\tikzset{
>=stealth',
help lines/.style={dashed, thick},
axis/.style={<->},
important line/.style={thick},
connection/.style={thick, dotted},
}
\newcommand{\nc}{\newcommand}
\nc{\rnc}{\renewcommand}
\nc{\bb}[1]{{\mathbb #1}}
\nc{\bbA}{\bb{A}}\nc{\bbB}{\bb{B}}\nc{\bbC}{\bb{C}}\nc{\bbD}{\bb{D}}
\nc{\bbE}{\bb{E}}\nc{\bbF}{\bb{F}}\nc{\bbG}{\bb{G}}\nc{\bbH}{\bb{H}}
\nc{\bbI}{\bb{I}}\nc{\bbJ}{\bb{J}}\nc{\bbK}{\bb{K}}\nc{\bbL}{\bb{L}}
\nc{\bbM}{\bb{M}}\nc{\bbN}{\bb{N}}\nc{\bbO}{\bb{O}}\nc{\bbP}{\bb{P}}
\nc{\bbQ}{\bb{Q}}\nc{\bbR}{\bb{R}}\nc{\bbS}{\bb{S}}\nc{\bbT}{\bb{T}}
\nc{\bbU}{\bb{U}}\nc{\bbV}{\bb{V}}\nc{\bbW}{\bb{W}}\nc{\bbX}{\bb{X}}
\nc{\bbY}{\bb{Y}}\nc{\bbZ}{\bb{Z}}
\nc{\mbf}[1]{{\mathbf #1}}
\nc{\bfA}{\mbf{A}}\nc{\bfB}{\mbf{B}}\nc{\bfC}{\mbf{C}}\nc{\bfD}{\mbf{D}}
\nc{\bfE}{\mbf{E}}\nc{\bfF}{\mbf{F}}\nc{\bfG}{\mbf{G}}\nc{\bfH}{\mbf{H}}
\nc{\bfI}{\mbf{I}}\nc{\bfJ}{\mbf{J}}\nc{\bfK}{\mbf{K}}\nc{\bfL}{\mbf{L}}
\nc{\bfM}{\mbf{M}}\nc{\bfN}{\mbf{N}}\nc{\bfO}{\mbf{O}}\nc{\bfP}{\mbf{P}}
\nc{\bfQ}{\mbf{Q}}\nc{\bfR}{\mbf{R}}\nc{\bfS}{\mbf{S}}\nc{\bfT}{\mbf{T}}
\nc{\bfU}{\mbf{U}}\nc{\bfV}{\mbf{V}}\nc{\bfW}{\mbf{W}}\nc{\bfX}{\mbf{X}}
\nc{\bfY}{\mbf{Y}}\nc{\bfZ}{\mbf{Z}}
\nc{\bfa}{\mbf{a}}\nc{\bfb}{\mbf{b}}\nc{\bfc}{\mbf{c}}\nc{\bfd}{\mbf{d}}
\nc{\bfe}{\mbf{e}}\nc{\bff}{\mbf{f}}\nc{\bfg}{\mbf{g}}\nc{\bfh}{\mbf{h}}
\nc{\bfi}{\mbf{i}}\nc{\bfj}{\mbf{j}}\nc{\bfk}{\mbf{k}}\nc{\bfl}{\mbf{l}}
\nc{\bfm}{\mbf{m}}\nc{\bfn}{\mbf{n}}\nc{\bfo}{\mbf{o}}\nc{\bfp}{\mbf{p}}
\nc{\bfq}{\mbf{q}}\nc{\bfr}{\mbf{r}}\nc{\bfs}{\mbf{s}}\nc{\bft}{\mbf{t}}
\nc{\bfu}{\mbf{u}}\nc{\bfv}{\mbf{v}}\nc{\bfw}{\mbf{w}}\nc{\bfx}{\mbf{x}}
\nc{\bfy}{\mbf{y}}\nc{\bfz}{\mbf{z}}
\newcommand{\op}{\text{op}}
\nc{\mcal}[1]{{\mathcal #1}}
\nc{\calA}{\mcal{A}}\nc{\calB}{\mcal{B}}\nc{\calC}{\mcal{C}}\nc{\calD}{\mcal{D}}
\nc{\calE}{\mcal{E}} \nc{\calF}{\mcal{F}}\nc{\calG}{\mcal{G}}\nc{\calH}{\mcal{H}}
\nc{\calI}{\mcal{I}}\nc{\calJ}{\mcal{J}}\nc{\calK}{\mcal{K}}\nc{\calL}{\mcal{L}}
\nc{\calM}{\mcal{M}}\nc{\calN}{\mcal{N}}\nc{\calO}{\mcal{O}}\nc{\calP}{\mcal{P}}
\nc{\calQ}{\mcal{Q}}\nc{\calR}{\mcal{R}}\nc{\calS}{\mcal{S}}\nc{\calT}{\mcal{T}}
\nc{\calU}{\mcal{U}}\nc{\calV}{\mcal{V}}\nc{\calW}{\mcal{W}}\nc{\calX}{\mcal{X}}
\nc{\calY}{\mcal{Y}}\nc{\calZ}{\mcal{Z}}
\nc{\fA}{\frak{A}}\nc{\fB}{\frak{B}}\nc{\fC}{\frak{C}} \nc{\fD}{\frak{D}}
\nc{\fE}{\frak{E}}\nc{\fF}{\frak{F}}\nc{\fG}{\frak{G}}\nc{\fH}{\frak{H}}
\nc{\fI}{\frak{I}}\nc{\fJ}{\frak{J}}\nc{\fK}{\frak{K}}\nc{\fL}{\frak{L}}
\nc{\fM}{\frak{M}}\nc{\fN}{\frak{N}}\nc{\fO}{\frak{O}}\nc{\fP}{\frak{P}}
\nc{\fQ}{\frak{Q}}\nc{\fR}{\frak{R}}\nc{\fS}{\frak{S}}\nc{\fT}{\frak{T}}
\nc{\fU}{\frak{U}}\nc{\fV}{\frak{V}}\nc{\fW}{\frak{W}}\nc{\fX}{\frak{X}}
\nc{\fY}{\frak{Y}}\nc{\fZ}{\frak{Z}}
\nc{\fa}{\frak{a}}\nc{\fb}{\frak{b}}\nc{\fc}{\frak{c}} \nc{\fd}{\frak{d}}
\nc{\fe}{\frak{e}}\nc{\fFf}{\frak{f}}\nc{\fg}{\frak{g}}\nc{\fh}{\frak{h}}
\nc{\fri}{\frak{i}}\nc{\fj}{\frak{j}}\nc{\fk}{\frak{k}}\nc{\fl}{\frak{l}}
\nc{\fm}{\frak{m}}\nc{\fn}{\frak{n}}\nc{\fo}{\frak{o}}\nc{\fp}{\frak{p}}
\nc{\fq}{\frak{q}}\nc{\fr}{\frak{r}}\nc{\fs}{\frak{s}}\nc{\ft}{\frak{t}}
\nc{\fu}{\frak{u}}\nc{\fv}{\frak{v}}\nc{\fw}{\frak{w}}\nc{\fx}{\frak{x}}
\nc{\fy}{\frak{y}}\nc{\fz}{\frak{z}}
\newtheorem{theorem}{Theorem}[section]
\newtheorem{lemma}[theorem]{Lemma}
\newtheorem{corollary}[theorem]{Corollary}
\newtheorem{prop}[theorem]{Proposition}
\newtheorem{assumption}{Assumption}[section]
\theoremstyle{definition}
\newtheorem{definition}[theorem]{Definition}
\newtheorem{example}[theorem]{Example}
\newtheorem{remark}[theorem]{Remark}
\newtheorem{conj}[theorem]{Conjecture}
\newtheorem{thm}{Theorem}
\DeclareMathOperator{\im}{im} \DeclareMathOperator{\coker}{coker}
\DeclareMathOperator{\codim}{codim} \DeclareMathOperator{\id}{id}
\DeclareMathOperator{\ch}{ch}
 \DeclareMathOperator{\GL}{GL}
\DeclareMathOperator{\Hom}{{Hom}} 
\DeclareMathOperator{\sHom}{{\mathscr{H}om}}
\DeclareMathOperator{\loc}{{loc}}
\DeclareMathOperator{\proj}{pr} 
\DeclareMathOperator{\Lie}{Lie}
\DeclareMathOperator{\Pic}{Pic}
\DeclareMathOperator{\Spec}{{Spec}} 
\DeclareMathOperator{\triv}{triv}
\DeclareMathOperator{\cyc}{cyc}
 \DeclareMathOperator{\End}{End}
\DeclareMathOperator{\Gm}{\bbG_m}
\DeclareMathOperator{\Att}{Att}
\DeclareMathOperator{\Dr}{Dr}
\DeclareMathOperator{\Fr}{\mathrm{Fr}}
\DeclareMathOperator{\ev}{ev}
\DeclareMathOperator{\fin}{fin}
\DeclareMathOperator{\ad}{ad}
\DeclareMathOperator{\res}{res}
\DeclareMathOperator{\Gr}{Gr}
\DeclareMathOperator{\inc}{in}
\DeclareMathOperator{\out}{out}
\DeclareMathOperator{\red}{red}
\DeclareMathOperator{\even}{even}
\newcommand{\Omit}[1]{}
\DeclareMathOperator{\Stab}{Stab}
\DeclareMathOperator{\Rep}{Rep}
\DeclareMathOperator{\Res}{Res}
\newcommand{\surj}{\twoheadrightarrow}
\newcommand{\inj}{\hookrightarrow}
\newcommand{\pt}{\text{pt}}
\newcommand{\Z}{\bbZ}
\newcommand{\C}{\bbC}
\newcommand{\N}{\bbN}
\DeclareMathOperator{\MU}{MU}
 \gdef\Young(#1){\hbox{$\vcenter
 {\mathcode`,="8000\mathcode`|="8000
  \def,{\global\advance\cols by 1 &}%
  \def|{\cr
        \multispan{\the\cols}\hrulefill\cr
        &\global\cols=2 }%
  \offinterlineskip\everycr{}\tabskip=0pt
  \dimen0=\ht\strutbox \advance\dimen0 by \dp\strutbox
  \halign
   {\vrule height \ht\strutbox depth \dp\strutbox##
    &&\hbox to \dimen0{\hss$##$\hss}\vrule\cr
    \noalign{\hrule}&\global\cols=2 #1\crcr
    \multispan{\the\cols}\hrulefill\cr%
   }
 }$}}
\title[
Frobenii on Morava $E$-theoretical quantum groups
]{Frobenii on Morava $E$-theoretical quantum groups}
\author[Y.~Yang]{Yaping~Yang}
\address{The University of Melbourne,
	School of Mathematics and Statistics,
	813 Swanston Street, Parkville VIC 3010,
	Australia}
\email{yaping.yang1@unimelb.edu.au}
\author[G.~Zhao]{Gufang~Zhao}
\address{The University of Melbourne,
	School of Mathematics and Statistics,
	813 Swanston Street, Parkville VIC 3010,
	Australia}
\email{gufangz@unimelb.edu.au}
\dedicatory{To Leo, with love.}
\subjclass[2010]{Primary 17B37;  	
Secondary 
55N22,   
16G20.}
\keywords{Quantum Frobenius homomorphism, quantum group, Morava E-theory, quiver variety, transchromatic character map}
\date{\today}
\begin{document}
\maketitle
\begin{abstract}
In this paper, we study a family of new quantum groups labelled by a prime number $p$ and a natural number $n$ constructed using the Morava $E$-theories. 
We define the quantum Frobenius homomorphisms among these quantum groups. This is a geometric generalization of Lusztig's quantum Frobenius from the quantum groups at a root of unity to the enveloping algebras. The main ingredient in constructing these Frobenii is the transchromatic character map of Hopkins, Kuhn, Ravenal, and Stapleton. As an application, we prove a Steinberg-type formula for irreducible representations of these quantum groups.  Consequently, we prove that, in type $A$ the characters of certain irreducible representations of these quantum groups satisfy the formulas introduced by Lusztig in 2015.
\end{abstract}

\tableofcontents
\section{Introduction}
It is well-known that the representation theory of Lusztig quantum groups at a root of unity \cite{Lu2} has many similarities to that of algebraic groups over a field of positive characteristic. Their characters of irreducible modules coincide in certain cases \cite{AJS}. In both settings, the Frobenius morphism is responsible to many interesting features of their representation theories. There is, however, a major difference between these two. The Frobenius morphism of the algebraic group, having the same domain and target, can be iterated. On the other hand, the Frobenius morphism of the quantum group, known as the quantum Frobenius, goes from Lusztig quantum group at a root of unity to the enveloping algebra of the Lie algebra, and hence can not be iterated. 
The present paper can be understood as finding an analogue of iteration of the quantum Frobenius. Roughly speaking, we consider a family of (relatively) new quantum groups labelled by a prime number $p$ and an integer $n\in\N$ (the {\em level}) constructed using the Morava $E$-theories. We construct an algebra homomorphism from the level $n$ quantum group at a $p$th root of unity to the level $n-t$ quantum group, for any $0<t\leq n$. (Here a root of unity means a $p$-torsion point in the formal group of the Morava $E$-theory.) On the level of representations, the homomorphism scales the weights by $p^t$, and hence has the feature expected of a Frobenius morphism.

These Morava $E$-theoretical quantum groups (or rather a variant of them) are first considered in \cite[Section 4]{YZ1}, and are speculated to be related to Lusztig's new character formulas in 2015 with some pieces of evidence given in \cite[Section 6]{YZ1}. The construction in \cite{YZ1} uses the approach of the cohomological Hall algebras, which is helpful in doing explicit calculations in the quantum groups. The present paper is made to be self-contained. We focus on the realization of these quantum groups in terms of the Morava $E$-theoretical convolution algebras of the Nakajima quiver varieties. 

The aforementioned Lusztig's character formulas \cite{Lusz} are obtained based on iteration of Frobenius of the algebraic group in positive characteristic, and a character formula from \cite{Lu2} for irreducible representations with ``reduced highest weights". The latter are known to 
describe irreducible representations of quantum group at a root of unity, but not those of algebraic groups. 
Therefore, to find the meaning of the new character formulas from \cite{Lusz}, one is naturally led to ``iteration of quantum Frobenius", which is the main topic of the present paper. Using the Frobenii on Morava $E$-theoretically quantum group,  we  address this meaning of Lusztig's new character formulas. 

\subsection{Overview}
Let $p$ be a prime number. Let $E_n^*$ be the integral Morava $E$-theory studied by Hopkins and Miller. 
Let $Q$ be a finite Dynkin quiver, with the set of vertices $I$. 
Labelling the simple roots $\alpha_i$ of the corresponding Lie algebra $\fg$ by $i\in I$, 
each dominant weight of $\fg$ is given by $w \in \mathbb{N}^I$.
Let $\fM(w)$ be the Nakajima quiver variety with framing $w$ (\S~\ref{subsec:quiver variety}), 
let $Z(w)$ be the Steinberg variety, which is a Lagrangian subvariety in $\fM(w)\times\fM(w)$. All these varieties are endowed with $\GL_w\times \C^*$-actions. Applying the equivariant $E_n^*$-theory, $E^*_{n, \GL_w\times \C^*}(Z(w))$ has an algebra structure by convolution (\S~\ref{subsec:conv}). 

Let $\gamma: \Lambda_n:=(\Z/p)^n\to \GL_w\times \C^*$ be a group homomorphism. Let $\mathbb{F}$ be a field of  characteristic zero satisfying  certain properties spelled out explicitly in \S\ref{sec:Frob phen}. In particular, $\mathbb{F}$ is an $E^*_{n, \Lambda_n}(\pt)$-field, and should be thought as a field containing cyclotomic integers of $E^0_n(\pt)$. We define the Morava-$E$-theoretical quantum group to be the subalgebra of the convolution algebra (Definition \ref{def:U})
\[
U_{w}^n(\gamma, \mathbb{F})\subset \mathbb{F}\otimes_{E^*_{n, \Lambda_n}(\pt)}E^*_{n, \GL_w\times \C^*}(Z(w))
\]
generated by certain tautological classes. 
Specializations of the equivariant parameters are to be understood as restricting to a finite subgroup of $\GL_w\times \C^*$.
For example,  the quantum parameter $q$ (coming from the extra $\C^*$-equivariance) lies in $E_n^*(BS^1)$, which is the formal group of the $E_n^*$-theory. We then specialize $q$ to be $\epsilon$, \textit{a root of unity}, which in this setting means a $p$-torsion point on the formal group  $E_n^*(B\Z/p)$. 
Note that the set of all the $p$-torsion points, roughly speaking, has cardinality $p^n$ (see Section 5 for details). When $n=1$, this is the usual notion of the $p$-th roots of $1$ in $\C^*$. 
 The main focus of the paper is the study of the quantum groups $U_{w}^n(\gamma, \mathbb{F})$ and their representations. These quantum groups should be thought of  as $n$-loop analogues of Lusztig's quantum groups, where each loop is specialized at a root of unity. This heuristic is to be elaborated on in \S~\ref{subsec:heuristic}.

For any positive integer $t$ such that $0< t\leq n$, assume $w$ is divisible by $p^t$, and assume further $\gamma$ is of the form $\gamma=\alpha \oplus \gamma'$, where 
$\alpha: \Lambda_t\to \GL_w\times \C^*$ is the group homomorphism defined in \S\ref{subsec:alpha} and 
$\gamma': \Lambda_{n-t}\to \GL_{w/p^t}\times \C^* \xrightarrow{\Delta}  \GL_{w}\times \C^*$ is a group homomorphism. 
We construct the quantum Frobenius map
\[
\Fr_{n, n-t}: U_{w}^n(\gamma, \mathbb{F})\to U_{w/p^t}^{n-t}(\gamma', \mathbb{F}), 
\]
as an algebra homomorphism (Theorem \ref{thm:algebra hom}). 
Geometrically, it is obtained by modifying the transchromatic character map of Hopkins, Kuhn, Ravenal \cite{HKR}, and Stapleton \cite{St}, which starts from the Morava $E_n^*$-theory and lands in a cohomology theory of chromatic level $n-t$.


One constructs the irreducible representations $L^n_w(\gamma, \mathbb{F})$ of $U_{w}^n(\gamma, \mathbb{F})$ 
by taking the irreducible quotients of the Weyl modules obtained from the quiver varieties (see  \S\ref{subsec:irreducible}) similar as \cite{Nak}. 
If $p^t|w$, denote by $\Fr_{n, n-t}^*L^{n-t}_{w/p^t}(\gamma', \mathbb{F})$ the representation of $U_{w}^n(\gamma, \mathbb{F})$ obtained by pulling back the irreducible module $L^{n-t}_{w/p^t}(\gamma', \mathbb{F})$ of $U_{w/p^t}^{n-t}(\gamma', \mathbb{F})$ along $\Fr_{n, n-t}$. 
We prove the following Steinberg tensor product formula of the irreducible representations. Let $X^+=\N^I$ be the set of dominant weights and let \[
X^+_{\red}:=\{ w\in X^+\mid (\alpha_i^\vee, w) <p, \forall i\in I \}. 
\]
\begin{thm}
(Theorem \ref{thm:Steinberg tensor}) 
Let $w=w'+pw''$, where $w'\in X^+_{\red}$ and $w''\in X^+$. Let $\gamma:\Lambda_n\to \GL_{w}\times\bbC^*$ be the group homomorphism $\gamma=\gamma'\times (\alpha\oplus \gamma'')$ where $\gamma':\Lambda_n\to\GL_{w'}\times\bbC^*$, $\gamma'':\Lambda_{n-1}\to\GL_{w''}\times\bbC^*$ are group homomorphisms and $\alpha: \Lambda_1\to \GL_{pw''}\times\bbC^*$ is the homomorphism defined in  \S~\ref{subsec:alpha}. 
Under the conditions on the field $\mathbb{F}$ specified in \S\ref{subsec:Steinberg tensor}, there is an isomorphism of representations of  $U_w^n(\gamma,\mathbb{F})$
\[
L_w^{(n)}(\gamma, \mathbb{F})
\cong L_{w'}^{(n)}(\gamma', \mathbb{F})\otimes
\Fr_{n, n-1}^* L_{w''}^{(n-1)}(\gamma'', \mathbb{F}),
\]
where $U_w^n(\gamma,\mathbb{F})$ acts on the right hand side via the coproduct \eqref{DeltaE} in the setting of Section \ref{sec:tensor product with dynamical}. 
\end{thm}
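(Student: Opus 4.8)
Write $M:=L_{w'}^{(n)}(\gamma',\mathbb{F})\otimes\Fr_{n,n-1}^{*}L_{w''}^{(n-1)}(\gamma'',\mathbb{F})$, where $\Fr_{n,n-1}^{*}L_{w''}^{(n-1)}(\gamma'')$ is the $U_{pw''}^{n}(\alpha\oplus\gamma'')$-module obtained from $L_{w''}^{(n-1)}(\gamma'')$ by pullback along $\Fr_{n,n-1}\colon U_{pw''}^{n}(\alpha\oplus\gamma'')\to U_{w''}^{n-1}(\gamma'')$ (the map of Theorem~\ref{thm:algebra hom} with $t=1$; note $pw''/p=w''$ and $\alpha\oplus\gamma''$ has the required shape), and $M$ is made into a $U_w^{n}(\gamma,\mathbb{F})$-module along the coproduct \eqref{DeltaE}. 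Since $L_w^{(n)}(\gamma,\mathbb{F})$ is by construction (\S\ref{subsec:irreducible}) the unique irreducible quotient of the Weyl module of highest weight $w$, it suffices to show that $M$ is irreducible of highest weight $w$. The plan is to transport the Steinberg--Lusztig argument to this setting, using the transchromatic realization of $\Fr_{n,n-1}$ to control the $U_w^{n}(\gamma)$-action on $M$.

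I would begin with the highest weight bookkeeping. Let $v',v''$ be the highest weight vectors of $L_{w'}^{(n)}(\gamma'),L_{w''}^{(n-1)}(\gamma'')$. Since $\Fr_{n,n-1}$ rescales weights by $p$ (recorded after Theorem~\ref{thm:algebra hom}), the pullback $\Fr_{n,n-1}^{*}L_{w''}^{(n-1)}(\gamma'')$ has highest weight $pw''$ with one-dimensional top weight space $\mathbb{F}v''$; by the triangular shape of \eqref{DeltaE}, $v'\otimes v''$ is then a highest weight vector of $M$ of weight $w'+pw''=w$, every other weight of $M$ is strictly smaller, and the $w$-weight space of $M$ is $\mathbb{F}\,v'\otimes v''$. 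Hence $L_w^{(n)}(\gamma,\mathbb{F})$ occurs as the head of the cyclic submodule $\langle v'\otimes v''\rangle\subseteq M$, and the theorem reduces to showing that $M=\langle v'\otimes v''\rangle$ and that $M$ has no proper nonzero submodule.

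For the irreducibility I would split off the ``small'' subalgebra $u\subset U_w^{n}(\gamma,\mathbb{F})$ generated by the tautological classes from the ``Frobenius-type'' subalgebra $U'$ generated by their $p$-th divided powers. The transchromatic construction of $\Fr_{n,n-1}$ should yield: (i) $\Fr_{n,n-1}$ carries $u$ into the coefficient subalgebra, so by \eqref{DeltaE} the algebra $u$ acts on $M$ only through the first tensor factor; (ii) $\Fr_{n,n-1}$ maps $U'$ onto $U_{w''}^{n-1}(\gamma'')$, and since the divided powers $E_i^{(k)}$ with $0<k<p$ annihilate $v'$, the subspace $v'\otimes\Fr_{n,n-1}^{*}L_{w''}^{(n-1)}(\gamma'')$ is $U'$-stable and, as a $U_{w''}^{n-1}(\gamma'')$-module, isomorphic to $L_{w''}^{(n-1)}(\gamma'')$. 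Granting that $L_{w'}^{(n)}(\gamma',\mathbb{F})$ is irreducible over $u$ --- the ``reduced highest weight'' phenomenon forced by $w'\in X^+_{\red}$, the analogue for these quantum groups of the classical fact used in \cite{Lu2} --- the module $M$ is $u$-isotypic of type $L_{w'}^{(n)}(\gamma',\mathbb{F})$, with multiplicity space $v'\otimes\Fr_{n,n-1}^{*}L_{w''}^{(n-1)}(\gamma'')$. A nonzero $U_w^{n}(\gamma)$-submodule $N\subseteq M$ is then in particular a $u$-submodule, hence meets this multiplicity space nontrivially; but $N\cap\big(v'\otimes\Fr_{n,n-1}^{*}L_{w''}^{(n-1)}(\gamma'')\big)$ is a nonzero $U'$-submodule of the irreducible $L_{w''}^{(n-1)}(\gamma'')$, hence all of it, and applying $u$ (which acts on the first factor with $u\cdot v'=L_{w'}^{(n)}(\gamma',\mathbb{F})$) gives $N=M$. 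Thus $M$ is irreducible, and $M\cong L_w^{(n)}(\gamma,\mathbb{F})$.

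The main obstacle is exactly the pair of inputs used in the last step: the $u$-irreducibility of the restricted module $L_{w'}^{(n)}(\gamma',\mathbb{F})$, and the precise triangularity of the \emph{dynamical} (shifted, not Hopf) coproduct \eqref{DeltaE} needed for $u$ and $U'$ to act on $M$ as claimed. Geometrically, both come down to analysing the $\Lambda_1$-fixed locus of $\fM(w)$ for the action of $\Lambda_1$ through $\alpha$ on the $\GL_{pw''}$-factor: one must identify its ``diagonal'' component with $\fM(w')\times\fM(w'')$, show that the convolution action of $Z(w)$ restricts to this component compatibly with the transchromatic character map (so that the induced module is exactly $L_{w'}^{(n)}(\gamma')\otimes\Fr_{n,n-1}^{*}L_{w''}^{(n-1)}(\gamma'')$), and verify that the remaining fixed components do not meet the irreducible quotient --- equivalently, that $u$ lands in the grouplike part under $\Fr_{n,n-1}$. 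As a consistency check, for $n=1$ the argument should recover, after unwinding the transchromatic map over $\mu_p$, Lusztig's quantum Frobenius \cite{Lu2} and the classical Steinberg tensor product theorem.
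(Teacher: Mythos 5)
Your overall strategy is the correct one and is essentially the paper's: restrict to the De Concini--Kac-type subalgebra $U'^n_w$ (your $u$), use that $L_{w'}^{(n)}(\gamma',\mathbb{F})$ stays irreducible over it because $w'\in X^+_{\red}$ (Corollary \ref{cor:irreducible}, which you rightly treat as an input), use that $\Fr_{n,n-1}$ kills the degree-one tautological generators so that $U'^n_w$ sees only the first tensor factor, and then run the Steinberg isotypic/multiplicity-space argument. The paper's endgame is packaged slightly differently---it compares the multiplicity spaces $V'=\Hom_{U'^n_w}(L_{w'},L')$ and $V=\Hom_{U'^n_w}(L_{w'},L_w)$ and squeezes using the surjection $L'\to L_{w,\rho}$ supplied by the coproduct on Weyl modules, rather than proving irreducibility of $M$ head-on---but that difference is cosmetic.

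The genuine gap is precisely the step you flag as ``the main obstacle'': why the divided-power generators act on the multiplicity space. Your proposed mechanism---``since the divided powers $E_i^{(k)}$ with $0<k<p$ annihilate $v'$, the subspace $v'\otimes\Fr_{n,n-1}^{*}L_{w''}^{(n-1)}$ is $U'$-stable''---does not work as stated: applying a lowering divided power to $v'\otimes m$ via the coproduct produces terms $F_i^{(a)}v'\otimes(\cdots)$ with $0<a<p$ that need not vanish, so that subspace is not literally stable inside $M$. What is actually needed, and what the paper proves in the unnumbered lemma of \S\ref{subsec:Steinberg tensor}, is that the abstract multiplicity space $V\cong\Hom_{\mathbb{F}}(L_{w'},L_w)^{U'^n_{w'+w}}$ carries an action of the full convolution algebra. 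That rests on the adjoint action of \S\ref{act on hom} and on the Serre-type relation $\ad(X)Y=0$ for $k\geq 2$ (Proposition \ref{lem:Serre type relation}), whose proof is a nontrivial geometric analysis of the generalized Hecke correspondences through Nakajima's modified quiver varieties; concretely, one needs that for a divided-power generator $X$ with $\Delta^{\loc}(X)=X'\otimes 1+1\otimes X''+\text{l.o.t.}$ and any degree-one generator $Y$, one has $Y(X''f)=0$ for all $f\in V$, so that $X''f\in V$. Without this input the isotypic argument cannot be closed. A smaller point: your claim that $u$ acts on $M$ only through the first factor requires the coproduct \eqref{DeltaE} (the stable-envelope coproduct with dynamical shift, not $\Delta^{\loc}$) to be compatible with $\Fr_{n,n-1}$; this is the compatibility recorded at the end of \S\ref{sec:tensor product with dynamical} and should be invoked explicitly rather than assumed.
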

When the weight $w$ lies in the reduced region $X^+_{\red}$, we choose $\alpha=(\alpha_1, \cdots, \alpha_n): \Lambda_n\to \GL_w\times \C^*$ be a group homomorphism satisfying the property in \S\ref{subsec:group_hom}. In Theorem \ref{thm:reduced region}, we show the irreducible representation $L_{w}^{(n)}(\alpha, \mathbb{F})$ at level $n$ is isomorphic to the irreducible representation $L_{w}^{(1)}(\alpha_1, \mathbb{F})$ at level $1$. 
In particular, $L_{w}^{(n)}(\alpha, \mathbb{F})$ for $w\in X^+_{\red}$ can be understood using the representation theory of the quantum loop algebra $U_{\epsilon}(L\fg)$ at roots of unity.

\subsection{Lusztig's character formulas}
Let $G$ be an almost simple, simply connected algebraic group over an algebraically closed field $k$ of characteristic $p > 0$. Fix $T\subset G$  a maximal torus of $G$ and let  $X:=\Hom(T, k^*)$.
Based on the ideas from \cite{Lu2}, in 2015 Lusztig  introduced a family of characters, denoted by $E^n_{\lambda}$ \cite{Lusz}, as elements in  $\Z[X]$ for each dominant weight $\lambda \in X^+$ of $G$. We refer the readers to Lusztig's original paper for the details of the character formulas. Here we only recall some context and properties relevant for the present paper. 
The character formula $E_{\lambda}^{n} \in \Z[X]$, labelled by a prime number $p$ and a positive integer $n$, is defined inductively with $E_{\lambda}^{0}$ the Weyl character formula. 
For an element $\xi=\sum_{\mu} c_{\mu}e^{\mu}\in \Z[X]$, we write $
\xi^{[t]}:=\sum_{\mu} c_{\mu}e^{p^t\mu}\in \Z[X]$. 
For any $\lambda\in X^+$ we can write uniquely $\lambda=\sum_{i\geq 0} p^i \lambda_i$, where $\lambda_i\in X^+_{\red}$. 
Then 
\[E^k_\lambda=E^1_{\lambda_0}(E^1_{\lambda_1})^{[1]}\dots(E^1_{\lambda_{k-1}})^{[k-1]}(E^0_{\Sigma_{j\geq k}p^{j-k}\lambda_j})^{[k]}.
\]
In particular, $E^1_\lambda$ is known to be the character of irreducible representation of Lusztig's quantum group at a root of unity with highest weight $\lambda$ \cite{Lu2}. 
For $\lambda$ in the reduced region $X^+_{\red}$,
we have 
\[
E_{\lambda}^1=E_{\lambda}^2=\cdots= E_{\lambda}^{\infty}.
\]
Lusztig's conjecture \cite{Lu2} can then be formulated as, for $p$ large enough, the limit character $E^{\infty}_{\lambda}$ is equal to that of the irreducible modular representation of $G$ with highest weight $\lambda$.

This statement is false for small $p$. We refer interested readers to \cite{W} for a discussion about the precise meaning of being small. In particular, $E^1_\lambda$ describes only the characters of irreducible representations of the quantum group, not the algebraic group. To motivate the present paper, it suffices to say that, for $k>1$, the meaning of the formula $E_{\lambda}^k$ had been unknown. 
This opens the possibility  of the existence of new quantum groups whose irreducible representations have characters given by $E^n_{\lambda}$, for the intermediate $n$ and any $\lambda\in X^+$. The motivation of the present paper is to construct such family of quantum groups with the desired properties.

Associated to an arbitrary oriented cohomology theory $A^*$,
in earlier work a quantum group $U(A^*, \fg)$ is constructed 
\cite{YZ2} as the Drinfeld double of preprojective $A^*$-cohomological Hall algebra of the Dynkin quiver of $Q$.
When $A^*$ is taken to be $K(p, n)$, the Morava-$K$-theory for a prime number $p$ and chromatic level $n$, some properties of $U(K(p, n), \fg)$ are studied in \cite[\S~6]{YZ1}, including a stabilization property that for $n$ large enough 
 compared to $\lambda$ the character of irreducible modules of $U(K(p, n), \fg)$ stabilizes. 
 
 In the present paper, by using the realization of the quantum groups in terms of the Morava $E$-theoretical convolution algebras $U^n_w(\gamma, \mathbb{F})$, we study the characters of their irreducible representations. Using the Steinberg tensor product theorem (Theorem \ref{thm:Steinberg tensor}) and the result for the reduced region (Theorem \ref{thm:reduced region}), together with  a theorem of Chari and Pressley \cite{CP}, we show, in type $A$, there exists group homomorphism $\gamma$, such that  (Corollary \ref{cor:for type A proof})
 \begin{equation}\label{eq:intro}
 \ch(L_{w}^n (\gamma, \mathbb{F}))=E_w^n. 
 \end{equation}

 Following Nakajima \cite{Nak04}, we define the $\epsilon,t$-character  $\ch_{\epsilon,t}(L_{w}^n (\gamma, \mathbb{F}))$ of $L_{w}^n (\gamma, \mathbb{F})$. 
The Steinberg tensor product theorem (Theorem \ref{thm:Steinberg tensor}) and the result of the reduced region (Theorem \ref{thm:reduced region}) then give a formula for $\ch_{\epsilon,t}(L_{w}^n (\gamma, \mathbb{F}))$  for all types, see \eqref{eq:ch_et}.

\subsection{Why Morava $E_n$-theory?}\label{subsec:heuristic}
We close the introduction with an informal cartoon illustration on the intuition behind the present paper, as why the Morava $E_n$-theory is the correct object to consider for iteration of quantum Frobenius and hence Lusztig's formula.
Lusztig's formula satisfies the property $E^{(n)}_{p^nw}=[E^{(0)}_{w}]^{[n]}$. 
This suggests that were these characters of  representations  of some quantum group $U_{q}(\fg, n)$, the quantum group  should have a Frobenius map that can be iterated $n$ times with the final target the enveloping algebra of $\fg$.  

Roughly speaking, Hopkins, Kuhn, Ravenel show that Morava theories have the following feature. 
The $p$-torsion points in the formal group of $E_n^*$ 
is isomorphic to $E_n^*(B\Z/p)$, which roughly speaking is isomorphic to $(\Z/p)^n$. 
We can view $(\Z/p)^n$ as the roots of unity on $ \overbrace
{S^1\times S^1\times \cdots \times S^1}^{\text{n-copies}}$. 
Using them to construct the quantum group $U_{q}(\fg, n)$, and specialize at roots of unity, we would obtain a deformation of the enveloping algebra of $\calO_{F[p]}\otimes \fg$, where $F[p]$ is the set of p-torsion points in the formal group law of $E_n^*$. By the discussion above, $\calO_{F[p]}\otimes \fg$ should be viewed as $p$-localized  $n$-fold loop algebra of $\fg$. Thus, $U_{q}(\fg, n)$ at roots of unity should be thought as $p$-localized  $n$-fold loop quantum group. 

Applying the quantum Frobenius morphism $\Fr_{n, n-1}$, 
we reduce one of the copies of $\bbZ/p\subset S^1$, so that we end up with $p$-torsion points in $ \overbrace
{S^1\times S^1\times \cdots \times S^1}^{\text{(n-1)-copies}}$. The chromatic level of the cohomology theory drops by $1$. 
By iteration of this is the procedure, we can get rid of those $S^1$'s in $n$ steps and end up with the enveloping algebra of $\fg$, with irreducible representations described by Weyl's formula $E^{(0)}_{w}$.

\subsection*{Acknowledgments} We thank David Ben-Zvi, Marc Levine, Ivan Mirkovi\'c, who back in 2016 encouraged us to study quantum groups built from Morava $E$-theories. We thank Nora Ganter for drawing our attention to \cite{St} and patiently explaining her related work. We thank David Gepner and Arun Rum for helpful discussions.
During the preparation of the present paper, Y.Y. was partially supported by the Australian Research Council (ARC) via grants DE190101231 and DP210103081; G.Z. was partially supported by  ARC via DE190101222 and DP210103081.

\section{Topology preliminaries}
We recall some basics in oriented cohomology theory, convolution algebra, Morava $E$-theory, transchromatic character map, as well as the Hopkins-Kuhn-Ravenel theorem. We then use it to give a localization formula for convolution algebra in Morava $E$-theory.

\subsection{Convolution algebra in oriented cohomology theory}\label{subsec:conv}
We work with a topological oriented cohomology theory $A^*$, which for any compact Lie group $G$ we extend to an equivariant version by Borel construction $A^*_G$, although most of the results hold with other equivariant cohomology theories. 
Such a theory has enough structure to define a convolution algebra. A version of such in the algebraic setting is spelled out in detail in \cite[\S~5.1]{ZZ1}. A topological version works in the same fashion. For the convenience of the readers, we briefly recall some basics.

In this paper we mainly consider topological spaces which are either quasi-projective complex manifolds or complex varieties with an action of finite groups or more generally compact Lie groups. 
In the present paper for simplicity we make the convention that all complex algebraic varieties are assumed to be separated quasi-projective of finite type over $\bbC$, but not necessarily irreducible. We call such a variety a complex manifold if it is furthermore smooth. 
For a complex manifold $M$, we consider the pointed topological space $(M\sqcup\pt,\pt)$. If $M$ has an action of a compact Lie group $G$, we let $G$ act on $M\sqcup\pt$ by the trivial action on $\pt$.

 For a complex manifold $M$ with a $G$-action,  by abusing of notation, we denote by $A^*_G(M)$ the equivariant $A^*$-cohomology of the pair $(M\sqcup\pt,\pt)$. For a not-necessarily-smooth subvariety invariant under the $G$-action $i:S\inj M$, we denote $A^*_G(M,M\setminus S)$ by $A^*_G(S)$, and the natural map $A^*_G(S)\to A^*_G(M)$ by $i_*$. 
 Recall that a complex orientation on $A^*$ gives rise,  for a rank-$r$ vector bundle $V\to M$ on a smooth manifold, to an isomorphism $A^*(M)\cong A^{*+r}(V,V\setminus M)$ where $M\inj V$ is the zero-section. This construction extends equivariantly. Hence, the above definition of  $A^*_G(S)$ has no ambiguity in the smooth case. 
 Note that this definition however does not respect the cohomological grading, hence extra care would be needed to keep track the degree. We do not use the cohomological degree in the present paper, hence for simplicity  we ignore this issue.

 Let $f:M\to N$ be an equivariant map between complex manifolds, and $i:S\inj M$, $j:T\inj N$ embeddings of $G$-invariant subvarieties so that the diagram is Cartesian
 \[\xymatrix{
 S\ar[r]\ar[d]_i&T\ar[d]^j\\
 M\ar[r]_f&N.
 }\]
 Then we have the map induced by the map of pairs 
 $f^*:
 A^*_G(T):=A^*_G(N,N\setminus T)
 \to A^*_G(M,M\setminus S)=:A^*_G(S)$, which makes the following diagram commutative
 \[\xymatrix{
 A^*_G(M)&A^*_G(N)\ar[l]_{f^*}\\
 A^*_G(S)\ar[u]_{i_*}&A^*_G(T)\ar[u]_{j_*}\ar[l]_{f^*}.
 }\] 
 This map is called the refined Gysin pullback in \cite[\S~5.2]{ZZ1} and is denoted by $f^!$ therein, and the commutative diagram above is the base-change property.

For an equivariant vector bundle $V$ on $X$, let $s:X\to V$ be the zero section. The equivariant Euler class operator $e^G(V)\cup: A^*_G(X)\to A^*_G(X)$ is defined to be $s^*s_*$. This definition is functorial with respect to pullback of vector bundle along an equivariant map $f:Y\to X$. The usual 
Thom-Pontrjagin construction defines pushforward for any equivariant projective morphism $f:M\to N$, which again extends to the singular case and gives $f_*:A^*_G(S)\to A^*_G(T)$. 
This map satisfies the projection formula with respect to the pullback.

Recall that a complex orientation on $A^*$ also gives rise to a genus, i.e., a ring homomorphism from the complex cobordism $MU^*\to A^*$.
The genus is furthermore functorial with respect to pullbacks and pushforwards. In particular, the pullbacks and pushforwards in $MU^*$ are easily described in terms of cobordant classes of complex manifolds \cite[\S~1]{Qui}.
The orientation also gives $A^*$ a formal group law, so that the formal group law on $MU^*$ is the universal one of Lazard and that the genus is compatible with formal group laws. We refer the readers to \cite[\S~II.1.]{Ad} for the details.
This discussion extends equivariantly using the Borel construction.

Let $M_i$ be smooth quasi-projective $H$-varieties for $i=1,2,3$, and let $Z_{12}\subseteq M_1\times M_2$ and $Z_{23}\subseteq M_2\times M_3$ be $H$-stable closed subvarieties, not necessarily smooth. Let $\proj_{i,j}:M_1\times M_2\times M_3\to M_i\times M_j$ be the projection. Denote $Z_{12}\times_{M_2}Z_{23}\to M_1\times M_2\times M_3$ to be the fiber product  $\proj_{12}^{-1}(Z_{12})\times_{(M_1\times M_2\times M_3)}\proj^{-1}_{23}(Z_{23})$; and define $Z_{12}\circ Z_{23}\subseteq M_1\times M_3$ to be the image of $Z_{12}\times_{M_2}Z_{23}$ under the projection $\proj_{13}:M_1\times M_2\times M_3\to M_1\times M_3$. Note that in general $Z_{12}\circ Z_{23}$ is not smooth even when both $Z_{12}$ and $Z_{23}$ are.

The following Proposition allows us to consider convolution algebras and their representations. The equivariant $K$-theory version is \cite[2.7.5]{CG}.

\begin{prop}\label{prop:conv_action}
\begin{enumerate}
\item Assume that the natural map $Z_{12}\times_{M_2}Z_{23}\to Z_{12}\circ Z_{23}$ is proper. Then, there is a well-defined map \[*:A_H(Z_{12})\otimes_{A_H(\pt)} A_H(Z_{23})\to A_H(Z_{12}\circ Z_{23}).\] 
\item The map $*$ is associative in the usual sense. 
\item Suppose $M\to M_0$ is an $H$-equivariant projective morphism. 
Let $M_x$ be the fiber at $x\in M_0$.
Let $H_x\subset H$ be the stabilizer of $x$ so that $H_x$ acts on $M_x$. Let $Z$ be an $H$-stable closed subvariety of $M\times_{M_0}M$. Then, the induced morphism $A_H(Z)\otimes A_{H_x}(M_x)\to A_{H_x}(M_x)$ is well-defined. 
\item In particular, when $Z=M\times_{M_0}M$, the $R$-module homomorphism above extends to a homomorphism of $R$-algebras $A_H(Z)\to\End_R(A_{H_x}(M_x))$.
\end{enumerate}
\end{prop}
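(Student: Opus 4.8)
The plan is to transplant the equivariant $K$-theory argument of \cite[2.7.5]{CG} (see also \cite[\S5]{ZZ1}) to the oriented theory $A^*$, using only the operations recalled above: pullback of pairs along projections, the relative cup product, the refined Gysin pullback with its base-change property, the proper pushforward with its projection formula, and the fact that all of these descend to the Borel-equivariant groups. Write $\mu\colon M\to M_0$ for the morphism in (3)--(4), and $R=A_H(\pt)$.

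\textbf{Parts (1)--(2).} For $c_{12}\in A_H(Z_{12})$, $c_{23}\in A_H(Z_{23})$ I would set
\[
c_{12}*c_{23}\ :=\ (\proj_{13})_*\big(\proj_{12}^*c_{12}\cup\proj_{23}^*c_{23}\big),
\]
where $\proj_{12}^*c_{12}\in A_H(\proj_{12}^{-1}Z_{12})$ and $\proj_{23}^*c_{23}\in A_H(\proj_{23}^{-1}Z_{23})$ (the projections being Cartesian over $Z_{12}$, resp. $Z_{23}$), their relative cup product landing in $A_H(\proj_{12}^{-1}Z_{12}\cap\proj_{23}^{-1}Z_{23})=A_H(Z_{12}\times_{M_2}Z_{23})$. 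Since $\proj_{13}$ is not proper, I would push forward after choosing a smooth projective compactification $\overline{M_2}\supset M_2$: the properness hypothesis on $Z_{12}\times_{M_2}Z_{23}\to Z_{12}\circ Z_{23}$ forces $Z_{12}\times_{M_2}Z_{23}$ to be already closed in $M_1\times\overline{M_2}\times M_3$, so by excision the cup-product class is computed there, and $M_1\times\overline{M_2}\times M_3\to M_1\times M_3$ is projective, giving the desired element of $A_H(Z_{12}\circ Z_{23})$; independence of the compactification is routine, and $A_H(\pt)$-bilinearity of $*$ is inherited from the cup product. For associativity I would form, on $M_1\times M_2\times M_3\times M_4$, the triple product $\proj_{12}^*c_{12}\cup\proj_{23}^*c_{23}\cup\proj_{34}^*c_{34}$ (supported on $Z_{12}\times_{M_2}Z_{23}\times_{M_3}Z_{34}$), push it forward to $M_1\times M_4$, and show both $(c_{12}*c_{23})*c_{34}$ and $c_{12}*(c_{23}*c_{34})$ equal it; expanding either side, one moves a Gysin pullback past a pushforward by base change and then a cup product past a pushforward by the projection formula, and the two symmetric expressions collapse to the same class.

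\textbf{Parts (3)--(4).} Part (3) is the instance $M_1=M_2=M$, $M_3=\pt$, $Z_{12}=Z$, $Z_{23}=M_x$ of (1)---note $M_x=\mu^{-1}(x)$ is closed in $M$, and $Z\circ M_x\subseteq M_x$ since $(m_1,m_2)\in Z\subseteq M\times_{M_0}M$ with $m_2\in M_x$ forces $\mu(m_1)=x$, while the required properness holds because $M_x$ is projective ($\mu$ being projective). The only wrinkle is that $M_x$ carries only an $H_x$-action, so I would run the construction $H_x$-equivariantly ($Z$ and $M_x$ are both $H_x$-stable) and then precompose with the restriction $A_H(Z)\to A_{H_x}(Z)$ coming from $H_x\hookrightarrow H$. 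For (4), when $Z=M\times_{M_0}M$ one has $Z\circ Z=Z$ (using projectivity of $\mu$ for properness of $Z\times_M Z\to Z$), so (1)--(2) make $A_H(Z)$ an associative unital $R$-algebra---the unit being the pushforward of $1$ along the diagonal $M\hookrightarrow Z$, which acts as an identity for $*$ by the usual short computation---and (2) applied to $(M,M,M,\pt)$ shows the map of (3) is an $A_H(Z)$-module structure on $A_{H_x}(M_x)$, equivalently an $R$-algebra homomorphism $A_H(Z)\to\End_R(A_{H_x}(M_x))$.

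\textbf{Expected main obstacle.} The substantive work is entirely in (1)--(2): making the pushforwards along the non-proper projections $\proj_{13}$ (and $\proj_{14}$) well-defined and independent of the auxiliary compactifications, and organising the base-change / projection-formula diagram chase for associativity together with the mild properness conditions it requires. Everything else---the equivariance reduction in (3) and the algebra/module formalism in (4)---is formal. The only point specific to the present setting (as opposed to \cite{CG}) is that each ingredient must exist in $A^*$; but the refined Gysin pullback, base change, proper pushforward and projection formula have all been recalled above, so the $K$-theoretic proof carries over unchanged.
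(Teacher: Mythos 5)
Your proof is correct and follows essentially the route the paper itself intends: the paper offers no argument of its own for this proposition, deferring to the equivariant $K$-theory case \cite[2.7.5]{CG} and the algebraic oriented version in \cite[\S~5.1]{ZZ1}, and your transplantation — relative cup product supported on $Z_{12}\times_{M_2}Z_{23}$, pushforward via a smooth equivariant compactification of $M_2$, and the base-change/projection-formula chase for associativity — is exactly how those sources proceed in the $A^*_G(S):=A^*_G(M,M\setminus S)$ model. The only caveat worth recording is that closedness of $Z_{12}\times_{M_2}Z_{23}$ in $M_1\times\overline{M_2}\times M_3$ really requires properness of $Z_{12}\times_{M_2}Z_{23}\to M_1\times M_3$ (equivalently, that $Z_{12}\circ Z_{23}$ is closed), not merely properness onto the image; this imprecision is already in the proposition as stated and is harmless in all the applications here, where everything is proper over the affine base $\fM_0(w)$.
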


\begin{example}\label{ex:conv_sm}
We look at special cases that will be used. Let $M_i\to M_0$ be projective equivariant morphisms of quasi-projective $H$-varieties with $M_i$ smooth for $i=1,2$. Let $i_Z:Z\to M_1\times_{M_0}M_2$ be an $H$-stable smooth closed subvariety, proper over $M$.
Take $M_3=\pt$ and $Z_{12}=Z$, $Z_{23}=M_2$, and $Z_{13}=M_1$.
Then for any $\eta\in A_H(Z)$ we have an operator $\eta*_Z\in \Hom_R(A_H(M_2),A_H(M_1))$ such that  $\eta*_Z \mu=\proj_{1*}\left((\proj_2^*\mu)\cdot \eta\right)\in A_H(M_1)$ for any $\mu\in A_H(M_2)$, where $\proj_i:Z\to M_i$ is the composition of $i_Z$ with the $i$-th projection $M_1\times_{M_0} M_2\to M_i$.
\end{example}

\subsection{Recollection of Morava $E$-theory}
\label{sub:Recall Morava}
For any oriented cohomology theory $A^*$, we denote $A^*(\pt)$ by  $A^*$, as a graded ring. The degree zero piece is denoted by $A^0$.
In the present paper, we will mostly take $A^*$ to be the Morava $E$-theory. 
There are many versions of the Morava $E$-theory. We focus on the integral version of Hopkins and Miller, denoted by $E_n$. 
Interested readers are referred to \cite{HKR,Rez98} for details. Let $W(\bbF_{p^n})$ be the ring of Witt vectors of the finite field $\bbF_{p^n}$. The theory $E_n$ has coefficient ring given by
\[
E^*_n\cong W(\bbF_{p^n})[\![w_1,\cdots,w_{n-1}]\!][u^{\pm}],\] where $w_i$ has degree zero, and $u$ has degree $p^n-1$.
Therefore, 
\[
E^0_n\cong W(\bbF_{p^n})[\![w_1,\cdots,w_{n-1}]\!]. 
\]
For each $0< t\leq n$, let $K(n-t)$ be the ``integral lifts" of the Morava $K$-theory with coefficient ring $W(\bbF_{p^{n-t}})[u, u^{-1}]$. Let $(E_n^0)_{K(n-t)}$ be the localization of $E_n^0$ along $K(n-t)$. This is a height $(n-t)$-theory. The coefficient ring is given by\[
(E_n^0)_{K(n-t)}\cong W(\bbF_{p^n})[\![w_1,\cdots,w_{n-1}]\!][w_{n-t}^{\pm}]^\wedge_{(p,w_1,\cdots,w_{n-t-1})}. 
\]
In other words, the ring $(E_n^0)_{K(n-t)}$ is obtained from $E_n^*$ by inverting the element $w_{n-t}$ and then completing with respect to the ideal $(p, w_1, \cdots, w_{n-t-1})$. 

In the present paper, for simplicity, we follow Stapleton \cite{St} and pass to the separable closer $\overline{\bbF_p}$ and still refer the resulting theory as $E_n$, so that for the purpose of the present paper, $E_n^0\cong W(\overline{\bbF_p})[\![w_1,\dots,w_{n-1}]\!]$. We remark that with a more careful albeit tedious treatment all the results can be done over explicit finite extensions of  $W(\bbF_{p^n})[\![w_1,\dots,w_{n-1}]\!]$.

Let $[p]x:=\overbrace
{x+_Fx+_Fx +\cdots +_F x}^{\text{p-copies}}$ be the $p$-series of $F$. 
By \cite[Lemma 5.7]{HKR}, there is an isomorphism 
\[
E_n^*(B\Z/p)=E_n^*[\![x]\!]/([p]x)
\]
By \cite[Theorem 5.6]{HKR}, let $L$ be an $E^*_n$-algebra that is also an algebraically closed graded field of characteristic $0$. Then,
\[
\Hom_{E^*_n}(E_n^*(B\Z/p), L)\cong (\Z/p)^n. 
\]

As shown in \cite{St}, there is a universal $(E_n^0)_{K(n-t)}$-algebra $C_{n-t}$ such that the formal group on $(E_n^0)_{K(n-t)}$ splits into the direct sum of a height $n-t$ formal group and a constant $\bbQ_p/\bbZ_p^{t}$. Furthermore, $C_{n-t}$ is non-zero and flat over $E_n^0$. We now briefly recall the construction of $C_{n-t}$ from \cite{St}.  
For any $t$, set $\Lambda_t:=(\Z/p)^{\oplus t}$. 
Let 
\[C_{n-t}':= (E_n^0)_{K(n-t)}\otimes_{E_n^0} E_n^0(B\Lambda_t)=(E_n^0)_{K(n-t)}(B\Lambda_t).
\]
The algebra $C_{n-t}$ is the localization
\[C_{n-t}:=S_{\Lambda}^{-1}C_{n-t}',\] where 
$S_{\Lambda}\subset (E_n^0)_{K(n-t)}\otimes_{E_n^0} E_n^0(B\Lambda_t)$ is the multiplicative set described as follows. 
Occasionally, we need to keep track of the dependence on $n$, and hence in that case write $C^*_{n-t}$ as $C_{n,n-t}^*$.

In general, let $A^*$ be an arbitrary cohomology theory.
Any irreducible representation $\chi$ of $\Lambda_t$ defines a line bundle $L_\chi$ on $B\Lambda_t$. 
The multiplicative set $S_{\Lambda}$ in  $A^*(B\Lambda_t)$ is generated by
\begin{equation}\label{S_Lam}
\{c_1^A(L_{\chi})\mid \chi\,\  \text{is a non-trivial irreducible representation of $\Lambda_t$}\}. 
\end{equation}

\subsection{Cyclotomic rings associated to $A^*$}

\begin{definition}
\label{def:cyclotomic}
The $t$-th cyclotomic ring of $A^*$ is defined to be the localization
\[
\Phi_t(A^*):=A^*(B\Lambda_t)[\frac{1}{S_{\Lambda}}], \text{where $S_{\Lambda}$ is defined in \eqref{S_Lam}}. 
\]
\end{definition}
We also write $\Phi_t(A^*)$ simply as $\Phi_t^A$, or $\Phi_t$ when $A^*$ is understood from the context. If $A^*\to B^*$ is a graded ring homomorphism, then we have the base-change $\Phi_t^B:=\Phi_t^A\otimes_{A^*}B^*$. If $B^*$ defines a cohomology theory coming from a flat ring homomorphism $A^0\to B^0$, 
then $A^*(B\Lambda_t)\to B^*(B\Lambda_t)$ sends $c_1^A$ in $A^*$-theory to $c_1^B$ in $B^*$-theory, and hence
$\Phi_t^B$ defined above is isomorphic to $B^*(B\Lambda_t)[\frac{1}{S_{\Lambda}}]$.
\begin{remark}
By definition of $C_{n-t}^*$, under the map $E^*_{n}(B(\Lambda_t)) \to C_{n-t}^*$ the image of the set $S_{\Lambda}$ is already invertible. However, in the application in \S~\ref{sec:quantum Frob} we need to consider $C^*_{n-t}(B\Lambda_t)$ and the invert $S_{\Lambda}$ in this sense, i.e., the Chern classes are taken in $C^*_{n-t}$-theory. Hence $\Phi_t^{C_{n-t}}$ is still useful in our setting. See also Remark~\ref{rmk:localization}.
\end{remark}

\begin{example}
\label{ex:K-theory}
In this example, we take $A^*$ to be the K-theory. 
We then have the isomorphism 
\[
K^0_{\Z/p}(\pt)_{\bbC}\cong \Rep(\Z/p)_{\bbC}\cong \C[z]/(z^{p}-1), 
\]
where $\Rep(\Z/p)$  is the representation ring of $\Z/p$ and  $z$ corresponds to the one dimensional representation $L_{\zeta}$ sending a generator of $\Z/p$ to a primitive $p$th root of unity. 

Clearly $S$ is the multiplicative set generated by 
\[
\{c_1(L_{\zeta}^{\otimes k}) \mid  k=1, 2, 3, \cdots, {p-1}\}
=\{1-z^k\mid  k=1, 2, 3, \cdots, {p-1}\}. 
\]
Then, the cyclotomic ring in Definition \ref{def:cyclotomic} is 
\[
\Phi_1(K^0_\bbC)=\C[z]/(z^{p}-1)[\prod_{k=1}^{p-1}\frac{1}{1-z^k}]
=\C[z]/(\Phi(z)), 
\]
where $\Phi(z)=1+z+z^2+\cdots +z^{p-1}$ is the cyclotomic polynomial. 
Taking spectrum, $\Spec(K^0_{\Z/p}(\pt)_\bbC)$ is the closed subvariety of $\C$ consisting of all $p$-th roots of unity in $\C^*$, and 
 $\Spec(\C[z]/\Phi(z))$ is the one consisting of all $p$-th roots of unity but $1$. 
 
 Note that the equality also holds integrally \[
\Phi_1(K^0)=\bbZ[z]/(\Phi(z)).
\]
That is, the first cyclotomic ring of $K$-theory is the ring of cyclotomic integers. 
\end{example}

The embedding $e\inj G$ of the trivial group $e$ into $G$ induces a ring homomorphism $E^*(BG) \to E^*(\pt)$. 
Taking spectrum, we get an $E^*(\pt)$-point which we denote by $1 \in \Spec E^*(BG)=\Spec E^*_G(\pt)$. 

\begin{lemma}\label{lem:free}
Let $G$ be a finite group, and let $X$ be a finite $G$ $CW$-complex. Assume the $G$ action on $X$ is free. 
Then, 
$E^*_G(X)$ as a module over $E^*_G(\pt)$ is supported on the point $1$ arising from $e\in G$. 
\end{lemma}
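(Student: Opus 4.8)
The plan is to reduce to the case of a cyclic group of prime order by a standard induction, and then to invoke the Hopkins--Kuhn--Ravenel character theory recalled above together with the ``support on a single conjugacy class'' phenomenon. First I would recall that, for a finite group $G$ acting freely on a finite $CW$-complex $X$, the Borel construction $X\times_G EG$ is homotopy equivalent to $X/G$, which is a finite $CW$-complex whose homotopy type is that of a space with \emph{no} $p$-torsion forced into the picture by $G$; more precisely the map $X\times_G EG\to BG$ factors through $X/G$ and the composite $X/G\to BG$ classifies the free action. The essential point is that when the action is free, all the ``higher'' contributions localize away except at the identity element of $G$.

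The key technical input is the generalized character map: by \cite[Theorem A]{HKR} (the version recalled before Lemma~\ref{lem:free}, using that $L$ is an algebraically closed graded $E^*_n$-field of characteristic $0$), for any finite group $G$ one has
\[
L\otimes_{E^*_n(\pt)}E^*_G(X)\;\cong\;\Bigl(\prod_{[\alpha]}L\otimes E^*_{C(\alpha)}\bigl(X^{\alpha}\bigr)^{?}\Bigr)^{G},
\]
where $\alpha$ runs over conjugacy classes of commuting $n$-tuples of $p$-power-order elements and $X^\alpha$ is the simultaneous fixed-point set of the image of $\alpha$. When the $G$-action on $X$ is free, $X^\alpha=\varnothing$ for every $\alpha$ whose image is nontrivial, so only the term $\alpha=1$ survives; after base change to $L$ this shows $L\otimes_{E^*_n}E^*_G(X)$ is concentrated at the point $1\in\Spec E^*_G(\pt)$. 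To pass from this statement over $L$ to the integral statement over $E^*_G(\pt)$ one uses that $E^*_n(\pt)\to L$ is faithfully flat enough to detect support, or, more carefully, one argues directly: the restriction $E^*_G(\pt)\to E^*(\pt)$ to the identity is a retract of the defining map, and one checks that any element of the augmentation ideal complementary to $1$ acts invertibly on $E^*_G(X)$ after the appropriate localization, because it does so after $\otimes L$ and the relevant modules are finitely generated over the Noetherian ring $E^*_G(\pt)$.

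Concretely I would run the induction as follows: by restricting along a composition series of $G$ (a chain $e=G_0\triangleleft G_1\triangleleft\cdots\triangleleft G_m=G$ with cyclic quotients of prime order), and using the fact that a subgroup acts freely on $X$ whenever $G$ does, it suffices to treat the prime-cyclic case $G=\Z/\ell$. For $\ell\neq p$ the group $B\Z/\ell$ is $E_n$-trivial (its $E_n$-cohomology is just $E_n^*$, since $|\Z/\ell|$ is invertible), so $E^*_{\Z/\ell}(X)\cong E^*(X/G)$ is automatically a module over $E^*(\pt)=E^*_G(\pt)/(\text{augmentation})$, i.e. supported at $1$. For $\ell=p$ one has the explicit presentation $E_n^*(B\Z/p)=E_n^*[\![x]\!]/([p]x)$ recalled above; here $\Spec$ is the set of $p$-torsion points of the formal group, and I would show that the free action forces $E^*_{\Z/p}(X)$, after inverting $x$ (equivalently, away from the identity torsion point), to vanish, by a Gysin/transfer argument: the transfer for the free $\Z/p$-cover $X\to X/G$ splits the pullback, while multiplication by the relevant Euler classes is nilpotent on the complementary summand exactly because $X$ has a free cell structure. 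The main obstacle I anticipate is the bookkeeping needed to deduce the \emph{integral} support statement from the HKR isomorphism, which is \emph{a priori} only an isomorphism after the faithfully-flat-ish base change to $L$; one must either show the relevant localizations of $E^*_G(X)$ are already rational enough, or give the self-contained transfer argument in the $\ell=p$ case and assemble it through the composition series. Everything else—the reduction to prime-cyclic groups, the $\ell\neq p$ case, and the identification of the point $1$—is routine.
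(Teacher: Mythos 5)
Your first paragraph already contains the entire proof, and you then walk away from it. Once you observe that a free action makes $EG\times_G X\simeq X/G$, you are done: $E^*_G(X)\cong E^*(X/G)$, the $E^*_G(\pt)$-module structure is pullback along the classifying map $c:X/G\to BG$ of the free action, and since $X/G$ is a \emph{finite} CW-complex the image under $c^*$ of the augmentation ideal of $E^*(BG)$ lands in the reduced classes $\widetilde E^*(X/G)$, which are nilpotent. Hence the augmentation ideal is contained in the radical of the annihilator, i.e.\ the support is the point $1$. This is exactly the paper's (two-line) proof. Everything after your first paragraph is an attempt to prove the lemma by a different and much heavier route, and that route has genuine gaps.

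Concretely: (i) the HKR isomorphism only holds after base change to $L$, and $E^*_n\to L$ is \emph{not} faithfully flat (it inverts $p$ and is a localization of $E^*_n(B\Lambda_n)$), so vanishing of the fixed-point terms over $L$ cannot detect the integral support; you flag this yourself but never close it. (ii) Your fallback — that elements ``complementary to $1$ act invertibly because they do so after $\otimes L$'' — is both unjustified and logically inverted: support contained in $\{1\}$ means the elements of the augmentation ideal act (locally) nilpotently, not that anything acts invertibly, and invertibility after $\otimes L$ does not descend. (iii) The composition-series reduction does not obviously control support over $\Spec E^*_G(\pt)$ for non-cyclic $G$: relating the support of an $E^*_G(\pt)$-module to its restrictions along $E^*_G(\pt)\to E^*_{G_i}(\pt)$ requires a Quillen-stratification/descent input that is not routine and is not supplied. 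None of this machinery is needed; the freeness of the action collapses the Borel construction to a finite complex, and nilpotence of reduced classes on a finite complex finishes the argument.
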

\begin{proof}
As the action of $G$ on $X$ is free, we have the isomorphism
\[
E^*_{G}(X)=E^*(EG\times_G X)\cong E^*(EG\times X/G)
 \cong E^*_{e}(X/G). 
\]
Therefore, the support of $E^*_{G}(X)$ is on $1 \in \Spec E^*_G(\pt)$ as the support of $E^*_{e}(X/G)$ is on $1$. 
\end{proof}

For any order $p$ element $b\in G$, we denote by $\langle b\rangle \cong \Z/p$ the cyclic subgroup generated by $b$. 
In Lemma~\ref{lem:free}, let us consider the special case when $G$ is generated by $b\in G$. In \eqref{S_Lam}, putting $\langle b\rangle $ in place of $\Lambda_t$, we have a multiplicative set $S_b\subseteq E_{\langle b\rangle}^*(\pt)$.
We then have the natural ring homomorphism of localization
\[
E_{\langle b\rangle}^*(\pt)\to E_{\langle b\rangle}^*(\pt)[\frac{1}{S_b}]
\]

\begin{lemma}\label{lem:cohomology_free_action}
Assume $\langle b\rangle$ acts freely on $X$. Then, we have
\[
E^*_{\langle b\rangle} (X)\otimes_{E^*_{\langle b\rangle}(\pt)} (E_{\langle b\rangle}^*(\pt)[\frac{1}{S_b}])=0. 
\]

\end{lemma}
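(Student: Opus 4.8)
The plan is to reduce this to Lemma \ref{lem:free} by tracking supports. By Lemma \ref{lem:free} applied to the finite group $\langle b\rangle\cong\Z/p$ acting freely on $X$, the module $E^*_{\langle b\rangle}(X)$ over $E^*_{\langle b\rangle}(\pt)$ is supported on the single point $1\in\Spec E^*_{\langle b\rangle}(\pt)$ coming from the trivial subgroup $e\inj\langle b\rangle$. On the other hand, $E^*_{\langle b\rangle}(\pt)[\frac{1}{S_b}]=\Phi_1(E^*_n)$-type localization inverts the Chern classes $c_1^E(L_\chi)$ of the non-trivial characters $\chi$ of $\langle b\rangle$; I claim this localization kills the point $1$. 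Indeed, pulling back along $e\inj\langle b\rangle$ sends each line bundle $L_\chi$ on $B\langle b\rangle$ to the trivial line bundle on $\pt$, hence sends $c_1^E(L_\chi)$ to $c_1^E(\mathcal O)=0\in E^*(\pt)$. Therefore under the ring map $E^*_{\langle b\rangle}(\pt)\to E^*(\pt)$ (corresponding to the point $1$) every element of $S_b$ maps to $0$, which is exactly the statement that $1\notin\Spec\big(E^*_{\langle b\rangle}(\pt)[\frac1{S_b}]\big)$, i.e. the localization map $E^*_{\langle b\rangle}(\pt)\to E^*_{\langle b\rangle}(\pt)[\frac1{S_b}]$ does not factor through the residue field at $1$.

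With these two facts in hand, I would argue as follows. The tensor product in the statement is $E^*_{\langle b\rangle}(X)\otimes_{E^*_{\langle b\rangle}(\pt)}E^*_{\langle b\rangle}(\pt)[\frac1{S_b}]$, which is the localization $S_b^{-1}E^*_{\langle b\rangle}(X)$ of the module $M:=E^*_{\langle b\rangle}(X)$ at the multiplicative set $S_b$. A localized module $S_b^{-1}M$ vanishes iff every element of $M$ is annihilated by some element of $S_b$, equivalently iff the support of $M$ is disjoint from the open set $\Spec\big(E^*_{\langle b\rangle}(\pt)[\frac1{S_b}]\big)=\{\mathfrak p : \mathfrak p\cap S_b=\emptyset\}$. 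Since $\supp M\subseteq\{1\}$ by Lemma \ref{lem:free}, and $1$ meets $S_b$ (as shown above, $S_b$ maps into the maximal ideal — in fact into $0$ — under the quotient defining the point $1$), the intersection is empty, hence $S_b^{-1}M=0$.

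The one point that needs a little care — and the main (mild) obstacle — is making precise the claim "$S_b$ is not contained in the prime $\mathfrak p_1$ corresponding to the point $1$": what is literally true is stronger, namely the elements of $S_b$ become $0$ (not just non-units) in the residue ring $E^*(\pt)$ at that point, because $c_1^E$ of a trivial bundle vanishes. I would spell this out via the restriction map along $B e\to B\langle b\rangle$ and the naturality of Chern classes under pullback of line bundles (the trivial character restricts to the trivial character), exactly as in the proof of Lemma \ref{lem:free}. Since $0\in S_b$ forces the localization to be the zero ring, and $E^*_{\langle b\rangle}(X)$ is a module over it, the tensor product is $0$. (Alternatively, and perhaps cleanest: the isomorphism $E^*_{\langle b\rangle}(X)\cong E^*(X/\langle b\rangle)$ from Lemma \ref{lem:free} is $E^*_{\langle b\rangle}(\pt)$-linear with the module structure factoring through $E^*_{\langle b\rangle}(\pt)\to E^*(\pt)$, under which $S_b\mapsto\{0\}$; tensoring a module whose structure map sends $S_b$ to $0$ against a localization inverting $S_b$ gives $0$.) I would present this second phrasing as the proof proper, as it avoids any discussion of supports of non-Noetherian rings.
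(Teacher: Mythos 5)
Your first argument (Lemma \ref{lem:free} gives $\supp E^*_{\langle b\rangle}(X)\subseteq\{1\}$, the elements of $S_b$ vanish at $1$ because the trivial character has trivial first Chern class, hence the localization kills the module) is correct and is essentially the paper's proof verbatim.

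However, be careful with the parenthetical ``cleanest'' rephrasing that you say you would present as the proof proper: it is not true that the $E^*_{\langle b\rangle}(\pt)$-module structure on $E^*_{\langle b\rangle}(X)\cong E^*(X/\langle b\rangle)$ factors through $E^*_{\langle b\rangle}(\pt)\to E^*(\pt)$. Under the identification $E\langle b\rangle\times_{\langle b\rangle}X\simeq X/\langle b\rangle$, the structure map is pullback along the classifying map $X/\langle b\rangle\to B\langle b\rangle$ of the covering $X\to X/\langle b\rangle$, which is in general not null-homotopic; so an element $c_1^E(L_\chi)\in S_b$ maps to $c_1^E$ of a possibly nontrivial line bundle on $X/\langle b\rangle$, which is nilpotent (as $X/\langle b\rangle$ is a finite complex and the theory is complex oriented) but need not be zero. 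The conclusion $S_b^{-1}E^*_{\langle b\rangle}(X)=0$ still follows, since inverting a nilpotent element kills the ring, but the justification must go through nilpotence (equivalently, through the support statement of Lemma \ref{lem:free}), not through literal vanishing. I would therefore keep the first, support-based formulation.
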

\begin{proof}
By Lemma \ref{lem:free}, the module $E^*_{\langle b\rangle} (X)$ is supported on the point $1$. For any element $f$ in $S_b$, which is the $c_1$ of a non-trivial representation, we have $f$ vanishes at $1$ and hence annihilates $E^*_{\langle b\rangle} (X)$. 
\end{proof}

\subsection{The transchromatic character maps}
Generalizing the character map of Hopkins, Kuhn, and Ravenel  \cite{HKR}, a character map from the Morava $E_n^*$-theory to the   $C_{n-t}^*$-theory for $0< t\leq n$ has been constructed \cite{St}, which we now recall.

Let $G$ be a finite group, and let $X$ be a finite $G$ $CW$-complex. Fix a group homomorphism $\alpha: \Lambda_t\to G$. Let $\iota: X^{\im \alpha}\to X$ be the embedding of the fixed points of $\im \alpha$. 
We have the following diagram
\[
\xymatrix{
\Lambda_t \times X^{\im \alpha}  \ar[r]^{\alpha\times \iota} \ar[d]& G\times X \ar[d]\\
X^{\im \alpha} \ar@{^{(}->}[r]^{\iota} & X}
\]
where the vertical maps are the group actions and the action of $\Lambda_t$ on $X^{\im \alpha}$ is trivial. 
Therefore, it induces a map on spaces  
\[
E\Lambda_t \times_{\Lambda_t} X^{\im \alpha} \cong B\Lambda_t \times X^{\im \alpha} \to EG\times_G X. 
\]
Applying $E_n^*$, we obtain 
\[
E_n^*(EG\times_G X) \to 
E_n^*( B\Lambda_t \times X^{\im \alpha})\cong 
E_n^*( B\Lambda_t)\otimes E_n^*(X^{\im \alpha}). 
\]
Composing the ring homomorphim $E_n^*( B\Lambda_t)\to C_{n-t}^*$, we obtain the transchromatic character map
\[
\chi_{n, n-t}^G: E_n^*(EG\times_G X)\to C_{n-t}^*(X^{\im \alpha}). 
\]
Let $E^*$ be a complex oriented theory with associated formal group law $F$. We assume the graded ring $E^*$ and $F$ with height-$n$
satisfy the following condition. 
\begin{assumption}\label{ass:HKR}
\begin{itemize}
\item
$E^*$ is local with maximal ideal $\mathfrak{m}$, and complete in the $\mathfrak{m}$-adic topology.
\item
The graded residue field $E^*/\mathfrak{m}$ has characteristic 
$p > 0$.
\item $p^{-1}E^*$ is not zero.
\item
The mod $\mathfrak{m}$ reduction of $F$ has height $n < \infty$ over $E^*/\mathfrak{m}$.
\end{itemize}
\end{assumption}
Assume $E^*$ is a cohomology theory satisfying Assumption~\ref{ass:HKR} with height $n$. Define
\[
L(E^*):=S_{\Lambda}^{-1} E^*(B\Lambda_n)=C_{0}^*. 
\]
\begin{theorem}\label{thm:HKR}\cite[Theorem C]{HKR}
The character map $\chi_{n, 0}^G$ induces an isomorphism
\[
L(E^*)\otimes_{E^*} E^*(EG\times_G X) 
\cong
L(E^*)\otimes_{E^*} E^*( 
\sqcup_{\alpha\in \hom(\Lambda_n, G)} 
X^{\im \alpha})^G
\]
\end{theorem}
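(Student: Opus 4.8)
This is \cite[Theorem C]{HKR}. The plan is to promote $\chi^G_{n,0}$ to an isomorphism of $G$-equivariant cohomology theories in the variable $X$, and then to reduce to the case of a single orbit $X=G/H$. Write $L:=L(E^*)$, $F_1(X):=L\otimes_{E^*}E^*(EG\times_G X)$ and $F_2(X):=L\otimes_{E^*}E^*\big(\sqcup_{\alpha\in\hom(\Lambda_n,G)}X^{\im\alpha}\big)^G$. First I would check that both are $G$-equivariant cohomology theories on finite $G$-CW complexes. For $F_1$ this is clear since $L$ is flat over $E^*$. For $F_2$, one uses that $X\mapsto\sqcup_\alpha X^{\im\alpha}$ carries $G$-cofiber sequences to cofiber sequences (passage to $\im\alpha$-fixed points is exact on $G$-CW pairs), that $E^*(-)$ is a cohomology theory, and that $(-)^G$ is exact --- for which it is enough that $|G|$ be a unit in $L$: integers prime to $p$ are already units in the complete local ring $E^*$, and $p$ becomes a unit in $L=C_0^*$, which incorporates the rational (height zero) localization of $E^*$. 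By construction $\chi^G_{n,0}$ is then a natural transformation $F_1\Rightarrow F_2$.

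Next, a natural transformation of $G$-equivariant cohomology theories on finite $G$-CW complexes which is an isomorphism on every orbit $G/H$, $H\le G$, is an isomorphism, by the usual induction over cells $G/H\times D^k$ via the long exact sequences and the five lemma. So it remains to treat $X=G/H$. Here $F_1(G/H)=L\otimes_{E^*}E^*(BH)$, while $(G/H)^{\im\alpha}=\{gH\mid\alpha(\Lambda_n)\subseteq gHg^{-1}\}$ is a finite discrete $G$-set, so $E^*\big(\sqcup_\alpha(G/H)^{\im\alpha}\big)^G\cong\mathrm{Maps}\big((\sqcup_\alpha(G/H)^{\im\alpha})/G,\,E^*\big)$; and $(\alpha,gH)\mapsto[\,g^{-1}\alpha(-)g\,]$ gives a bijection from this orbit set onto the set $\hom(\Lambda_n,H)/H$ of $H$-conjugacy classes of homomorphisms $\Lambda_n\to H$. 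Hence $F_2(G/H)\cong\mathrm{Maps}\big(\hom(\Lambda_n,H)/H,\,L\big)$, and the theorem reduces to showing that $\chi$ induces an isomorphism $L\otimes_{E^*}E^*(BH)\xrightarrow{\ \sim\ }\mathrm{Maps}\big(\hom(\Lambda_n,H)/H,\,L\big)$ for every finite group $H$.

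For this I would first reduce to $H$ a finite abelian $p$-group. The engine is the vanishing of $L\otimes_{E^*}E^*_{\langle b\rangle}(-)$ on free $\langle b\rangle$-actions, $\langle b\rangle\cong\Z/p$ (the phenomenon behind Lemma~\ref{lem:cohomology_free_action}); combined with the finite generation of $E^*(BH)$ over $E^*$, a transfer argument and induction on $|H|$ reduce the computation of $L\otimes_{E^*}E^*(BH)$ to that for abelian $p$-subgroups, the conjugation action producing the quotient by $H$ on the index set. When $H$ is a finite abelian $p$-group, $E^*(BH)$ is the ring of functions on the finite flat $E^*$-scheme $\hom(H,F)$ of homomorphisms into the formal group $F$ of $E^*$, and base change along $E^*\to L=C_0^*$ trivializes the $p$-divisible group of $F$ (this is \cite[Theorem~5.6]{HKR}, together with the construction of $C_0=C_{n,0}$ recalled before Lemma~\ref{lem:free}), so that $\Spec\big(L\otimes_{E^*}E^*(BH)\big)$ becomes the constant finite $L$-scheme on the set $\hom(\Lambda_n,H)$ --- which for $H$ abelian equals $\hom(\Lambda_n,H)/H$. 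Finally, unwinding the definition of $\chi^G_{n,0}$ --- pullback along $B\Lambda_n\times(\pt)^{\im\alpha}\to BG$ followed by $E^*(B\Lambda_n)\to C_0^*$ --- shows that it is precisely ``evaluation of a class at the torsion point cut out by $\alpha$'', hence realizes the bijection above.

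The hard part will be this last step: the reduction from an arbitrary finite $H$ to abelian $H$, and the claim that $L\otimes_{E^*}E^*(BH)$ is free over $L$ of exactly the right rank, rely on genuine Morava-theoretic input --- the finiteness and ``algebro-geometric'' behaviour of $E^*$ of classifying spaces of finite groups, together with the vanishing of free actions after inverting $S_\Lambda$ (Lemmas~\ref{lem:free} and~\ref{lem:cohomology_free_action}) --- rather than on formal manipulation; everything else is bookkeeping with equivariant cohomology theories and finite $G$-sets.
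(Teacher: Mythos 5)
This statement is not proved in the paper at all: it is quoted verbatim as \cite[Theorem C]{HKR} and used as a black box (e.g.\ in Proposition~\ref{lem:lambda_invertible} and in \S\ref{E^* class}), so there is no internal proof to compare your proposal against. What you have written is an outline of the original Hopkins--Kuhn--Ravenel argument, and as an outline it is faithful to their strategy: promote $\chi^G_{n,0}$ to a natural transformation of $G$-equivariant cohomology theories, reduce by cell induction to orbits $G/H$, identify $F_2(G/H)$ with maps out of $\hom(\Lambda_n,H)/H$, and for abelian $p$-groups use that $L=C_0^*$ splits the $p$-divisible group of the formal group so that $\Spec\big(L\otimes_{E^*}E^*(BH)\big)$ becomes the constant scheme on $\hom(\Lambda_n,H)$. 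The exactness of $(-)^G$ after inverting $|G|$ and the orbit bookkeeping are all correct.

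However, as you acknowledge, the proposal defers exactly the two points that carry the real content, and neither is a routine verification. First, the reduction from a general finite $H$ to abelian $p$-subgroups is not a standard transfer-plus-induction argument: in \cite{HKR} it rests on their detection theorem that $L\otimes_{E^*}E^*(BH)$ injects into the product over abelian subgroups, which is proved via complex-oriented descent (embedding $H$ in a unitary group and descending along the flag bundle), together with an Euler-characteristic count showing the two sides have the same rank over $L$. Second, everything depends on $L(E^*)$ being nonzero and faithfully flat over $p^{-1}E^*$, which is itself a substantial theorem about the Drinfeld-style moduli interpretation of $S_\Lambda^{-1}E^*(B\Lambda_n)$; your sketch uses this implicitly when asserting that the base change to $L$ yields a \emph{free} module of exactly the right rank. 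So the proposal is a correct roadmap but not a proof; since the paper itself treats the statement as external input, nothing in the paper is available to fill these gaps either.
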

\subsection{The equivariant Euler class}
Now we give one application of the theorem of Hopkins, Kuhn, Ravenel, which is useful for later. 

Let $X$ be a complex variety with the trivial action of a compact Lie group $A$. 
Let $\pi: \mathcal{E} \to X$ be an $A$-equivariant vector bundle \cite[Chapter 5.1]{CG} on $X$. 
The vector bundle decomposes according to the $A$-action as \begin{equation}\label{eqn:bundle_decompose}
    \oplus_{i\in \hat A} \calE_i\otimes_\bbC V_i
\end{equation} where $V_i$ is an irreducible representation of $A$ made into a trivial vector bundle on $X$, and $\calE_i$ has fiberwise the trivial $A$-action but not necessarily trivial as a vector bundle.

The following proposition should be compared with \cite[Proposition 5.10.3]{CG}. 
\begin{prop}\label{lem:lambda_invertible}
Let $A=A_1\times A_2$ such that $A_1$ and $A_2$ are finite groups and $A_1$ is abelian. Let $\calE\to X$ be an $A$-equivariant vector bundle. 
Assume $A_1$ acts trivially on $X$ and in the decomposition \eqref{eqn:bundle_decompose} under $A_1$-action each $L_i$ with $\calE_i\neq 0$ is non-trivial as $A_1$-representation. Assume $E^*$ is a cohomology theory satisfying Assumption~\ref{ass:HKR} with height $h$. Assume further that $E^*$ has K\"unneth formula with respect to $A_1$. Let $E_{\loc}=S^{-1} E^*_{A_1}(\pt)\otimes_{E^0} L$, where $S$ is the multiplicative set generated by \[\{ c_1^{A_1}(V)\in E^*_{A_1}(\pt)\mid  V\hbox{ a non-trivial irreducible representation of }A_1\}.\] Then, multiplication by the $A$-equivariant Euler class $e(\calE)$ induces an isomorphism on the localized $E$-theories
\[
E^*_A(X)\otimes_{E^*_{A_1}} E_{\loc}\xrightarrow{e(\calE)} E^*_A(X)\otimes_{E^*_{A_1}} E_{\loc}. 
\]
\end{prop}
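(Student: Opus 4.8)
The plan is to reduce the statement to the case where $X$ is a point, using equivariance/functoriality and the Künneth assumption, and then invoke the Hopkins--Kuhn--Ravenel isomorphism (Theorem~\ref{thm:HKR}), which in effect turns $E^*_{A_1}$-modules into sheaves on the finite set $\hom(\Lambda_h, A_1)$ (the $p$-torsion points of the formal group), on which Euler classes of nontrivial $A_1$-representations become units away from the origin. More precisely, write $A = A_1 \times A_2$; since $A_1$ acts trivially on $X$, the Borel construction gives $E^*_A(X) = E^*_{A_2}(X) \otimes_{E^0} E^*_{A_1}(\pt)$ once we use the Künneth formula for $E^*$ with respect to $A_1$ (this is exactly where that hypothesis enters). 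Under the decomposition \eqref{eqn:bundle_decompose}, the $A$-equivariant Euler class factors as $e(\calE) = \prod_i e_A(\calE_i \otimes_\bbC V_i)$, and each factor, after pushing along the splitting principle / using the formal group law, is a product of terms of the form $c_1^{A_1}(V_i) +_F (\text{Chern roots of } \calE_i \text{ twisted by } A_2)$.

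The key steps, in order, would be: first, record that it suffices to prove multiplication by $c_1^{A_1}(V) +_F y$ is invertible on $E^*_A(X) \otimes_{E^*_{A_1}} E_{\loc}$ for each nontrivial irreducible $A_1$-representation $V$ and each nilpotent (or topologically nilpotent) element $y \in E^*_A(X)$ coming from Chern roots of the $\calE_i$ — these $y$'s are nilpotent because $X$ is a finite-dimensional complex variety, so only finitely many Chern classes are nonzero and they lie in a suitable power of the augmentation/maximal ideal; second, observe that in $E_{\loc} = S^{-1}E^*_{A_1}(\pt) \otimes_{E^0} L$ the element $c_1^{A_1}(V)$ is already a unit by construction of $S$; third, conclude by a geometric-series argument: if $u = c_1^{A_1}(V)$ is a unit and $y$ is nilpotent then $u +_F y = u \cdot (\text{unit} + \text{nilpotent}) = u \cdot (\text{unit})$ since $u +_F y = u + y\cdot(\text{power series in } u, y)$, and a unit plus a nilpotent is a unit. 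Taking the product over all the factors $e_A(\calE_i \otimes V_i)$ (there are finitely many, and each $\calE_i \neq 0$ is assumed to pair with a nontrivial $L_i$) shows $e(\calE)$ itself is a unit in $E^*_A(X) \otimes_{E^*_{A_1}} E_{\loc}$, which is stronger than — and implies — the claimed isomorphism.

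An alternative, perhaps cleaner, route is to use Theorem~\ref{thm:HKR} directly: $L(E^*) \otimes_{E^*} E^*_{G}(Y) \cong L(E^*) \otimes_{E^*} E^*(\sqcup_\alpha Y^{\im\alpha})^G$, applied with $G = A_1$ and $Y$ the total space of the relevant Borel construction; the point is that since $A_1$ acts trivially on $X$ but every summand $L_i$ (equivalently $\calE_i$) is a \emph{nontrivial} $A_1$-character, the Euler class $e(\calE)$ restricted to the fixed locus of a \emph{nontrivial} $\alpha \in \hom(\Lambda_h, A_1)$ is invertible after localizing, while the component at $\alpha = 1$ (the trivial homomorphism) is killed by the localization $S^{-1}(-)$ in the style of Lemma~\ref{lem:cohomology_free_action} — more precisely $c_1^{A_1}(V)$ vanishes at the point $1$ and the localization at $S$ removes that point. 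Either way, the main obstacle is bookkeeping: one must (a) verify the Künneth splitting of $E^*_A(X)$ rigorously given only Assumption~\ref{ass:HKR} plus the stated Künneth hypothesis, and (b) make precise the sense in which the Chern roots of the $\calE_i$ are nilpotent, so that the formal-group-law expansion $u +_F y$ is manipulable — this requires $X$ quasi-projective of finite type (granted by the standing conventions) so that $\widetilde{E}^*(X)$ is nilpotent, or at least that the relevant classes lie in an ideal on which $+_F$ converges and the geometric series makes sense.
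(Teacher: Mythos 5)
Your first reduction --- K\"unneth to split off $E^*_{A_1}$, then the Hopkins--Kuhn--Ravenel isomorphism together with the localization that makes $c_1^{A_1}(V)$ a unit for every non-trivial character $V$ --- is exactly how the paper's proof begins (this is the role of Lemma~\ref{lem2} and the first paragraph of the printed proof, which uses Theorem~\ref{thm:HKR} to reduce to the case where $A_2$ also acts trivially on $X$). Where you diverge is in the second half, and that is where your argument has a soft spot. You propose to finish via the splitting principle, writing $e(\calE)$ as a product of factors of the form $c_1^{A_1}(L_i)+_F x_{ij}$ and concluding by ``unit plus nilpotent is a unit.'' The justification that the $x_{ij}$ are nilpotent ``because $X$ is a finite-dimensional complex variety'' is not correct: the bundles $\calE_i$ are $A_2$-equivariant, so their Chern roots live over the Borel construction $EA_2\times_{A_2}X$, which is infinite-dimensional; at best these classes are topologically nilpotent, and after inverting $S$ and base-changing to $L$ it is no longer automatic that the resulting ring is complete in the relevant ideal, so the geometric series you need does not obviously converge. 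You flag this yourself as an obstacle, but it is the crux, not bookkeeping.

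The paper's proof is structured precisely to avoid this issue: after the HKR reduction it runs a Noetherian induction on $\dim X$, using an $A$-stable dense open $U$ on which $\calE$ is trivial, the localization long exact sequence relating $U$, $X$, and $Y=X\setminus U$, and the five lemma. In the base case the Euler class is pulled back from $E^*_A(\pt)_{\loc}$, so the only completeness ever invoked is that of $E^*_{A_1\times A_2}(\pt)$ inside Lemma~\ref{lem2}, prior to any localization over $X$. If you wish to keep your route, you should either substitute this induction for the nilpotence claim, or supply a proof that $E^*_A(X)\otimes_{E^*_{A_1}}E_{\loc}$ is complete with respect to the ideal generated by the equivariant Chern roots of the $\calE_i$.
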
 
Here recall that $L$ is the same as given in Theorem~\ref{thm:HKR}. By construction, we have two algebra morphisms $L\to E_{\loc}$  as well as $E^*_{A_1}\to E_{\loc}$, which agree when both are restricted to $E^0$. That is, we have the following commutative diagram
\[
\xymatrix@R=1em @C=1em{
&E_{\loc}&\\
L \ar[ur]& &E_{A_1}^* \ar[ul]\\
&E^0 \ar[ul] \ar[ur]&
}
\]
We need a few lemmas.
\begin{lemma}\label{lem1}
Let $R$ be a commutative ring with an idempotent decomposition $R=\oplus_{i\in I}R_i$ such that the index set $I$ is finite. 
Let $H$ be a finite group acting compatibly on both $I$ and $R$ (the $H$ action on $R$ preserves the ring structure). That is, for an idempotent decomposition $a=\sum_i a_i$ of $a\in R$ and for any $h\in H$, we have the idempotent decomposition $ha=\sum_{i} h(a_{i})$, such that $h(a_{i})\in R_{h(i)}$. For $a\in R^H$, if each $a_i$ in $R_i$ is invertible, then $a$ is invertible in $R^H$.
\end{lemma}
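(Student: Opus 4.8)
The plan is to deduce invertibility in $R^H$ from invertibility in the ambient ring $R$ together with uniqueness of inverses — a standard descent-type argument. First I would note that, since $I$ is finite, the idempotent decomposition identifies $R$ with the product ring $\prod_{i\in I}R_i$, each $R_i$ being a unital ring with identity the idempotent $e_i$ and $1=\sum_{i}e_i$. Writing $a=\sum_i a_i$ with $a_i\in R_i$, the hypothesis provides $b_i\in R_i$ with $a_ib_i=e_i$; then $b:=\sum_i b_i$ satisfies $ab=\sum_i a_ib_i=\sum_i e_i=1$, so $a$ is a unit in $R$ with inverse $b$ (everything being commutative, this is automatically a two-sided inverse).

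Next I would invoke uniqueness: in a commutative ring an element has at most one inverse, since $ab=ab'=1$ forces $b=b(ab')=(ba)b'=b'$. Now fix $h\in H$. Because $H$ acts on $R$ by ring automorphisms, applying $h$ to the identity $ab=1$ yields $h(a)h(b)=1$; and since $a\in R^H$ we have $h(a)=a$, hence $a\cdot h(b)=1$, so $h(b)$ is also an inverse of $a$. By uniqueness, $h(b)=b$. As $h$ was arbitrary, $b\in R^H$, and $ab=ba=1$ holds in $R^H$; this is the desired conclusion.

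I do not expect a genuine obstacle here — the lemma is essentially formal. The only point worth a word of care is that this implication uses only that $H$ acts on $R$ by ring automorphisms fixing $a$; the compatibility of the $H$-action with the indexing set $I$ (so that $h$ permutes the factors $R_i$) is not needed for the argument itself, although it is what makes the hypothesis ``each $a_i$ is invertible'' natural for an $H$-invariant $a$, since the components $a_i$ are then permuted among themselves by $H$.
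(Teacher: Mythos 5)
Your proof is correct and follows essentially the same route as the paper: construct the inverse componentwise as $b=\sum_i b_i$ and then check it is $H$-invariant. The only difference is cosmetic — you deduce $h(b)=b$ from uniqueness of inverses in a commutative ring, whereas the paper verifies it directly from the permutation identity $h(a_i^{-1})=a_{h(i)}^{-1}$; your observation that the compatibility of the $H$-action with $I$ is not strictly needed for the implication is accurate.
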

\begin{proof}
Let $a=\sum_i a_i$ be the idempotent decomposition of $a\in R^H$. The assumption that $a$ is invariant under $H$ implies
\[
ha_i=a_{h(i)}, \text{for any $h\in H$.}
\]
Each $a_i$ is invertible in $R_i$. Hence, $h \frac{1}{a_i}=\frac{1}{a_{h(i)}}$, for any $h\in H$. 
Clearly, $\sum_i \frac{1}{a_i}$ is inverse to $a$ in $R$ and is $H$ invariant. This completes the proof. 
\end{proof}
\begin{lemma}\label{lem2}
Let $A_1, A_2$ be two abelian groups and $L_1, L_2$ be the irreducible representations of $A_1, A_2$ respectively. 
If $c_1^{A_1}(L_1)$ is invertible, then, 
so is $c_1^{A_1 \times A_2}(L_1\otimes L_2)$. 
\end{lemma}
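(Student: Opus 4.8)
The plan is to reduce to an elementary property of the formal group law $F$ of $E^{*}$. Write $\pi_i\colon A_1\times A_2\to A_i$ for the two projections; under the identification $B(A_1\times A_2)\simeq BA_1\times BA_2$ the line bundle attached to the character $L_1\otimes L_2$ of $A_1\times A_2$ is $(B\pi_1)^{*}L_1\otimes(B\pi_2)^{*}L_2$, so the defining compatibility of first Chern classes with tensor products gives
\[
c_1^{A_1\times A_2}(L_1\otimes L_2)=u+_F v,\qquad u:=(B\pi_1)^{*}c_1^{A_1}(L_1),\ \ v:=(B\pi_2)^{*}c_1^{A_2}(L_2),
\]
an identity in whichever cohomology ring we are working in. Since $(B\pi_1)^{*}$ is a ring homomorphism, $u$ is invertible the moment $c_1^{A_1}(L_1)$ is; in particular the hypothesis forces $L_1$ to be a non-trivial character.

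Next I would use the factorisation $F(s,t)=s+t\,g(s,t)$ with $g\in E^{*}[\![ s,t ]\!]$ and $g(0,0)=1$, which is immediate from $F(s,t)=s+t+\sum_{i,j\ge 1}a_{ij}s^{i}t^{j}$, and which gives $F(u,v)=u\bigl(1+u^{-1}v\,g(u,v)\bigr)$. Thus everything comes down to showing that $v$ is \emph{negligible}: it suffices that $v$ be topologically nilpotent, for then so is $u^{-1}v\,g(u,v)$ and the second factor is a unit by the geometric series (the ambient ring being complete in the relevant sense).

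The negligibility of $v=(B\pi_2)^{*}c_1^{A_2}(L_2)$ is the crux, and the step I expect to be the main obstacle. Here one uses that $A_2$ is finite: $c_1^{A_2}(L_2)$ lies in the augmentation ideal of $E^{*}(BA_2)$, and this ideal is topologically nilpotent, because modulo the maximal ideal $\mathfrak{m}$ of $E^{*}$ the ring $E^{*}(BA_2)$ is a finite-dimensional algebra over $E^{*}/\mathfrak{m}$ with nilpotent augmentation ideal (the standard computation of the Morava $K$-theory of $BA_2$ for the finite abelian group $A_2$), so $v^{N}\in\mathfrak{m}\,E^{*}(B(A_1\times A_2))$ for some $N$, while $E^{*}(B(A_1\times A_2))$ is a finite $E^{*}$-module, hence $\mathfrak{m}$-adically complete, forcing $v$ topologically nilpotent; and this persists after any localisation that inverts only Chern classes of $A_1$-representations. (In the cyclotomic rings $\Phi_t$ of Definition~\ref{def:cyclotomic}, where the Chern class of \emph{every} non-trivial irreducible representation has already been inverted, the statement is instead immediate, since $L_1$ non-trivial makes $L_1\otimes L_2$ a non-trivial irreducible representation of $A_1\times A_2$, whose Chern class is therefore one of the inverted elements.) Everything apart from pinning down this negligibility of $v$ in the ring at hand is purely formal manipulation with $F$.
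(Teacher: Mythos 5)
Your proposal is correct and follows essentially the same route as the paper: expand $c_1^{A_1\times A_2}(L_1\otimes L_2)=c_1^{A_1}(L_1)+_F c_1^{A_2}(L_2)$, factor out the invertible $c_1^{A_1}(L_1)$, and observe that the remaining factor is $1$ plus a topologically nilpotent element, hence a unit by completeness of the Borel-construction ring. The only difference is that you spell out the topological nilpotence of $c_1^{A_2}(L_2)$ (via finiteness of $A_2$ and $\mathfrak m$-adic completeness) where the paper simply appeals to completeness with respect to Chern classes, so your version fills in a detail rather than changing the argument.
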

\begin{proof}
Denote the image of $c_1^{A_1}(L_1)\in E_{A_1}(\pt)$ under the natural map
\[
E_{A_1}(\pt) \to E_{A_1\times A_2}(\pt) 
\]
by the same notation $c_1^{A_1}(L_1)$. Then, we have
\[
c_1^{A_1\times A_2}(L_1\otimes L_2)
=c_1^{A_1}(L_1)+c_1^{A_2}(L_2)+h.o.t., 
\]
where $h.o.t.$ consists of higher order terms in  $c_1^{A_1}(L_1)$ and $c_1^{A_2}(L_2)$.
By the assumption that $c_1^{A_1}(L_1)$ is invertible, 
multiplying $c_1^{A_1\times A_2}(L_1\otimes L_2)$ by $(c_1^{A_1}(L_1))^{-1}$, we obtain
\begin{equation}\label{lem:c_1}
1+(c_1^{A_1}(L_1))^{-1}c_1^{A_2}(L_2)+(c_1^{A_1}(L_1))^{-1}h.o.t.. 
\end{equation}
Recall that $E_{A_1\times A_2}(\pt)$ is defined via the Borel construction, hence it is complete with respect to Chern classes. Therefore, \eqref{lem:c_1} is invertible. 
This completes the proof. 
\end{proof}

\begin{proof}[Proof of Proposition~\ref{lem:lambda_invertible}]
By K\"unneth formula, and the assumption of the trivial $A_1$-action we have $E_A^*(X)\cong E_{A_1}^*\otimes_{E^0}E_{A_2}^*(X)$. Hence $E^*_A(X)\otimes_{E^*_{A_1}}E_{\loc}=E^*_{A_2}(X)\otimes_{E^0}E_{\loc}$. Using Theorem~\ref{thm:HKR}, we have 
\[
E_{\loc}\otimes_{E^*_{A_1}}(E_A^*(X)) \cong
S^{-1}E^*_{A_1}\otimes_{E^0}(L\otimes_{E^0}E_{A_2}^*(X))\cong S^{-1}E^*_{A_1}\otimes_{E^0}\Big(\bigoplus_{\beta:(\bbZ/p)^h\to A_2}L\otimes_{E^0_{(\bbZ/p)^h}}E^*_{(\bbZ/p)^h}(X^{\im \beta})\Big)^{A_2}.
\]
Therefore, to prove the proposition, without loss of generality, we may assume that $A_2$ acts trivially on $X$ as well.
Now Lemma~\ref{lem2} above reduces the proof to a standard argument (c.f., proof of \cite[Proposition 5.10.3]{CG}). 

Step 1: Assume $\calE=X\times F$ is a trivial vector bundle. 
We have projections in the following diagram
\[
\xymatrix{
\calE=X\times F\ar[r]^(0.6){p\times \id_F} \ar[d]^{\pi} & F \ar[d]_{\pi_F}\\
X\ar[r]^{p} \ar@/^1.0pc/[u]^{s}
&\pt\ar@/_1.0pc/[u]_{s_F}
}
\]
with $s_F: \pt \to F$ the zero section of $F$ on $\pt$. 
Therefore, $e(\calE)$, which comes from the pullback of a class in $E^*_A(\pt)_{\loc}$, induces an isomorphism thanks to Lemma~\ref{lem2}. 

Step 2: 
We use induction on $\dim(X)$. 
If $dim(X)=0$ then each irreducible component of $X$ is a point and by Step 1 the Proposition follows. 

Let $U\subset X$ be the Zariski open dense subset such that $U$ is stable under $A$-action and $E|_{U}$ is trivial. Let $Y:=X\setminus U$ be the complement. Then, $\dim(Y)<\dim(X)$. 
Write the long exact sequence in $E^*$-theory
\[
\xymatrix{
\cdots \ar[r] & E^{*-1}_{A}(Y)_{\loc} \ar[r] \ar[d]^{e(\calE)}& E^*_{A}(U)_{\loc} \ar[r] \ar[d]^{e(\calE)}& E^*_{A}(X)_{\loc} \ar[r] \ar[d]^{e(\calE)}& E^*_{A}(Y)_{\loc}\ar[r] \ar[d]^{e(\calE)}& \cdots \\
\cdots \ar[r] & E^{*-1}_{A}(Y)_{\loc} \ar[r] & E^*_{A}(U)_{\loc} \ar[r] & E^*_{A}(X)_{\loc} \ar[r] & E^*_{A}(Y)_{\loc}\ar[r] & \cdots \\
}\]
The restriction of $\calE$ to $U$ being trivial, the multiplication by $e(\calE)$ induces an isomorphism on $E^*_{A}(U)_{\loc}$ by Step 1. Also, $\dim(Y)<\dim(X)$, hence $e(\calE)$ induces an isomorphism on $E^*_{A}(Y)_{\loc}$ by the induction hypothesis. The Proposition follows from the $5$ lemma. 
\end{proof}

Let $G$ be a finite group and $A_1\times A_2$ a subgroup with $A_1\cong (\Z/p)^t$ with $t\leq h$.
Let $S$ be the multiplicative set generated by $\{ c_1^{A_1}(V)\in E^*_{A_1}(\pt)\mid  V\hbox{ a non-trivial irreducible representation of }A_1\}$. Thanks to Lemma~\ref{lem2}, we have the following composition of maps
$S^{-1}E^*_{A_1}\to S^{-1}E^*_{A_1\times (\Z/p)^{h-t}} \to  L$.
\begin{corollary} Notations as above. 
Let $X$ be a smooth $G$-variety, and let $N$ be the normal bundle of the embedding $i:X^{A_1}\inj X$, and $e_N$ its Euler class. Then, multiplication by $e_N$ induces an isomorphism on 
\[L\otimes_{E_{A_1}}E_{A_1\times A_2}(X^{A_1})\to L\otimes_{ E_{A_1}}E_{A_1\times A_2}(X^{A_1}).\]
Moreover, the pullback \[i^*:L\otimes_{E_{A_1}}E_{A_1\times A_2}(X)\to L\otimes_{E_{A_1}}E_{A_1\times A_2}(X^{A_1})\]
is an isomorphism. 
\end{corollary}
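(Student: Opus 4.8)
The plan is to derive both assertions from Proposition~\ref{lem:lambda_invertible} and the usual equivariant localization argument — the self-intersection formula together with vanishing of the localized cohomology of the complement — in the spirit of the proof of that proposition.

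For the statement about $e_N$: since $X$ is smooth and $A_1$ is finite, $X^{A_1}$ is smooth, $A_1$ acts trivially on it, and the $A_1$-fixed subbundle of $TX|_{X^{A_1}}$ is exactly $TX^{A_1}$; hence in the $A_1$-isotypic decomposition \eqref{eqn:bundle_decompose} of the normal bundle $N$ the trivial summand vanishes. Thus the pair $(X^{A_1},N)$ is an instance of the hypotheses of Proposition~\ref{lem:lambda_invertible} for the group $A_1\times A_2$ (which has height $h$), so multiplication by $e_N$ is an isomorphism on $E_{A_1\times A_2}(X^{A_1})\otimes_{E_{A_1}}E_{\loc}$ with $E_{\loc}=S^{-1}E^*_{A_1}(\pt)\otimes_{E^0}L$; after the routine base change to $L\otimes_{E_{A_1}}(-)$ — or by repeating the proof of the proposition verbatim — this is the first claim.

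For the pullback, set $U:=X\setminus X^{A_1}$, an open $A_1\times A_2$-stable subvariety. By the conventions of \S\ref{subsec:conv} one has $E_{A_1\times A_2}(X^{A_1})=E_{A_1\times A_2}(X,U)$ with the canonical map to $E_{A_1\times A_2}(X)$ being $i_*$, and the long exact sequence of the pair $(X,U)$ reads
\[
\cdots\to E_{A_1\times A_2}(X^{A_1})\xrightarrow{\,i_*\,}E_{A_1\times A_2}(X)\xrightarrow{\,j^*\,}E_{A_1\times A_2}(U)\to\cdots.
\]
The functor $L\otimes_{E_{A_1}}(-)$ is exact — $L=C_0^*$ is flat over $E^*_{A_1}(\pt)$ — so it preserves this sequence. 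After applying it the composite $i^*i_*=e_N\cup$ becomes an isomorphism by the first part, so $i_*$ becomes (split) injective; hence, once one shows $L\otimes_{E_{A_1}}E_{A_1\times A_2}(U)=0$, the map $i_*$ becomes an isomorphism and so does $i^*=(i^*i_*)\circ i_*^{-1}=e_N\circ i_*^{-1}$.

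The heart of the matter, and where I expect the real work, is the vanishing $L\otimes_{E_{A_1}}E_{A_1\times A_2}(X\setminus X^{A_1})=0$. I would prove it by Noetherian induction on $X\setminus X^{A_1}$, in the manner of the proof of Proposition~\ref{lem:lambda_invertible}: stratify $X\setminus X^{A_1}$ into finitely many $A_1\times A_2$-stable locally closed pieces on each of which some fixed cyclic subgroup $\langle b\rangle\cong\Z/p$ of $A_1$ acts freely — possible because $X\setminus X^{A_1}$ has no $A_1$-fixed point, so the generic $A_1$-isotropy on each irreducible piece is proper — and use the long exact sequences of the stratification to reduce to the vanishing $L\otimes_{E_{A_1}}E_{A_1\times A_2}(V)=0$ for such a piece $V$. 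That last vanishing I would get from Lemma~\ref{lem:free} and Lemma~\ref{lem:cohomology_free_action}, applied after passing to the free $\langle b\rangle$-quotient, possibly together with Theorem~\ref{thm:HKR} to rewrite the $A_1\times A_2$-equivariant theory in terms of fixed-point data. The genuinely delicate points are producing the stratification and tracking the $E^*_{A_1}(\pt)$-module structure through the $\langle b\rangle$-quotient identification; this is the only place where the choice of the cohomology theory $L$ enters in an essential way, everything else being formal manipulation of localizations and long exact sequences.
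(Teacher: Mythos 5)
Your argument is correct, but it is not the route the paper takes. The paper's proof is a direct application of the Hopkins--Kuhn--Ravenel character isomorphism (Theorem~\ref{thm:HKR}): after base change to $L\otimes_{E_{A_1}}(-)$, both $E_{A_1\times A_2}(X)$ and $E_{A_1\times A_2}(X^{A_1})$ are identified with the same sum $\bigoplus_{\beta:(\Z/p)^{h-t}\to A_2}E^*(X^{A_1\times\im\beta})$-type expression, using the tautology $X^{A_1\times\im\beta}=(X^{A_1})^{\im\beta}$; the pullback $i^*$ is then an isomorphism by inspection, and the Euler-class statement is the same argument as Proposition~\ref{lem:lambda_invertible}. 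You instead run the classical concentration argument: self-intersection formula plus vanishing of the localized theory on the free locus $X\setminus X^{A_1}$, via the long exact sequence of the pair. What your approach buys is independence from the full character decomposition for the pullback statement (at the cost of needing $i^*i_*=e_N\cup$ and an isotropy stratification); what the paper's approach buys is that the pullback isomorphism is literally one line once HKR is in place. Two small refinements to your sketch: (i) the clean stratification is by isotropy type $U_H=\{u:\Stab_{A_1}(u)=H\}$ for proper subgroups $H\subsetneq A_1$ — these strata are automatically $A_1\times A_2$-stable and locally closed, and since $A_1$ is elementary abelian any $b\in A_1\setminus H$ generates a $\Z/p$ meeting $H$ trivially, hence acting freely on $U_H$; your phrasing via "generic isotropy on irreducible pieces" forces an extra Noetherian recursion on the non-generic locus that the isotropy stratification avoids. (ii) To conclude vanishing of $L\otimes_{E_{A_1}}E_{A_1\times A_2}(U_H)$ you should choose the inverted class compatibly with $H$: take a non-trivial character $\chi$ of $A_1$ trivial on $H$, split $A_1=\ker\chi\times\langle b\rangle$, and note that $c_1^{A_1}(\chi)$ and $c_1^{\langle b\rangle}(\chi|_{\langle b\rangle})$ have the same image in $E^*_{A_1\times A_2}(U_H)$, so Lemmas~\ref{lem:free} and \ref{lem:cohomology_free_action} (extended to the product group exactly as in Lemma~\ref{lem:diag_power}) give the annihilation. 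With these adjustments your proof is complete.
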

\begin{proof}
Consider the isomorphism of \cite{HKR} $L\otimes_{E^0}E_G(X)\cong L\otimes_{E_{(\Z/p)^{h}}}(\oplus_{\alpha:(\Z/p)^{h}\to G}E_{ (\Z/p)^{h}}^*(X^{\im\alpha}))^{G}$, and base-change via the ring map $L\otimes_{E^0}E_G\to L\otimes_{E_{A_1}} E_G$, we obtain 
 \[L\otimes_{E_{A_1}}E_G(X)\cong L\otimes_{E_{A_1\times (\Z/p)^{h-t}}}(\oplus_{\beta:(\Z/p)^{h-t}\to A_2}E_{A_1\times (\Z/p)^{h-t}}^*(X^{A_1\times\im\beta}))^{A_2}\cong  L\otimes_{E_{A_1}}(\oplus_{\beta:(\Z/p)^{h-t}\to A_2}E_{A_1}^*(X^{A_1\times\im\beta}))^{A_2}.\]
Observe that $X^{A_1\times\im\beta}=(X^{A_1})^{\im\beta}$ and that the vector bundle $N$ on $X^{A_1}$ satisfies the assumption of Proposition~\ref{prop:localization_convolution}, hence, the same argument as that of Proposition~\ref{lem:lambda_invertible} implies the assertion. 
\end{proof}

\subsection{Localization of convolution algebras}
The following proposition is
deduced from a standard argument, which can be found, say, in  \cite[Theorem 5.11.10]{CG}. However, it is useful in our setting. For the convenience of the readers, we sketch the proof below.

\begin{prop}\label{prop:localization_convolution}
Assume $E$ is a cohomology theory satisfying Assumption~\ref{ass:HKR} with height $h$. 
Assume $A=A_1\times A_2$ with $A_1$ and $A_2$ finite  groups with $A_1=(\Z/p)^t$ with $t\leq h$. Assume $M_i$ is smooth with $A$-action $i=1,2,3$, and the $A$-equivariant Euler class of the normal bundle $M_i^{A_1}\subseteq M_i$ is denoted by $e_i$. 

Then, $\chi=(1\boxtimes \frac{1}{e_2})\circ i^*:E^*_A(Z_{1,2})\to E^*_A(Z_{1,2}^{A_1})$ is well-defined when applying $-\otimes_{E_{A_1}}L$ and commutes with convolutions. 
\end{prop}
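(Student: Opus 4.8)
The plan is to reduce the statement to the already-established localization results—specifically the Corollary preceding Proposition~\ref{lem:lambda_invertible} and Proposition~\ref{lem:lambda_invertible} itself—and to a base-change/compatibility check for the convolution product. First I would set up the geometry: for each $i=1,2,3$ we have $M_i^{A_1}\inj M_i$ with normal bundle $N_i$ carrying an $A$-action (hence an $A_1\times A_2$-action), and by construction of the fixed locus each isotypic summand of $N_i$ under $A_1$ is a nontrivial $A_1$-representation. This is exactly the hypothesis needed to invoke the Corollary above: multiplication by $e_i$ (the Euler class of $N_i$) becomes invertible after applying $-\otimes_{E_{A_1}}L$, and moreover the restriction $i^*\colon L\otimes_{E_{A_1}}E_A(M_i)\to L\otimes_{E_{A_1}}E_A(M_i^{A_1})$ is an isomorphism. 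The same applies to the subvarieties: $Z_{1,2}^{A_1}\subseteq M_1^{A_1}\times M_2^{A_1}$ sits inside $M_1\times M_2$ with the normal directions coming from $N_1\boxplus N_2$, so $i^*$ on $E_A(Z_{1,2})$ localizes to an isomorphism after tensoring with $L$. This shows $\chi=(1\boxtimes\tfrac1{e_2})\circ i^*$ is well-defined (the factor $\tfrac1{e_2}$ makes sense precisely because $e_2$ is invertible in the localized theory), which disposes of the first assertion.

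The substantive point is that $\chi$ intertwines convolution. Here I would follow the standard template from \cite[Theorem 5.11.10]{CG}, adapted to the present cohomology theory. Recall convolution $A_H(Z_{12})\otimes A_H(Z_{23})\to A_H(Z_{13})$ is built from $\proj_{13,*}$ applied to the product of pullbacks $\proj_{12}^*\alpha\cdot\proj_{23}^*\beta$ along the fixed-point inclusions and projections among the $M_i$'s (Proposition~\ref{prop:conv_action}, Example~\ref{ex:conv_sm}). On the fixed loci $M_i^{A_1}$ one has the analogous convolution diagram, and the inclusion $i\colon \prod M_i^{A_1}\inj\prod M_i$ is compatible with all three projections $\proj_{12},\proj_{23},\proj_{13}$ in the evident Cartesian sense. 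The key identity to establish is a projection-type formula: for the proper pushforward along $\proj_{13}$, restricting to fixed points and pushing forward differ by the Euler class of the normal bundle in the $M_2$-direction, i.e. $i^*\proj_{13,*}(\xi)=\proj_{13,*}^{A_1}\big(\tfrac1{e(N_2)}\cdot i^*\xi\big)$ after localization—this is the self-intersection / excess-intersection formula for the fixed-point inclusion, valid because $e(N_2)$ is invertible. Combining this with the obvious compatibility $i^*(\proj_{12}^*\alpha\cdot\proj_{23}^*\beta)=\proj_{12}^{A_1,*}(i^*\alpha)\cdot\proj_{23}^{A_1,*}(i^*\beta)$ and tracking where the $\tfrac1{e_i}$ normalizations go yields $\chi(\alpha*\beta)=\chi(\alpha)*\chi(\beta)$; the $e_1$ and $e_3$ contributions cancel against the normalizations built into $\chi$ on the source and target, leaving exactly the $\tfrac1{e_2}$ that defines $\chi$.

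I would carry this out in the order: (1) verify the normal-bundle hypotheses of the Corollary/Proposition~\ref{lem:lambda_invertible} so that $i^*$ and multiplication by each $e_i$ become isomorphisms after $-\otimes_{E_{A_1}}L$, giving well-definedness of $\chi$; (2) establish the localized self-intersection formula for $\proj_{13}$ restricted to fixed points, which is where the bulk of the work lies; (3) assemble (1)–(2) with the elementary pullback-compatibility of the Chern/Euler classes to conclude $\chi$ commutes with $*$. The main obstacle I anticipate is step (2): one must be careful that the proper pushforward $\proj_{13,*}$ on the singular subvarieties $Z_{12}\circ Z_{23}$ (which need not be smooth) behaves well under fixed-point restriction, and that the excess-intersection/Euler-class bookkeeping is done in the refined Gysin formalism of \cite[\S5.2]{ZZ1} rather than naively on smooth ambient spaces; the cohomological-grading subtleties flagged earlier in the excerpt are harmless here since we never use degrees, but the functoriality of pushforward and the projection formula must be applied on the pairs $(M,M\setminus S)$ with some care.
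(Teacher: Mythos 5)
Your proposal is correct and follows essentially the same route as the paper: well-definedness comes from the invertibility of the Euler classes $e_i$ after $-\otimes_{E_{A_1}}L$ (Proposition~\ref{lem:lambda_invertible} and the ensuing corollary), and compatibility with convolution is exactly the localized self-intersection bookkeeping you describe in step (2) — the paper phrases it as the statement that $\frac{1}{e_1}\boxtimes\frac{1}{e_2}\boxtimes\frac{1}{e_3}$ composed with $i^*$ is the inverse of $i_*$ and hence commutes with $(p_{13})_*$, after which the projection formula cancels the $e_1$ and $e_3$ factors and leaves precisely the $1\boxtimes\frac{1}{e_2}$ defining $\chi$. Your caution about pushforwards on the singular $Z_{12}\circ Z_{23}$ is handled by the paper's convention $A^*_G(S):=A^*_G(M,M\setminus S)$ and the refined Gysin formalism, so no gap remains.
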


\begin{proof}
Let $Z_{12}\subset M_1\times M_2$, $Z_{23}\subset M_2\times M_3$. Let $p_{ij}: M_1\times M_2\times M_3\to M_i\times M_j$ be the projection.  
Let $\calF_{12}\in E_G^*(Z_{12})$, $\calF_{23}\in E_G^*(Z_{23})$. 
Let $e_i\in E_{A }^*(M_i^{A_1})$, $i=1, 2, 3$. 
Then,
\begin{align}
(\chi \calF_{12})* (\chi \calF_{23})
=&(p_{13})_*\Big(p_{12}^* (\chi \calF_{12}) \otimes_{ } p_{23}^* (\chi \calF_{23})\Big)  &&\text{by definition of $*$} \notag\\
=&(p_{13})_* \Big((1\boxtimes \frac{1}{e_2} \boxtimes \frac{1}{e_3} )\otimes (i^*\calF_{12} \otimes i^*\calF_{23}) \Big)
&&\text{by definition of $\chi$}\notag\\
=&(p_{13})_* \Big(p_{13}^*(e_1\boxtimes 1)\otimes (\frac{1}{e_1}\boxtimes \frac{1}{e_2} \boxtimes \frac{1}{e_3}) \otimes (i^*\calF_{12} \otimes i^*\calF_{23}) \Big)&&\notag\\
&\phantom{12}\text{by the equality 
$1\boxtimes \frac{1}{e_2} \boxtimes \frac{1}{e_3}
=p_{13}^*\Big(e_1\boxtimes 1\otimes (\frac{1}{e_1}\boxtimes \frac{1}{e_2} \boxtimes \frac{1}{e_3})\Big)$}\notag\\
=&(e_1\boxtimes 1)\otimes (p_{13})_* \Big( (\frac{1}{e_1}\boxtimes \frac{1}{e_2} \boxtimes \frac{1}{e_3}) \otimes (i^*\calF_{12} \otimes i^*\calF_{23})\Big) && \text{by the projection formula} \label{pf:chif}
\end{align}
The commutative diagram
\[
\xymatrix{
M_1\times M_2\times M_3 \ar[r]^{p_{13}}& M_1\times M_3\\
M_1^{A_1}\times M_2^{A_1}\times M_3^{A_1} \ar[r]^{p_{13}} \ar[u]^{i}
& M_1^{A_1}\times M_3^{A_1} \ar[u]^{i}
}
\]
induces the following commutative diagram on cohomology
\[
\xymatrix{
E^*_{A}(M_1\times M_2\times M_3) \ar[r]^{(p_{13})_*}  \ar[d]^{\frac{1}{e_1}\boxtimes \frac{1}{e_2} \boxtimes \frac{1}{e_3}}&  E^*_{A}(M_1\times M_3) \ar[d]^{\frac{1}{e_1} \boxtimes \frac{1}{e_3}}\\
L\otimes_{E_{A_1}}E^*_{A }(M_1^{A_1}\times M_2^{A_1}\times M_3^{A_1}) \ar[r]^{(p_{13})_*} &  L\otimes_{E_{A_1}}E^*_{A }(M_1^{A_1}\times M_3^{A_1})
}\]
The vertical maps are the inverse of $i_*$. 
As a consequence, we have 
\begin{align*}
\eqref{pf:chif}=&(e_1\boxtimes 1)\otimes  ((\frac{1}{e_1}\boxtimes \frac{1}{e_3}) )(p_{13})_* \Big((i^*\calF_{12} \otimes i^*\calF_{23})\Big) \\
=&(1\boxtimes \frac{1}{e_3}) i^*( \calF_{12} * \calF_{23}) &&\text{by $e_1\frac{1}{e_1}=1$ and definition of $*$} \\
=&\chi ( \calF_{12} * \calF_{23}) &&\text{by definition of $\chi$} 
\end{align*}
\end{proof}

\section{Quiver representations}
We recall some notations and recollect some basics on quiver representations, Nakajima quiver varieties, and cohomological Hall algebras. Experts in this area are recommended to skip ahead to \S~\ref{subsec:def_quant_groups}, where we introduce the quantum groups. 
\subsection{Generality of quiver representations}
Recall that a quiver $\Gamma=(\Gamma_0,\Gamma_1)$ has a finite set of vertices $\Gamma_0$ and a finite set of arrows $\Gamma_1$, with each arrow $h\in \Gamma_1$ having a outgoing vertex $\out(h)\in \Gamma_0$ and an incoming vertex $\inc(h)$ making $\Gamma$ an oriented graph. A representation is a functor $V$ from the graph $\Gamma$ to the category of finite dimensional vector spaces, with $V^i$ the vector space associated to $i\in \Gamma_0$. Abusing notations, we also write $V=\oplus_{i\in \Gamma_0}V^i$ as the underlying vector space. 
We call an element $\gamma=(\gamma^i)_{i\in \Gamma_0}\in \bbN^{\Gamma_0}$ a dimension vector. A representation of the quiver $V$ has the dimension vector with $\gamma^i=\dim V^i$ for  $i\in \Gamma_0$. For each  dimension vector $\gamma\in \bbN^{\Gamma_0}$, let 
\[
\Rep(\Gamma,\gamma):=\prod_{h\in \Gamma_1}\Hom(\bbC^{\gamma^{\out(h)}},\bbC^{\gamma^{\inc(h)}}). 
\] 
The group $\GL_\gamma:=\prod_{i\in I}\GL_{\gamma^i}$ acts on $\Rep(\Gamma,\gamma)$ by conjugation.

Assume there is a subset $\Gamma_0''\subset \Gamma_0$, called the {\em frozen} vertices, with the complement $\Gamma_0'\subseteq \Gamma_0$. Any $\theta\in \bbQ^{\Gamma_0'}$ defines a rational character of $\GL_{\gamma'}$ for any $\gamma'\in \bbN^{\Gamma_0'}$, which we use as GIT stability condition, and hence 
has an open subset $$\Rep(\Gamma,\gamma)^{\theta}\subseteq \Rep(\Gamma,\gamma)$$ consisting of $\theta$-semi-stable points. 
We have the GIT quotient $\coprod_{\gamma\in \bbN^{\Gamma_0}}\Rep(\Gamma,\gamma)^{\theta}/\GL_{\gamma'}$. 
Note that on each $\Rep(\Gamma,\gamma)^{\theta}/\GL_{\gamma'}$ there is an action on $\GL_{\gamma''}$ together with an equivariant projective morphism to the categorical quotient 
$$\Rep(\Gamma,\gamma)^{\theta}/\GL_{\gamma'}\to \Rep(\Gamma,\gamma)/\GL_{\gamma'}.$$ 

We introduce some notations. For a quiver $Q=(I,H)$, the opposite quiver $Q^{\op}=(I,H^{\op})$ has the same set of vertices $I$ as $Q$, and the set of arrows $H^{\op}$ has a fixed one-to-one correspondence with $H$ so that for each $h\in H$, the corresponding arrow $h^*\in H^{\op}$ has  $out(h^*)=in(h)$ and $in(h^*)=out(h)$. 
The double quiver $\overline{Q}=Q\sqcup Q^{\op}$ has the same set of vertices $I$ as $Q$, and the set of arrows is $H\sqcup H^{\op}$.
Note that for any $v\in\bbN^I$ we have an $\GL_v$-equivariant isomorphism $\Rep(\overline{Q},v)\cong T^*\Rep(Q,v)$.

For a quiver $Q$, let $Q^\heartsuit$ be the \textit{ framed quiver}. 
The set of vertices of $Q^\heartsuit$ is $I \sqcup I'$, where $I'$ is another copy of the set $I$, equipped with the bijection $I\to I'$, $i\mapsto i'$. 
The set of arrows of $Q^\heartsuit$ is, by definition, the disjoint union of $H$ and a set of additional edges $j_i : i \to i'$, one for each vertex $i\in I$.
We follow the tradition that $v\in \bbN^{I'}$ is the notation for the dimension vector at $I$, and $w\in \bbN^I$ is the dimension vector at $I'$. Also, we consider the vertices $I'$ as frozen. 

\subsection{Nakajima quiver varieties}
\label{subsec:quiver variety}
In this section, we review the Nakajima quiver varieties and their convolution algebra. Interested readers can find most of the missing details in \cite{Nak}.

Let $\overline{Q^{\heartsuit}}=Q^{\heartsuit}\sqcup Q^{\heartsuit, \op}$ be the double of $Q^{\heartsuit}$. We have the isomorphism
\[
\Rep(\overline{Q^\heartsuit},(v,w))\cong T^*\Rep(Q^\heartsuit,v,w).
\]
Let $\mu_{v, w}:T^*\Rep(Q^\heartsuit,v,w)\to \fg\fl_v^*\cong \fg\fl_v$ be the moment map
 \[
\mu_{v, w}: (x, x^*, i, j)\mapsto 
\sum[x, x^*]+i\circ j \in \fg\fl_v.
\]
For any $\theta=(\theta_i)_{i\in I} \in \Z^{I}$, 
let $\chi_\theta: \GL_v\to \Gm$ be the character $g=(g_i)_{i\in I} \mapsto \prod_{i\in I} \det (g_i)^{-\theta_i}.$ 
The set of $\chi_\theta$-semistable points in $T^*\Rep(Q^\heartsuit, v, w)$ is denoted by $\Rep(\overline{Q^\heartsuit},v,w)^{ss}$.
The Nakajima quiver variety is defined to be the Hamiltonian reduction 
\[\fM_{\theta}(v, w):=\mu_{v, w}^{-1}(0)/\!/_\theta \GL_v.\] 
In this paper, we use the stability condition $\theta^+=(1,\cdots,1)$ and denote $\fM_{\theta}(v, w)$ by $\fM(v, w)$ for simplicity. The categorical quotient is denoted by $\fM_{0}(v, w)$ to avoid possible confusion. We also write $\mu_{v, w}^{-1}(0)^s$ for the $\theta^+$-stable locus in $\mu_{v, w}^{-1}(0)$.
We  write $\fM(w)=\sqcup_{v\in\bbN^I}\fM(v, w)$ and $\fM_0(w)=\cup_{v\in\bbN^I}\fM_0(v, w)$. The meaning of $\cup$ here is explained in \cite[\S~2.5]{Nak}.

\subsubsection{Hecke correspondence and tautological bundles}

As $\fM(w)$ has many connected components, so is $Z(w):=\fM(w)\times_{\fM_0(w)}\fM(w)$. We describe one component relevant to us together with a tautological line bundle on it.

For two dimension vectors $v_1$ and $v_2$,
the composition $\fM(v_i,w)\to \fM_{0}(v_i, w) \subset \fM_{0}(v_1+v_2, w)$ are denoted by $\pi_i$.
Let 
\[
Z(v_1, v_2, w):=\{ (x_1, x_2)\in \fM(v_1,w)\times \fM(v_2,w) \mid \pi_1(x_1)=\pi_2(x_2)\}
\] be the Steinberg variety. 
By the construction of $\fM(v, w)$, we have the tautological vector bundle 
\[
\mu_v^{-1}(0)^{ss}\times _{\GL_v} V \to \fM(v, w)
\]
associated to the principal $\GL_v$-bundle $\mu_v^{-1}(0)^{ss} \to \fM(v, w)$. Here 
$V$ is the $\GL_v$ representation with dimension vector $v$. We denote the vector bundle by $\calV(v, w)$.
We now consider the case when $v_1=v_2-e_k$, where $e_k$ is the dimension vector whose entry $k$ is $1$, and other entries are $0$. The Hecke correspondence $\fP_k(v_2-e_k, v_2, w)$  \cite[\S~5.1]{Nak} is an irreducible, smooth component of  $Z(v_1,v_2,w)$, and  a Lagrangian subvariety of $\fM(v_2-e_k, w)\times \fM(v_2, w)$ as shown by Nakajima. In {\it loc. cit.} $\sqcup_{v_2}\fP_k(v_2-e_k, v_2, w)$ is denoted by $\fP_k(w)$.
The tautological line bundle $\calL_k$ of $\fP_k(v_2-e_k, v_2, w)$ is defined to be the quotient 
\[\calL_k:=\calV(v_2,w)/\calV(v_2-e_k,w).\]
Let $\fM(v_1, w)\times \fM(v_2, w)\xrightarrow{(1, 2)} \fM(v_2, w)\times \fM(v_1, w)$ be the exchange of factors. 
Denote $(1, 2)(\fP_k(v_2, v_2+e_k, w))$ by $\fP_k(v_2+e_k, v_2, w)$. As on $\fP_k(v_2-e_k, v_2, w)$, we have a natural line bundle over $\fP_k(v_2+e_k, v_2, w)$. Let us denote it by
\[
\calL_k^{\op}:=\calV(v_2+e_k,w)/\calV(v_2,w).
\]
 
Similarly, there are generalized Hecke correspondences \cite[\S~5.3]{Nak} $\fP^{(\pm n)}_k=\sqcup_{v}\fP(v,v\pm ne_k,w)$ with each $\fP(v,v\pm ne_k,w)$ a smooth component of $Z(v,v\pm ne_k,w)$. There is a rank-$n$ tautological vector bundle $\calV_k^{(n)}$ on  $\fP(v,v+ne_k,w)$ (resp. a rank-$n$ tautological vector bundle $\calV_k^{(n), \op}$ on  $\fP(v,v-ne_k,w)$ ).

\subsubsection{The torus action}\label{sec:torus_action}
Let $\sqcup_{v\in \N^I}\mathfrak{M}(v, w)$ be the Nakajima quiver variety and $\varmathbb{G}:=\C^* \times \GL_{w}$. 
We now describe the action of $\varmathbb{G}$ on $\mathfrak{M}(w)$ and $\mathfrak{M}_0$ \cite[Section 2.7]{Nak}.
The action of $\GL_{w}$ is defined by its natural action on $\Rep(\overline{Q^\heartsuit},(v,w))$. It preserves the moment map equation $[x, x^*]+ij=0$ and commutes with the conjugation action of $\GL_v$. Hence it induces an action on $\mathfrak{M}(w)$ and $\mathfrak{M}_0$. 

For each pair $k, l\in I$ such that the number of arrows $b'$ from $k$ to $l$ is at least $1$, we fix a numbering $1, 2, \dots, b'$ on the arrows from $k$ to $l$. It induces a numbering $h_1, \dots, h_{b'}, h_1^*, \dots, h_{b'}^*$ on oriented edges between $k$ and $l$. 
Define a weight function $m: H\to \Z$ by 
\[
m(h_s)=b'+1-2s, \,\ m(h_s^*)=-b'-1+2s. 
\]
In the special case when there is only one arrow $h$ from $k$ to $l$. We have $m(h)=m(h^*)=0$. 
The $\C^*$-action on $\Rep(\overline{Q^\heartsuit},(v,w))$ is by 
\[
B_h\mapsto t^{m(h)+1} B_h, i\mapsto ti, j\mapsto tj, \text{for $t\in \C^*$}. 
\]
 This $\varmathbb{G}$-action makes the projective morphism $\mathfrak{M}\to \mathfrak{M}_0$ equivariant.

\subsection{Cohomological Hall algebras and affine quantum groups}\label{subsec:def_quant_groups}

This is not used in the present paper in an essential way and is included in the present paper for the context and future reference.

For any $v\in\bbN^I$, let $\mu_v:\Rep(\overline{Q},v)\cong T^*\Rep(Q,v)\to\fg\fl_v$, $(x, x^*)\mapsto \sum[x, x^*]$ be the moment map. On $\mu_v^{-1}(0)$ there is an action of $\GL_v\times\bbC^*$ where $\bbC^*$-acts the same way as in \S~\ref{sec:torus_action}. On $\oplus_v A_{\GL_v\times\bbC^*}(\mu_v^{-1}(0))$ there is a structure of an associative algebra, called the preprojective cohomological Hall algebra (COHA) associated to the cohomology theory $E^*$ and the quiver $Q$ \cite{YZ1}. The construction follows closely that in \cite{Gr2,KS,SV}.

Let $\calH(E^*, Q)$ ( resp. $\calH'(E^*, Q)$) be the subalgebra of $\oplus_v A_{\GL_v\times\bbC^*}(\mu^{-1}(0))$ generated by elements in $ A_{\GL_{n e_k}\times\bbC^*}(\mu_{n e_k }^{-1}(0))$, where $n$ varies in $\N$ and $k$ varies in $I$ (resp. generated by elements in $ A_{\GL_{ e_k}\times\bbC^*}(\mu_{e_k }^{-1}(0))$, where $k$ varies in $I$). We call $\calH(E^*, Q)$ (resp. $\calH'(E^*, Q)$) the Lusztig spherical COHA (resp. De Concini-Kac spherical COHA).

Similarly, for any $w$, let $A^*_{\varmathbb{G}}(Z(w))$ be the convolution algebra. Consider the subalgebra $U_w(\varmathbb{G}, E^*)$ (resp. $U'_w(\varmathbb{G}, E^*)$) generated by the Chern classes of the tautological vector bundles $\calV_{k}^{(n)}$ and $\calV_{k}^{(n), \op}$ as $n$ varies in $\N$ and $k$ varies in $I$ (resp. generated by the Chern classes of the tautological line bundles $\calL_{k}$ and $\calL_{k}^{\op}$ as $k$ varies in $I$). 
Here when $n=0$, the Hecke correspondence is the diagonal $\fM(w)$, and the tautological bundles are $\calV(v,w)$. Although this is not used in the present paper, the Kirwan surjectivity \cite{MN} shows $A_{\bbG}(\fM(w))$ is tautologically generated.

When the quiver $Q$ has no edge loops, \cite[Theorem 5.6]{YZ1} is true for the generalized Hecke correspondence $\fP(v,v+ne_k,w)$ with the following replacements. We replace $e_k$ by $ne_k$, replace $c_1(\calL_k)$ by the Chern classes of $\calV_k^{(n)}$, and replace 
$ A_{\GL_{e_k}\times\bbC^*}(\mu_{e_k }^{-1}(0))=A_{\bbC^*}(\pt)[z^{(k)}]$ by $ A_{\GL_{ne_k}\times\bbC^*}(\mu_{ne_k }^{-1}(0))=A_{\bbC^*}(\pt)[z^{(k)}_1, \cdots, z^{(k)}_n]^{\Sigma_n}$. 
As a consequence of the generalized statement of \cite[Theorem 5.6]{YZ1}, there is an algebra homomorphisms $\calH(A^*, Q)\to  U_w(\varmathbb{G}, A^*)$ which restricts to a homomorphism $\calH'(A^*, Q)\to  U'_w(\varmathbb{G}, A^*)$. It is compatible with their respective actions on the $A^*$-cohomology of the Nakajima quiver variety
\begin{equation}
\label{eq:algebra hom}
\xymatrix@C=1em
{
\calH(E^*, Q)\ar[rr]^{\Phi_w} \ar[dr]_{a_{\mathcal{H}}} && 
U_w(\varmathbb{G}, E^*) \ar[ld]^{a_{Z}}\\
&\End(E^*_{\varmathbb{G}}(\mathfrak{M}(w)))&
}
\end{equation}
One can take the (reduced) Drinfeld double of $\calH(E^*, Q)$ as in \cite{YZ2}, denoted by $D(\calH(E^*, Q))$. 
The morphism $\Phi_w$ extends to an epimorphism $D(\Phi_w)$ from $D(\calH(E^*, Q))$ to $ U_w(\varmathbb{G}, E^*)$. 
Therefore we  have the isomorphism $D(\calH(E^*, Q))/\ker(D\Phi_w) \cong U_w(\varmathbb{G}, E^*)$. The highest weight modules of $D(\calH(E^*, Q))$ of weight $w$ factors through $U_w(\varmathbb{G}, E^*)$. The algebra $U_w(\varmathbb{G}, E^*)$ is the main focus of the present paper. This algebra should rather be thought of as the $E^*$-theory analogue of affine $q$-Schur algebra \cite{D}. 

 
\section{Quiver with automorphism and folding of affine quantum groups}
We use the Hopkins-Kuhn-Ravenel theorem to define convolution algebra and cohomological Hall algebra of a quiver with automorphism. We then apply it to a special example, i.e., Nakajima cyclic quiver variety. 
\label{sec:QuiverAuto}
\subsection{The definition}
\label{subsec:rep_auto}
Let $\Gamma=(\Gamma_0,\Gamma_1)$ be a quiver. 
An \text{admissible automorphism} of the quiver $\Gamma=(\Gamma_0,\Gamma_1)$ \cite[Chapter 12.1.1]{Lus93} consists of a permutation $a: \Gamma_0 \to \Gamma_0$ and a permutation $a: \Gamma_1\to \Gamma_1$ such that for any $h\in \Gamma_1$, we have $a(\out(h))=\out(a(h))$, $a(\inc(h))=\inc(a(h))$ and such that there is no edge joining two vertices in the same $a$-orbit.
In the present paper for simplicity we assume that $a$ is of order $p$ which is a prime number. 

Let $V$ be a representation of $\Gamma$, then, the automorphism $a$ induces another representation $a^*V$ with $(a^*V)_i=V_{a(i)}$ for any $i\in \Gamma_0$ and similar for the linear maps assigned to $h\in \Gamma_1$. 
For any dimension vector $\gamma\in\bbN^{\Gamma_0}$, and any representation $V$ of dimension $\gamma$, let $a^*\gamma \in \bbN^{\Gamma_0}$ be the dimension vector of $a^*V$. Hence, we get an action of $a$ on $\bbN^{\Gamma_0}$, and decomposes the latter into $a$-orbits. Our assumption on the order of $a$ implies that each orbit is either free or a singleton. Furthermore, for each orbit we fix a system of isomorphisms $b:a^*\bbC^{\gamma}\cong \bbC^{a^*\gamma}$ of $\Gamma_0$-graded vector spaces for each $\gamma\in \bbN^{\Gamma_0}$  so that $b^p=\id$. In particular, if $\gamma$ is fixed by $a$, this gives an order-$p$  linear automorphism $b$ on $\bbC^{\gamma}$ such that $a(V_i)=V_{ai}$ for each $i\in \Gamma_0$. Such a structure induces an action of $b$ on $\sqcup_{\gamma\in\bbN^{\Gamma_0}}\Rep(\Gamma,\gamma)$
which is compatible with the action of $a$ on $\bbN^{\Gamma_0}$. Furthermore, $b$ induces an isomorphism of groups  by conjugation
$\GL_\gamma\cong \GL_{a^*\gamma}$, which is compatible with their actions
\[
\begin{xymatrix}@R=0.3em{
\GL_\gamma \ar[r]^{b} &\GL_{a^*\gamma}\\
&\\
\ar@(ul,ur)[]&\ar@(ul,ur)[]\\
\Rep(\Gamma,\gamma)
 \ar[r]^{b} & \Rep(\Gamma,a^*\gamma)
}
\end{xymatrix}
\]

Assume there is a subset $\Gamma_0''\subset \Gamma_0$ closed under $a$, called the {\em frozen} vertices, with the complement $\Gamma_0'\subseteq \Gamma_0$. 
When drawing a quiver, we draw the frozen vertices with $\square$ and the unfrozen ones with $\circ$.
We consider $\bbQ^{\Gamma_0'}$ as the space of stability conditions for $\GL_{\gamma'}:=\prod_{i\in \Gamma_0'}\GL(V_i)$. Let $\theta\in \bbQ^{\Gamma_0'}$ be a stability condition fixed by $a$. Then, we have an action of $b$ on $(\coprod_{\gamma\in \bbN^{\Gamma_0}}\Rep(\Gamma,\gamma)^{\theta})/\GL_{\gamma'}$. Note that this action is compatible with the action of $a$ on $\bbN^{\Gamma_0}$ and the action of $\GL_{\gamma''}:=\prod_{i\in \Gamma_0''}\GL(V_i)$. 
As in the case of Nakajima quiver varieties, we will also consider the $\GL_{\gamma'}$-quotients of closed subsets  $N_\gamma\subseteq \Rep(\Gamma,\gamma)^{\theta}$ that are invariant under $b$.
For simplicity, we fix $\gamma''\in\bbN^{\Gamma_0''}$ invariant under $a$, and let $N(\gamma'')=\sqcup_{\gamma'\in\bbN^{\Gamma_0'}}N_{(\gamma', \gamma'')}$ which is endowed with the action of $\GL_{\gamma''}$ and an equivariant projective morphism \[
N(\gamma'')\to \cup_{\gamma'\in\bbN^{\Gamma_0'}}\Rep(\Gamma,(\gamma', \gamma''))/\GL_{\gamma'}.
\]
Here $\cup$ is taken in the same sense as \cite[\S~2.5]{Nak}.

We say $\Rep(\Gamma,\gamma)$ (resp. $\Rep(\Gamma,\gamma)^{\theta}/\GL_{\gamma'}$ or $N_\gamma$) is an {\em even component} if $\gamma$ is fixed by $a$ and hence $b$ induces an action of $\langle b \rangle=\bbZ/p$ on this component. We say $\Rep(\Gamma,\gamma)$ or resp. $\Rep(\Gamma,\gamma)^{\theta}/\GL_{\gamma'}$ or $N_{\gamma}$ is an {\em uneven component} if $\gamma$ is not fixed by $a$. 
In each of the cases, we write $|_{\even}$ for the union of the even components. 
In this case we have easily the following.

\begin{lemma}\label{lem:uneven_free}
Notations and terminologies as above, assume $\Rep(\Gamma,\gamma)$ (resp. $\Rep(\Gamma,\gamma)^{\theta}/\GL_{\gamma'}$ or $N_{\gamma}$) is an uneven component, then $b$ induces an action of $\langle b \rangle=\bbZ/p$ on $\sqcup_{\nu\in \langle a^*\rangle \gamma}\Rep(\Gamma,\nu)$ (resp. $\sqcup_{\nu\in \langle a^*\rangle \gamma}\Rep(\Gamma,\nu)^{\theta}/\GL_{\nu'}$ or $\sqcup_{\nu\in \langle a^*\rangle \gamma}N_{\nu}$) which is a free action, where $\langle a^*\rangle \gamma=\{(a^*)^i\gamma\mid i=0, 1, \cdots, p-1\}$. 
\end{lemma}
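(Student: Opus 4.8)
The plan is to reduce the statement to the elementary fact that a cyclic group of prime order $p$ acting on a set has only orbits of size $1$ or $p$. First I would observe that, since $\gamma$ is uneven, i.e. not fixed by $a$, its orbit $\langle a^*\rangle\gamma$ under the order-$p$ automorphism $a^*$ of $\bbN^{\Gamma_0}$ is not a singleton, hence --- $p$ being prime --- has exactly $p$ elements $\gamma, a^*\gamma, \dots, (a^*)^{p-1}\gamma$, which are pairwise distinct.

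Next I would invoke the compatibility of $b$ with $a^*$ recorded above: by construction $b$ carries $\Rep(\Gamma,\nu)$ onto $\Rep(\Gamma,a^*\nu)$ for every $\nu$, and $b^p=\id$ by the chosen system of isomorphisms. Since the index set $\langle a^*\rangle\gamma$ is $a^*$-stable, $b$ restricts to a self-map of $\sqcup_{\nu\in\langle a^*\rangle\gamma}\Rep(\Gamma,\nu)$; because $b^p=\id$ while $b$ is visibly not the identity on this union (it moves the piece $\Rep(\Gamma,\gamma)$ to the distinct piece $\Rep(\Gamma,a^*\gamma)$), the cyclic group it generates is a copy of $\bbZ/p$, permuting the $p$ distinct pieces cyclically.

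For freeness I would argue pointwise: let $x\in\Rep(\Gamma,(a^*)^i\gamma)$ and suppose $b^jx=x$ with $0\le j\le p-1$. Then $b^jx$ lies in $\Rep(\Gamma,(a^*)^{i+j}\gamma)$, so, the pieces of the disjoint union being genuinely disjoint, $x=b^jx$ forces $(a^*)^{i+j}\gamma=(a^*)^i\gamma$, hence $(a^*)^j\gamma=\gamma$, which by the first paragraph forces $j\equiv 0\pmod p$, i.e. $j=0$. Thus every stabilizer is trivial. The same argument applies verbatim after replacing $\Rep(\Gamma,\nu)$ by $\Rep(\Gamma,\nu)^{\theta}/\GL_{\nu'}$ (using that $\theta$ is fixed by $a$, so $b$ permutes the semistable loci compatibly) or by the $b$-invariant closed subsets $N_\nu$, since in all three cases $b$ still cyclically permutes the $p$ pieces indexed by $\langle a^*\rangle\gamma$.

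The only point needing any care --- and it is mild --- is the very first one: that ``uneven'' forces the orbit to have full size $p$, which is precisely where the hypothesis $\ord(a)=p$ prime enters; the remainder is formal bookkeeping with the disjoint-union decomposition, so I do not anticipate a genuine obstacle.
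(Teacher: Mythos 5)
Your proof is correct and is exactly the paper's argument, merely written out in full: the paper's one-line proof likewise reduces freeness of the $\langle b\rangle$-action on the union of components to freeness of the $\langle a^*\rangle$-action on the orbit of $\gamma$ in $\bbN^{\Gamma_0}$, which holds because $\gamma$ is not fixed and $p$ is prime. The pointwise stabilizer computation and the remark about the semistable loci and $N_\nu$ are just the details the paper leaves implicit.
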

\begin{proof}
Recall the $b$ is compatible with the $a$-action on $\bbN^{\Gamma_0}$, which in turn is free in the case of uneven components. 
\end{proof}

We would like to consider the cohomological Hall algebras and the convolution algebras in the setting of a quiver with an automorphism, although again the former is not needed for the purpose of the present paper. For this, it suffices to notice that by the discussion above
$$\calH_{\langle b\rangle}(\Gamma,a):=
\oplus_{\gamma\in \bbN^{\Gamma_0}/a}
A^*_{\langle b\rangle\ltimes \prod_{\nu\in \langle a^*\rangle \gamma }\GL_{\nu}}
\big(\sqcup_{\nu\in \langle a^*\rangle \gamma}\Rep(\Gamma,\nu)\big)$$
and the Hall multiplication is well-defined equivariantly. 
Here $\bbN^{\Gamma_0}/a$ consists of representatives of the $a$-orbits. So is the convolution product on $$\calA_{\langle b\rangle}:=A^*_{\langle b \rangle\ltimes \GL_{\gamma''}}\left(N(\gamma'')\times_{\big( \sqcup_{\gamma'\in\bbN^{\Gamma_0'}}\Rep(\Gamma,(\gamma', \gamma''))^{\theta}/\GL_{\gamma'})\big)}N(\gamma'')\right).$$
\begin{definition}
Notations as above, we call $\calH_{\langle b \rangle}(\Gamma,a)\otimes_{A^*_{\langle b \rangle}(\pt)}A^*_{\langle b \rangle}(\pt)[\frac{1}{S}]$ (resp. $\calA_{\langle b \rangle}\otimes_{A^*_{\langle b \rangle}(\pt)}A^*_{\langle b \rangle}(\pt)[\frac{1}{S}]$) endowed with the multiplication induced from that on $\calH_{\langle b \rangle}(\Gamma,a)$ (resp. $\calA_{\langle b \rangle}$) the cohomological Hall algebra (resp. the convolution algebra) of quiver with automorphisms. 
\end{definition}

The above definition is analogue of and motivated by Luszitg's construction of constructible Hall algebra of a quiver with automorphism.
\begin{prop}\label{prop:folding}
Notations as above, the multiplications on $\calH_{\langle b \rangle}(\Gamma,a)$ (resp. $\calA_{\langle b \rangle}$)
and 
 $\calH_{\langle b \rangle}(\Gamma,a)\otimes_{A^*_{\langle b \rangle}(\pt)}A^*_{\langle b \rangle}(\pt)[\frac{1}{S}]$ (resp. $\calA_{\langle b \rangle}\otimes_{A^*_{\langle b \rangle}(\pt)}A^*_{\langle b \rangle}(\pt)[\frac{1}{S}]$) are associative. Furthermore, the natural map  $\calH_{\langle b \rangle}(\Gamma,a)\to \calH_{\langle b \rangle}(\Gamma,a)\otimes_{A^*_{\langle b \rangle}(\pt)}A^*_{\langle b \rangle}(\pt)[\frac{1}{S}]$ (resp. $\calA_{\langle b \rangle}\to \calA_{\langle b \rangle}\otimes_{A^*_{\langle b \rangle}(\pt)}A^*_{\langle b \rangle}(\pt)[\frac{1}{S}]$) is a map of  associative algebras which sends the uneven components to zero. 
\end{prop}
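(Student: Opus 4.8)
The plan is to verify associativity first, then the vanishing of uneven components, treating the COHA and the convolution algebra uniformly since both arguments only use the formal structure of an $A^*$-linear convolution product together with Lemma~\ref{lem:uneven_free} and Lemma~\ref{lem:cohomology_free_action}. For associativity of $\calH_{\langle b\rangle}(\Gamma,a)$ (resp. $\calA_{\langle b\rangle}$), I would point out that the multiplication is defined component-by-component exactly as in the non-equivariant case — pull–push along the correspondences of \S~\ref{subsec:conv} — now carried out $\langle b\rangle\ltimes\prod\GL_\nu$-equivariantly. Since the base-change/projection-formula package recalled in Proposition~\ref{prop:conv_action} holds for any equivariant oriented theory $A^*_G$, the usual diagram chase (the one behind Proposition~\ref{prop:conv_action}(2)) goes through verbatim once one checks that the triple-correspondence used to compare $(f\ast g)\ast h$ and $f\ast(g\ast h)$ is $\langle b\rangle$-stable; this stability is immediate because $b$ is compatible with the $a$-action on $\bbN^{\Gamma_0}$ and commutes with the relevant $\GL$-actions, as in the diagram preceding Lemma~\ref{lem:uneven_free}. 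Associativity then descends to the localization $-\otimes_{A^*_{\langle b\rangle}(\pt)}A^*_{\langle b\rangle}(\pt)[\tfrac1S]$ because localization is a (flat) base change along the central ring map $A^*_{\langle b\rangle}(\pt)\to A^*_{\langle b\rangle}(\pt)[\tfrac1S]$, so it is automatically an algebra map and preserves associativity.

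Next I would show the localization map kills the uneven components. Fix an uneven $\gamma$, so $\gamma$ is not $a$-fixed; by Lemma~\ref{lem:uneven_free} the group $\langle b\rangle=\Z/p$ acts \emph{freely} on $\sqcup_{\nu\in\langle a^*\rangle\gamma}\Rep(\Gamma,\nu)$ (resp. on the corresponding union of quotients, resp. of $N_\nu$'s). Therefore the summand of $\calH_{\langle b\rangle}(\Gamma,a)$ (resp. $\calA_{\langle b\rangle}$) indexed by the orbit of $\gamma$ is the equivariant $A^*$-cohomology of a space carrying a free $\langle b\rangle$-action. By Lemma~\ref{lem:cohomology_free_action} (applied with $b$ a generator of $\langle b\rangle$, after restricting the module structure along $A^*_{\langle b\rangle}(\pt)\to A^*_{\langle b\rangle}(\pt)$, i.e. using that the $\langle b\rangle$-equivariant cohomology is supported away from the locus where $S$ is inverted), tensoring with $A^*_{\langle b\rangle}(\pt)[\tfrac1S]$ annihilates this summand: every element of $S$ is the first Chern class of a nontrivial representation of $\langle b\rangle$, which vanishes at the support point $1$ and hence kills the whole module by Lemma~\ref{lem:free}. (Strictly, Lemma~\ref{lem:cohomology_free_action} is stated for $A^*=E_n$; I would note that its proof only uses the identification $A^*_G(X)\cong A^*(X/G)$ for free actions plus the vanishing of nontrivial Chern classes at $1$, so it applies to any $A^*$ with the relevant orientation, or alternatively just restrict attention to $A^*=E_n$, which is the only case used later.) Thus the map sends every uneven component to zero, and since the even components are sent identically to their images, the map is a well-defined algebra homomorphism.

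The one point requiring genuine care — the main obstacle — is compatibility of the vanishing with the \emph{multiplication}: a priori a product of two even classes could land partly in an uneven component, or a product involving an uneven class could be nonzero in the source. Here the structure of the Hall/convolution product saves us: the product respects the grading by $\bbN^{\Gamma_0}/a$ (the multiplication $A_\gamma\otimes A_{\gamma'}\to A_{\gamma+\gamma'}$ adds dimension vectors), and the subspace spanned by uneven components is a two-sided ideal — if either factor is uneven then the composed correspondence maps to a union of $\Rep(\Gamma,\nu)$'s on which $\langle b\rangle$ still acts freely (a free action persists under the projection from the fiber product, since freeness is detected on $\bbN^{\Gamma_0}$-orbits and at least one coordinate orbit is free). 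Hence the localization of that ideal is $0$, the quotient by it is exactly the even part, and the induced map on the even part is an isomorphism onto its image compatible with multiplication. I would spell this out as a short lemma ("the span of uneven components is an ideal whose localization vanishes") and then the Proposition follows formally.
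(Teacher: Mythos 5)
Your proof is correct and follows essentially the same route as the paper: associativity is the standard convolution argument of Proposition~\ref{prop:conv_action} carried out $\langle b\rangle$-equivariantly and then preserved under base change along the central subalgebra $A^*_{\langle b\rangle}(\pt)$, and the vanishing of the uneven components is exactly the combination of Lemmas~\ref{lem:uneven_free} and \ref{lem:cohomology_free_action}. Your final paragraph is not actually needed for the statement as written — the localization map is automatically an algebra homomorphism for the induced multiplication on the target, and the uneven summands of the target simply vanish, so no separate argument that the uneven components form a two-sided ideal is required.
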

\begin{proof}
One observes that in the proof of associativity in either case \cite[\S~2.7.18]{CG} (see also Proposition~\ref{prop:conv_action} for the statement in the present generality) and \cite[Theorem~4.1]{YZ1}, the proof carries through $\langle b \rangle$-equivariantly and hence defines $A^*_{\langle b \rangle}$-algebras. Change of scalars with respect to the central subalgebra $A^*_{\langle b \rangle}$ preserves associativity. 

The statement about uneven components follows directly from Lemmas~\ref{lem:cohomology_free_action} and \ref{lem:uneven_free}.
\end{proof}

Examples are to be given  in \S~\ref{sec:mod}, where the algebra is identified explicitly.

\subsection{Two examples}
\label{sub:two exas}
Let $Q=(I, H)$ be a quiver. 
Let $\mu_l$ be the set of $l$-th roots of $1$ and $\zeta\in \mu_l$ be the  primitive $l$-th root of unity. 

We introduce two quivers with automorphism associated to $Q=(I, H)$. 
The first one is 
\begin{equation}\label{quiver_l}
Q_l=(I\times \mu_l,H_l,a),
\end{equation}
where 
\begin{enumerate}
\item the set of vertices of $Q_l$ is $I_l=I\times \mu_l=\{(i, r) \mid i\in I, r\in \mu_l\}$,
\item the set of edges is 
$H_l=\{ h_r: (i,r) \to (j,s) \mid  \text{if there is an edge $i \xrightarrow{h} j$ in $H$ and $r=\zeta^{1+m(h)} s$} \}$, where $m(h)+1$ is the weight of $h$.  
\item 
the automorphism is given as
$a: I_l\to I_l, (i, r) \mapsto (i, \zeta r)$, $a: H_l\to H_l, h_r\mapsto h_{\zeta r}$ 
\end{enumerate}
The second quiver with automorphism is 
\[
Q\times \mu_l=(I \sqcup \cdots \sqcup I, H \sqcup \cdots \sqcup H, a),  
\]
where 
\begin{enumerate}
\item the set of vertices of $Q\times \mu_l$ is $I \sqcup \cdots \sqcup I=I\times \mu_l=\{(i, r) \mid i\in I, r\in \mu_l\}$,
\item the set of edges is 
$H \sqcup \cdots \sqcup H=\{ h_r: (i,r) \to (j,s) \mid  \text{if there is an edge $i \xrightarrow{h} j$ in $H$ 
and $r=s$} \}$. 
\item 
the automorphism is given as
$a: (i, r) \mapsto (i, \zeta r)$, $a: h_r\mapsto h_{\zeta r}$ 
\end{enumerate}
The following pictures illustrate the two quivers with automorphism.
Let $Q$  be the following quiver. 
\[
\begin{tikzpicture}
  \draw ($(0,0)$) circle (.1);
   \draw ($(1,0)$) circle (.1);
  \draw [->] (0.1, 0) -- (0.9, 0);
   \filldraw (0,0) node[anchor=north, yshift=0.3cm, xshift=-1cm] {$Q:$};
   \filldraw (0,0) node[anchor=north, yshift=0.7cm, xshift=0cm] {$i$};
   \filldraw (1,0) node[anchor=north, yshift=0.7cm, xshift=0cm] {$j$};
    \filldraw (0.5,0) node[anchor=north, yshift=0.5cm, xshift=0cm] {$h$};
\end{tikzpicture}
\]
We then have
\[
\begin{tikzpicture}

  \draw ($(0,0)$) circle (.1);
   \draw ($(1,0)$) circle (.1);
  \foreach \x in {0,-1, -2, -3}{
	  \draw [->] (0.1, \x) -- (0.9, \x+1);
	}
  
    \draw ($(0,-1)$) circle (.1);
   \draw ($(1,-1)$) circle (.1);
      	\node at (0.5, -2) {$\vdots$};  
    \draw ($(0,-3)$) circle (.1);
   \draw ($(1,-3)$) circle (.1);	
   \filldraw (0,0) node[anchor=north, yshift=0.3cm, xshift=-1cm] {$Q_l:$};
   \filldraw (0,0) node[anchor=north, yshift=1.7cm, xshift=0cm] {$(i, r)$};
   \filldraw (1,0) node[anchor=north, yshift=1.7cm, xshift=0cm] {$(j, r)$};
    \filldraw (0.5,0) node[anchor=north, yshift=1.2cm, xshift=0cm] {$h_1$};
        \filldraw (0.5,-1) node[anchor=north, yshift=1.2cm, xshift=0cm] {$h_{\zeta}$};
                \filldraw (0.5,-3) node[anchor=north, yshift=0.3cm, xshift=0cm] {$h_{\zeta^{l-1}}$};
                  \foreach \x in {0,-1, -2}{                
                  \draw [->, red, out=-30, in=30, thick] (1.2, \x) to node[xshift=8]{$a$} (1.2,\x-0.9);}
                  \draw [->, red, out=180-60, in=180+60, thick] (-0.2, -3) to node[xshift=-7]{$a$} (-0.2, 0);
\end{tikzpicture}
\qquad\hbox{and}\qquad
\begin{tikzpicture}
  \draw ($(0,0)$) circle (.1);
   \draw ($(1,0)$) circle (.1);
  \draw [->] (0.1, 0) -- (0.9, 0);
    \draw ($(0,-1)$) circle (.1);
   \draw ($(1,-1)$) circle (.1);
     \draw [->] (0.1, -1) -- (0.9, -1);
      	\node at (0.5, -2) {$\vdots$};  
    \draw ($(0,-3)$) circle (.1);
   \draw ($(1,-3)$) circle (.1);
     \draw [->] (0.1, -3) -- (0.9, -3);	
   \filldraw (0,0) node[anchor=north, yshift=0.3cm, xshift=-1.5cm] {$Q\times \mu_l:$};
   \filldraw (0,0) node[anchor=north, yshift=0.7cm, xshift=0cm] {$(i, r)$};
   \filldraw (1,0) node[anchor=north, yshift=0.7cm, xshift=0cm] {$(j, r)$};
    \filldraw (0.5,0) node[anchor=north, yshift=0.5cm, xshift=0cm] {$h_1$};
        \filldraw (0.5,-1) node[anchor=north, yshift=0.5cm, xshift=0cm] {$h_{\zeta}$};
                \filldraw (0.5,-3) node[anchor=north, yshift=0.5cm, xshift=0cm] {$h_{\zeta^{l-1}}$};
                \foreach \x in {0,-1, -2}{                
                  \draw [->, red, out=-30, in=30, thick] (1.2, \x) to node[xshift=8]{$a$} (1.2,\x-0.9);}
                  \draw [->, red, out=180-60, in=180+60, thick] (-0.2, -3) to node[xshift=-7]{$a$} (-0.2, 0);
\end{tikzpicture}\]

Let $v=(v_{i, r})_{i\in I, r\in \mu_l} \in \N^{I\times \mu_l}$ be a dimension vector of $Q_l$. The dimension vector $a^*v$ on the vertex $a(i, r)=(i, \zeta r)$ is given as $(a^*v)_{a(i, r)}:=v_{i, r}$. 
For an $I\times \mu_l$-graded vector space $V$ with dimension vector $v$, 
let $a^*V$ be the vector space with dimension vector $a^*v$. That is, $a^*V$ is isomorphic to $V$ as vector spaces, but it has a different grading than that on $V$. For simplicity, we write $V_\zeta$ as the $I$-graded vector space obtained as  the direct summand of $V$ with the $\mu_l$-component $\zeta$.

The automorphism $a$ gives the pullback 
\[
a^*: \Rep(Q_l, v) \to \Rep(Q_l, a^*v), 
(V_{\out(h), r} \xrightarrow{x} V_{\inc(h), s})
\mapsto 
(a^*V_{\out(h), r} \xrightarrow{x} a^*V_{\inc(h), s}). 
\]

\subsection{Fixed point subvariety of the quiver variety}
Let $\mathfrak{M}(w)$ be the Nakajima quiver variety defined in \S~\ref{subsec:quiver variety}. Assume the framing vector $w=(w^i)_{i\in I}$ is divisible by $p$, that is, $p|w^i$ for each $i\in I$. 
Fix a group homomorphism
\begin{equation}\label{map:alpha}
\alpha: \Z/p\to \varmathbb{G}=\C^* \times \GL_{w}, 
\alpha(1)=(\zeta, g_0), 
\end{equation}
such that
\begin{itemize}
\item  $\zeta$ is a primitive $p$-th root of $1$, 
\item the eigenvalues of $g_0$ are all the $p$-th roots of $1$ and each has multiplicity $w/p$.
\end{itemize} 
One can write $g_0$ as the following diagonal matrix with $\id$ the identity matrix of size $w/p$
\[g_0=\begin{bmatrix}
\id &0& 0&\cdots &0\\
0& \zeta \id & 0&\cdots &0\\
0& 0& \zeta^2 \id&\cdots &0\\
&\vdots && \ddots&\vdots\\
0& 0&  0 & \cdots& \zeta^{p-1} \id \end{bmatrix}\]

Let $x\in \mathfrak{M}(w)^{\im \alpha}$. 
Take a representative $(B, i, j) \in \mu^{-1}(0)^s$ of $x$. 
We then have \cite[Section 4]{Nak}
\begin{align}
 &a \star (B, i, j)=\rho(a)^{-1} \cdot (B, i, j),\,\  \text{for any $a\in \im \alpha$} \label{eq:fix}
\end{align}
 The map $a\mapsto  \rho(a)$  is a homomorphism as the action of $\GL_V$ on $\mu^{-1}(0)^s$ is free. 
 Following \cite{Nak}, we have the decomposition
 \[
 \mathfrak{M}(w)^{\im \alpha}=\coprod_{\rho}  \mathfrak{M}[\rho],
 \]
 where $\rho$ runs the set of homomorphisms $\im \alpha\to \GL(v)$ (with various $v$) and $\mathfrak{M}(\rho)$ is a union of connected components of $ \mathfrak{M}(w)^{\im \alpha}$. 
Note that $\mathfrak{M}(\rho)$ depends only on the $\GL(v)$-conjugacy class of $\rho$.

For the fixed homomorphism $\alpha$ \eqref{map:alpha}, let 
\[b=\begin{bmatrix}
0&\id& 0&\cdots & 0 \\
0&0& \id&\cdots & 0 \\
&\vdots&& &\vdots\\
0&0& 0&\cdots & \id \\
\id&0& 0&\cdots & 0 \\
\end{bmatrix}
\]
be the block matrix where $\id$ is the identity matrix with size $w/p$. In particular, $b$ gives an identification 
\[
b: W_{\zeta^s}\xrightarrow{\cong} W_{\zeta^{s+1}}
\]
of the $g_0$-eigenspaces. 
The conjugation $bg_0 b^{-1}$ is given by $\zeta g_0$, whose eigenvalues
are obtained by a cyclic permutation of that of $g_0$. 

Let $\langle b\rangle\cong\Z/p$ be the cyclic group generated by $b$. 
Then, $\langle b\rangle$ is a subgroup in the normalizer of $C(\im \alpha)\subset \GL_w\times \C^*$. 
Indeed, $C(\im \alpha)=\prod_{s=0}^{p-1}(\GL_{w/p})\times \C^*$. 
The action of $b$ on an element $(A, t) \in C(\im \alpha)$ is given by
\[
\Big(bAb^{-1}=\begin{bmatrix}
A_1 &0& 0&\cdots &0\\
0& A_2 & 0&\cdots &0\\
0& 0& A_3&\cdots &0\\
&\vdots && \ddots&\vdots\\
0& 0&  0 & \cdots& A_{0} \end{bmatrix}, t\Big), \text{where $A=\begin{bmatrix}
A_0 & 0& 0&\cdots &0\\
0& A_1 & 0&\cdots &0\\
0& 0& A_2&\cdots &0\\
&\vdots && \ddots&\vdots\\
0& 0&  0 & \cdots& A_{p-1} \end{bmatrix}$}
\]
We now show there is an action of $\langle b\rangle \ltimes C(\im \alpha)$ on the fixed points $\mathfrak{M}(w)^{\im \alpha}$. 

\begin{lemma}\label{lem:C^* trivial action}
Let $\C^*\subset \GL_{w}, t\mapsto (t\id_{w})$ be the diagonal $\C^*$ which lies in the center of $\GL_{w}$. The action of this $\C^*$ on $\mathfrak{M}(w)$ is trivial. 
\end{lemma}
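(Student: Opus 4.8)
The plan is to unwind the description of $\mathfrak{M}(w)=\sqcup_{v}\mathfrak{M}(v,w)$ as a GIT quotient and to observe that the diagonal $\C^*\subset\GL_w$ acts on the space of framed representations in exactly the same way as a central one-parameter subgroup of the gauge group $\GL_v$, by which we are already quotienting; hence the induced action on $\mathfrak{M}(v,w)$ is trivial. First I would recall from \S\ref{subsec:quiver variety} that a point of $\mathfrak{M}(v,w)$ is the $\GL_v$-orbit of a $\theta^+$-stable tuple $(x,x^*,i,j)\in\mu_{v,w}^{-1}(0)^s$, with $x,x^*\in\Rep(\overline{Q^\heartsuit},v)$, $i\in\Hom(W,V)$, $j\in\Hom(V,W)$, where $\dim V=v$ and $\dim W=w$, and that by \S\ref{sec:torus_action} the $\GL_w=\GL(W)$-action on the framing commutes with the $\GL_v$-conjugation and preserves both the moment map equation and $\theta^+$-semistability, so it descends to $\mathfrak{M}(w)$.

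Next I would write down the two relevant actions explicitly and compare them. With Nakajima's conventions, the diagonal element $t\,\id_W\in\GL_w$ sends $(x,x^*,i,j)\mapsto(x,x^*,t^{-1}i,tj)$, fixing $x$ and $x^*$ since these only involve $V$. On the other hand the central one-parameter subgroup $\C^*\hookrightarrow\GL_v=\GL(V)$, $s\mapsto s\,\id_V$, acts by $(x,x^*,i,j)\mapsto(sxs^{-1},sx^*s^{-1},si,s^{-1}j)=(x,x^*,si,s^{-1}j)$, because conjugation by a scalar is trivial. Thus the action of $t\,\id_W$ agrees with the action of $t^{-1}\,\id_V\in\GL_v$. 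Both tuples therefore lie in the same $\GL_v$-orbit inside $\mu_{v,w}^{-1}(0)^s$, so they represent the same point of $\mathfrak{M}(v,w)$. Consequently the diagonal $\C^*\subset\GL_w$ acts trivially on each $\mathfrak{M}(v,w)$, hence on $\mathfrak{M}(w)$; the identical computation at the level of $\mu_{v,w}^{-1}(0)$ shows it acts trivially on the categorical quotient $\mathfrak{M}_0(w)$ as well.

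There is essentially no genuine obstacle here beyond bookkeeping: the only point requiring care is the sign/direction convention for how $\GL_w$ acts on the framing maps $i$ and $j$. However, the two competing scaling actions differ only by replacing $t$ with $t^{-1}$ in the $\GL_v$ slot, so the conclusion is insensitive to that choice; I would simply fix the convention of \S\ref{sec:torus_action} at the outset and carry it through. One should also double-check (routine) that $t\,\id_W$ preserves the stable locus, which follows because it acts as an element of $\GL_v$ and $\GL_v$ preserves $\mu_{v,w}^{-1}(0)^s$ by definition.
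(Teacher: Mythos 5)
Your proof is correct and follows exactly the paper's argument: the action of $t\,\id_W$ on a representative $(B,i,j)$ coincides with the action of the central element $t^{-1}\id_V\in\GL_v$, so the two tuples lie in the same $\GL_v$-orbit and define the same point of $\mathfrak{M}(w)$. The additional remarks on stability and conventions are fine but not needed beyond what the paper records.
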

\begin{proof}
For any $(B_h, i, j)\in \mathfrak{M}(w)$, $t\in \C^*$, we have
\[
t \star (B_h, i, j)=(B_h, t^{-1}i, tj)=(t^{-1}\id_{V})\cdot (B_h, i, j)
\]
Thus, $(B_h, i, j)$ is fixed under the action of $t$. 
\end{proof}

\begin{lemma}\label{lem:inv of M}
We have the equality 
$\mathfrak{M}(w)^{\im \alpha}=\mathfrak{M}(w)^{\im b\alpha b^{-1}}$. 
\end{lemma}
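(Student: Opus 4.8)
The plan is to reduce the asserted equality of fixed loci to the triviality of the central $\C^*$-action already established in Lemma~\ref{lem:C^* trivial action}. The key observation is that although $\im\alpha$ and $\im(b\alpha b^{-1})$ are a priori different subgroups of $\varmathbb{G}$, they act on $\mathfrak{M}(w)$ through exactly the same set of automorphisms.

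First I would compute the conjugate homomorphism. Since $b$ lies in the $\GL_w$-factor of $\varmathbb{G}$, conjugation by $b$ leaves the $\C^*$-coordinate untouched, so $(b\alpha b^{-1})(1)=(\zeta,\,bg_0b^{-1})=(\zeta,\,\zeta g_0)$ by the identity $bg_0b^{-1}=\zeta g_0$ recorded above. Raising to the $k$-th power for $0\le k\le p-1$ gives
\[
(b\alpha b^{-1})(k)=(\zeta^k,\,\zeta^k g_0^{\,k})=(\zeta^k,\,g_0^{\,k})\cdot(1,\,\zeta^k\id_w)=\alpha(k)\cdot(1,\,\zeta^k\id_w),
\]
so the two images $\im\alpha$ and $\im(b\alpha b^{-1})$ differ, element by element, only by multiplication by the scalar matrices $(1,\zeta^k\id_w)$, which sit in the diagonal central $\C^*\subset\GL_w$.

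Next I would invoke Lemma~\ref{lem:C^* trivial action}: each $(1,\zeta^k\id_w)$ acts trivially on $\mathfrak{M}(w)$, hence $(b\alpha b^{-1})(k)$ and $\alpha(k)$ induce the same automorphism of $\mathfrak{M}(w)$ for every $k$. Consequently a point of $\mathfrak{M}(w)$ is fixed by all of $\im\alpha$ if and only if it is fixed by all of $\im(b\alpha b^{-1})$, which is precisely the claim $\mathfrak{M}(w)^{\im\alpha}=\mathfrak{M}(w)^{\im b\alpha b^{-1}}$. There is no serious obstacle; the only point requiring care is that $b$ is regarded inside $\GL_w$ (so the quantum parameter $\zeta$ is not affected) and that $\mathfrak{M}(w)$ is formed using the $\GL_w$-action, so the matrices $(1,\zeta^k\id_w)$ are genuinely central and Lemma~\ref{lem:C^* trivial action} applies verbatim.
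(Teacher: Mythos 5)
Your proof is correct and follows essentially the same route as the paper: both arguments reduce the equality of fixed loci to the identity $bg_0b^{-1}=\zeta g_0$ together with Lemma~\ref{lem:C^* trivial action} on the triviality of the central scalar action. The only cosmetic difference is that you verify the agreement of the two actions power by power, whereas the paper works directly with the generator $(\zeta,g_0)$, which suffices since the fixed locus of a cyclic group equals that of any generator.
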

\begin{proof}
By Lemma \ref{lem:C^* trivial action}, the action of $\xi\id_{w}$ on $\mathfrak{M}(w)$ is trivial. 
The claim follows from the following identity
\[
\mathfrak{M}(w)^{\im b\alpha b^{-1}}=\mathfrak{M}(w)^{(\zeta, bg_0 b^{-1})}
=\mathfrak{M}(w)^{(\zeta, \zeta g_0)}=\mathfrak{M}(w)^{(\zeta, g_0)}=\mathfrak{M}(w)^{\im \alpha}. 
\]
\end{proof}
As a consequence, the action of $\langle b\rangle$ on $\mathfrak{M}(w)^{\im \alpha}$ is given by 
\[
 b: \mathfrak{M}(w)^{\im \alpha}\to \mathfrak{M}(w)^{\im b\alpha b^{-1}}=\mathfrak{M}(w)^{\im \alpha}.
\]

By $C(\im \alpha)=C(\im b\alpha b^{-1})$ and Lemma \ref{lem:inv of M}, 
we have the following commutative diagram
\begin{equation}
\label{eq:action of b}
\xymatrix{
C(\im \alpha) \times \mathfrak{M}(w)^{\im\alpha} \ar[r] \ar[d]^{(b(-)b^{-1}, b)}& \mathfrak{M}(w)^{\im\alpha} \ar[d]^{b}\\
C(\im \alpha ) \times \mathfrak{M}(w)^{\im \alpha } \ar[r]& \mathfrak{M}(w)^{\im  \alpha }\\
}
 \,\ \text{given by}\,\
 \xymatrix{
 (a, x)\ar@{|->}[r] \ar@{|->}[d] & ax \ar@{|->}[d]\\
 (bab^{-1}, bx) \ar@{|->}[r]& bax\\
 }
\end{equation}
Thus, we have an action
\[
\big(\langle b\rangle\ltimes C(\im \alpha) \big)\times \mathfrak{M}(w)^{\im\alpha}  \to \mathfrak{M}(w)^{\im\alpha}. 
\]

\subsection{Cyclic quiver variety and the fixed point subvariety}
In this section, we recall the \textit{cyclic} quiver variety defined in \cite[Section 4]{Nak04}. 
Let $Q$ be a quiver, and  $\overline{Q^{\heartsuit}}$ the doubled framed quiver, we have the quiver with automorphism 
$(\overline{Q^{\heartsuit}})_l$ as in \eqref{quiver_l}.  
Let $V, W$ be the $I\times \mu_l$-graded vector spaces with dimension vectors 
$(v, w)=(v_{i, r}, w_{i, r})_{i\in I, r\in \mu_l}$. Denoted by $V_{i, r}$ its $i\times r$-component. 
Define vector spaces
\begin{align*}
&L^{\bullet}(V, W)^{[n]}:=\oplus_{i\in I, r\in \mu_l} \Hom(V_{i, r}, W_{i, r\zeta^n})\\
&E^{\bullet}(V, W)^{[n]}:=\oplus_{h\in H, r\in \mu_l} \Hom(V_{\out(h), r}, W_{\inc(h), r\zeta^n})\\
& M^{\bullet}(V, W):=
E^{\bullet}(V, V)^{[-1]}\oplus L^{\bullet}(W, V)^{[-1]}\oplus 
L^{\bullet}(V, W)^{[-1]}. 
\end{align*}
Define the moment map $\mu_l:  M^{\bullet}(V, W)\to L^{\bullet}(V, V)^{[-2]}$ by
\[
(\mu_l)_{k, r}(B, i, j)\mapsto \sum_{\inc(h)=k}\epsilon(h) B_{h, r\zeta^{-1}} B_{h^*, \zeta}+i_{k, r\zeta^{-1}} j_{k, r}, 
\]
where $(\mu_l)_{k, r}$ is the $(k, r)$-component of $\mu_l$. The group $\prod_{k, r}\GL(V_{k, r})$ acts on $M^{\bullet}$ by conjugation as follows
\[
(B, i, j)\mapsto 
g\cdot (B, i, j)=
(g_{\inc(h), r\zeta^{-1}}B_{h, r}g_{\out(h), r}^{-1}, g_{k, a\zeta^{-1}i_{k, r}}, j_{k, r}g_{\out(h), r}^{-1}). 
\]
Define the cyclic quiver variety to be the quotient
\[
\mathfrak{M}^{\cyc}(v, w):=(\mu_l)_{v, w}^{-1}(0)^{s}/\prod_{k, r}\GL(V_{k, r}). 
\]

Recall $\mathfrak{M}[\rho]\subset \mathfrak{M}^{\im \alpha}$. We regard $V$ as an $\im \alpha$-module via $\rho: \im \alpha\to \GL(V)$ and consider the weight space decomposition of $V$ as
\[
V=\oplus_{s} V_{\zeta^s}, 
\text{where $V_{\zeta^s}:=\{v\in V\mid \rho(1) v=\zeta^s v\}$}. 
\]
We say a component of $\mathfrak{M}[\rho]$ is an even component if
\[
\dim(V_{\zeta^0})=\dim(V_{\zeta^s})=\cdots=\dim(V_{\zeta^{p-1}}).
\]

The following is basically from \cite[\S~4.1]{Nak}, which we briefly review so that we can keep track of the quiver automorphism.
\begin{prop}\label{prop:Nakajima}
(1). 
There is an isomorphism of varieties
 \[
 \mathfrak{M}^{\im \alpha} \cong  \mathfrak{M}^{\cyc}(v, w).
\] Furthermore, the above isomorphism commutes with the actions of $C(\im \alpha)\cong \GL(w)\times\bbC^*$, where $C(\im \alpha)\subset \prod_{i} \GL(\oplus_{r\in \mu_l} W_{i, r})\times \C^*$ is the centralizer of $\im \alpha$ and $\GL(w)=\prod_{i, r} \GL(W_{i, r})$, and sends an even component on one side to an even component on the other side and hence induces $\mathfrak{M}^{\im \alpha}|_{\even} \cong  \mathfrak{M}^{\cyc}(v, w)|_{\even} $. 

(2). The isomorphism $ \mathfrak{M}^{\im \alpha} \cong  \mathfrak{M}^{\cyc}(v, w)$ is compatible with the $\langle b\rangle$-actions on both sides. 
\end{prop}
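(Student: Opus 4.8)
The plan is to follow Nakajima's analysis of the fixed locus of a quiver variety under a finite cyclic group, carrying the quiver automorphism $a$ along at each stage. Throughout one specializes $l=p$ in \eqref{quiver_l}, so that $\mu_p$ is identified with $\im\alpha$ and $\zeta$ is the chosen primitive $p$-th root of $1$.

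First I would extract the cyclic-quiver data from a fixed point. Given $x\in\mathfrak{M}(w)^{\im\alpha}$ with representative $(B,i,j)\in\mu_{v,w}^{-1}(0)^s$, equation \eqref{eq:fix} produces a homomorphism $\rho:\im\alpha\to\GL(V)$, well-defined up to conjugacy, giving the decomposition $\mathfrak{M}(w)^{\im\alpha}=\coprod_\rho\mathfrak{M}[\rho]$. Fixing $\rho$, decompose $V=\bigoplus_{s}V_{\zeta^s}$ into $\rho(1)$-eigenspaces; the framing space already carries its $g_0$-eigenspace decomposition $W=\bigoplus_s W_{\zeta^s}$, each summand of dimension $w/p$ by the choice of $\alpha$ in \eqref{map:alpha}. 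Writing the condition $a\star(B,i,j)=\rho(1)^{-1}\cdot(B,i,j)$ out componentwise --- using that $\C^*$ scales $B_h$ by $\zeta^{m(h)+1}$ and $i,j$ by $\zeta$ (\S~\ref{sec:torus_action}) --- shows that each of $B_h,i,j$ shifts the $\mu_p$-grading by exactly the amount built into the edge set of $(\overline{Q^\heartsuit})_p$. Thus $(B,i,j)$ together with the two gradings is precisely an element of $M^\bullet(V,W)$ in the notation used to define $\mathfrak{M}^{\cyc}(v,w)$, i.e. a representation of the cyclic doubled framed quiver with dimension vector $(v,w)$.

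Next I would match the remaining structures. Decomposing the single equation $\mu_{v,w}(B,i,j)=0$ along the $\mu_p$-grading of $\mathfrak{gl}_v$ yields exactly the system $(\mu_l)_{k,r}(B,i,j)=0$ cutting out $(\mu_l)_{v,w}^{-1}(0)$, and the centralizer $Z_{\GL(V)}(\rho)=\prod_{k,r}\GL(V_{k,r})$ acts through the displayed conjugation formula, compatibly with the ambient $\GL(V)$-action. Then, as in \cite[\S~4.1]{Nak}, one checks that a $\rho$-fixed point is $\theta^+$-stable for $\GL(V)$ precisely when the associated cyclic-quiver representation lies in $(\mu_l)_{v,w}^{-1}(0)^s$; taking $Z_{\GL(V)}(\rho)$-quotients gives $\mathfrak{M}[\rho]\cong\mathfrak{M}^{\cyc}(v,w)$, and summing over conjugacy classes of $\rho$ gives the isomorphism of (1). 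For the equivariance statement I would note $C(\im\alpha)=\C^*\times Z_{\GL_w}(g_0)=\prod_{i,r}\GL(W_{i,r})\times\C^*$, acting on $\mathfrak{M}(w)^{\im\alpha}$ by restriction of the $\GL_w\times\C^*$-action; since this action touches only the $W_{i,r}$ through the framing maps and the scaling of \S~\ref{sec:torus_action}, it transports to the tautological $\GL(w)\times\C^*$-action on $\mathfrak{M}^{\cyc}(v,w)$. The ``even'' condition $\dim V_{\zeta^0}=\cdots=\dim V_{\zeta^{p-1}}$ translates into ``$v$ is independent of the $\mu_p$-coordinate'', i.e. ``$v$ is $a$-fixed'', so even components correspond to even components and $\mathfrak{M}(w)^{\im\alpha}|_{\even}\cong\mathfrak{M}^{\cyc}(v,w)|_{\even}$ follows. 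For (2): by Lemma~\ref{lem:inv of M} the element $b$ genuinely preserves $\mathfrak{M}(w)^{\im\alpha}$ and acts there via \eqref{eq:action of b}; since $bg_0b^{-1}=\zeta g_0$, conjugation by $b$ carries $W_{\zeta^s}$ isomorphically to $W_{\zeta^{s+1}}$ and, replacing $\rho$ by $b\rho b^{-1}$, carries $V_{\zeta^s}$ to $V_{\zeta^{s+1}}$, which under the identification above is exactly the quiver automorphism $a$ cycling the $\mu_p$-factor --- that is, the $\langle b\rangle$-action on $\mathfrak{M}^{\cyc}(v,w)$ of \S~\ref{subsec:rep_auto}. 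Because the identification was built solely from the eigenspace decompositions, it is manifestly compatible with the two $\langle b\rangle$-actions.

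The main obstacle is the stability comparison: verifying, as in \cite[\S~4.1]{Nak}, that $\theta^+$-stability of $(B,i,j)$ with respect to $\GL(V)$ is detected on the cyclic-quiver representation, so that the GIT quotient is really $\mathfrak{M}^{\cyc}(v,w)$ and not a larger or smaller space. Second in difficulty is the bookkeeping of the $\C^*$-weights $m(h)+1$ against the $\zeta$-shifts in \eqref{quiver_l} and the cohomological shifts $[-1],[-2]$ built into $L^\bullet,E^\bullet,M^\bullet(V,W)$; once these sign conventions are pinned down, the rest is formal.
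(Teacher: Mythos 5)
Your proposal is correct and follows essentially the same route as the paper: eigenspace decomposition of $V$ and $W$, componentwise translation of the fixed-point condition \eqref{eq:fix} into the cyclic-quiver data, deferral of the moment-map and stability comparison to \cite[\S~4.1]{Nak}, and identification of the $b$-action (via the cyclic permutation $b_V$ on $V$) with the quiver automorphism $a$. The ``main obstacle'' you flag --- the stability comparison --- is likewise treated in the paper only by citation to Nakajima, so no further work is needed there.
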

Here recall that $\mathfrak{M}^{\im \alpha}|_{\even}$ is the union of all the even components of $\mathfrak{M}^{\im \alpha}$. 
\begin{proof}
We prove (1). Recall we have the decomposition  $\mathfrak{M}(w)^{\im \alpha}=\coprod_{\rho}  \mathfrak{M}[\rho]$. Take a representative $(B, i, j) \in \mu^{-1}(0)^s$ of $x\in \mathfrak{M}[\rho]$.
Let  
\[
W=\oplus_{s} W_{\zeta^s}, 
\text{where $W_{\zeta^s}:=\{w\in W\mid g_0 w=\zeta^s w\}$}.  
\] be the weight space decomposition of $W$ under the action of $\im \alpha$. 
 
We have the decomposition of $V$ as
\[
V=\oplus_{s} V_{\zeta^s}, 
\text{where $V_{\zeta^s}:=\{v\in V\mid \rho(1) v=\zeta^s v\}$}. 
\]
The triple $(B, i, j)$ satisfies \eqref{eq:fix} if and only if 
$i$ maps $W_{\zeta^s}$ to $V_{\zeta^{s-1}}$, 
$j$ maps $V_{\zeta^s}$ to $W_{\zeta^{s-1}}$, and 
$B_h$ maps $V_{\zeta^s}^{(\out(h))}$ to $V_{\zeta^{s-(m(h)+1)}}^{(\inc(h))}$. 
We illustrate this condition (of $i, j$) in the following picture. 
\[
\begin{tikzpicture}
\foreach \x in {-1,...,4}{
	  \draw ($(\x,1.1)$) circle (.1);
	}
	\foreach \x in {-1,...,4}{	
	    \filldraw (\x,-0.1) node {$\square$};
	    }
	
\foreach \x in {0,...,4}{
	   \draw [red, ->] (\x-0.05, 0) -- (\x-0.95, 1);
	}
\foreach \x in {-1,...,3}{	
	    \filldraw (\x+0.5,0.5) node[anchor=north, yshift=0.7cm, xshift=-0.2cm, red] {$i$};
	    }
\foreach \x in {-1,...,3}{
	   \draw [blue, <-] (\x+0.05, 0) -- (\x+0.95, 1);
	}
	\foreach \x in {-1,...,3}{	
	    \filldraw (\x+0.5,0.5) node[anchor=north, yshift=0cm, xshift=-0.2cm, blue] {$j$};
	    }
\end{tikzpicture}
\]
As a consequence, the restriction of $(B, i, j)$ on the weight spaces gives an element in $\mathfrak{M}^{\cyc}(v, w)$. 

Now we prove (2). 
We analyze the quiver automorphism $a$ and the action of $b$ on $\mathfrak{M}(w)^{\im \alpha}$. 
For $b\in \GL(W)$, the action of $b$ on $(B_h, i, j)\in \mathfrak{M}[\rho]$ is given as 
\[
b\star (B_h, i, j)=(B_h, i \circ b^{-1}, b\circ j)=b_V\cdot (B_h, i \circ b^{-1}, b\circ j)=(b_VB_hb_V^{-1}, b_V i  b^{-1}, b j  b_V^{-1}), 
\]
where $b_V: \oplus_{s=0}^{p-1} V_{\zeta^s}\to  \oplus_{s=0}^{p-1} V_{\zeta^{s+1}}$ is the cyclic permutation 
\[
b_V=\begin{bmatrix}
0&\id_{V_{\zeta}}& 0&\cdots & 0 \\
0&0& \id_{V_{\zeta^2}}&\cdots & 0 \\
&\vdots&& &\vdots\\
0&0& 0&\cdots & \id_{V_{\zeta^{p-1}}} \\
\id_{V_1}&0& 0&\cdots & 0 \\
\end{bmatrix},
\] which permutes the direct summands as follows. 
\[
\begin{tikzpicture}
   \filldraw (0,0) node {$V_1$};
    \filldraw (1,0) node {$V_{\zeta}$};
    \filldraw (2,0) node {$\cdots$};
      \filldraw (3,0) node {$V_{\zeta^{p-1}}$};
       \filldraw (0,-1) node {$V_{\zeta}$};
    \filldraw (1,-1) node {$V_{\zeta^2}$};
    \filldraw (2,-1) node {$\cdots$};
      \filldraw (3,-1) node {$V_{1}$};
  \foreach \x in {0,1, 2}{
	 \filldraw (\x+0.5,0) node {$\oplus$};
	  \filldraw (\x+0.5,-1) node {$\oplus$};
	}
   \foreach \x in {1, 2, 3}{
     \draw [->] (\x, -0.2) -- (\x-1, -0.8);
	} 
	     \draw [dashed, ->] (0, -0.2) -- (3, -0.7);

\end{tikzpicture}
\]
For the morphism $i: \oplus_{s=0}^{p-1} W_{\zeta^s} \to \oplus_{s=0}^{p-1} V_{\zeta^s}$ such that $i(W_{\zeta^s})\subset V_{\zeta^{s-1}}$. The conjugation $b_V i  b^{-1}$ is given as 
\[
\begin{tikzpicture}
   \filldraw (0,2) node {$W_{\zeta}$};
    \filldraw (1,2) node {$W_{\zeta^2}$};
    \filldraw (2,2) node {$\cdots$};
      \filldraw (3,2) node {$W_{\zeta^{1}}$};

   \filldraw (0,1) node {$W_1$};
    \filldraw (1,1) node {$W_{\zeta}$};
    \filldraw (2,1) node {$\cdots$};
      \filldraw (3,1) node {$W_{\zeta^{p-1}}$};

   \filldraw (0,0) node {$V_1$};
    \filldraw (1,0) node {$V_{\zeta}$};
    \filldraw (2,0) node {$\cdots$};
      \filldraw (3,0) node {$V_{\zeta^{p-1}}$};
      
     \filldraw (0,-1) node {$V_{\zeta}$};
    \filldraw (1,-1) node {$V_{\zeta^2}$};
    \filldraw (2,-1) node {$\cdots$};
      \filldraw (3,-1) node {$V_{1}$};

  \foreach \x in {0,1, 2}{
	 \filldraw (\x+0.5,0) node {$\oplus$};
	  \filldraw (\x+0.5,-1) node {$\oplus$};
	   \filldraw (\x+0.5,1) node {$\oplus$};
	    \filldraw (\x+0.5,2) node {$\oplus$};
	}
   \foreach \x in {0, 1, 2}{
     \draw [->] (\x, 1.8) -- (\x+1, 1.2);}
       \draw [dashed, ->] (3, -0.2+2) -- (0, -0.7+2);
       
        \foreach \x in {1, 2, 3}{
     \draw [->] (\x, -0.2+1) -- (\x-1, -0.8+1);
          \draw [->] (\x, -0.2+1) -- (\x-1, -0.8+1);
	} 
   \foreach \x in {1, 2, 3}{
     \draw [->] (\x, -0.2) -- (\x-1, -0.8);
	} 
	     \draw [dashed, ->] (0, -0.2) -- (3, -0.7);
  \draw [dashed, ->] (0, -0.2+1) -- (3, -0.7+1);
 \filldraw (-1,1.5) node {$b^{-1}$};
 \filldraw (-1,0.5) node {$i$};
 \filldraw (-1,-0.5) node {$b_V$};
\end{tikzpicture}
\]
Therefore, we have $b_V i  b^{-1}=a^*(i)$. Similar equality holds for $j$ and $B_h$. 
Therefore, we have, $a^*(B_h, i, j)=b\star (B_h, i, j)$.
\end{proof}

\subsection{Quiver varieties associated to $(\overline{Q^{\heartsuit}})_l$ and $\overline{Q^{\heartsuit}}\times \mu_l$}
In this subsection, we compare the cyclic quiver varieties associated to the two quivers with automorphisms from Example \ref{sub:two exas}. 

In $\mathfrak{M}^{\cyc}_{(\overline{Q^{\heartsuit}})_l}(w)$, we consider the following locus consisting of those
$
(B_h, i, j)=((B_h)_s, i_s, j_s)_{s=0, \cdots,p-1}
$
with
\[
(B_h)_s: V_{\zeta^s}^{\out(h)} \to V_{\zeta^{s-(m(h)+1)}}^{\inc(h)}, 
i_s: W_{\zeta^s}^{(k)} \to V_{\zeta^{s-1}}^{(k)}, 
j_s: V_{\zeta^s}^{(k)} \to W_{\zeta^{s-1}}^{(k)}, 
\]
satisfy the commutative diagrams
\[
\xymatrix{
V_{\zeta^s}^{\out(h)} \ar[r]^{(B_h)_s}\ar[d]^{b} & V_{\zeta^{s-(m(h)+1)}}^{\inc(h)} \ar[d]^{b}
\\
V_{\zeta^{s+1}}^{\out(h)} \ar[r]^{(B_h)_{s+1}} & V_{\zeta^{s-m(h)}}^{\inc(h)}
}
\,\ \,\ 
\xymatrix{
W_{\zeta^s}^{(k)} \ar[r]^{i_s}\ar[d]^{b} & V_{\zeta^{s-1}}^{(k)} \ar[d]^{b}
\\
W_{\zeta^{s+1}}^{(k)} \ar[r]^{i{s+1}} & V_{\zeta^{s}}^{(k)}
}
\,\ \,\ 
\xymatrix{
V_{\zeta^s}^{(k)} \ar[r]^{j_s}\ar[d]^{b} & W_{\zeta^{s-1}}^{(k)} \ar[d]^{b}
\\
V_{\zeta^{s+1}}^{(k)} \ar[r]^{j_{s+1}} & W_{\zeta^{s}}^{(k)}
}
\]
We denote this locus by $\mathfrak{M}^{\cyc}_{(\overline{Q^{\heartsuit}})_l}(w)|_{\Delta}$. In other words, it is the $\langle b \rangle$-fixed points. 
It is straightforward to verify that the diagonal $\GL(w/p)\subset \prod_p \GL(w/p)$ and $\C^*$ act on $\mathfrak{M}^{\cyc}_{(\overline{Q^{\heartsuit}})_l}(w)|_{\Delta}$. 

It is clear that $
\mathfrak{M}^{\cyc}_{\overline{Q^{\heartsuit}}\times \mu_l}(w)\cong \prod_{p} \mathfrak{M}(w/p)$. 
There is an diagonal embedding 
$\mathfrak{M}(w/p) \xrightarrow{\Delta} \prod_{p} \mathfrak{M}(w/p)$. 

\begin{prop}\label{lem:iso to products}
\begin{enumerate}
\item
We have
\begin{align*}
\mathfrak{M}^{\cyc}_{(\overline{Q^{\heartsuit}})_l}(w)|_{\Delta}
\cong
\mathfrak{M}^{\cyc}_{\overline{Q^{\heartsuit}}\times \mu_l}(w)|_{\Delta}
\cong \mathfrak{M}(w/p)
. 
\end{align*}
\item
The above isomorphisms are compatible with the action of 
$\GL(w/p)\times \C^*$. 
\end{enumerate}
\end{prop}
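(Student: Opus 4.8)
The plan is to produce isomorphisms $\mathfrak{M}^{\cyc}_{(\overline{Q^\heartsuit})_l}(w)|_\Delta\cong\mathfrak{M}(w/p)$ and $\mathfrak{M}^{\cyc}_{\overline{Q^\heartsuit}\times\mu_l}(w)|_\Delta\cong\mathfrak{M}(w/p)$ and to compose them; throughout $l=p$, and passing to the $\langle b\rangle$-fixed loci forces the weight spaces $V_{\zeta^s}$ and $W_{\zeta^s}$ to share a common dimension vector $(v/p,w/p)$. The $\overline{Q^\heartsuit}\times\mu_l$ side is the easy one: starting from the isomorphism $\mathfrak{M}^{\cyc}_{\overline{Q^\heartsuit}\times\mu_l}(w)\cong\prod_p\mathfrak{M}(w/p)$ recorded above, with $s$-th factor built from $(V_{\zeta^s},W_{\zeta^s})$, the generator $b$ — combined with the fixed identifications $V_{\zeta^s}\isom V_{\zeta^{s+1}}$, $W_{\zeta^s}\isom W_{\zeta^{s+1}}$ with $b^p=\id$ from \S\ref{subsec:rep_auto} — acts by the cyclic shift $(x_0,\dots,x_{p-1})\mapsto(x_{p-1},x_0,\dots,x_{p-2})$, so its fixed locus is exactly the image of $\Delta$ and is carried isomorphically to $\mathfrak{M}(w/p)$. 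As the diagonal $\GL(w/p)\subset\prod_p\GL(w/p)$ and the extra $\C^*$ act factorwise and preserve $\Delta$, this isomorphism is $\GL(w/p)\times\C^*$-equivariant, giving part (2) here.

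For $(\overline{Q^\heartsuit})_l$ I would construct an explicit ``untwisting'' map. A point of $\mathfrak{M}^{\cyc}_{(\overline{Q^\heartsuit})_l}(w)|_\Delta$ is represented by $\langle b\rangle$-equivariant data $\big((B_h)_s,i_s,j_s\big)$ fitting into the three commuting squares preceding the statement, with $(B_h)_s\colon V^{\out(h)}_{\zeta^s}\to V^{\inc(h)}_{\zeta^{s-(m(h)+1)}}$, $i_s\colon W_{\zeta^s}\to V_{\zeta^{s-1}}$ and $j_s\colon V_{\zeta^s}\to W_{\zeta^{s-1}}$; by $b$-equivariance all components are determined by those with $s=0$. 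Post-composing each target with the appropriate power of $b$, namely $(B'_h)_s:=b^{m(h)+1}\circ(B_h)_s$, $i'_s:=b\circ i_s$ and $j'_s:=b\circ j_s$, yields $\langle b\rangle$-equivariant data of the shape $V^{\out(h)}_{\zeta^s}\to V^{\inc(h)}_{\zeta^s}$, $W_{\zeta^s}\to V_{\zeta^s}$, $V_{\zeta^s}\to W_{\zeta^s}$ — i.e., after restricting to weight $\zeta^0$, a representation of $\overline{Q^\heartsuit}$ on $(V_{\zeta^0},W_{\zeta^0})$. Since only the targets get relabelled, the $\theta^+$-stable locus is preserved, as is the relevant symmetry group — the $b$-equivariant part of $\prod_{k,r}\GL(V_{k,r})$, which is the diagonal $\prod_k\GL(V_{k,\zeta^0})\cong\GL(v/p)$ — so the substitution descends to an isomorphism $\mathfrak{M}^{\cyc}_{(\overline{Q^\heartsuit})_l}(w)|_\Delta\cong\mathfrak{M}(w/p)$, inverted by multiplying the edge maps by inverse powers of $b$. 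For part (2) here, the extra $\C^*$ and the diagonal $\GL(w/p)=\prod_k\GL(W_{k,\zeta^0})$ act on the weight-$\zeta^0$ data exactly as in \S\ref{sec:torus_action}, and the substitution is equivariant because the exponent $m(h)+1$ of $b$ matches the $\C^*$-weight $m(h)+1$ of $B_h$.

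An alternative for the $(\overline{Q^\heartsuit})_l$ side is to invoke Proposition~\ref{prop:Nakajima}, which identifies $\mathfrak{M}^{\cyc}_{(\overline{Q^\heartsuit})_l}(w)|_\Delta$ with the $\langle b\rangle$-fixed locus of $\mathfrak{M}(w)^{\im\alpha}$; since the central $\zeta\,\id_w$ acts trivially on $\mathfrak{M}(w)$ by Lemma~\ref{lem:C^* trivial action}, so that $b$ and $g_0$ commute modulo elements acting trivially, this locus consists of the representations equivariant for both $b$ and $g_0$, which are pinned down by a single weight space and reproduce $\mathfrak{M}(w/p)$. In either route, the one step that is not pure bookkeeping is checking that $(B_h)_s\mapsto b^{m(h)+1}(B_h)_s$, together with the matching substitutions for $i$ and $j$, carries the twisted cyclic moment-map equation of $(\overline{Q^\heartsuit})_l$ to the ordinary equation $\sum[x,x^*]+ij=0$: this rests on $m(h)+m(h^*)=0$, so that the powers of $b$ inserted on $B_h$ and $B_{h^*}$ combine to the overall shift dictated by the $[-2]$-degree of the cyclic moment map, and it is precisely where the edge-labelling convention $r=\zeta^{1+m(h)}s$ defining $Q_l$ is used. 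I expect this verification to be essentially the whole content of the argument, the rest being the linear algebra of cyclic $\langle b\rangle$-equivariance.
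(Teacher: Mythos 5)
Your proposal is correct and follows essentially the same route as the paper: the paper also realizes the isomorphism by post-composing $(B_h)_s$, $i_s$, $j_s$ with the powers $b^{m(h)+1}$, $b^{2}$, $b^{0}$ of the cyclic operator to untwist the data onto a single weight space (your choice of pairing $(V_{\zeta^s},W_{\zeta^s})$ and inserting $b,b$ on $i,j$ instead of $b^2,1$ is just a different but equivalent normalization, and both make the two terms of the moment map acquire the same overall twist $b^2$ since $m(h)+m(h^*)=0$), and then checks $\GL(w/p)\times\C^*$-equivariance by the same direct computation, which ultimately rests on $b$ commuting with the diagonal subgroup. No gaps.
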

\begin{proof}
We prove (1). Define a map 
\[
\mathfrak{M}^{\cyc}_{(\overline{Q^{\heartsuit}})_l}(w)|_{\Delta}
\to
\prod_{p} \mathfrak{M}(w/p)|_{\Delta}. 
\] by assigning to $((B_h)_s, i_s, j_s)$ the following compositions
\begin{align}
& 
V_{\zeta^{s+1}}^{(k)} \xrightarrow{j_{s+1}} W_{\zeta^{s}}^{(k)}, \,\ \,\
W_{\zeta^{s}}^{(k)}
\xrightarrow{i_s}
V_{\zeta^{s-1}}^{(k)}
\xrightarrow{b^{2}}
V_{\zeta^{s+1}}^{(k)}, \,\ \,\
V_{\zeta^{s+1}}^{(\out(h))} \xrightarrow{(B_h)_{s+1}} V_{\zeta^{s-m(h)}}^{(\inc(h))}
\xrightarrow{b^{m(h)+1}}
V_{\zeta^{s+1}}^{(\inc(h))}. \label{eq:comp}
\end{align}
It is straightforward to see that this gives an isomorphism. 

Now we prove  (2). Recall the action of $\GL(w/p)\times \C^*$ on $\mathfrak{M}^{\cyc}_{(\overline{Q^{\heartsuit}})_l}(w)$ is given as 
\[
(B_h, i, j)\mapsto  (t^{m(h)+1} B_h, t   i g, t g^{-1}j), \text{where $(g, t)\in \GL(w/p)\times \C^*$}.
\] 
We have the following diagram of the actions 
\[
\begin{xymatrix}
{
((g, t),  (B_h, i, j)) \ar@{|->}[r]  \ar@{|->}[d] & ((g, t), (b^{m(h)+1}B_h , b^2 i , j))
\ar@{|->}[d] \\
(t^{m(h)+1} B_h, t   i g, t g^{-1}j)
\ar@{|->}[r] & 
(t^{m(h)+1}b^{m(h)+1}B_h , t b^2 i g, t g^{-1}j)
}
\end{xymatrix}
\]
As a consequence, the isomorphism in (1) is compatible with respect to the $\GL(w/p)\times \C^*$ action. 
\end{proof}

\subsection{Restriction to the diagonal}
Let $i_\Delta:\mathfrak{M}(w/p)\to \mathfrak{M}(w)^{\im \alpha}$ be the embedding, obtained via composing the isomorphism from Proposition~\ref{lem:iso to products} with the embedding $\mathfrak{M}(w)^{\im \alpha}|_\Delta\subseteq \mathfrak{M}(w)^{\im \alpha}$, which without causing confusions we also refer to as the diagonal embedding.
Let $H$ be a finite subgroup of $\GL(w/p)\times\bbC^*$. Note that the action of $H$ on $\fM(w)^{\im \alpha}$ comes from the composition of the isomorphism in  Proposition~\ref{lem:iso to products} and that in Proposition~\ref{prop:Nakajima}. Explicitly this comes from the diagonal embedding of $\GL(w/p)\times\bbC^*\inj \GL(w)\times\bbC^*$ with image landing in $C(\im \alpha)\cong \prod_p \GL(w/p)\times\bbC^*$. Recall there is an action of $\langle b\rangle\cong\Z/p$ on $\fM(w)^{\im \alpha}$.
\begin{lemma}\label{lem:diag_power}
The action of $\langle b\rangle$ is free on the complement of $\mathfrak{M}(w)^{\im \alpha}|_\Delta$ in $\mathfrak{M}(w)^{\im \alpha}$, denoted by $\mathfrak{M}(w)^{\im \alpha}\setminus\Delta$. In particular, 
\begin{enumerate}
    \item pulling back  $i_\Delta^*:E^*_{\langle b\rangle\times H}(\mathfrak{M}(w)^{\im \alpha})\to E^*_{\langle b\rangle\times H}(\mathfrak{M}(w)^{\im \alpha}|_\Delta)$ becomes an isomorphism after applying $-\otimes_{E^*_{\langle b\rangle}}\Phi_1$;
    \item similarly, $i_\Delta^*:E^*_{\langle b\rangle\times H}(Z(w)^{\im \alpha})\to E^*_{\langle b\rangle\times H}(Z(w)^{\im \alpha}|_\Delta)$ becomes an isomorphism after applying $-\otimes_{E^*_{\langle b\rangle}}\Phi_1$;
    \item the Euler class of the normal bundle of the embedding $\fM(w)^{\im \alpha}|_\Delta\inj\mathfrak{M}(w)^{\im \alpha}$ becomes invertible  after applying $-\otimes_{E^*_{\langle b\rangle}}\Phi_1^L$.
\end{enumerate}

\end{lemma}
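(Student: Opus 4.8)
The plan is to reduce everything to two things already in hand: the elementary fact that an action of $\langle b\rangle\cong\Z/p$, $p$ prime, has no proper nontrivial subgroups, hence is free away from its fixed locus; and the localization package of \S\ref{subsec:conv}--\S\ref{sec:QuiverAuto}, namely Lemmas~\ref{lem:free}, \ref{lem:cohomology_free_action} and Proposition~\ref{lem:lambda_invertible}.

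\emph{Freeness.} First I would record that $\mathfrak{M}(w)^{\im\alpha}|_\Delta$, as constructed, coincides with the full $\langle b\rangle$-fixed locus of $\mathfrak{M}(w)^{\im\alpha}$. Transporting the $\langle b\rangle$-action through the isomorphisms of Propositions~\ref{prop:Nakajima} and \ref{lem:iso to products}: on an uneven component the tuple of weight-space dimensions $(\dim V_{\zeta^s})_s$ is not cyclically invariant, so $\langle b\rangle$ permutes such components without fixed points (cf. Lemma~\ref{lem:uneven_free}); on an even component the $\langle b\rangle$-fixed points are exactly the tuples $((B_h)_s,i_s,j_s)$ intertwining with $b$, which is the locus $\mathfrak{M}^{\cyc}_{(\overline{Q^\heartsuit})_l}(w)|_\Delta\cong\mathfrak{M}(w/p)$. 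Since $\langle b\rangle\cong\Z/p$ has only the trivial proper subgroup, the stabilizer of any point of $\mathfrak{M}(w)^{\im\alpha}$ is either trivial or all of $\langle b\rangle$, the latter precisely on $\Delta$. Hence $\langle b\rangle$ acts freely on $\mathfrak{M}(w)^{\im\alpha}\setminus\Delta$, and the same reasoning applies to $Z(w)^{\im\alpha}$ and its diagonal part. This is the first assertion.

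\emph{Parts (1) and (2).} Set $G=\langle b\rangle\times H$. Both $\mathfrak{M}(w)^{\im\alpha}$ and its fixed locus $\Delta$ are smooth, with $\Delta$ closed and $U:=\mathfrak{M}(w)^{\im\alpha}\setminus\Delta$ open. In the conventions of \S\ref{subsec:conv} (Thom isomorphism $E^*_G(\mathfrak{M}(w)^{\im\alpha},U)=E^*_G(\Delta)$), the pair $(\mathfrak{M}(w)^{\im\alpha},U)$ yields a long exact sequence containing $E^*_G(\Delta)\xrightarrow{(i_\Delta)_*}E^*_G(\mathfrak{M}(w)^{\im\alpha})\to E^*_G(U)$. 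As $\langle b\rangle$ acts freely on $U$ (equivalently on $EH\times_H U$), Lemma~\ref{lem:cohomology_free_action} gives $E^*_G(U)\otimes_{E^*_{\langle b\rangle}(\pt)}\Phi_1=0$, so tensoring the sequence with the localization $\Phi_1$ makes $(i_\Delta)_*$ an isomorphism. To pass to $i_\Delta^*$ I would use the self-intersection identity $i_\Delta^*(i_\Delta)_*=e(N)\cup(-)$, where $N$ is the normal bundle of $\Delta$: since $\Delta$ is the whole $\langle b\rangle$-fixed locus, the decomposition \eqref{eqn:bundle_decompose} of $N$ under $\langle b\rangle$ has no trivial summand, so $e(N)$ is a product of classes $x+_F c_1^E(\chi)$ with $\chi$ a nontrivial character of $\langle b\rangle$; each $c_1^E(\chi)\in S_b$ is a unit in $\Phi_1$ and each $x$ is topologically nilpotent in the Borel construction (as in the proof of Lemma~\ref{lem2}), so $e(N)$ is invertible after $-\otimes_{E^*_{\langle b\rangle}}\Phi_1$. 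Hence $i_\Delta^*=(e(N)\cup(-))\circ(i_\Delta)_*^{-1}$ is an isomorphism after $-\otimes\Phi_1$, which is (1). For (2) one runs the same argument with $Z(w)^{\im\alpha}$ in place of $\mathfrak{M}(w)^{\im\alpha}$, viewing it inside the smooth ambient space $\mathfrak{M}(w)^{\im\alpha}\times\mathfrak{M}(w)^{\im\alpha}$ and using the refined Gysin pullback of \S\ref{subsec:conv} in the Cartesian square with $Z(w)^{\im\alpha}\cap(\Delta\times\Delta)=Z(w)^{\im\alpha}|_\Delta$; the vanishing of $E^*_G(Z(w)^{\im\alpha}\setminus\Delta)\otimes\Phi_1$ again comes from freeness and Lemma~\ref{lem:cohomology_free_action}, and the Euler class of the normal bundle of $\Delta\times\Delta\inj\mathfrak{M}(w)^{\im\alpha}\times\mathfrak{M}(w)^{\im\alpha}$ is invertible by the same computation.

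\emph{Part (3) and the main obstacle.} Part (3) is a direct application of the corollary following Proposition~\ref{lem:lambda_invertible} with $X=\mathfrak{M}(w)^{\im\alpha}$, $A_1=\langle b\rangle$ and $A_2=H$: its hypothesis is that in \eqref{eqn:bundle_decompose} every nonzero isotypic piece of the normal bundle of $X^{A_1}\inj X$ is a nontrivial $A_1$-representation, which holds because $X^{A_1}=\Delta$ by the freeness step, so all $\langle b\rangle$-invariant normal directions lie in $T\Delta$. The only substantive point in the whole argument is that identification $\mathfrak{M}(w)^{\im\alpha}|_\Delta=(\mathfrak{M}(w)^{\im\alpha})^{\langle b\rangle}$, i.e.\ that ``off the diagonal'' means ``off the fixed locus''; it rests on the even/uneven dichotomy and the explicit description of the diagonal loci in \S\ref{sec:QuiverAuto}. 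After that everything is formal, the only routine care being the usual one about the Borel construction for the non-proper $\mathfrak{M}(w)$ and handling the singular $Z(w)$ through a smooth ambient space, neither of which the paper dwells on.
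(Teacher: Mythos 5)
Your proof is correct and follows essentially the same route as the paper's: the freeness of $\langle b\rangle$ off $\Delta$ (which the paper simply calls ``clear''), the localization long exact sequence whose term over the free complement dies after $-\otimes_{E^*_{\langle b\rangle}}\Phi_1$ by Lemma~\ref{lem:cohomology_free_action}, and Proposition~\ref{lem:lambda_invertible} for the Euler class. You merely make explicit two points the paper leaves implicit — that $|_\Delta$ coincides with the full $\langle b\rangle$-fixed locus, and the passage from $(i_\Delta)_*$ being an isomorphism to $i_\Delta^*$ being one via the self-intersection formula — both of which are sound.
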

\begin{proof}
It is clear that $\langle b\rangle$-acts freely on $\mathfrak{M}(w)^{\im \alpha}\setminus\Delta$. We then use the localizing exact sequence on cohomology
\[\cdots \to E^{*-1}_{\langle b\rangle\times H}(\mathfrak{M}(w)^{\im \alpha}|_\Delta)\to E^*_{\langle b\rangle\times H}(\mathfrak{M}(w)^{\im \alpha}\setminus \Delta)\to E^*_{\langle b\rangle\times H}(\mathfrak{M}(w)^{\im \alpha})\to E^*_{\langle b\rangle\times H}(\mathfrak{M}(w)^{\im \alpha}|_\Delta)\to \cdots .\]
By Lemma~\ref{lem:cohomology_free_action}, the term $E^*_{\langle b\rangle\times H}(\mathfrak{M}(w)^{\im \alpha}\setminus\Delta)$ vanishes when applying $-\otimes_{E^*_{\langle b\rangle}}\Phi_1$. Hence we are done.

The statement (2) about $Z(w)$ is similar. The statement (3) about Euler class of the normal bundle is a direct consequence of Lemma~\ref{lem:lambda_invertible}.
\end{proof}

\section{The quantum Frobenius maps}
\label{sec:quantum Frob}
In this section, we show that the transchromatic character map of \cite{HKR, St}, modified by the Euler class of a normal bundle,  defines an algebra homomorphism between the convolution algebras. This is the quantum Frobenius homomorphism in the title of the present paper. 

\subsection{Group homomorphisms}
\label{subsec:alpha}
Recall $\Lambda_t=(\Z/p)^t$. 
Let us consider a group homomorphism \[
\alpha:\Lambda_t \to \GL(W)\times\bbC^*
\] satisfying the following conditions. 
Write $\alpha=(\alpha_1,\alpha_2,\dots,\alpha_t)$ where $\alpha_i: \Z/p\to \GL(W)\times\bbC^*, 1\mapsto (g_i, \zeta)$ for $i=1,\cdots,t$. Assuming $p^t|w$, the $g_1$-eigenspace decomposition of $W=W_1\oplus\cdots\oplus W_p$ is so that the eigenspaces are isomorphic as $I$-graded vector spaces, and hence we fix such an isomorphism $b_1:W_i\cong W_{i+1}$. Similarly, we assume that $g_2$-eigenspace on each of $W_i$ have the same property, with a fixed operator $b_2$. We continue this pattern. In other words, $b_k$ is the cyclic permutation of the eigenspaces of $g_k$ and the order of $b_k$ is $p$, for $1\leq k\leq t$. For $1\leq i\neq j \leq t$, $b_i, b_j$ commute. 
Let $W=\oplus_{1\leq i_1, \cdots i_k, \cdots, i_t \leq p}W_{i_1, \cdots, i_k, \cdots, i_t}$ be the common eigenspace decomposition of $(g_1, \cdots, g_t)$. 
Under the above assumption, we have the following operators $b_1, \cdots, b_t$ in $\GL(W)$, such that
$
b_k(W_{i_1, \cdots, i_k, \cdots, i_t})
\subseteq W_{i_1, \cdots, i_k+1, \cdots, i_t}, \,\ k=1, \cdots t. 
$
Let $\langle b\rangle$ be the abelian subgroup generated by $b_1, \cdots, b_t$. We then have
\[
\langle b\rangle=
\langle b_1 \rangle
\oplus \langle b_2 \rangle 
\oplus \cdots \oplus 
\langle b_t \rangle \cong (\Z/p)^{\oplus t}. 
\]
For each $k$, $\langle b_k\rangle$ acts on $(\Z/p)^p$  by cyclic permutation and it induces the following morphism
\[ 
\langle b_k\rangle \wr \alpha_k:=\langle b_k\rangle\ltimes(
\alpha_k\oplus b_k\alpha_kb_k^{-1} \oplus\cdots \oplus b_k^{p-1}\alpha_kb_k^{-p+1}):\langle b_k\rangle\ltimes(\Z/p)^{\oplus p} \to \GL_w\times\bbC^*. 
\] 
When $k\neq l$, we have $b_k\alpha_lb_k^{-1}=\alpha_l$. 
Therefore, we have the following morphism
\[
\langle b\rangle \wr \alpha:=(\langle b_1\rangle \wr \alpha_1) \times\cdots \times (\langle b_t\rangle \wr \alpha_t):
\langle b\rangle\ltimes((\Z/p)^{ p})^t\to \GL_w\times\bbC^*.
\]

The centralizer $C(\im \alpha)$ is isomorphic to $\prod_{p^t} \GL(w/p^t)\times \C^*$. 
Let $H\subseteq \GL(w/p^t)\times\bbC^*$ be any finite subgroup. We similarly have
\[
\langle b\rangle\wr (\alpha\times H):=
\langle b\rangle\ltimes\Big(
(\alpha\times H)\times b(\alpha\times H)b^{-1} \times \cdots  \times \cdots
b^{p-1}(\alpha\times H)b^{-p+1}
\Big).
\]
It is clear that there is a group homomorphism 
\begin{equation}\label{eq:group}
\langle b\rangle\wr (\alpha\times H)
\to \GL(w)\times\bbC^*. 
\end{equation}

In $((\Z/p)^p)^t\cong(\Lambda_t)^p$, we take a diagonal subgroup $\Lambda_t$, such that $\langle b \rangle$ acts trivially on $\Lambda_t$. Let $\Delta(\alpha):\Lambda_t \subset ((\Z/p)^p)^t \to \GL_w\times\bbC^*$ be the group homomorphism. We have the equality 
\[
Z(w)^{\im b^i\alpha b^{-i}}= Z(w)^{ \im \alpha}=Z(w)^{ \im \Delta(\alpha)}, i=0, \cdots p-1.
\]By the diagram \eqref{eq:action of b}, 
we have an action of $\langle b\rangle\wr (\alpha\times H)$ on $Z(w)^{ \im \alpha}$. It restricts to an action of the subgroup $\langle b\rangle\times \Delta(\alpha)\times \Delta(H) \cong \langle b\rangle\times \Lambda_t\times H$ of $\langle b\rangle\wr (\alpha\times H)$. 
We make the following conventions.
For $\alpha:A\to \GL_{w_1}$ and $\beta:A\to \GL_{w_2}$, we write $\alpha\times\beta:A\to \GL_{w_1+w_2}$ the composition of $\alpha\times\beta:A\to\GL_{w_1}\times\GL_{w_2}$ with the inclusion $\GL_{w_1}\times\GL_{w_2}\inj \GL_{w_1+w_2}$. Similarly when $A$ and $B$ are both abelian groups and $\alpha:A\to G$ and $\beta:B\to G$, we write $\alpha\oplus\beta:A\oplus B\to G$.

\subsection{Restriction to diagonal}\label{subsec:restr}
Assume $p^t| w$. Let $\alpha:\Lambda_t\to \GL_w\times\bbC^*$ be a group homomorphism as above. 
We have as in Lemma~\ref{lem:diag_power} the diagonal embedding $\fM(w/p)\inj \fM^{\im \alpha_1}$. By assumption of $\alpha$, we have $\alpha_2$ being obtained from a map $\Z/p\to \GL_{w/p}\times\bbC^*$, which without causing confusions is still denoted by $\alpha_2$. In particular, we have $\fM(w/p^2)\inj \fM(w/p)^{\im \alpha_2}$. Composition of the two diagonal embeddings gives us $i_\Delta:\fM(w/p^2)\inj \fM(w)^{\im \alpha_1\oplus\alpha_2}$.

In general repeating this process we get $\mathfrak{M}(w)^{\im \alpha}|_\Delta\cong \fM(w/p^t)$ and the embedding $i_\Delta:\fM(w/p^t)\inj \fM(w)^{\im \alpha}$.
Let $H$ be a finite subgroup of $\GL(w/p^t)\times\bbC^*$, embedded into $C(\im \alpha)\cong \prod_{p^t}\GL(w/p^t)\times\bbC^*$ diagonally as in \S~\ref{subsec:alpha}. Note that by construction the conjugation action of $\langle b\rangle\cong(\Z/p)^t$ is trivial on $H$ and hence the actions of $\langle b\rangle$ and $H$ commute on $\fM(w)^{\im \alpha}$. 

\begin{lemma}\label{lem:t_diag}
Let $i_\Delta:\fM(w/p^t)\inj \fM(w)^{\im \alpha}$ be the diagonal embedding, which is   $\langle b\rangle\times \Delta(\alpha)\times H$-equivariant. 
Then, 
\begin{enumerate}
    \item pulling back  $i_\Delta^*:E^*_{\langle b\rangle\times \Delta(\alpha)\times H}(\mathfrak{M}(w)^{\im \alpha})\to E^*_{\langle b\rangle\times \Delta(\alpha)\times H}(\mathfrak{M}(w)^{\im \alpha}|_\Delta)$ becomes an isomorphism after applying $-\otimes_{E^*_{\langle b\rangle}}\Phi_t$;
    \item similarly, $i_\Delta^*:E^*_{\langle b\rangle\times \Delta(\alpha)\times H}(Z(w)^{\im \alpha})\to E^*_{\langle b\rangle\times \Delta(\alpha)\times H}(Z(w)^{\im \alpha}|_\Delta)$ becomes an isomorphism after applying $-\otimes_{E^*_{\langle b\rangle}}\Phi_t$;
    \item the Euler class of the normal bundle of the embedding $\fM(w)^{\im \alpha}|_\Delta\inj\mathfrak{M}(w)^{\im \alpha}$ becomes invertible  after applying $-\otimes_{E^*_{\langle b\rangle}}\Phi_t^L$.
\end{enumerate}
\end{lemma}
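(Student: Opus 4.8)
The plan is to follow the template of Lemma~\ref{lem:diag_power} essentially verbatim: use the localizing exact sequence to discard the complement of the diagonal, and invert the Euler class via Proposition~\ref{lem:lambda_invertible}. The only genuinely new issue is that for $t>1$ the group $\langle b\rangle\cong(\Z/p)^t$ does not act freely on $\fM(w)^{\im\alpha}\setminus\Delta$ — it has proper nontrivial subgroups — so a single application of Lemma~\ref{lem:cohomology_free_action} no longer suffices; instead one stratifies $\fM(w)^{\im\alpha}$ by $\langle b\rangle$-stabilizer.

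First I would record that, $i_\Delta$ being the $t$-fold iterate of the single-step diagonal embedding of \S\ref{subsec:restr}, the locus $\fM(w)^{\im\alpha}|_\Delta$ is exactly the $\langle b\rangle$-fixed locus of $\fM(w)^{\im\alpha}$: each $b_k$ fixes the image at its stage, and a point outside $\Delta$ cannot be $\langle b\rangle$-fixed (this is in any case forced by the statement being proved, since $i_\Delta^*$ could not become an isomorphism under any localization otherwise). Hence $\fM(w)^{\im\alpha}=\coprod_{K\leq\langle b\rangle}U_K$ with $U_K:=\{x:\Stab_{\langle b\rangle}(x)=K\}$ locally closed, finitely many strata, $U_{\langle b\rangle}=\fM(w)^{\im\alpha}|_\Delta$, and $\langle b\rangle/K$ acting freely on $U_K$. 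On a stratum with $K\subsetneq\langle b\rangle$ I would pick a character $\chi$ of $\langle b\rangle=\Lambda_t$ that is nontrivial but trivial on $K$ (available since $\langle b\rangle/K\neq 1$); choosing a complement to $K$ and combining the K\"unneth formula of $E^*$ with respect to the trivially acting factor $K$ with Lemma~\ref{lem:free} for the free $\langle b\rangle/K$-action, the module $E^*_{\langle b\rangle\times\Delta(\alpha)\times H}(U_K)$ is supported over $\{1_{\langle b\rangle/K}\}\subset\Spec E^*_{\langle b\rangle/K}$, where $c_1^{\langle b\rangle}(L_\chi)$, being pulled back from $\langle b\rangle/K$, vanishes; since $c_1^{\langle b\rangle}(L_\chi)\in S_{\Lambda}$ this forces $E^*_{\langle b\rangle\times\Delta(\alpha)\times H}(U_K)\otimes_{E^*_{\langle b\rangle}}\Phi_t=0$. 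Feeding these into the finitely many long exact sequences of the stabilizer stratification of $\fM(w)^{\im\alpha}\setminus\Delta$ gives $E^*_{\langle b\rangle\times\Delta(\alpha)\times H}(\fM(w)^{\im\alpha}\setminus\Delta)\otimes_{E^*_{\langle b\rangle}}\Phi_t=0$, and then the localizing sequence of the pair $(\fM(w)^{\im\alpha},\fM(w)^{\im\alpha}\setminus\Delta)$ yields (1). Statement (2) is the same argument applied to $Z(w)^{\im\alpha}$, whose diagonal part is $Z(w/p^t)$ and whose stabilizer stratification is handled identically. For (3): since $\langle b\rangle$ acts trivially on $\fM(w)^{\im\alpha}|_\Delta$ and the latter is the $\langle b\rangle$-fixed locus, the normal bundle $N$ of the embedding decomposes under $\langle b\rangle$ with no nonzero trivial summand, so Proposition~\ref{lem:lambda_invertible} — with $A_1=\langle b\rangle$, $A_2=\Delta(\alpha)\times H$, and using the hypothesis $t\leq n$ so that $\langle b\rangle$ has rank at most the height — shows multiplication by $e_N$ is invertible after $-\otimes_{E^*_{\langle b\rangle}}\Phi_t^L$.

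An essentially equivalent route is induction on $t$: factor $i_\Delta$ as $\fM(w/p^t)\to\fM(w/p^{t-1})^{\im\alpha_t}\to\fM(w)^{\im\alpha}$, the first map being the single-step embedding (to which Lemma~\ref{lem:diag_power} applies over the framing $w/p^{t-1}$) and the second being induced on $\im\alpha_t$-fixed points by the level-$(t-1)$ embedding $\fM(w/p^{t-1})\inj\fM(w)^{\im\alpha'}$ with $\alpha'=\alpha_1\oplus\cdots\oplus\alpha_{t-1}$. This splits the complement into two pieces, but controlling the second piece $\big(\fM(w)^{\im\alpha'}\setminus\Delta\big)^{\im\alpha_t}$ again requires the $\langle b'\rangle$-stabilizer structure of $\fM(w)^{\im\alpha'}\setminus\Delta$, not merely the conclusion of the inductive hypothesis, so it comes down to the same stratification. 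I expect this stratification — together with the bookkeeping (via Lemma~\ref{lem2}) that inverting $S_{\Lambda}$ really does invert enough Chern classes to annihilate every non-diagonal stratum — to be the only real content beyond Lemma~\ref{lem:diag_power}.
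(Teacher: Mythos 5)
Your stratification argument is correct, but it is a genuinely different and heavier route than the one the paper takes. The paper's proof is exactly your ``essentially equivalent route'' at the end, and — contrary to your assessment — it does \emph{not} collapse back to the stabilizer stratification. The point is that $-\otimes_{E^*_{\langle b\rangle}}\Phi_t$ is a localization that can be performed one $\Z/p$-factor at a time, and $i_\Delta$ factors into $t$ single-step diagonals: e.g.\ for $t=2$, through $\fM(w/p^2)\inj\fM(w/p)^{\im\alpha_2}\inj\bigl(\fM(w)^{\im\alpha_1}\bigr)^{\im\alpha_2}$. The complement of the first step is handled verbatim by Lemma~\ref{lem:diag_power} over the framing $w/p$; the complement of the second step is $\bigl(\fM(w)^{\im\alpha_1}\setminus\Delta\bigr)^{\im\alpha_2}$, and since $b_1$ commutes with $\alpha_2$ (by the construction in \S\ref{subsec:alpha}), the free $\langle b_1\rangle$-action of Lemma~\ref{lem:diag_power} restricts to this invariant subset and remains free — so Lemma~\ref{lem:cohomology_free_action} kills it after inverting only the $c_1$'s of the first factor, which lie in $S_\Lambda$ by Lemma~\ref{lem2}. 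The only input beyond the single-step lemma is thus ``free actions restrict to invariant fixed loci,'' not the stabilizer structure of the whole complement. Part (3) is likewise done factorwise: the normal bundle of $i_\Delta$ is an extension of the two single-step normal bundles, so its Euler class is the product of two classes each invertible by Lemma~\ref{lem:diag_power}(3). What your stratification buys is generality (it would apply without the explicit factorization of $i_\Delta$), at the cost of the Künneth/support bookkeeping on each stratum; what the paper's factorization buys is that everything reduces formally to the $t=1$ case. One blemish in your write-up worth fixing either way: you justify ``$\fM(w)^{\im\alpha}|_\Delta$ is the $\langle b\rangle$-fixed locus'' partly by appeal to the statement being proved, which is circular; this identification is established independently in \S4 (the paper defines $|_\Delta$ as the $\langle b\rangle$-fixed points of the cyclic quiver variety) and should be cited rather than inferred.
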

\begin{proof}
We use Lemma~\ref{lem:diag_power} inductively.
Without loss of generality, we show this for $t=2$.
Note that $-\otimes_{E^*_{(\Z/p)^2}}\Phi_2$ is a localization, which  can be done in two steps, inverting $c_1$ of non-trivial representations of the second $\Z/p$-factor, and then those of the first $\Z/p$-factor. We factor $i_\Delta$ into $i_{\Delta_2}:\fM(w/p)\inj \fM^{\im \alpha_1}$ and $i_{\Delta_1}:\fM(w/p^2)\inj \fM(w/p)^{\im \alpha_1\oplus \alpha_2}$. Pulling back via $\Delta_2$ is an isomorphism when inverting $c_1$ of non-trivial representations of the second $\Z/p$-factor, and that of $\Delta_1$ is an isomorphism when inverting $c_1$ of non-trivial representations of the first $\Z/p$-factor.

To show the statement of the normal bundle, note that the normal bundle of $i_\Delta$ is an extension of the normal bundle of $i_{\Delta_1}$ by that of $i_{\Delta_2}$ and hence its Euler class is the product that of $i_{\Delta_1}$ and $i_{\Delta_2}$. Each individual factor is invertible by Lemma~\ref{lem:diag_power}.
\end{proof}
To summarize, we have the following diagram of equivariance
\[\xymatrix@R=0.5em{
\langle b\rangle\times \Delta(\alpha)\times H \ar@{}[r]|{\subset} & 
\langle b\rangle\wr (\alpha\times H)
\ar@{}[r]|{\phantom{1234}\subset} &\varmathbb{G}\\
&&\\
\ar@(ul,ur)[]& \ar@(ul,ur)[]&\ar@(ul,ur)[]\\
\fM(w/p^t) \ar@{^{(}->}[r]
& \fM(w)^{\im \alpha} \ar@{^{(}->}[r] & \fM\\
}
\]

\subsection{Coefficients}\label{subsec:coefficient}
We de-clutter and de-clash notations here. Let $E^*$ be the base ring the $E_n^*$-theory. Let $L(E_n^*)$ be the construction of \cite{HKR} via localization of $E^*(B\Lambda_n)$. 
We avoid using $L$ for Bousfield localization, instead, let $E_{K_{n-t}}$ be the localization of $E_n^*$ with respect to the Morava K-theory $K_{n-t}$. We then have ring homomorphisms
$E_n^*\to (E_n^*)_{ K_{n-t}} \to C_{n-t}^*$ by definition. Recall that \cite[Proposition~2.17]{St} with
the unique maximal ideal of $E^0_n$ contained in $I_t\subset E_{K_{n-t}}$, 
the ring homomorphism $E_{ K_{n-t}} /I_t\to C_{n-t}/I_t$ is faithfully flat.  Hence, the unique maximal ideal of $E^0_n$ can be lifted (although non-uniquely in general) to a maximal ideal of $C_{n-t}^0$. 
Let $C^{\wedge*}_{n-t}$ be the completion of $C_{n-t}^*$ at any one of such maximal ideals. Here the purpose of the completion is so that $C^{\wedge*}_{n-t}$ satisfies Assumption~\ref{ass:HKR} of height $n-t$. Similar to $L(E_n^*)$, we also have  $L(C^{\wedge*}_{n-t})$.

Let $L_{K_{n-t}E}$ be the base-change of $L(E_n^*)$ from $E_n$ to $E_{K_{n-t}}$, so that $L_{K_{n-t}E}$ is a ring over $E_{K_{n-t}}$ and the formal group law of $E_n$ base-changed to $L_{K_{n-t}E}$ is  constant $(\Z/p)^n$. On the other hand, the ring homomorphism $E_{K_{n-t}}\to C_{n-t}$ is universal with respect to the property that the formal group  of $E_n$ base-changed to a sum of $(\Z/p)^{t}$ with a  formal group of height $n-t$. Hence, the map $E_{K_{n-t}}\to L_{K_{n-t}E}$ factors through $C_{n-t}$. The height-$(n-t)$ factor on $C_{n-t}$ when base-changed to $L_{K_{n-t}E}$  becomes constant $(\Z/p)^{n-t}$. 
The universal property of $L(C^{\wedge*}_{n-t})$ then implies that the base-change of $L_{K_{n-t}E}$  to $C^\wedge_{n-t}$ is isomorphic to $L(C^{\wedge*}_{n-t})$. 
The above is summarized in the commutative diagram.
\begin{equation}\label{eq:field L}
\xymatrix@R=1em{
C_{n-t}^{\wedge} \ar[r]& L(C^{\wedge*}_{n-t})\\
C_{n-t} \ar[u] \ar[r] & L_{K_{n-t}E} \ar[u]\\
E_{K_{n-t}} \ar[u]\ar[ur]& \\
E_n\ar[u] \ar[r]&  L(E_n^*)\ar[uu]
}
\end{equation}

Let us consider $E^*_{\langle b\rangle \times \Lambda_t\times H}$ for a finite group $H$ as in \S~\ref{subsec:restr}. 
Recall that we have ring homomorphisms $S_{\Lambda}^{-1}E_{\Lambda_t}^*\to C_{n-t}^*$ as well as $S_b^{-1}E_{\langle b\rangle}^*\to \Phi_{t}^E$.  Recall Lemma~\ref{lem2} implies that after inverting $S_{\Lambda}$ the $\Lambda_t\times\langle b\rangle$-equivariant $c_1$ of non-trivial $\Lambda_t$-representations are invertible; similar for $S_{b}$. Hence, we obtain \[S_{\Lambda}^{-1} S_{b}^{-1}E^*_{\Lambda_t\times\langle b\rangle}\cong S_{\Lambda}^{-1}E^*_{\Lambda_t}\otimes_{E^0}S_{b}^{-1}E^*_{\langle b\rangle}\to C_{n-t}^0\otimes_{E^0}S_{b}^{-1}E^*_{\langle b\rangle}\to L_{K_{n-t}E}\otimes_{E^0}S_{b}^{-1}E^*_{\langle b\rangle} \cong  \Phi_{t}^{L_{K_{n-t}E}}.\]
More generally, for any finite $H$ $CW$-complex $X$ with trivial $\Lambda_t\times\langle b\rangle$-action, we then have \begin{equation}\label{eqn:LocPhi}
E^*_{\Lambda_t\times\langle b\rangle\times H}(X)\to S_\Lambda^{-1}S_{b}^{-1}E^*_{\Lambda_t\times\langle b\rangle\times H}(X)\to \Phi_{t}^{L_{K_{n-t}E}}\otimes_{C^0_{n-t}} 
C^*_{n-t, H}(X). \end{equation}
\subsection{Definition of Frobenii}\label{subsec:def_Frob}
The morphism $\Fr_{n, n-t}$ is defined to be the composition of the following sequence of morphisms.

\begin{enumerate}
\item[(a)]
The change of group morphism
\[
E^*_{\varmathbb{G}}( Z) \to E^*_{\langle b\rangle\wr (\alpha\times H)}(Z).
\]
\item [(b)] 
Let $i: Z^{\im \alpha}|_\Delta\inj Z$ be embedding of the fixed point set of $\im \alpha$-action. 
The pullback 
\[
i^*: E^*_{\langle b\rangle\wr (\alpha\times H)}(Z)
\to E^*_{\langle b\rangle\times\Lambda_t\times H}( Z^{ \im \alpha}|_\Delta)\cong E^*_{\langle b\rangle\times\Lambda_t\times H}( Z(w/p^t)).
\]
\item[(c)]
The natural localization maps
\[
E^*_{\langle b\rangle\times \Lambda_t \times H}( Z^{ \im \alpha}|_\Delta)
\to
S_\Lambda^{-1}
S_b^{-1}E^*_{\langle b\rangle\times \Lambda_t\times H}( Z^{ \im \alpha}|_\Delta)
\to \Phi_t^{L_{K_{n-t}E}}\otimes_{C^0_{n-t}} 
C^*_{n-t, H}(Z(\frac{w}{p^t})).
\]
\item[(d)]
Let $N$ be the normal bundle of the embedding $i:\mathfrak{M}(w)^{\im \alpha}|_\Delta\inj \mathfrak{M}(w)$ with equivariant Euler class denoted by $e$. Multiplication by $1\boxtimes \frac{1}{e}$
\[
1\boxtimes \frac{1}{e}: \Phi_t^{L_{K_{n-t}E}}\otimes_{C^0_{n-t}}
C^*_{n-t, H}(Z(\frac{w}{p^t}))
\to
\Phi_t^{L_{K_{n-t}E}}\otimes_{C^0_{n-t}}
C^*_{n-t, H}(Z(\frac{w}{p^t})).
\]
\end{enumerate}

We assemble all the discussions so far into the following. 
\begin{theorem}\label{thm:algebra hom}
The morphism $\Fr_{n, n-t}$ is an algebra homomorphism.
\end{theorem}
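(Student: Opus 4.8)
The plan is to verify that each of the four maps (a)--(d) defining $\Fr_{n,n-t}$ is compatible with the convolution product on the relevant Steinberg varieties $Z(w)=\fM(w)\times_{\fM_0(w)}\fM(w)$, so that the composite is an algebra homomorphism. First I would record the setup: by the diagram of equivariance at the end of \S\ref{subsec:restr}, all the groups in sight act compatibly on $\fM(w)$, on the fixed locus $\fM(w)^{\im\alpha}$, on its diagonal component $\fM(w)^{\im\alpha}|_\Delta\cong\fM(w/p^t)$, and hence on the corresponding Steinberg varieties; and the convolution product of Proposition~\ref{prop:conv_action} is defined $G$-equivariantly for any of these groups. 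The content of each step is then an instance of a general principle: restriction-of-group maps, refined Gysin pullbacks along Cartesian squares, and flat base-change/localization of coefficients are all ring homomorphisms for convolution; only step (d) requires real work.

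For step (a), the change-of-group map $E^*_{\varmathbb{G}}(Z)\to E^*_{\langle b\rangle\wr(\alpha\times H)}(Z)$ is induced by a map of Borel constructions over a fixed space $Z$, hence is manifestly multiplicative for convolution (convolution is built from $p_{12}^*$, $p_{23}^*$, exterior product, and $p_{13*}$, all of which are natural in the group). For step (b), I would invoke Example~\ref{ex:conv_sm} together with the base-change property of the refined Gysin pullback $i^!$ recalled in \S\ref{subsec:conv}: the diagram relating $Z_{12}\times_{M_2}Z_{23}$ for the ambient $\fM(w)$'s and for the fixed loci $\fM(w)^{\im\alpha}|_\Delta$ is Cartesian (taking fixed points commutes with the fiber products defining $Z_{12}\circ Z_{23}$, since $\im\alpha$ acts on all factors), so $i^*$ intertwines the two convolution products; that $Z(w)^{\im\alpha}|_\Delta\cong Z(w/p^t)$ with its convolution structure is Proposition~\ref{lem:iso to products}(1)--(2) applied iteratively as in Lemma~\ref{lem:t_diag}. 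For step (c), the localization maps and the base-change along $C^0_{n-t}\to\Phi_t^{L_{K_{n-t}E}}$ from \eqref{eqn:LocPhi} are ring maps on the coefficient ring $E^*_{\langle b\rangle\times\Lambda_t\times H}(\pt)$, and convolution is $A_H(\pt)$-bilinear; extension of scalars of an associative algebra along a central ring map stays an associative algebra (as already used in Proposition~\ref{prop:folding}), so (c) is multiplicative.

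The main obstacle is step (d): multiplication by $1\boxtimes\frac1e$, where $e=e(N)$ is the Euler class of the normal bundle $N$ of $\fM(w)^{\im\alpha}|_\Delta\inj\fM(w)$, is \emph{not} a ring map for the naive product, and one must check it intertwines convolution. This is precisely the content of Proposition~\ref{prop:localization_convolution}: with $\chi=(1\boxtimes\frac{1}{e_2})\circ i^*$ one has $(\chi\calF_{12})*(\chi\calF_{23})=\chi(\calF_{12}*\calF_{23})$ after applying $-\otimes_{E_{\Lambda_t}}L$, the proof being the projection-formula computation \eqref{pf:chif} together with the base-change square for $p_{13}$ on fixed loci. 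What I need to supply is that the hypotheses of Proposition~\ref{prop:localization_convolution} are met here: that the normal bundles $e_i$ of $\fM(w)^{\im\alpha}|_\Delta$ in the three copies of $\fM(w)$ become invertible after the localization in step (c) --- this is exactly Lemma~\ref{lem:t_diag}(3) (equivalently Lemma~\ref{lem:diag_power}(3) iterated), using Proposition~\ref{lem:lambda_invertible} for the $\langle b\rangle$-equivariant Euler class --- and that the cohomology theory $C^{\wedge*}_{n-t}$ satisfies Assumption~\ref{ass:HKR} of height $n-t$, which was arranged in \S\ref{subsec:coefficient}. I would also note that step (b) followed by (d) is literally the map $\chi$ of Proposition~\ref{prop:localization_convolution}, so steps (b) and (d) together are handled by that proposition in one stroke, and the only thing genuinely new is bookkeeping: checking that the two diagonal restrictions in Lemma~\ref{lem:t_diag} can be interleaved with the two-stage localization so that the composite $i_\Delta^*$ and the composite Euler class behave as claimed. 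Assembling (a)--(d) then gives that $\Fr_{n,n-t}$ is a composite of algebra homomorphisms, hence an algebra homomorphism, proving Theorem~\ref{thm:algebra hom}.
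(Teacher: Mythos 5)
Your overall route is the same as the paper's: everything reduces to Proposition~\ref{prop:localization_convolution} (compatibility of $\chi=(1\boxtimes\frac{1}{e_2})\circ i^*$ with convolution) together with the invertibility statements of Lemmas~\ref{lem:diag_power} and~\ref{lem:t_diag}, with (a) and (c) treated as formal. The paper organizes the reduction by factoring the embedding $i\colon Z(w)^{\im\alpha}|_\Delta\inj Z(w)$ as $i_\alpha\colon Z(w)^{\im\alpha}\inj Z(w)$ followed by $i_\Delta\colon Z(w)^{\im\alpha}|_\Delta\inj Z(w)^{\im\alpha}$, writing $e=e_\Delta\cdot e_\alpha$, applying Proposition~\ref{prop:localization_convolution} verbatim to $\frac{1}{e_\alpha}i_\alpha^*$, and then rerunning the same computation for $\frac{1}{e_\Delta}i_\Delta^*$; this second stage is \emph{not} literally an instance of the proposition, since $\fM(w)^{\im\alpha}|_\Delta$ is the $\langle b\rangle$-diagonal locus rather than an $A_1$-fixed locus of the ambient smooth variety, and Lemma~\ref{lem:t_diag} is what makes it well defined. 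What you dismiss as ``bookkeeping'' is exactly this factorization, and it is where the paper's proof spends its effort; it should be made explicit rather than folded into a single application of the proposition.

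One intermediate claim in your step (b) is false and must be removed: the bare pullback $i^*$ does \emph{not} intertwine the two convolution products. The relevant square comparing $p_{13}\colon M_1\times M_2\times M_3\to M_1\times M_3$ with its restriction to fixed loci is not Cartesian --- the fiber product of $M_1\times M_2\times M_3$ with $M_1^{A}\times M_3^{A}$ over $M_1\times M_3$ is $M_1^{A}\times M_2\times M_3^{A}$, not $M_1^{A}\times M_2^{A}\times M_3^{A}$ --- so $i^*$ fails to commute with the pushforward $p_{13*}$ entering the convolution, and the discrepancy is measured precisely by the Euler class of the normal bundle of the middle factor. This is why the correction $1\boxtimes\frac{1}{e}$ of step (d) is indispensable and why only the composite $\chi$ is multiplicative; as written, your step (b) and step (d) cannot each be algebra homomorphisms separately. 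Your later packaging of (b) and (d) together as $\chi$ is the correct statement and rescues the argument, so the gap is repairable, but the base-change justification you give for (b) alone is the one step that would fail.
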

\begin{proof}
First of all, we factor the maps in (b) and (c) above into two steps each. Note that the embedding $i: Z(w)^{\im \alpha}|_\Delta\inj Z(w)$ factors as $i_\alpha: Z^{\im \alpha}\inj Z(w)$ and $i_\Delta: Z^{\im \alpha}|_\Delta\to Z^{\im \alpha}$. The normal bundle of $i$ is an extension that of $i_\alpha$ by that of $i_\Delta$, and hence its Euler class is a product that of the two individual factors $e=e_\Delta\cdot e_\alpha$.

We have $\frac{1}{e_\alpha}i^*_{\alpha}: E^*_{\langle b\rangle\times\Delta(\alpha)\times H}(Z(w))\to E^*_{\langle b\rangle\times\Delta(\alpha)\times H}(Z(w)^{\im \alpha})\otimes_{E_{\Lambda_t}}{L(E^*)}\cong E^*_{\langle b\rangle\times H}(Z(w)^{\im \alpha})\otimes_{E^0}{L(E^*)}$ from
Proposition~\ref{prop:localization_convolution}, which is an algebra homomorphism.

Now we have the diagonal pullback divided by $e_\Delta$, which exists thanks to 
Lemma~\ref{lem:t_diag}
\[\frac{1}{e_\Delta}i^*_{\Delta}:E^*_{\langle b\rangle\times H}(Z(w)^{\im \alpha})\otimes_{E^*_{\langle b\rangle}}\Phi_t^{L(E^*)}\to E^*_{\langle b\rangle\times H}(Z(w)^{\im \alpha}|_\Delta)\otimes_{E^*_{\langle b\rangle}}\Phi_t^{L(E^*)}.\]
The algebra structures on $E^*_{\langle b\rangle\times H}(Z(w)^{\im \alpha})$ and $E^*_{\langle b\rangle\times H}(Z(w)^{\im \alpha}|_\Delta)$ are both given by convolution.  
The same calculation (with $M_i$ replaced by $\fM(w)^{\im \alpha}$ and $M_i^{A_1}$ replaced by $\fM(w)^{\im \alpha}|_{\Delta}$) as in the proof of Proposition~\ref{prop:localization_convolution} now yields that this is also an algebra homomorphism. 

Base-change from $E^0$ to $E_{K_{n-t}}$, the discussion from \S~\ref{subsec:coefficient} then gives an algebra homomorphism \[ E^*_{\langle b\rangle\times H}(Z(w)^{\im \alpha}|_\Delta)\otimes_{E^*_{\langle b\rangle}}\Phi_t^{L(E^*)}\to \Phi_t^{L_{K_{n-t}E}}\otimes_{C^0_{n-t}}
C^*_{n-t, H}(Z(\frac{w}{p^t})).\]
The map $\Fr_{n, n-t}$ then is the composition of (a) with the three algebra homomorphisms above. 
\end{proof}

In what follows, we also use the terminology Frobenius for variants of the above map, when the domain is modified with $\varmathbb{G}$ replaced by a finite subgroup $G$ with $\langle b\rangle\wr (\alpha\times H)\subseteq G\subseteq \varmathbb{G}$, or when the target is further scalar changed along any ring homomorphism $\Phi_t^{L_{K_{n-t}E}}\to R$.
Starting from \S~\ref{sec:Frob phen} we consider representations over a field $\mathbb{F}$ which factors through $\Phi_{t}^{L(C^{\wedge*}_{n-t})}$. 

\begin{definition}
\label{def:U}
Let $G\subset \GL_w\times \C^*$ be a subgroup. Let $\mathbb{F}$ be a field of characteristic zero endowed with a map from $E^*_{G}(\pt)$. 
Define 
\begin{align*}
&
U^n_{w}(G, \mathbb{F})\subset \mathbb{F}\otimes_{E^*_{G}(\pt)}E^*_{n, \GL_w\times \C^*}(Z(w))
\end{align*}
to be the subalgebra generated by the Chern classes of the tautological vector bundles $\calV_{k}^{(n)} \to \fP(v, v+n\alpha_k, w)$ and $\calV_{k}^{(n), \op} \to \fP( v+n\alpha_k, v, w)$ where $n$ varies in $\N$ and $k$ varies in $I$. 

Let 
\[U'^n_{w}(G, \mathbb{F})\subset \mathbb{F}\otimes_{E^*_{G}(\pt)}E^*_{n, \GL_w\times \C^*}(Z(w))\] be the subalgebra generated by the Chern class of the tautological line bundles $\calL_{k} \to \fP(v, v+\alpha_k, w)$ and $\calL_{k}^{\op} \to \fP(v+\alpha_k, v, w)$ where $k$ varies in $I$. 
Recall the quantum groups $U_w(\bbG;E_n)$ and $U_w'(\bbG;E_n)$ from \S~\ref{subsec:def_quant_groups}. The algebras $U^n_{w}(G, \mathbb{F})$ and $U'^n_{w}(G, \mathbb{F})$ are their specializations  along the map $E^*_{\bbG}(\pt)\to E^*_{G}(\pt)\to \bbF$.

Let $\gamma:\Lambda_n\to \varmathbb{G}$ be a group homomorphism. When $G$ is chosen to be $\im \gamma$, we denote $U^n_{w}(\im\gamma, \mathbb{F})$ by $U^n_{w}(\gamma, \mathbb{F})$ for short. Sometimes, we skip $n$ or skip $(\gamma, \mathbb{F})$ in the notation and simply denote $U^n_{w}(\gamma, \mathbb{F})$ by $U_{w}(\gamma, \mathbb{F})$ or $U^n_{w}$ if it is understood from the context. 
\end{definition}

Notice that the normal bundle $N$ in (d) can be expressed in terms of the tautological bundles of the quiver variety, and hence $e$ is a tautological class. As a consequence, suppose $p^t\mid w$,  $\gamma=\alpha \oplus \gamma'$, where $\alpha: \Lambda_t \to \GL_w\times \C^*$ is as in \S\ref{subsec:alpha} and $\gamma':\Lambda_{n-t}\to \GL_{w/p^t}\times \C^* \xrightarrow{\Delta} \GL_{w}\times \C^*$ is a group homomorphism. Assume further that the  $\Phi_t^{L(E^*)}$-field $\mathbb{F}$ factors through $\Phi_t^{L_{K_{n-t}E}}$, then we have the Frobenius homomorphism
\begin{equation}
\Fr_{n, n-t}: U^n_{w}(\gamma, \mathbb{F})
\to U^{n-t}_{w/p^t}(\gamma', \mathbb{F}). 
\end{equation}

\section{Discussions and examples}	
This section consists of some miscellaneous discussions around Theorem~\ref{thm:algebra hom}.
\subsection{Some remarks}
We expect the target of the Frobenius map above to be explained as a version of power operations. This expectation is made precise in the $A_1$-case in \S~\ref{sec:a1case}.
\begin{remark}\label{rmk:localization}
The localization of inverting $S_\Lambda, S_b$ in (c) has two purposes. 
One is so that the multiplication $1\boxtimes \frac{1}{e}$ is well-defined. Another, as we have seen in \S~\ref{sub:Recall Morava}, shows up in the definition of transchromatic character map $E_n^*\to E^*_{n}(B(\Lambda_t))[\frac{1}{S_{\Lambda}}] \to C_{n-t}^*$. 
\end{remark}

More about the coefficients. 
\begin{remark}\label{rmk:coef}
\begin{enumerate}
    \item 
This is used in \S~\ref{subsec:irred_mod}. We have $E_n\to (E_{n})_{K_{n-t}}\to C_{n-t}$, and any maximal ideal $\mathfrak{m}$ of $C_{n-t}$ as in \S~\ref{subsec:coefficient}, the formal group on $C_{n-t}/\mathfrak{m}$ is of height $n-t$ of a perfect field of characteristic $p$.  The theorem of Lazard \cite{Laz} implies that this is isomorphic to the formal group law of $K_{n-t}$.  The universal deformation theorem of Hopkins and Miller \cite{Rez98} then implies the existence of a morphism (although non-unique)  $E_{n-t}\to C_{n-t}^\wedge$ with an isomorphism between the formal group law on the latter and that of the pushforward from $E_{n-t}$.
\item Now let $t=t_1+t_2$. The paragraph above gives the first row of the following diagram. Localization with respect to $K(n-t)$ gives the second row. Here we retain the convention of writing localization. \[\xymatrix{
E_{n-t_1}\ar[d]\ar[r]&C^\wedge_{n,n-t_1}\ar[d]&E_n\ar[l]\ar[d]\\
E_{n-t_1,K(n-t)}\ar[r]\ar[d]&C^\wedge_{n-t_1,K(n-t)}\ar[d]\ar[dr]&E_{n,K(n-t)}\ar[l]\ar[d]\\
C_{n-t_1,n-t}\ar[r]& C_\star \ar[r]&C_{n,n-t}
}\]Taking $C_\star$ as the base-change filling the lower left corner, universal properties then give all the remaining arrows in the diagram. 
\end{enumerate}
\end{remark}

\subsection{Iteration of the Frobenius}
Let $t=t_1+t_2$ and $p^t|w$. Assume $n\geq t$, 
and $\alpha:\Lambda_t \to \GL_w\times\bbC^*$ with $\alpha':\Lambda_{t_1}\to \GL_w\times\bbC^*$, $\alpha'':\Lambda_{t_2}\to \GL_{w/p^{t_1}}\times\bbC^*$ and $H\subseteq\GL_{w/p^{t}}\times\bbC^*$ a finite subgroup. 
Let $\langle b\rangle =\langle b'\rangle \oplus \langle b''\rangle$ be the operators as before. 
Let $H'\subseteq \GL_{w/p^{t_1}}\times\bbC^*$ be the subgroup $\langle b''\rangle\wr (\alpha''\wr (\langle b'\rangle\times H))$. We have the two Frobenii 
\begin{align*}
& E^*_{n,\varmathbb{G}}(Z(w))\to \Phi_{t_1}^{L(C^\wedge_{n,n-t_1})}\otimes_{C_{n,n-t_1}} C^*_{n,n-t_1,H'}(Z(w/p^{t_1}))\\
\text{as well as } \,\ 
& E^*_{n-t_1,H'}(Z(w/p^{t_1}))\to \Phi_{t_2}^{L(C^\wedge_{n-t_1,n-t})}\otimes_{C_{n-t_1,n-t}} C^*_{n-t_1,n-t,H}(Z(w/p^t)). 
\end{align*}
Remark~\ref{rmk:coef}(2) then gives the base-change of the latter $C^{\wedge*}_{n,n-t_1,H'}\to \Phi_{t_2}^{L(C_\star)}\otimes_{C_\star}C_{\star, H}^*(Z(w/p^t))$, which then is endowed with a map to $\Phi_{t_2}^{L(C^\wedge_{n,n-t})}\otimes_{C_{n,n-t}}C^*_{n,n-t,H}(Z(w/p^t))$ that preserves the convolution algebra structures. 

Taking $\alpha=\alpha'\oplus \alpha''$, we also have $E^*_{n,\varmathbb{G}}(Z(w))\to \Phi_{t}^{L(C^\wedge_{n,n-t})}\otimes_{C_{n,n-t}}C^*_{n,n-t,H}(Z(w/p^t))$. Recall that Lemma~\ref{lem2} gives  a map $\Phi_{t_1}^{L(C^\wedge_{n,n-t})}\otimes_{C_{n,n-t}}\Phi_{t_2}^{L(C^\wedge_{n,n-t})}\to \Phi_{t}^{L(C^\wedge_{n,n-t})}$.
The same argument as the last paragraph in the proof of Theorem~\ref{thm:algebra hom} yields that the following diagram commutes. 
\[
\xymatrix{
E^*_{n, \varmathbb{G}} (Z(w)) \ar[r]^(0.3){\Fr_{n, n-t_1}} \ar[dr]_{\Fr_{n, n-t}}& \Phi_{t_1}^{L(C^\wedge_{n,n-t_1})}\otimes C^{\wedge}_{n,n-t_1, H'} ( Z(w/p^{t_1}))
\ar[d]^{\Fr_{n-t_1, n-t}}\\
&\Phi_{t}^{L(C^\wedge_{n,n-t})}\otimes C_{n-t,  H} ( Z(w/p^t)) .
}
\]
Let $\gamma: \Lambda_n\to \GL_w\times \C^*$, $\gamma': \Lambda_{n-t_1}\to \GL_{w/p^{t_1}}\times \C^*$ and $\gamma'': \Lambda_{n-t}\to \GL_{w/p^{t}}\times \C^*$
be group homomorphisms, such that
\[
\gamma=\alpha'\oplus r'=\alpha\oplus r'', \,\   \text{and} \,\
\gamma'=\alpha''\oplus r''.
\]
It induces the following commutative diagram.
\[
\xymatrix{
U^n_{w}(\gamma, \mathbb{F}) \ar[r]^{\Fr_{n, n-t_1}}
\ar[dr]_{\Fr_{n, n-t}}&
U^{n-t_1}_{w/p^{t_1}}(\gamma', \mathbb{F}) \ar[d]^{\Fr_{n-t_1, n-t}}\\
& U^{n-t}_{w/p^{t}}(\gamma'', \mathbb{F})}
\]

In this sense, we say the Frobenius $\Fr_{n, n-t}$ can be obtained as the composition of $\Fr_{n, n-t_1}$ and $\Fr_{n-t_1, n-t}$. This gives the iteration of the Frobenius maps.

\subsection{Examples}
\label{ex:two}
\begin{enumerate}
\item
Assume the weight $w$ is divisible by $p^n$. 
Starting with the Morava $E_n$-theory, the quantum Frobenius $\Fr_{n, 0}$ lands in \[
\mathbb{F}\otimes_{E_0} E_0(Z(w/p^n)),\,\  (\text{precisely $\Phi_{n}^{L(E_0)}\otimes C^{\wedge}_{0} ( Z(w/p^{n}))$.}
\]
The target  is the non-equivariant cohomology with height $0$ formal group law. There is an algebra homomorphism \cite{Nak98}
\[
U(\fg)_{\mathbb{F}} \to U^0_{w/p^n}(1, \mathbb{F})
\]
from the enveloping algebra of the Lie algebra $\fg$ with coefficients in the field $\mathbb{F}$, to a non-equivariant cohomology. It is shown by Nakajima that  furthermore the irreducible representations of $U(\fg)_{\mathbb{F}}$ with highest weight less than $w$ are in one-to-one correspondence with those of $\mathbb{F}\otimes_{E_0} E_0(Z(w/p^n))$.
\item
Although the equivariant $K$-theory of taking Grothendieck group of the equivariant vector bundles is not a special case of the Morava theories, 
the construction of the Frobenius maps applies to this equivariant $K$-theory with the Theorem C of \cite{HKR} 
replaced by the Atiyah-Segal formula. 
We make the following substitutions
\begin{align*}
&\text{$E_1^*$ is replaced by the equivariant K-theory $K$, $E_0^0$ is replaced by the cohomology theory $H$},\\
&\text{$L(E_1^0)=L((E_1^0)_{K(0)})$ is replaced by $L(K)$, which is a cyclotomic field},\\
&\text{and $\Phi_1^{L(E_1^0)}$ is replaced by $\Phi_1^{L(K)}$, which is the cyclotomic ring of $L(K)$}.
\end{align*}
Let $U_{\epsilon}(L\fg)$ be the quantum loop algebra with $\epsilon^p=1$. We have an algebra homomorphism
\[
U_{\epsilon}(L\fg)_{\Phi_1^{L(K)}}\to U^1_{w}(\gamma, \Phi_1^{L(K)}). 
\]
The Frobenius map (for all $w\in X^+$) lifts to an algebra homomorphism 
\[
U_{\epsilon}(L\fg)_{\Phi_1^{L(K)}}
\to U(\fg)_{\Phi_1^{L(K)}}
\]
with coefficient in the cyclotomic ring $\Phi_1^{L(K)}$. 
This recovers Lusztig's quantum Frobinius map \cite[Proposition 7.5]{Lu2}. 
Recall that the use of the cyclotomic ring is crucial in the construction of Lusztig's  quantum Frobenius maps. 
As explained in Example \ref{ex:K-theory}, passing to $\Phi_1^{L(K)}$ is an analogue to Lusztig's construction of taking the Grothendieck group of equivariant category of constructible sheaves \cite[Chapter 11]{Lus93}.
Lusztig's construction has been used by McGerty \cite{Mc} in a categorification of quantum Frobenius, where the cyclotomic ring occurs in the same way as in  Example~\ref{ex:K-theory}. 

However, note that in the coefficients $\Phi_1^{L(K)}$, if $\Phi_1^K$ is cyclotomic ring, then 
 $L(K)$ is an ``extra"  cyclotomic ring.  Passing to $L(K)$ is necessary only because we are working with the quantum loop algebras instead of the finite dimensional ones. Roughly speaking, the loop parameters contributes to these ``extra" cyclotomic extensions.  It is out of scope of the present paper but in principal one could construct the finite dimensional quantum groups associated to the Morava $E_n$ theory without going to the loop algebras, hence, potentially remove $L$ and only uses $\Phi_t^{E_n}$. 
\end{enumerate}

Similar to how $K$-theory is to $E_1^*$, in \cite{Ba} $E_2^*$ is proven to be a direct summand of elliptic cohomology spectrum localized at $p$. Using 
\cite{KM} in place of \cite{HKR} and the Atiyah-Segal formula, we expect to obtain a quantum Frobenius from the elliptic quantum group of
\cite{YZ3} at a torsion point of the elliptic curve, to quantum loop algebra at a root of unity.

\section{Power operations and another construction in the $A_1$-case}
\label{sec:mod}
We give a different construction of the Frobenius map when $Q$ is of type $A_1$. Although only a special case, we include this for its conceptual nature in terms of topology. 
\subsection{The definition}\label{subsec:power_def}
We first recall some basic properties of power operations, which are needed for the present paper. We suggest experts in this area to skip \S~\ref{subsec:power_def}, and refer  interested readers  to \cite{A95,G} for more details. Let $E^*$ be an oriented cohomology theory. Let $\Sigma_r$ be the symmetric group on $r$ letters. For a finite group $G$, set
\[
G \wr \Sigma_r:=G^r\rtimes \Sigma_r. 
\]
\begin{definition}(see \cite[VIII.1.1]{BMMS86} and \cite[Definition 4.3]{G})
An $H_{\infty}$-structure on $E$ is a collection of maps (power operations)
\[
P_r: E_G^*(X)\to E_{G \wr \Sigma_r}^* (X^r)\]
with finite group $G$ and finite $G$ $CW$-complex $X$ satisfying the following conditions
\begin{enumerate}
\item[(a)] $P_{1}=\id$ and $P_{0}(x)=1$.
\item[(b)] the (external) product of two power operations is 
$
P_j(x) \wedge P_{k}(x)=\res|^{\Sigma_{j+k}}_{\Sigma_j\times \Sigma_k} (P_{j+k}(x)). 
$
\item[(c)]\label{cond_c} the composition of two power operations is
$
P_j(P_k(x))=\res|^{\Sigma_{jk}}_{\Sigma_k\wr \Sigma_j} (P_{jk}(x)).
$
\item[(d)] the $P_j$'s preserve (external) products 
$P_j(x\wedge y)=\res|^{\Sigma_{j}\times \Sigma_{j}}_{\Sigma_j}(P_j(x)\wedge P_j(x))$, where the restriction is along the map
\[
[((X \curvearrowleft G)^2)^j \curvearrowleft \Sigma_j]
\to
[(X \curvearrowleft G)^{2j} \curvearrowleft (\Sigma_j\times \Sigma_j)]
\cong 
[( (X \curvearrowleft G)^j\curvearrowleft \Sigma_j)^2 ]. 
\]
\end{enumerate}
\end{definition}
\begin{example}
The power operations in the equivariant cobordism, expressed in terms of generators, are given by
\[
P_r^{\MU}: \MU^{2*}_G(X)\to \MU^{2r*}_{G \wr \Sigma_r} (X^r), \,\
[M\to X]\mapsto [M^r\to X^r] \curvearrowleft \Sigma_r.
\]
\end{example}
 We work on a cohomology theory $E^*$ with an $H_{\infty}$-structure, such that there is an $H_{\infty}$-map $\MU^*_G(X)\to E^*_G(X)$. That is, we have the following commutative square. 
 \begin{equation}\label{H_infty}
 \xymatrix{
 \MU^*_G(X) \ar[d]^{P_r}\ar[r]&  E^*_G(X) \ar[d]^{P_r}\\
 \MU^*_{G \wr \Sigma_r} (X^r)\ar[r] & E^*_{G \wr \Sigma_r} (X^r)
 }
 \end{equation}
 where the vertical maps are the power operations for $\MU^*_G$ and $E^*_G$ respectively. 
 We refer this assumption as the $H_{\infty}$-assumption. 
\Omit{ 
For the rest of this section, we follow \cite{A95}. We skip the finite group $G$ and still denote by
\[
P_r^{\MU}: MU^{2*}(X)\to MU^{2r*}(E\Sigma_r\times_{\Sigma_r} X^r)
\] the total power operation for complex cobordism. 
Let $\pi$ be an abelian $p$-group of order $r$.  By ordering the elements of $\pi$, we obtain a map
$
E\pi\times_{\pi} X^\pi\to 
E\Sigma_r\times_{\Sigma_r} X^r, 
$
where $X^{\pi}:=Map(\pi, X)$ is the left $\pi$-space obtained by using the right action of $\pi$ on itself. 
The operation $P_n^{\MU}$ determines the power operation
\[
P_{\pi}^{\MU}: \MU^{2*}(X)\to \MU^{2r*}(E\pi\times_{\pi} X^\pi)
\]
which factors the $r$-th external power map
$
\MU^{2*}(X)\xrightarrow{z \,\ \mapsto z^{\times r}} 
\MU^{2*}(X^{\pi}).
$ through the inclusion of the fiber 
$i: X^{\pi}\inj E\pi\times_{\pi} X^\pi$:
$
\MU^{2r^*}(E\pi\times_{\pi} X^\pi)\xrightarrow{i^*}
\MU^{2r^*}(X^{\pi}). 
$

 Let 
\[
 \Delta: B\pi\times X\to E\pi\times_{\pi} X^\pi
\]
 denote the diagonal map. 
 Let $L\to X$ be a complex line bundle and $eL:=s^*s_*(1)\in MU^{2}(X)$ its Euler class, where $s: X\to L$ is the zero section. 
 \begin{prop}\cite[Proposition 3.2.10]{A95}
 \[
 \Delta^* P_{\pi}^{\MU}(eL)=\prod_{u\in \pi^*} \Big(e(E\pi\times_{u}\C\to B\pi)+_{F_{\MU}} eL
\Big) \in MU^{2r}(B\pi\times X), 
 \]
where $\pi^*:=\Hom(\pi, \C^*)$ is the character group of $\pi$. 
 \end{prop}
 }

Recall that for two formal group laws $F_1$ and $F_2$ over a commutative ring $R$, a homomorphism is power series $h(u)\in R[\![u]\!]$, such that
\[
h(u_1+_{F_1}u_2)=h(u_1)+_{F_2} h(u_2). 
\]
It is an isomorphism is $h(u)=au \mod u^2$ with $a\in R$ a unit.
Let $[p]_{F}(t):=t+_{F}+t+_{F}\cdots+_{F}t \in R[[t]]$ be the $p$-series of the formal group law $F$.  Then, $[p]_{F}$ is a homomorphism of group laws (isogeny)
\[
F\xrightarrow{[p]_{F}} F. 
\]
Let $D_k$ be the ring extension of $E_n$ obtained by adjoinging all the roots of the $p^k$-series $[p]_{F}(t)$ of $F$. 
Let $_{p^k}F(D_k)$ be the subgroup of points of order $p^k$. Then, 
\[
_{p^k}F(D_k)\cong (\Z/p^k\Z)^n. 
\]

For any  finite subgroup $H\subset _{p^k}F(D_k)$,  let $H^*=\Hom(H, \C^*)$ be the character group of $H$. Taking the dual of the embedding $H\subset (\Z/p^k\Z)^n)^*\cong  _{p^k}F(D_k)$ induces a map $ (\Z/p^k\Z)^n\to H^*$. 
Then, the construction of \cite{HKR} gives $\chi: E_n^*(BH^*)\to D_k \subset S^{-1}E_n^*(B (\Z/p^k\Z)^n)$. 
 It is shown by Ando that there is an transformation of ring-valued functors 
 \[
E^*_n(X)\xrightarrow{\Psi^H} D_k\otimes_{E_n} E_n^*(X)
 \] such that the following diagram commutes \cite[Theorem 1]{A95}
 \[
 \xymatrix{
 \MU^{2*}(X) \ar[r]^(0.4){P_{H^*}^{\MU}} \ar[d]  & \MU^{2r*} (EH^*\times_{H^*} X) \ar[r]^{t} &E_n^* (EH^*\times_{H^*} X)   \ar[d]^{\chi^H}\\
 E_n^*(X) \ar[rr]^{\Psi^H} & & D_k\otimes_{E_n} E_n^*(X)
 },
 \]
 where $\chi^H$ is given by \cite[Definition 3.3.5]{A95} 
 \[
 \chi^H: E_n^*(EH^*\times_H^*X) \xrightarrow{\Delta^*} 
 E_n^*(BH^*)\otimes_{E_n} E_n^*(X)
 \xrightarrow{\chi\otimes 1} D_k\otimes_{E_n} E_n^*(X).
 \]

\begin{prop}\cite[Theorem 2 and Proposition 3.6.1]{A95}\label{p series}Notations as above, 
 taking $H$ to be  $H=_{p^k}F(D_k)$, then $\Phi^H$  factors through $E_n(X)\subseteq D_k\otimes_{E_n} E_n(X)$.  In particular, we obtain an operation
 \[
 E_n^*(X)\xrightarrow{\Psi^{p^k}} E_n^*(X)
 \]
 which is a natural transformation of rings. It induces the identity on coefficients and its effect on the Euler class $c_1L$ of a line bundle $L\to X$ is given by the $p^k$ series
 \[
 \Psi^{p^k}(c_1L)=[p^k]_{F} (c_1L). 
 \]
 \end{prop}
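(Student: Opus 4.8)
The plan is to follow Ando \cite{A95}. The operation $\Psi^H$ is, by \cite[Theorem~1]{A95} as recalled above, the natural ring transformation into $D_k\otimes_{E_n}E_n^*(X)$ that is computed on classes coming from cobordism by $\chi^H\circ t\circ P^{\MU}_{H^*}$, where $t$ is the orientation $\MU^*\to E_n^*$. That $\Psi^H$ is multiplicative and natural is built in; likewise it is clear from the construction of $\chi^H$ as $(\chi\otimes 1)\circ\Delta^*$ and from Theorem~\ref{thm:HKR} that $\chi^H$ is a ring homomorphism, and the $H_\infty$-assumption \eqref{H_infty} makes the cobordism power operation compatible with the one on $E_n^*$. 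So the content of the Proposition is: (i) the image of $\Psi^H$ lies in the subfunctor $E_n^*(X)$; (ii) $\Psi^H$ restricts to the identity on $E_n^*=E_n^*(\pt)$; and (iii) $\Psi^{p^k}(c_1(L))=[p^k]_{F}(c_1(L))$ for a line bundle $L$.

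I would prove (iii) first, as it also supplies the decisive case of (i). Since $c_1(L)$ is the image under $t$ of the cobordism Euler class $eL$, Ando's formula \cite[Prop.~3.2.10]{A95} for the diagonal restriction of the power operation gives, with $\pi=H^*$,
\[
\Delta^*P^{\MU}_{H^*}(eL)\;=\;\prod_{u\in(H^*)^*}\bigl(e(EH^*\times_{u}\C\to BH^*)\,+_{F}\,eL\bigr).
\]
Applying $t$ and then $\chi\otimes 1$: the map $\chi\otimes 1$ is a ring homomorphism, so it commutes with the formal group law $+_{F}$, and the Hopkins--Kuhn--Ravenel character $\chi$ sends each Euler class $e(EH^*\times_u\C\to BH^*)\in E_n^*(BH^*)$ to the point of $F(D_k)$ corresponding to $u$ under the canonical identification $(H^*)^*\cong H={}_{p^k}F(D_k)$ --- this is exactly the HKR computation of $\chi$ on Euler classes of line bundles over $B(\Z/p^k)^n$. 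Hence $\Psi^H(c_1(L))=\prod_{a\in{}_{p^k}F(D_k)}(a+_{F}c_1(L))$. Now $[p^k]_{F}\colon F\to F$ is an isogeny whose scheme of zeros is precisely the $p^k$-torsion subgroup; by Weierstrass preparation the power series $\prod_{a}(a+_{F}t)$ and $[p^k]_{F}(t)$ therefore agree up to multiplication by a unit. A comparison of the coefficients of $t$ --- performed on the cobordism side, where $P^{\MU}$ is normalized so that this coefficient equals $1$ (\cite[Theorem~2]{A95}) --- shows the unit is $1$, which is (iii). In particular $\prod_{a}(a+_{F}c_1(L))$, a priori an element of $D_k\otimes_{E_n}E_n^*(X)$, is equal to $[p^k]_{F}(c_1(L))$, whose coefficients lie in $E_n^*$; so it lies in $E_n^*(X)$.

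To finish (i) in general: $D_k$ is a nonzero finite free $E_n^*$-module (Weierstrass preparation for $[p^k]_{F}$), hence faithfully flat, so $E_n^*(X)\hookrightarrow D_k\otimes_{E_n}E_n^*(X)$ for every $X$, and it suffices to check the factorization on the universal classes. Every class in $E_n^*(X)$ is pulled back from products of copies of $\C P^\infty$, classifying spaces $BU$, and Thom spaces, whose $E_n$-cohomology is generated over $E_n^*$ by Chern classes; by the splitting principle these are symmetric functions in first Chern classes of line bundles, so by (iii) and multiplicativity $\Psi^H$ carries such generators into $E_n^*$, and by naturality $\Psi^H(E_n^*(X))\subseteq E_n^*(X)$ for all $X$. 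For (ii), restrict to $X=\pt$, where $X^{H^*}=\pt$, $EH^*\times_{H^*}\pt=BH^*$, and the diagonal is the identity; then $\Psi^H|_{\pt}$ is the ring endomorphism of $E_n^*$ given by $\chi\circ t\circ P^{\MU}_{H^*}\colon\MU^*(\pt)\to E_n^*(BH^*)\to D_k$, which lands in $E_n^*$ by (i). A direct computation ($P^{\MU}_{H^*}$ sends a cobordism class $[M]$ to $[M^{|H^*|}]$ with $H^*$ permuting the factors, and the HKR character of such a wreath class returns $[M]$) identifies this endomorphism with the identity, as in \cite[Proposition~3.6.1]{A95}.

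The main obstacle is the normalization issue inside (iii): establishing that $\prod_{a\in{}_{p^k}F(D_k)}(a+_{F}c_1(L))$ equals $[p^k]_{F}(c_1(L))$ on the nose, rather than up to a unit. This is the one place where one genuinely needs the explicit structure of the cobordism power operation $P^{\MU}$ (and where small primes, e.g.\ $p=2$, would require extra care); everything else is formal once \cite[Theorem~1]{A95} and the HKR character formula are granted.
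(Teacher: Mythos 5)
The paper does not actually prove this Proposition: it is quoted verbatim from Ando, with the proof deferred to \cite[Theorem 2 and Proposition 3.6.1]{A95}. Your outline reconstructs Ando's strategy correctly at the structural level (diagonal restriction of $P^{\MU}_{H^*}$ on Euler classes via \cite[Prop.~3.2.10]{A95}, the HKR identification of the Euler classes $e(EH^*\times_u\C\to BH^*)$ with the coordinates of the torsion points, and then descent from $D_k$ to $E_n$), so as a guide to where the result comes from it is sound.

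There are, however, two places where your argument as written has a genuine gap. First, the normalization in (iii): you assert that the unit relating $\prod_{a\in{}_{p^k}F(D_k)}(a+_{F}t)$ to $[p^k]_{F}(t)$ can be shown to be $1$ by ``a comparison of the coefficients of $t$ performed on the cobordism side.'' This cannot work: on the cobordism side there is no $D_k$ and no set of torsion points to multiply over; the product only appears after applying the orientation $t\colon\MU^*\to E_n^*$ and the character $\chi$. The linear coefficient of the product is $\prod_{a\neq 0}a$ times a unit, while that of $[p^k]_F$ is $p^k$, and the equality of these (not merely up to a unit --- compare the multiplicative group at $p=2$, where the two power series differ by a sign) is exactly the norm-coherence condition on the coordinate of the Lubin--Tate formal group. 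Establishing that the $H_\infty$-orientation of $E_n$ satisfies it is the actual content of Ando's Theorem~2, not a routine check. Second, your descent argument for (i) relies on $E_n^*(X)$ being generated by Chern classes pulled back from universal spaces, which fails for general $X$; the correct argument (Ando's) is that the full torsion subgroup is stable under the Galois action on $D_k$ over $E_n^*$, so $\Psi^H(x)$ is Galois-invariant, and the invariants of $D_k\otimes_{E_n}E_n^*(X)$ recover $E_n^*(X)$ by faithfully flat descent. With those two points repaired --- both of which amount to invoking the cited results of Ando rather than re-deriving them --- your sketch matches the intended proof.
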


 \subsection{Power operations and convolution algebras}

Now we retain the setup of \S~\ref{subsec:conv}, where $M$ is smooth with an action of a finite group $G$ and an equivariant  projective map $M\to N$. Let $Z=M\times_N M$ with the induced diagonal $G$-action. 
 \begin{lemma}\label{lem:power_alg_hom}
Assume $E^*$ satisfies the $H_{\infty}$-assumption. 
Then, the power operation $P_n$ is an algebra homomorphism 
\[
P_n: E^*_G(Z)\to E^*_{G\wr\Sigma_n} (Z^n). 
\]
Moreover, \[
P_n: E^*_G(M)\to E^*_{G\wr\Sigma_n} (M^n). 
\]
is a module homomorphism under the algebra homomorphism above. 
\end{lemma}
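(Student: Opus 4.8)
The plan is to verify that the power operation $P_n$ is compatible with the convolution product on $E^*_G(Z)$, where $Z = M\times_N M$. First I would recall the construction of the convolution product in terms of the three projections: writing $Z_{12}=Z_{23}=Z$ inside $M\times M\times M$ and $\proj_{ij}$ the projections, the product of $\calF_{12},\calF_{23}\in E^*_G(Z)$ is $(\proj_{13})_*\bigl(\proj_{12}^*\calF_{12}\cdot\proj_{23}^*\calF_{23}\bigr)$. The key point is that all three ingredients — the pullbacks $\proj_{ij}^*$, the cup product, and the proper pushforward $(\proj_{13})_*$ — are operations to which $P_n$ is natural, once one keeps careful track of the symmetric group that appears. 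Concretely, applying $P_n$ externally to a class on a $G$-space $X$ produces a class on $X^n$ equivariant for $G\wr\Sigma_n$, and $P_n$ commutes with pullback along any equivariant map (by naturality), sends cup products to cup products (this is property (d) of the $H_\infty$-structure, after restricting along the diagonal $\Sigma_n\inj \Sigma_n\times\Sigma_n$), and commutes with proper pushforward. The last assertion is the one that needs the $H_\infty$-assumption in an essential way: pushforward in $E^*$ is controlled by pushforward in $\MU^*$ via the genus, and for $\MU^*$ the power operation is literally $[M\to X]\mapsto [M^n\to X^n]\curvearrowleft\Sigma_n$, which manifestly commutes with the Thom--Pontryagin pushforward along a proper map. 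So I would first prove the $\MU$-statement by this geometric description, then transport it to $E^*$ using the commutative square \eqref{H_infty}.

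With these three compatibilities in hand, the argument is a diagram chase. Apply $P_n$ to $\calF_{12}*\calF_{23}=(\proj_{13})_*(\proj_{12}^*\calF_{12}\cdot\proj_{23}^*\calF_{23})$. Using the $\MU$-based pushforward compatibility, $P_n\circ(\proj_{13})_* = ((\proj_{13})^n)_*\circ P_n$, where $(\proj_{13})^n\colon (M\times M\times M)^n\to (M\times M)^n$ is the $n$-fold power, equivariant for the appropriate wreath products. Then by property (d), $P_n(\proj_{12}^*\calF_{12}\cdot\proj_{23}^*\calF_{23})$ equals the cup product of $P_n(\proj_{12}^*\calF_{12})$ and $P_n(\proj_{23}^*\calF_{23})$ after restricting the wreath-product equivariance appropriately; and by naturality each of these is $((\proj_{12})^n)^*P_n(\calF_{12})$, resp. $((\proj_{23})^n)^*P_n(\calF_{23})$. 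Finally one identifies $(M\times M\times M)^n$ with $M^n\times M^n\times M^n$ and $(\proj_{ij})^n$ with the projection $\proj_{ij}$ for the triple $(M^n,M^n,M^n)$, all compatibly with the inclusion $G\wr\Sigma_n\inj G^n\rtimes\Sigma_n$ acting diagonally — this is the bookkeeping step where one checks the relevant squares of spaces commute equivariantly. Putting the pieces together yields $P_n(\calF_{12}*\calF_{23}) = P_n(\calF_{12})*P_n(\calF_{23})$ in $E^*_{G\wr\Sigma_n}(Z^n)$, where the right-hand convolution is that for the $(M^n,M^n,M^n)$-correspondence; unitality ($P_n(1)=1$) is property (a). The module statement over $E^*_G(M)\to E^*_{G\wr\Sigma_n}(M^n)$ is the same chase with $M_3=\pt$ and $Z_{23}=M$, using Example~\ref{ex:conv_sm}.

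The main obstacle I anticipate is purely organizational rather than conceptual: getting the equivariance groups to line up. The power operation naturally produces $G\wr\Sigma_n$-equivariant classes, but the convolution on $Z^n$ that we want to land in uses the diagonal $\Sigma_n$ permuting the three factors simultaneously, and one must invoke property (d) — which involves restriction along $\Sigma_n\inj\Sigma_n\times\Sigma_n$ — in exactly the right place, as well as the observation that the symmetric group acting on the fiber product $Z^n=\bigl(M\times_N M\bigr)^n$ coincides (as a subgroup of $G^n\rtimes\Sigma_n$) with the one coming from $(M^n)\times_{N^n}(M^n)$. Once the identification $(M\times_N M)^n\cong M^n\times_{N^n}M^n$ with all its equivariant structure is set up cleanly, everything else follows from naturality of $P_n$ and the three standard compatibilities (pullback, product, proper pushforward), the last of which is where the $H_\infty$-assumption and the $\MU$-model are used.
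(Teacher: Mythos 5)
Your proposal is correct and follows essentially the same route as the paper's proof: reduce the compatibility of $P_n$ with pullback, pushforward, and cup product to the case of $\MU^*$, where the power operation is given geometrically on cobordism classes, and then transfer to $E^*$ via the commutative square \eqref{H_infty}; the algebra (and module) statement then follows since convolution is built from exactly these three operations. Your extra care with the wreath-product bookkeeping and the identification $(M\times_N M)^n\cong M^n\times_{N^n}M^n$ is a sound elaboration of details the paper leaves implicit.
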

\begin{proof}
It is clear that the power operation $P_n$ on $\MU^*_G(Z)$ commutes with pullback, pushforward and tensor product. 
Indeed, we only need to check this for cobordism classes of manifolds where for any $f: X\to Y$, we have 
$f^*: [M\to Y] \mapsto [f^*M\to X]$, and $f_*: [N\to X] \mapsto [N\to X\to Y]$. 
For two elements $[M\to Z]$ and $[N\to Z]$, the tensor product is given by $[M\times N\to Z]$. 
Thus, 
\[
\xymatrix{
[M\to Y] \ar@{|->}[r]^{f^*} \ar@{|->}[d]^{P_n}& [f^*M\to X]\ar@{|->}[d]^{P_n}\\
 [M^n\to Y^n]\ar@{|->}[r]^(0.4){(f^n)^*}& [f^*(M^n)=f^*(M)^n\to X^n]
}
\,\ \text{and}\,\ 
\xymatrix{
[N\to X] \ar@{|->}[r]^{f_*} \ar@{|->}[d]^{P_n}& [N\to X\to Y]\ar@{|->}[d]^{P_n}\\
 [N^n\to X^n]\ar@{|->}[r]^{(f^n)_*}& [N^n \to X^n\to Y^n]
}
\]
and 
\[
\xymatrix{
([M\to Z], [N\to Z]) \ar@{|->}[r]^{\otimes} \ar@{|->}[d]^{P_n}& [M\times N\to Z]\ar@{|->}[d]^{P_n}\\
 ([M^n\to Z^n], [N^n\to Z^n])\ar@{|->}[r]^{\otimes}& [M^n\times N^n \to Z^n]
}
\]
By the commutativity of the square \eqref{H_infty} and the fact that the top and the bottom maps commute with pullback and pushforward, we obtain the power operation $P_n$ on $E^*(Z)$ commutes with pullback, pushforward and tensor product. This implies the assertion, as the algebra structures of $E^*_G(Z)$ and $E^*_{G\wr \Sigma_n} (Z^n)$ are given by convolution.
Similar for the modules.  
\end{proof}

\begin{lemma}
Let $\Z/p\subseteq \Sigma_p$ be the subgroup of $p$-cycles, and let $\Delta:M\to M^p$ be the diagonal embedding. Then, the action of $\Z/p$ is free on the complement of $\Delta(M)$. In particular, pulling back  $\Delta^*:E^*_{\Z/p\times G}(M^p)\to E^*_{\Z/p\times G}(M)$ becomes an isomorphism after applying $-\otimes_{E^*_{\Z/p}}\Phi_1$.
\end{lemma}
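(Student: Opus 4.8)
The statement has two halves, and I would carry them out in that order.

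\emph{Freeness.} The group $\Z/p$ acts on $M^p$ by cyclic permutation of the factors; let $\sigma$ be the generating $p$-cycle. A point $(m_1,\dots,m_p)\in M^p$ is fixed by $\sigma^k$ with $k\not\equiv 0$ exactly when $m_i=m_{i+k}$ for all $i$ (indices mod $p$), and since $k$ is coprime to $p$ this forces $m_1=\dots=m_p$, i.e. the point lies on $\Delta(M)$. Because $p$ is prime, the only subgroups of $\Z/p$ are $\{e\}$ and $\Z/p$, so any point with nontrivial stabilizer is fixed by all of $\Z/p$, hence lies on $\Delta(M)$. Therefore $\Z/p$ acts freely on $M^p\setminus\Delta(M)$. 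Since $M$ is smooth, $\Delta(M)$ is a smooth closed submanifold of $M^p$, so $U:=M^p\setminus\Delta(M)$ is a $\Z/p\times G$-invariant open subset carrying a free $\Z/p$-action; this is the geometric input for the rest.

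\emph{The localization statement.} Here I would reuse the argument of Lemma~\ref{lem:diag_power}(1) (and Lemma~\ref{lem:t_diag}(1)) verbatim. One has the localizing long exact sequence in $E^*_{\Z/p\times G}$-theory for the closed embedding $\Delta(M)\hookrightarrow M^p$ with open complement $U$,
\[
\cdots\to E^{*-1}_{\Z/p\times G}(\Delta(M))\to E^*_{\Z/p\times G}(U)\to E^*_{\Z/p\times G}(M^p)\xrightarrow{\Delta^*} E^*_{\Z/p\times G}(\Delta(M))\to\cdots,
\]
together with $\Delta(M)\cong M$. Now tensor over $E^*_{\Z/p}$ with the first cyclotomic ring $\Phi_1=\Phi_1(E^*)$ of Definition~\ref{def:cyclotomic}; being a localization of $E^*_{\Z/p}(\pt)$ it is flat, so exactness is preserved. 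By the freeness above, $U$ is a (finite-type, hence finite-$CW$) space with free $\Z/p$-action, so Lemma~\ref{lem:cohomology_free_action}, applied $\Z/p\times G$-equivariantly exactly as in the proof of Lemma~\ref{lem:diag_power}, yields $E^*_{\Z/p\times G}(U)\otimes_{E^*_{\Z/p}}\Phi_1=0$. With the $U$-terms killed, the sequence collapses to an isomorphism $E^*_{\Z/p\times G}(M^p)\otimes_{E^*_{\Z/p}}\Phi_1\xrightarrow{\ \sim\ } E^*_{\Z/p\times G}(M)\otimes_{E^*_{\Z/p}}\Phi_1$ induced by $\Delta^*$, which is the claim.

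\emph{Main obstacle.} The one delicate point — already settled implicitly by Lemma~\ref{lem:diag_power} — is that $\Z/p\times G$ need \emph{not} act freely on $U$: only the normal subgroup $\Z/p$ does (for instance an order-$p$ element of $G$ can stabilize a ``twisted diagonal''), so one cannot simply quote Lemma~\ref{lem:free} for the group $\Z/p\times G$. What is true is that the free $\Z/p$-action makes $E^*_{\Z/p\times G}(U)$, as a module over $E^*_{\Z/p}(\pt)=E^*(B\Z/p)$, supported on the point $1\in\Spec E^*(B\Z/p)$ coming from $e\hookrightarrow\Z/p$: applying $E\Z/p\times_{\Z/p}(-)$ identifies $E^*_{\Z/p\times G}(U)\cong E^*_G(U/(\Z/p))$ so that the $E^*(B\Z/p)$-action factors through the classifying map of the $\Z/p$-cover $U\to U/(\Z/p)$, whence every generator of $S_b$ (the Euler class of a nontrivial $\Z/p$-representation) restricts to $0$ at $1$; since $1\notin\Spec\Phi_1$, inverting $S_b$ annihilates the module. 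Everything else is bookkeeping identical to Lemma~\ref{lem:diag_power}. (If one prefers to avoid the support language, the same conclusion follows by combining the vanishing of $E^*_{\Z/p\times G}(U)\otimes_{E^*_{\Z/p}}\Phi_1$ — which makes the Gysin pushforward $i_{\Delta*}$ an isomorphism after localization — with the identity $i_\Delta^*\circ i_{\Delta*}=e(N)\cup(-)$ and the invertibility of the normal Euler class $e(N)$ from Lemma~\ref{lem:lambda_invertible}.)
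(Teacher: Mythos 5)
Your proof is correct and follows essentially the same route as the paper, which simply cites the argument of Lemma~\ref{lem:diag_power}: freeness of the cyclic action off the diagonal (forced by $p$ being prime), the localization exact sequence tensored with the flat $E^*_{\Z/p}$-algebra $\Phi_1$, and the vanishing of the open-complement term via Lemma~\ref{lem:cohomology_free_action}. Your closing remark correctly identifies and resolves the one point the paper leaves implicit, namely that only the normal $\Z/p$-factor (not all of $\Z/p\times G$) acts freely, so the vanishing must be read as a support statement over $\Spec E^*(B\Z/p)$.
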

\begin{proof}
This follows the same argument as that of Lemma~\ref{lem:diag_power}.
\end{proof}

In Lemma~\ref{lem:power_alg_hom}, take $n=p$ and then composing with the restriction from $G\wr\Sigma_p$ to the subgroup $\Z/p\times G$, we obtain an algebra homomorphism the target of which is the domain of the $\Delta^*$ from Lemma~\ref{lem:diag_power}.

Now we discuss the iteration of this process.
By condition~Definition~\ref{cond_c}(c), the composition $P_{p}\circ P_{p}:E_G^*(X)\to E_{G\wr \Sigma_p}^*(X^p)\to E^*_{(G\wr \Sigma_p)\wr \Sigma_p}X^{p^2}$ can be obtained by $E_G^*(X)\to E_{G\wr S_{p^2}}(X^{p^2})\to E^*_{(G\wr \Sigma_p)\wr \Sigma_p}X^{p^2}$. We are interested in restriction to  further subgroups $\Z/p\wr \Z/p$ and $\Delta(\Z/p)\times\Z/p$. 
 Lemma~\ref{lem:power_alg_hom} now yields that $\res_{(G\wr \Sigma_p)\wr \Sigma_p}^{G\times\Z/p\times\Z/p}\circ P_{p^2}:E^*_G(Z)\to E^*_{G\times\Z/p\times\Z/p}(Z^{p^2})$ is an algebra homomorphism of convolution algebra, and similar for the module coming from $M$.

\begin{lemma}
\label{lem:diag pullback}
Let $\Delta:X\inj X^{p^2}$ be the diagonal embedding, which is equivariant under $\Delta(\Z/p)\times\Z/p$-action. Then, the pullback via the diagonal 
$
 E^*_{\Delta(\Z/p)\times\Z/p}(X^{p^2})\to E^*_{\Delta(\Z/p)\times\Z/p}(X)$ is an isomorphism after applying $-\otimes_{E^*_{(\Z/p)^2}}\Phi_2$. 
\end{lemma}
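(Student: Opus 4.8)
The plan is to reduce Lemma~\ref{lem:diag pullback} to the same localization argument used in the proofs of Lemma~\ref{lem:diag_power} and Lemma~\ref{lem:t_diag}, but with the group $\langle b\rangle\times\Lambda_t$ replaced by $\Delta(\Z/p)\times\Z/p\cong(\Z/p)^2$ acting on $X^{p^2}$, where the first factor $\Z/p$ is the $p$-cycle permuting the blocks of a $p^2$-fold product grouped into $p$ blocks of $p$, and the second $\Z/p$ is the diagonal $p$-cycle inside each block. Concretely, I would first identify the fixed locus: the $\Z/p\wr\Z/p$-action on $X^{p^2}$ has the property that a point $(x_{ij})_{1\le i,j\le p}$ is fixed by the inner $\Delta(\Z/p)$ iff $x_{ij}$ is independent of $j$, and is then fixed by the outer block-permuting $\Z/p$ iff it is moreover independent of $i$; so $(X^{p^2})^{\Delta(\Z/p)\times\Z/p}=\Delta(X)$, the small diagonal. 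The complement $X^{p^2}\setminus\Delta(X)$ carries a free $(\Z/p)^2$-action: if a point is not on the small diagonal, then either two blocks differ (so the outer $\Z/p$ acts freely on it) or within some block two coordinates differ (so the inner $\Delta(\Z/p)$ acts freely), and in either case the cyclic group of prime order $p$ generated by the relevant element has no fixed point, forcing the whole $(\Z/p)^2$ to act freely — more carefully, one checks that no nonzero element of $(\Z/p)^2$ fixes such a point.

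With the freeness in hand, the second step is the standard localization exact sequence argument, exactly as in the proof of Lemma~\ref{lem:diag_power}. Write the long exact sequence
\[
\cdots\to E^{*-1}_{(\Z/p)^2\times G}(\Delta(X))\to E^*_{(\Z/p)^2\times G}(X^{p^2}\setminus\Delta(X))\to E^*_{(\Z/p)^2\times G}(X^{p^2})\xrightarrow{\Delta^*} E^*_{(\Z/p)^2\times G}(\Delta(X))\to\cdots
\]
and apply $-\otimes_{E^*_{(\Z/p)^2}}\Phi_2$. By Lemma~\ref{lem:cohomology_free_action} (applied with $\langle b\rangle=(\Z/p)^2$, or rather its two-step refinement: first invert $c_1$ of nontrivial representations of one $\Z/p$, then of the other, noting that $\Phi_2$ is obtained as such an iterated localization) the group $E^*_{(\Z/p)^2\times G}(X^{p^2}\setminus\Delta(X))$ dies, because the free action makes this module supported away from the locus cut out by the relevant Chern classes — precisely the content of Lemma~\ref{lem:cohomology_free_action} after the base change $E^*_{(\Z/p)^2}\to\Phi_2$. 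Since $\Phi_2$ is flat over $E^*_{(\Z/p)^2}$ (it is a localization), tensoring is exact, so $\Delta^*$ becomes an isomorphism.

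The one point requiring a little care — and the main obstacle, such as it is — is the verification that $\Phi_2$ really is the iterated localization one needs, so that Lemma~\ref{lem:cohomology_free_action} can be applied one $\Z/p$-factor at a time: one first inverts $S_b$ for the inner $\Delta(\Z/p)$, killing the part of the complement where the inner action is free, and then inverts $S_b$ for the outer $\Z/p$; at each stage the relevant stratum of the complement is annihilated because the corresponding cyclic group acts freely there. This is exactly the bookkeeping already carried out in the proof of Lemma~\ref{lem:t_diag} for $t=2$, and by Lemma~\ref{lem2} the Chern classes of nontrivial representations of either $\Z/p$-factor remain invertible after passing to the larger group $(\Z/p)^2$, so the two localizations are compatible and their composite is $-\otimes_{E^*_{(\Z/p)^2}}\Phi_2$. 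I expect the proof to be short: essentially ``this follows the same argument as that of Lemma~\ref{lem:diag_power}'', with the decomposition of the complement into the two free strata being the only new combinatorial input.
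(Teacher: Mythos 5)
Your proposal is correct and follows essentially the same route as the paper: the paper's proof is a one-line reference to Lemma~\ref{lem:t_diag}, whose proof is exactly your two-step localization (invert the Chern classes for one $\Z/p$-factor at a time, killing the corresponding stratum of the complement via the localization sequence and Lemma~\ref{lem:cohomology_free_action}). One small correction to your write-up: the full $(\Z/p)^2$-action on $X^{p^2}\setminus\Delta(X)$ is \emph{not} free --- a point with all blocks equal but non-constant within each block is fixed by the outer block-permuting $\Z/p$ --- so the blanket claim that ``no nonzero element of $(\Z/p)^2$ fixes such a point'' is false; your stratified two-step argument, which only needs the relevant cyclic factor to act freely on the relevant stratum at each stage, is the correct formulation and is what actually carries the proof.
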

\begin{proof}
This follows the same argument as that of Lemma~\ref{lem:t_diag}.
\end{proof}

To summarize, we have the following.
\begin{prop}\label{prop:power_alg}
There is an algebra structure on $\Phi_t\otimes_{E^*_{\Lambda_t}}E^*_{G\times\Lambda_t}(Z)$ and a module structure on $\Phi_t\otimes_{E^*_{\Lambda_t}}E^*_{G\times\Lambda_t}(M),$ such that $\Delta^*\circ P_{p^t}:E^*_{G}(Z)\to \Phi_t\otimes_{E^*_{\Lambda_t}}E^*_{G\times\Lambda_t}(Z)$ is an algebra homomorphism, and  $\Delta^*\circ P_{p^t}:E^*_{G}(M)\to \Phi_t\otimes_{E^*_{\Lambda_t}}E^*_{G\times\Lambda_t}(M)$ is a module homomorphism.
\end{prop}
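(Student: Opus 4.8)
The plan is to bootstrap the statement from the two building blocks already assembled: the power operation $P_{p^t}$ (which is an algebra homomorphism for convolution algebras by Lemma~\ref{lem:power_alg_hom}, together with its iterated-composition property Definition~\ref{cond_c}(c)), and the diagonal restriction map $\Delta^*$ (which becomes an isomorphism after inverting the relevant Chern classes, by Lemmas~\ref{lem:diag_power} and \ref{lem:diag pullback}). The argument is essentially an induction on $t$, modelled on the proof of Lemma~\ref{lem:t_diag}, with $P_{p^t}$ playing the role that $i^*$ played in Theorem~\ref{thm:algebra hom}. First I would fix the group-theoretic setup: inside $\Sigma_{p^t}$ sits the iterated wreath product $(\cdots((\Z/p)\wr\Z/p)\wr\cdots)\wr\Z/p$ ($t$ factors), and by condition (c) the $t$-fold composite $P_p\circ\cdots\circ P_p$ equals $\res$ of the single operation $P_{p^t}$. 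Restricting the codomain group down to $\langle b\rangle\times\Lambda_t$ — where $\langle b\rangle\cong(\Z/p)^t$ is generated by the cyclic shifts on the $p^t$ tensor factors and $\Lambda_t$ is the diagonal copy on which $\langle b\rangle$ acts trivially, exactly as in \S\ref{subsec:alpha} — Lemma~\ref{lem:power_alg_hom} gives that $\res\circ P_{p^t}:E^*_G(Z)\to E^*_{G\times\langle b\rangle\times\Lambda_t}(Z^{p^t})$ is an algebra homomorphism of convolution algebras, and likewise a compatible module homomorphism on the $E^*_G(M)$ side. (Here, for the purposes of this proposition, $\langle b\rangle$ is being absorbed into $\Lambda_t$ in the notation of the statement; one keeps the shift group around only to run the localization.)

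Next I would produce the algebra/module structures asserted on $\Phi_t\otimes_{E^*_{\Lambda_t}}E^*_{G\times\Lambda_t}(Z)$ and $\Phi_t\otimes_{E^*_{\Lambda_t}}E^*_{G\times\Lambda_t}(M)$. The point is that after base change along $E^*_{\Lambda_t}\to\Phi_t$ (more precisely, after also inverting the $c_1$'s of the non-trivial $\langle b\rangle$-representations, using Lemma~\ref{lem2} to see this is harmless) the diagonal pullback $\Delta^*:E^*_{G\times\langle b\rangle\times\Lambda_t}(Z^{p^t})\to E^*_{G\times\langle b\rangle\times\Lambda_t}(Z)$ becomes an \emph{isomorphism} by Lemma~\ref{lem:diag pullback} (applied iteratively, as in Lemma~\ref{lem:t_diag}, factoring the diagonal $X\inj X^{p^t}$ through the intermediate diagonals and inverting one $\Z/p$-factor's Chern classes at each stage), and the embedding is transverse to the off-diagonal locus on which $\langle b\rangle$ acts freely (Lemma~\ref{lem:diag_power}), so the convolution algebra structure on the source transports across $\Delta^*$ to a well-defined convolution-type multiplication on the localized target $\Phi_t\otimes_{E^*_{\Lambda_t}}E^*_{G\times\Lambda_t}(Z)$. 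That the transported structure genuinely makes $\Delta^*\circ P_{p^t}$ an algebra homomorphism is then automatic: $P_{p^t}$ is already an algebra map into the unlocalized source, and $\Delta^*$ is an algebra isomorphism after localization by construction. The module statement for $M$ follows verbatim with the Steinberg variety $Z$ replaced by the quiver variety (or, abstractly, by $M\times_N M$ acting on $M$), since all of Lemmas~\ref{lem:power_alg_hom}, \ref{lem:diag_power}, \ref{lem:diag pullback} were stated simultaneously for the algebra and the module.

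Finally I would remark on compatibility of the resulting multiplication with the convolution product — i.e. that $\Delta^*$ is not merely used as a transport of structure but that the induced product on $\Phi_t\otimes_{E^*_{\Lambda_t}}E^*_{G\times\Lambda_t}(Z)$ coincides with the naive convolution product there. This is exactly the computation carried out in the proof of Proposition~\ref{prop:localization_convolution}: one inserts the Euler classes of normal bundles of the various diagonal embeddings, uses the projection formula and the base-change square for the product-of-projections maps, and checks that $\Delta^*$ intertwines the two products up to these Euler-class factors, which are invertible after localization by Lemma~\ref{lem:diag_power}(3) and Lemma~\ref{lem:t_diag}(3). I expect the main obstacle to be purely bookkeeping: keeping the $t$ distinct $\Z/p$-factors (the shift group $\langle b\rangle$ versus the diagonal $\Lambda_t$, and their roles in the iterated wreath product) straight through the induction, and verifying at each inductive step that the relevant Euler classes land in the multiplicative set being inverted — but this is precisely the pattern of Lemma~\ref{lem:t_diag}, so no genuinely new idea is needed beyond assembling the pieces already in hand.
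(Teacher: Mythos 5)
Your first two paragraphs reproduce the paper's argument: Proposition~\ref{prop:power_alg} is indeed proved by combining Lemma~\ref{lem:power_alg_hom} (restriction of $P_{p^t}$ along the iterated wreath product, using condition (c) of the $H_\infty$-structure) with the localized diagonal pullback isomorphisms of Lemmas~\ref{lem:diag_power} and \ref{lem:diag pullback}, and the algebra/module structures on the target are obtained by transport of structure along the isomorphism $\Delta^*$. That part is correct and is essentially what the paper does.

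Your final paragraph, however, asserts something false: that the transported product on $\Phi_t\otimes_{E^*_{\Lambda_t}}E^*_{G\times\Lambda_t}(Z)$ coincides with the naive convolution product there. The paper explicitly denies this in the remark immediately following the proposition: the structure ``comes from that of $E^*_{G^{p^t}}(Z^{p^t})$ via $\Delta^*$,'' but ``the pullback $\Delta^*$ does not respect this convolution multiplication.'' The computation in Proposition~\ref{prop:localization_convolution} that you invoke shows that the \emph{corrected} map $(1\boxtimes\frac{1}{e_2})\circ i^*$ commutes with convolution, not the bare restriction $i^*$ (equivalently $\Delta^*$); without dividing by the Euler class of the normal bundle of the diagonal, the pushforward step in the convolution picks up exactly that Euler class, which is why Theorem~\ref{thm:algebra hom} needs its step (d). Saying the two products agree ``up to invertible Euler-class factors'' is not the same as saying they agree: the transported product is the convolution product conjugated by multiplication by $e$, which is a genuinely different (though isomorphic) algebra structure. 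This error does not sink the proposition --- the statement only asserts the existence of \emph{some} algebra structure making $\Delta^*\circ P_{p^t}$ a homomorphism, which transport of structure provides --- but you should delete or correct the claimed identification, since downstream arguments (e.g.\ the comparison with $\Fr_{n,n-t}$ in \S\ref{sec:a1case}) depend on keeping the Euler-class correction explicit.
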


We note that the algebra structure on $\Phi_t\otimes_{E^*_{\Lambda_t}}E^*_{G\times\Lambda_t}(Z)$ above comes from that of $E^*_{G^{p^t}}(Z^{p^t})$ via $\Delta^*$. In other words, on $\Phi_t\otimes_{E^*_{\Lambda_t}}E^*_{G\times\Lambda_t}(Z)$, there is a natural convolution product. However, the pullback $\Delta^*$ does not respect to this convolution multiplication (see the proof of Theorem \ref{thm:algebra hom}).

\subsection{Cyclic quivers of $\mathfrak{sl}_2$}
\label{ex:sl_2}
Let $\mathfrak{g}$ be $\mathfrak{sl}_2$. 
Let $w \in \N$ be a positive integer. 
Let 
\[
X=T^*\Gr(w)=(\sqcup_{v=0}^w T^*\Gr(v, w))
\]be the cotangent bundle of Grassmannian. 
That is, 
\[
T^*\Gr(w)=\{(V\subset W=\C^w, x)\mid V \,\ \text{is a subvector space of $W$}, x\in 
\End(\C^w),\text{such that}\,\  x(W)\subset V, x(V)=0
\}
\]
Let $ \calN\subset \End(\C^w)$ be the nilpotent cone. 
We have the natural map 
\[
\pi: T^*(\Gr(w))\to \calN, \,\ (V\subset W=\C^w, x)\mapsto x.
\] 
Let $\varmathbb{G}=\C^* \times \GL_{w}$. 
The action of $\GL_{w}$ is induced by its action on $\C^w$. 
The action of $\C^*$ is by $x\mapsto t x$. 
Under the action of $g_0\in \End(W)$, we have the eigenspace decomposition 
$W=\oplus_{s=0}^{p-1} W_{\zeta^{s}}$. 
Thus, 
\begin{align*}
\calN^{(g_0, \zeta)}
=&\{x\in \calN\mid g_0xg_0^{-1}=\zeta^{-1}x\}
=\{x\in \calN\mid x(W_{\zeta^{s}})\subset W_{\zeta^{s-1}}, s=0, \cdots, p-1\}, \\
\text{and} \,\ \,\
T^*(\Gr(w))^{(g_0, \zeta)}
=&\{(V=\oplus_{s=0}^{p-1} V_{\zeta^{s}}\subset W=\oplus_{s=0}^{p-1} W_{\zeta^{s}}, x)\mid \\&
V_{\zeta^s} \,\ \text{is a subvector space of $W_{\zeta^{s}}$}, x\in \End(W)\,\ 
\text{such that}\,\  
x(W_{\zeta^{s}})\subset V_{\zeta^{s-1}},
x(V)=0
\}. 
\end{align*}
Under the action of $b$, we have
\[
(\oplus_{s=0}^{p-1} V_{\zeta^s}, x)\mapsto 
(\oplus_{s=1}^{p} V_{\zeta^s}, bxb^{-1})
\]

The components of $T^*(\Gr(w))^{(g_0, \zeta)}$ are labeled by the dimension vectors of $V$. 
Note that by the choice of $g_0$, all the eigenspaces $W_{\zeta^{s}}$ have the same dimension. The component of $T^*(\Gr(w))^{(g_0, \zeta)}$ is an 
\textit{even} component, if $dim(V_{\zeta^0})=dim(V_{\zeta^1})=\cdots =dim(V_{\zeta^{p-1}})$. 
Otherwise, it is  an \textit{uneven} component. 

In the case of $A_1$, we have the following result which is stronger than Proposition \ref{lem:iso to products}. 
\begin{prop}\label{prop:A_1}
We have the isomorphism 
\[
T^*\Gr(w)^{\im \alpha}|_{\even}\cong
\prod_{p} (T^*\Gr(w/p) )|_{\even}. 
\]
\end{prop}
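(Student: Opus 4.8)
The statement asserts that the $\im\alpha$-fixed locus of $T^*\Gr(w)$, restricted to its even components, is isomorphic to $p$ copies of $T^*\Gr(w/p)$, restricted to even components. Comparing with Proposition~\ref{lem:iso to products}, which gives $\mathfrak{M}^{\cyc}_{(\overline{Q^\heartsuit})_l}(w)|_\Delta \cong \mathfrak{M}(w/p)$ after passing to the $\langle b\rangle$-fixed subvariety, here we want the \emph{full} $\im\alpha$-fixed locus (even part) before taking any $\langle b\rangle$-invariants, and the claim is that it already decomposes as a product of $p$ copies. First I would set up the $g_0$-eigenspace decomposition $W = \bigoplus_{s=0}^{p-1} W_{\zeta^s}$ with all $W_{\zeta^s}$ of dimension $w/p$, and identify $b\colon W_{\zeta^s}\xrightarrow{\ \cong\ } W_{\zeta^{s+1}}$ as in \S\ref{subsec:alpha}. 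Recall from the explicit description in \S\ref{ex:sl_2} that a point of $T^*\Gr(w)^{(g_0,\zeta)}$ is a pair $(V = \bigoplus_s V_{\zeta^s} \subseteq W, x)$ with $V_{\zeta^s}\subseteq W_{\zeta^s}$, $x(W_{\zeta^s})\subseteq V_{\zeta^{s-1}}$, and $x(V)=0$; on an even component $\dim V_{\zeta^0} = \cdots = \dim V_{\zeta^{p-1}} =: v'$.

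\textbf{Key steps.} The idea is to use the nilpotent shift operator $x$ itself together with $b$ to rigidify the data. On an even component, the condition $x(W_{\zeta^s})\subseteq V_{\zeta^{s-1}}$ and $x(V_{\zeta^s})=0$ means $x$ restricted to each graded piece, followed by the identification $b$, produces maps $b\circ x\colon W_{\zeta^s} \to V_{\zeta^s}$ (reading indices cyclically). The plan is to define the isomorphism
\[
T^*\Gr(w)^{\im\alpha}|_{\even} \longrightarrow \prod_{s=0}^{p-1} T^*\Gr(w/p)
\]
by sending $(V,x)$ to the tuple whose $s$-th entry is the pair $\big(V_{\zeta^s}\subseteq W_{\zeta^s},\ b^{?}\circ x|_{W_{\zeta^s}}\big)$, where the power of $b$ is chosen (as in the composition formulas of Proposition~\ref{lem:iso to products}, cf.\ the compositions displayed before that proposition) so that the source and target of the resulting endomorphism of $W_{\zeta^s}\cong \C^{w/p}$ match up and the nilpotency conditions $x(W)\subseteq V$, $x(V)=0$ become exactly the defining conditions of $T^*\Gr(w/p)$ on each factor. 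One checks this lands in $\prod_p T^*\Gr(w/p)$; because the data on distinct graded pieces are now completely independent (the only coupling was through $x$, which has been absorbed into each factor separately via $b$), the map is a bijection, with inverse assembling $p$ separate flags-with-nilpotent into a single $(V,x)$ by inserting the appropriate powers of $b^{-1}$. Smoothness of both sides (both are cotangent bundles of Grassmannians, or disjoint unions thereof) plus bijectivity of this algebraic map give the isomorphism of varieties; one then restricts to even components on the right. Finally I would note that even components of $\prod_p T^*\Gr(w/p)$ are precisely those where each factor's flag dimension is the common value $v'$, matching $\prod_p (T^*\Gr(w/p))|_{\even}$.

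\textbf{Main obstacle.} The delicate point is bookkeeping the cyclic index shifts: $x$ drops the $\zeta$-grading by one while $b$ raises it by one, and the weight function $m(h)$ is trivial in type $A_1$ (single loop $\Rep(\overline{Q})=T^*\Rep(Q)$), so the relevant twist is uniform; still one must verify that the composite $b\circ x$ (or $b^{-1}\circ x$, depending on orientation conventions) genuinely defines a point of $T^*\Gr(w/p)$, i.e.\ that it is nilpotent of the right type and that the image-containment / vanishing conditions transport correctly. The reason this is \emph{stronger} than Proposition~\ref{lem:iso to products} is that in type $A_1$ there are no arrows $B_h$ to worry about beyond the single nilpotent $x$, so the only obstruction to splitting off the whole $\im\alpha$-fixed locus (rather than just its $\langle b\rangle$-fixed part) is absent; I would make this explicit by comparing with the general cyclic quiver variety and observing that in the $A_1$ case the ``diagonal'' constraint used in Proposition~\ref{lem:iso to products} is automatically satisfied on each even component once one uses $x$ and $b$ to trivialize the identifications. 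A secondary point is to confirm compatibility with the $\C^*$-action (which scales $x$): since $b$ is $\C^*$-equivariant and $x\mapsto tx$ on both sides, this is immediate, though it is not asserted in the present statement and can be deferred.
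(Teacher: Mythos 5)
Your construction follows the same route as the paper's own proof (identify the $g_0$-eigenspaces of $W$ via $b$ and try to split the fixed locus into $p$ independent factors), but the decoupling step is a genuine gap, not a bookkeeping issue. On an even component the data consists of subspaces $V_{\zeta^s}\subseteq W_{\zeta^s}$, each of dimension $a$, together with the components $x_s\colon W_{\zeta^s}\to V_{\zeta^{s-d}}$ of $x$, where $d$ is the amount by which $x$ shifts the eigenspace grading ($d=1$ or $2$ depending on the $\C^*$-weight convention; in either case $d\not\equiv 0\bmod p$ once $p>2$). The point is that $x_s$ is constrained by \emph{two different} subspaces: its image must lie in $V_{\zeta^{s-d}}$ while its kernel must contain $V_{\zeta^s}$. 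Composing with $b^{d}$ yields an endomorphism of $W_{\zeta^s}$ whose image lies in $b^{d}(V_{\zeta^{s-d}})$ and which kills $V_{\zeta^s}$; but $b$ identifies only the ambient spaces, not the varying subspaces, and $b^{d}(V_{\zeta^{s-d}})$ is an independent point of $\Gr(a,W_{\zeta^s})$, generically different from $V_{\zeta^s}$. So the pair $(V_{\zeta^s},\,b^{d}\circ x_s)$ is \emph{not} a point of $T^*\Gr(a,w/p)$, and the graded pieces remain cyclically coupled. The coupling disappears exactly when $d\equiv 0\bmod p$, which is the $p=2$ case (there $x=j\circ i$ shifts the grading by $2\equiv 0$, so image and kernel conditions refer to the same $V_{\zeta^s}$); this is why $p=2$ is ``tautological'' while $p>2$ is not.

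Moreover the obstruction cannot be argued away: over the common base $\prod_s\Gr(a,W_{\zeta^s})$ the two sides are total spaces of the bundles $\bigoplus_s\sHom(\calQ_s,\calV_{\zeta^{s-d}})$ and $\bigoplus_s\sHom(\calQ_s,\calV_{\zeta^{s}})$, and these total spaces can be distinguished intrinsically. For $p=3$, $w=6$, $a=1$ the degree-one parts of their rings of global functions have dimensions $12$ and $9$ respectively, while the affinization of $(T^*\PP^1)^3$ embeds in $\C^9$, so every local ring on that side has embedding dimension at most $9$; hence no abstract isomorphism of varieties exists between the two total spaces. (The paper's own proof hides the same issue in the assertion $b^*\calV_{\zeta^{j+1}}\cong\calV_{\zeta^{i+1}}$, which only holds over the locus where $b$ carries one tautological subspace to the other.) What is true, and what your formulas do prove, is the statement after restricting to the $\langle b\rangle$-fixed (diagonal) locus, where $b^{d}(V_{\zeta^{s-d}})=V_{\zeta^s}$ by definition; that is Proposition~\ref{lem:iso to products}. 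To salvage a statement on the full even locus you would have to either impose the diagonal condition or replace the target by the relevant cyclic quiver variety rather than a product of $p$ copies of $T^*\Gr(w/p)$.
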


\begin{proof}
If $p=2$, it is clear from their quiver descriptions that 
$\mathfrak{M}^{\cyc}_{(\overline{Q^{\heartsuit}})_2}(w)$ and $ \mathfrak{M}^{\cyc}_{\overline{Q^{\heartsuit}}\times \mu_2}(w)$ are tautologically the same as $T^*\Gr(w/p)\times T^*\Gr(w/p)$.
\[\begin{tikzpicture}
[square/.style={regular polygon,regular polygon sides=4}]
 \node at (0, 1.2 ) {$V_0$};
 \node at (1, 1.2 ) {$V_1$};
  \node at (0, -1.5 ) {$W_0$};
 \node at (1, -1.5 ) {$W_1$};
\foreach \x in {0, 1}{
	  \draw ($(\x,-0.1+1)$) circle (.1);
	}
\foreach \x in {0, 1}{
	  \node at (\x,-0.1-1) [square,inner sep=0.2em, draw]{};
	}
\draw[->, red] (0, -1) to[bend left] (1, 1-0.2);
\draw[<-, blue] (0, -1) to[bend right] (1, 1-0.2);
\draw[->, red] (1, -1) to[bend left] (0, 1-0.2);
\draw[<-, blue] (1, -1) to[bend right] (0, 1-0.2);
\end{tikzpicture}
\,\ \text{and}\,\ 
\begin{tikzpicture}
[square/.style={regular polygon,regular polygon sides=4}]
 \node at (0, 1.2 ) {$V_1$};
 \node at (1, 1.2 ) {$V_0$};
  \node at (0, -1.5 ) {$W_0$};
 \node at (1, -1.5 ) {$W_1$};
\foreach \x in {0, 1}{
	  \draw ($(\x,-0.1+1)$) circle (.1);
	}
\foreach \x in {0, 1}{
	  \node at (\x,-0.1-1) [square,inner sep=0.2em, draw]{};
	}
\draw[->, red] (0, -1) to[bend left] (0, 1-0.2);
\draw[<-, blue] (0, -1) to[bend right] (0, 1-0.2);
\draw[->, red] (1, -1) to[bend left] (1, 1-0.2);
\draw[<-, blue] (1, -1) to[bend right] (1, 1-0.2);
\end{tikzpicture}
\]
For general $p$, the $b$ identifies $W_1, W_{\zeta}, \cdots, W_{\zeta^{p-1}}$. It induces an isomorphism between the two Grassmannians
\[
b: \Gr(a, W_{\zeta^i}) \cong
\Gr(a, W_{\zeta^j}).
\]
where $a=\dim(V_{0})$. 
Hence, on the tautological sequences
\[
0\to \calV_{\zeta^{i+1}} \to W_{\zeta^i} \to \calQ_i\to 0, \,\ \,\ 
0\to \calV_{\zeta^{j+1}} \to W_{\zeta^j} \to \calQ_j\to 0,
\]
we have an isomorphism of vector bundles 
\[
b^*\calV_{\zeta^{j+1}}
\cong \calV_{\zeta^{i+1}}.
\]
The blue arrows in the following picture on the left become the embeddings of the tautological subbundles on the Grassmannians $\Gr(V_{\zeta^{i+1}}, W_{\zeta^i})$. 
They can be identified with the blue arrows in the picture on the right. 

The red arrows on the left, a priori, are bundles maps from $W_{\zeta^i}$ to $V_{\zeta^{i-1}}$. Using the identification of $b^*\calV_{\zeta^{i-1}}
\cong \calV_{\zeta^{i+1}}$, the red arrows 
define elements in $\sHom_{\Gr(a, W_{\zeta^i})}(W_{\zeta^i}, V_{\zeta^{i+1}})$ and can be identified with the red arrows in the picture on the right. This gives an isomorphism between $\Gr(w)^{\im \alpha}|_{\even}$ and $\prod_{p} \Gr(w/p)|_{\even}$. 
\[\begin{tikzpicture}
[square/.style={regular polygon,regular polygon sides=4}]
 \node at (0, 1.3 ) {$V_1$};
 \node at (1, 1.2 ) {$V_{\zeta}$};
  \node at (2, 1.2 ) {$V_{\zeta^2}$};
   \node at (4, 1.2 ) {$V_{\zeta^{p-2}}$};
    \node at (5, 1.2 ) {$V_{\zeta^{p-1}}$};
     \node at (6, 1.3 ) {$V_{1}$};
  \node at (0, -1.5 ) {$W_1$};
 \node at (1, -1.5 ) {$W_{\zeta}$};
  \node at (2, -1.5 ) {$W_{\zeta^2}$};
   \node at (4, -1.5 ) {$W_{\zeta^{p-2}}$};
    \node at (5, -1.5 ) {$W_{\zeta^{p-1}}$};
    \node at (6, -1.5 ) {$W_{1}$};
     \node at (2.5, 0 ) {$\cdots$};
\foreach \x in {0, 1, 2, 4, 5, 6}{
	  \draw ($(\x,-0.1+1)$) circle (.1);
	}
\foreach \x in {0, 1, 2, 4, 5, 6}{
	  \node at (\x,-0.1-1) [square,inner sep=0.2em, draw]{};
	}
\foreach \x in {1, 2, 4, 5, 6}{	
\draw[red,->] (\x, -1) to[bend left] (\x-1, 1-0.2);
}
\foreach \x in {1, 2, 4, 5, 6}{	
\draw[blue, <-] (\x-1, -1) to[bend left] (\x, 1-0.2);
\draw[dashed] (0, -1.2) to[bend right] (6, -1.2);
\draw[dashed] (0, 1) to[bend left] (6, 1);
}
\end{tikzpicture}
\,\ \,\ \,\ \text{and}\,\ \,\  \,\
\begin{tikzpicture}
[square/.style={regular polygon,regular polygon sides=4}]
 \node at (0, 1.2 ) {$V_{\zeta}$};
 \node at (1, 1.2 ) {$V_{\zeta^2}$};
  \node at (3, 1.2 ) {$V_{1}$};
  \node at (0, -1.5 ) {$W_1$};
 \node at (1, -1.5 ) {$W_{\zeta}$};
  \node at (3, -1.5 ) {$W_{\zeta^{p-1}}$};
  \node at (2, 0 ) {$\cdots$};
\foreach \x in {0, 1, 3 }{
	  \draw ($(\x,-0.1+1)$) circle (.1);
	}
\foreach \x in {0, 1, 3}{
	  \node at (\x,-0.1-1) [square,inner sep=0.2em, draw]{};
	}
	\foreach \x in {0, 1, 3}{
\draw[blue, <-] (\x, -1) to[bend left] (\x, 1-0.2);
\draw[red,->] (\x, -1) to[bend right] (\x, 1-0.2);}
\end{tikzpicture}
\]
In the above picture, the two vertices
connected by the dashed arrows are identified. 
\end{proof}

\subsection{$A_1$-case via power operations}\label{sec:a1case}
In the case of $A_1$, by Proposition \ref{prop:A_1}, we have the isomorphisms
\[
Z(w)^{\im \alpha}|_{\even} \cong \prod_{p^t} Z(w/p^t)|_{\even}, \,\ 
\fM(w)^{\im \alpha}|_{\even} \cong \prod_{p^t} \fM(w/p^t)|_{\even}. 
\]
In this case, we give a ``correct" construction of the Frobenius morphism where the power operation naturally show up.

Let $i_\alpha :Z(w)^{\im\alpha}|_{\even}\inj Z(w)$ be the embedding. We have the pullback map
\[
i_\alpha^*: E^*_{n,\varmathbb{G}}(Z(w))
\to  E^*_{n,\langle b\rangle\wr (\alpha\times H)}(Z(w)^{\im \alpha}). 
\]
Let $e_\alpha$ be the Euler class of the normal bundle of the second factor $\fM(w)^{\im\alpha}\times \fM(w)^{\im\alpha}\subseteq \fM(w)\times\fM(w)$ restricted to $Z(w)^{\im\alpha}|_{\even}\subseteq \fM(w)^{\im\alpha}|_{\even}\times \fM(w)^{\im\alpha}|_{\even}$. 
Observe  that $e_\alpha$ is invertible in $S_{\Lambda}^{-1}E^*_{n,\langle b\rangle\wr (\alpha\times H)}(Z(w)^{\im\alpha})$ thanks to Lemma~\ref{lem:lambda_invertible}. Proposition~\ref{prop:localization_convolution} then implies that 
\[
\frac{i_\alpha^*}{e_\alpha}: E^*_{n,\varmathbb{G}}(Z(w))\to S_{\Lambda}^{-1}E^*_{n,\langle b\rangle\wr (\alpha\times H)}(Z(w)^{\im\alpha}|_{\even})
\cong 
S_{\Lambda}^{-1}E^*_{n,\langle b\rangle\wr (\alpha\times H)}(\prod_{p} Z(w/p)|_{\even})
\]
is an algebra homomorphism, where the latter isomorphism follows from Proposition \ref{prop:A_1}. Observe also that $S_b^{-1}E^*_{n,\langle b\rangle\wr (\alpha\times H)}(\prod_{p^t} Z(w/p^t)|_{\even})\cong S_b^{-1}E^*_{n,\langle b\rangle\wr (\alpha\times H)}(\prod_{p^t} Z(w/p^t))$ thanks to Proposition~\ref{prop:folding}.

Composing these and using \eqref{eqn:LocPhi}, we obtain
\begin{equation}\label{eq:pullback}
\frac{i_\alpha^*}{e_\alpha}:
E^*_{n,\varmathbb{G}}(Z(w))\to S_\Lambda^{-1}S_b^{-1}E^*_{n,\langle b\rangle\wr (\alpha\times H)}(\prod_{p^t}Z(w/p^t))
\to \Phi_t^{L_{K_{n-t}E}}\otimes_{C_{n-t,\langle b\rangle}}C^*_{n-t,\langle b\rangle\wr  H}(\prod_{p^t}Z(w/p^t)).
\end{equation}
Pullback along the diagonal embedding $
\Delta: Z(w/p^t)\subset \prod_{p^t}Z(w/p^t)$, 
by Lemma \ref{lem:diag pullback}, we get an isomorphism 
\[
\Delta^*: 
\Phi_t^{L_{K_{n-t}E}}\otimes_{C_{n-t,\langle b\rangle}}C^*_{n-t,\langle b\rangle\wr  H}(\prod_{p^t}Z(w/p^t))
\cong
\Phi_t^{L_{K_{n-t}E}}\otimes_{C_{n-t,\langle b\rangle}}C^*_{n-t,\langle b\rangle\wr  H}(Z(w/p^t)). 
\]

After inverting $p$, $p^t$ is a unit, then, the $p^t$-series of $F$
\[
[p^t]_F(z)=p^t z+\cdots
\]
becomes an isomorphism of formal group laws. 
Therefore, by Proposition \ref{p series}, the following composition is an isomorphism of $\Phi_t^{L_{K_{n-t}E}}$-modules
\[
\Phi_t^{L_{K_{n-t}E}}\otimes_{C_{n-t}}C^*_{n-t,H}(Z(w/p^t))\xrightarrow{P_{p^t}^{C_{n-t,\langle b\rangle}}}
\Phi_t^{L_{K_{n-t}E}}\otimes_{C_{n-t,\langle b\rangle}}C^*_{n-t, \langle b\rangle\wr  H}(\prod_{p^t}Z(w/p^t))
\xrightarrow[\cong]{\Delta^*}
\Phi_t^{L_{K_{n-t}E}}\otimes_{C_{n-t}}C^*_{n-t,  H}(Z(w/p^t)).
\]
In particular, $P_{p^t}^{C_{n-t}}$ is an isomorphism and the inverse map exists. 

Composing \eqref{eq:pullback} and the inverse of the power operation, we obtain an algebra homomorphism by Lemma~\ref{lem:power_alg_hom}.  
\[
\tilde{\Fr}_{n, n-t}: E^*_{n,\varmathbb{G}}(Z(w))
\xrightarrow{ \frac{i_\alpha^*}{e_\alpha}:
} 
\Phi_t^{L_{K_{n-t}E}}\otimes_{C_{n-t,\langle b\rangle}}C^*_{n-t,\langle b\rangle\wr  H}(\prod_{p^t}Z(w/p^t))
\xrightarrow[\cong]{ (P_{p^t}^{C_{n-t}})^{-1}
} 
\Phi_t^{L_{K_{n-t}E}}\otimes_{C_{n-t}}C^*_{n-t,  H}(Z(w/p^t)). 
\]

\section{The Frobenius phenomenon on representations}
In this section we gather some basic properties of the effect of Frobenii on representation theory of the convolution algebras. 
\label{sec:Frob phen}

\subsection{The standard modules}
Assume $p^t\mid w$. We define a morphism on the modules
\begin{align}
\Fr _{n, n-t}^{\mathfrak{M}}: E^*_{n, \varmathbb{G}}( \mathfrak{M}(w))
\to & 
\Phi_t(C^*_{n-t})\otimes_{C^*_{n-t}(\pt)} 
C^*_{n-t,H}(\mathfrak{M}(\frac{w}{p^t})), 
\end{align} 
similar to \S~\ref{subsec:def_Frob}, 
with  the space $Z(w)$ substituted by $\mathfrak{M}(w)$, and do not  multiply by $\frac{1}{e}$ in (d).
More precisely, $\Fr_{n, n-t}^{\mathfrak{M}}$ is defined as the composition of the following morphisms. 

\begin{enumerate}
\item[(a)]
The change of group morphism induced from \eqref{eq:group}
\[
E^*_{\varmathbb{G}}( \mathfrak{M}(w)) \to E^*_{\langle b\rangle\wr (\alpha\times H)}(\mathfrak{M}(w)).
\]
\item [(b)] 
Let $i: \mathfrak{M}(w)^{\im \alpha}\mid_{\Delta}\inj \mathfrak{M}(w)$ be embedding of the fixed point set of $im(\alpha)$-action. 
The pullback $i^*$ along $i$. 
\[
i^*: E^*_{\langle b\rangle\wr (\alpha\times H)}(\mathfrak{M}(w))
\to E^*_{\langle b\rangle \times \Lambda_t\times H}( \mathfrak{M}(w)^{ \im \alpha}\mid_{\Delta}).
\]
\item[(c)]
The localization
\[
E^*_{\langle b\rangle \times \Lambda_t\times H}( \mathfrak{M}(w)^{ \im \alpha}\mid_{\Delta})
\to S_{\Lambda}^{-1} S_b^{-1}E^*_{\langle b\rangle \times \Lambda_t\times H}( \mathfrak{M}(w)^{ \im \alpha}\mid_{\Delta}).
\]

\item[(d)]
The natural map 
\[
S_{\Lambda}^{-1} S_b^{-1}E^*_{\langle b\rangle \times \Lambda_t\times H}( \mathfrak{M}(w)^{ \im \alpha}\mid_{\Delta})
\to \Phi_t^{L_{K_{n-t}E}}\otimes_{C^0_{n-t}} 
C^*_{n-t, H}(\mathfrak{M}(\frac{w}{p^t})).
\]
\end{enumerate}

\begin{theorem}\label{mainthm}
The morphism $\Fr_{n, n-t}^{\mathfrak{M}}$ is compatible with the algebra homomorphism $\Fr_{n, n-t}$. 
That is, we have the following diagram
\[
\begin{xymatrix}@R=0.3em{
 E^*_{\varmathbb{G}}( Z) \ar[r]^(0.3){\Fr_{n, n-t}} &\Phi_t^{L_{K_{n-t}E}}\otimes_{C^0_{n-t}} 
C^*_{n-t, H}(Z(\frac{w}{p^t}))\\
&\\
\ar@(ul,ur)[]&\ar@(ul,ur)[]\\
 E^*_{\varmathbb{G}}( \mathfrak{M}(w)) \ar[r]^(0.3){\Fr_{n, n-t}^{\mathfrak{M}}} &
\Phi_t^{L_{K_{n-t}E}}\otimes_{C^0_{n-t}} 
C^*_{n-t, H}(\mathfrak{M}(\frac{w}{p^t}))
}
\end{xymatrix}
\]
\end{theorem}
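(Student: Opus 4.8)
The plan is to check commutativity of the square stage by stage, following the same factorisation of $\Fr_{n,n-t}$ as in the proof of Theorem~\ref{thm:algebra hom}, and verifying at each stage that the map in question intertwines the convolution action of $E^*_{\varmathbb{G}}(Z(w))$ on $E^*_{\varmathbb{G}}(\mathfrak{M}(w))$ with the convolution action on the corresponding target. The observation that makes this work cleanly is that $\Fr^{\mathfrak{M}}_{n,n-t}$ is obtained from $\Fr_{n,n-t}$ by specialising the set-up of Example~\ref{ex:conv_sm} to $M_3=\pt$: taking $M_1=M_2=\mathfrak{M}(w)$, $Z_{12}=Z(w)$ and letting $Z_{23}=\mathfrak{M}(w)$ be the module, the action is $\eta * \mu=\proj_{1*}\big((\proj_2^*\mu)\cdot\eta\big)$, and the normal bundle of $\pt$ inside $\pt$ is the zero bundle, whose Euler class is $1$. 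This is exactly why $\Fr^{\mathfrak{M}}_{n,n-t}$ carries no $1\boxtimes\frac{1}{e}$ factor in step (d), whereas the algebra Frobenius does.

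First I would dispatch steps (a), (c) and (d). The change-of-group morphism (a), the localisation $S_\Lambda^{-1}S_b^{-1}(-)$ in (c), and the passage to $\Phi_t^{L_{K_{n-t}E}}\otimes_{C^0_{n-t}}C^*_{n-t,H}(-)$ in (d) are each induced by a coefficient ring homomorphism (respectively $E^*_{\varmathbb{G}}(\pt)\to E^*_{\langle b\rangle\wr(\alpha\times H)}(\pt)$, the localisation map, and the ring map underlying the transchromatic character map $E_n^*(B\Lambda_t)[\tfrac{1}{S_\Lambda}]\to C^*_{n-t}$, all acting after step (b) on a space with trivial $\Lambda_t$-action). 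Since pullback, pushforward and cup product are functorial for such base changes, each of these maps commutes with convolution products and convolution actions simultaneously on $Z(w)$ and on $\mathfrak{M}(w)$. Thus the whole content of the theorem is concentrated in step (b).

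For step (b) I would factor $i=i_\Delta\circ i_\alpha$ with $i_\alpha\colon Z(w)^{\im\alpha}\hookrightarrow Z(w)$ and $i_\Delta\colon Z(w)^{\im\alpha}|_\Delta\hookrightarrow Z(w)^{\im\alpha}$, and correspondingly $e=e_\Delta\cdot e_\alpha$, exactly as in the proof of Theorem~\ref{thm:algebra hom}. For the outer factor, Proposition~\ref{prop:localization_convolution}, applied with $A_1=\im\alpha$, $M_1=M_2=\mathfrak{M}(w)$ and $M_3=\pt$, states precisely that $(1\boxtimes\frac{1}{e_\alpha})\circ i_\alpha^*$ on $E^*_{\langle b\rangle\wr(\alpha\times H)}(Z(w))$ together with $i_\alpha^*$ on $E^*_{\langle b\rangle\wr(\alpha\times H)}(\mathfrak{M}(w))$ intertwine the convolution action; the chain of equalities culminating in \eqref{pf:chif}, read with the $M_3$-factor a point so that the displayed $\tfrac{1}{e_3}$ is $1$, is the proof. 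For the inner factor one runs the identical computation with $\mathfrak{M}(w)$ replaced by $\mathfrak{M}(w)^{\im\alpha}$, its fixed locus $\mathfrak{M}(w)^{\im\alpha}|_\Delta$, and normal bundle Euler class $e_\Delta$; Lemma~\ref{lem:t_diag} ensures $i_\Delta^*$ becomes an isomorphism and $e_\Delta$ invertible after $-\otimes_{E^*_{\langle b\rangle}}\Phi_t$, so the computation is legitimate, and once more the module side picks up no $\frac{1}{e_\Delta}$. Finally the isomorphisms $\mathfrak{M}(w)^{\im\alpha}|_\Delta\cong\mathfrak{M}(w/p^t)$ and $Z(w)^{\im\alpha}|_\Delta\cong Z(w/p^t)$ of Proposition~\ref{lem:iso to products} are compatible with the projections to $\mathfrak{M}_0$, hence identify the convolution action obtained after step (b) with the standard action of $C^*_{n-t,H}(Z(w/p^t))$ on $C^*_{n-t,H}(\mathfrak{M}(w/p^t))$.

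The one point requiring genuine care, rather than bookkeeping, is precisely this asymmetry of the Euler class between the algebra and module versions of step (d). I expect the cleanest treatment is to keep the factorisation $(e_1\boxtimes 1)\otimes(\tfrac{1}{e_1}\boxtimes\tfrac{1}{e_2}\boxtimes\tfrac{1}{e_3})$ from \eqref{pf:chif} explicit throughout, specialised to $e_3=1$: the factor that would have supplied a module-side Euler correction is then trivial, $e_1$ and $\tfrac{1}{e_1}$ cancel as in the original argument, and the projection formula delivers $\chi(\mathcal{F}_{12})* i^*(\mathcal{F}_{23})=i^*(\mathcal{F}_{12}*\mathcal{F}_{23})$, where $\chi$ denotes the step-(b) map (which already absorbs $\tfrac{1}{e_\alpha}$, resp. $\tfrac{1}{e_\Delta}$) on the algebra side. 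Everything else reduces to the base-change compatibilities recorded above, so the square commutes.
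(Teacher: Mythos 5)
Your proposal is correct and follows essentially the same route as the paper: the paper's proof also sets $M_1=M_2=\mathfrak{M}(w)$, $M_3=\pt$, $Z_{23}=\mathfrak{M}(w)$, chooses $e_3=1$, and reruns the argument of Proposition~\ref{prop:localization_convolution} exactly as in Theorem~\ref{thm:algebra hom}, which is the content of your observation that the module side acquires no Euler-class correction. Your write-up merely spells out the base-change steps (a), (c), (d) and the factorisation $i=i_\Delta\circ i_\alpha$ in more detail than the paper's terse proof, but the underlying argument is identical.
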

\begin{proof}
It follows from a similar proof as Theorem \ref{thm:algebra hom}, which relies on Proposition~\ref{prop:localization_convolution}, with the following changes. 
Let $M_1=M_2:=\mathfrak{M}(w)$ and $M_3:=\{\pt\}$. 
Let $Z_{12}:=\mathfrak{M}(w)\times_{\mathfrak{M}_0}\mathfrak{M}(w) \subset M_1\times M_2$, $Z_{23}:=M_2\times M_3=\mathfrak{M}(w)$ and $Z_{13}:=M_1\times M_3=\mathfrak{M}(w)$. Let $p_{ij}: M_1\times M_2\times M_3\to M_i\times M_j$ be the projection. Thus, we have the convolution diagram which gives the action of $E^*_{n, \varmathbb{G}}(Z(w)$ on $E^*_{n, \varmathbb{G}}(\mathfrak{M}(w)$. 
\[
\xymatrix{
&M_1\times M_2\times M_3 \ar[ld]_{p_{12}} \ar[rd]^{p_{23}} \ar[d]^{p_{13}}&\\
Z_{12}\subseteq M_1\times M_2 & M_1\times M_3 \supseteq Z_{13}& M_2\times M_3 \supseteq Z_{23}
}
\]
Choose $e_i\in E_{A }^*(M_i^{\im \alpha})$, $i=1, 2$ and $e_3$ to be $1$. We can repeat the argument of the proof of Proposition~\ref{prop:localization_convolution} under this setting. 

\end{proof}

\subsection{The irreducible modules}
\label{subsec:irreducible}
For  the Morava $E_n^*$-theory,  $w\in \N^I$,  
and  a group homomorphism $\gamma: \Lambda_n \to \varmathbb{G}:=\GL_w\times \C^*$, let us consider representations of the convolution algebra $E_{n,\varmathbb{G}}^*(Z(w))$ valued in a field $\mathbb{F}$, which we assume to be endowed with  a ring homomorphism  $E^*_{n,\Lambda_n}(\pt)\to \mathbb{F}$.  

We start with the algebra $U_w^n(\gamma, \mathbb{F})\subset \mathbb{F} \otimes_{E_{n,\Lambda_n}} E^*_{n, \varmathbb{G}} (Z(w))
$, which acts on the module 
\begin{equation}\label{module:V}
V_w^n(\gamma, \mathbb{F}):=
\oplus_{v\in \N^I} (V_w^n(\gamma, \mathbb{F})_v)
=\oplus_{v} (\mathbb{F}\otimes_{E^*_{n,\Lambda_n}} E^*_{n, \varmathbb{G}} (\mathfrak{M}(v, w))).
\end{equation}
This module $V_w^n(\gamma, \mathbb{F})$ is an analogue of the local Weyl module \cite{Kash} of the quantum loop algebra at a root of unity. 
Define the highest weight vector $|\emptyset \rangle$ of $V_w^n(\gamma, \mathbb{F})$ such that $|\emptyset \rangle$ is the unity of $\mathbb{F}$ under the isomorphism
\[
\mathbb{F}\otimes_{E^*_{n,\Lambda_n}} E^*_{n, \varmathbb{G}} (\mathfrak{M}(0, w))=\mathbb{F}\otimes_{E_{n,\Lambda_n}} E_{n, \varmathbb{G}} (\pt). 
\]
Let $L_w^n(\gamma, \mathbb{F})$ be the irreducible quotient module of $V_w^n(\gamma, \mathbb{F})$ with the highest weight vector $|\emptyset \rangle$. 

Let $X$ be the weight lattice. We identify $X$ with $\Z^I$ by choosing the fundamental weights $\{\overline{\omega}_1, \overline{\omega}_2, \cdots, \overline{\omega}_n\}$. 
The decomposition $V_w^n(\gamma, \mathbb{F})=\oplus_{v\in \N^I} V_w^n(\gamma, \mathbb{F})_v$ gives the weight space decomposition, where each weight space $V_w^n(\gamma, \mathbb{F})_v$ has weight 
\[
\mu(v, w)=\sum_k w_k\overline{\omega}_k-\sum_k v_k\alpha_k, \,\ 
\text{where $\{\alpha_1, \cdots, \alpha_n\}$ are the roots.}
\]
It induces the weight space decomposition of the irreducible quotient $L_w^n(\gamma, \mathbb{F})=\oplus_{v\in \N^I} L_w^n(\gamma, \mathbb{F})_v$, 
where $L_w^n(\gamma, \mathbb{F})_v$ is the quotient of $V_w^n(\gamma, \mathbb{F})_v$ and has weight $\mu(v, w)$.

\subsection{Group homomorphisms}\label{subsec:group_hom}
In this section, we drop the assumption that $p^t\mid w$ and consider the weight $w$ in general. 
Fix a positive integer $n$ and a prime number $p$. Let $w\in \N^I$ be any dimension vector. 
Consider the set of group homomorphisms 
\[
\alpha: \Lambda_n \to \GL_{w}\times \C^*
\] satisfying the following property $\mathrm{P}_{w}^n$. 

(1): In the case when $w\in X^+_{\red}$, for any $n\in \N$, write $\alpha=(\alpha_1, \alpha_2, \cdots , \alpha_n)$ for $\alpha_i: \Z/p\to \GL_{w}\times \C^*$. We have $\alpha_1(1)$ is a diagonal matrix of the form
\[
\alpha_1=
\left(\begin{bmatrix}
1 &0& 0&\cdots &0\\
0& \zeta & 0&\cdots &0\\
0& 0& \zeta^2  &\cdots &0\\
&\vdots && \ddots&\vdots\\
0& 0&  0 & \cdots& \zeta^{w-1} \end{bmatrix},\zeta\right)\]
and 
\[
\alpha_i=t_i \alpha_1, i=2, 3, \cdots, n,  \,\
\text{where $t_i$ is any $p$-th root of $1$.}
\]

(2): If $w=w_0+pw_1$ for $w_0\in X^+_{\red}$, 
then $\alpha$ is of the form $\alpha=\alpha^{(0)}\times \alpha^{(1)}$, where
\begin{align*}
& \alpha^{(0)}: \Lambda_n \to T_{w_0}\times \C^*\subset \GL_{w_0}\times \C^*, \\
&\alpha^{(1)}: \Lambda_n \to T_{pw_1}\times \C^*\subset\GL_{pw_1}\times \C^*. 
\end{align*}
Here $T_w\subset \GL_w$ is the maximal torus consisting of diagonal matrices. 
Furthermore, $ \alpha^{(0)}$ satisfies property $\mathrm{P}^n_{w_0}$, and 
\[
\alpha^{(1)}=\alpha'\oplus (b \alpha' b^{-1})\oplus (b^2\alpha'b^{-2})\oplus \cdots \oplus( b^{p-1} \alpha'b^{1-p}), 
\]
for some $\alpha': \Lambda_{n-1} \to \GL_{w_1}\times \C^*$ satisfying $\mathrm{P}^{n-1}_{w_1}$. Here we decompose the vector space $W^{(1)}$ (with $\dim(W^{(1)})=pw_1$) as
$W^{(1)}=W^{(1)}_{1}\oplus W^{(1)}_{\zeta}\oplus\cdots \oplus W^{(1)}_{\zeta^{p-1}}$ (with $\dim(W^{(1)}_{\zeta^i})=w_1$), and $b$ is the automorphism that identifies the eigenspaces \[
b: W^{(1)}_{1} \xrightarrow{\cong} W^{(1)}_{\zeta}, 
b: W^{(1)}_{\zeta}\xrightarrow{\cong} W^{(1)}_{\zeta^2}, \cdots, 
b: W^{(1)}_{\zeta^{p-2}}\xrightarrow{\cong} W^{(1)}_{\zeta^{p-1}}, 
b: W^{(1)}_{\zeta^{p-1}}\xrightarrow{\cong} W^{(1)}_{1} 
\]

\begin{example}
We give one example that satisfies the property $\mathrm{P}_{w}^n$, 
for illustration purpose. We take 
$w=p^t w'$, where $w'\in X^+_{\red}$. 
We assume $n\geq t$. 
We decompose the vector space $W$ as 
\[
W=\oplus_{0\leq a_i \leq p-1} W_{\zeta^{a_1}, \cdots, \zeta^{a_t}}, \,\ \text{where}\,\  \dim(W_{\zeta^{a_1}, \cdots, \zeta^{a_t}})=w'. 
\]
Choose $(\alpha_1, \alpha_2, \cdots , \alpha_t)$ such that
$W_{\zeta^{a_1}, \cdots, \zeta^{a_t}}$ is the common eigenspace for $(\alpha_1, \alpha_2, \cdots , \alpha_t)$ with eigenvalue $\zeta^{a_i}$ for $\alpha_i$, $i=1, \cdots, t$. Choose $\alpha_{t+1}=\cdots=\alpha_{n}$ such that on each $W_{\zeta^{a_1}, \cdots, \zeta^{a_t}}$,  $\alpha_{t+1}$ is the following  diagonal matrix
\[\left(\begin{bmatrix}
1&0& 0&\cdots &0\\
0& \zeta  & 0&\cdots &0\\
0& 0& \zeta^{2} &\cdots &0\\
&\vdots && \ddots&\vdots\\
0& 0&  0 & \cdots& \zeta^{w'-1} \end{bmatrix},\zeta\right).\]
This homomorphism $\alpha$ satisfies $\mathrm{P}_{w}^n$. 
\end{example}

\begin{corollary}
\label{cor:8.3}
\begin{enumerate}
\item If $w\in X^+_{\red}$, we have
\[\mathfrak{M}(w)^{\im \alpha_1}=\mathfrak{M}(w)^{\im \alpha_2}=\cdots =\mathfrak{M}(w)^{\im \alpha_n}.\] 
\item If $p^t|w$ for some integer $t$ such that $1< t\leq n$, then the common eigenspaces for $\alpha_1, \alpha_2, \cdots, \alpha_t$ all have dimension $w/(p^t)$. That is, let $W=\oplus W_{a_1, \cdots, a_t}$ where $W_{a_1, \cdots , a_t}:=
\{w\in W\mid 
\alpha_i(1)w=a_iw, i=1, \cdots, t\}$, then  
\[
\dim( W_{a_1, \cdots, a_t})=w/(p^t).
\]
\end{enumerate}
\end{corollary}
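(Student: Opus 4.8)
The plan is to read both statements off the recursive definition of property $\mathrm{P}_w^n$ from \S\ref{subsec:group_hom}, using two ingredients already in hand: that the central $\C^*\subseteq\GL_w$ acts trivially on $\mathfrak{M}(w)$ (Lemma~\ref{lem:C^* trivial action}), and the elementary fact that a block cyclic permutation on $\bigoplus_{k=0}^{p-1}W_{\zeta^k}$ has each $p$th root of unity as an eigenvalue with multiplicity $\dim W_{\zeta^0}$.

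For part (1), I would observe that when $w\in X^+_{\red}$, property $\mathrm{P}_w^n$(1) forces, for $2\le i\le n$, the generators $\alpha_i(1)$ and $\alpha_1(1)$ to have equal $\C^*$-component and $\GL_w$-components differing by the central scalar $t_i\,\mathrm{id}_W$. Since $t_i\,\mathrm{id}_W$ lies in the central $\C^*\subseteq\GL_w$ and hence acts trivially on $\mathfrak{M}(w)$ by Lemma~\ref{lem:C^* trivial action}, for every $x$ we get $\alpha_i(1)\star x=\alpha_1(1)\star x$, so $x$ is fixed by $\im\alpha_i=\langle\alpha_i(1)\rangle$ if and only if it is fixed by $\im\alpha_1=\langle\alpha_1(1)\rangle$. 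This is the same argument as in the proof of Lemma~\ref{lem:inv of M}, and it yields $\mathfrak{M}(w)^{\im\alpha_1}=\cdots=\mathfrak{M}(w)^{\im\alpha_n}$.

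For part (2), I would induct on $t$. Because $p\mid w$, every component of the reduced part $w_0$ in the decomposition $w=w_0+pw_1$ of $\mathrm{P}_w^n$(2) is divisible by $p$ and lies in $[0,p)$ by definition of $X^+_{\red}$, hence $w_0=0$ and $\alpha=\alpha^{(1)}=\alpha'\oplus b\alpha'b^{-1}\oplus\cdots\oplus b^{p-1}\alpha'b^{1-p}$ for some $\alpha'\colon\Lambda_{n-1}\to\GL_{w/p}\times\C^*$ satisfying $\mathrm{P}^{n-1}_{w/p}$, where $b$ cyclically permutes the blocks $W_1,W_\zeta,\dots,W_{\zeta^{p-1}}$ of $W$, each of dimension $w/p$, and $\alpha_1(1)$ acts on $W$ through $b$. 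For $t=1$ this is already the claim, by the block-cyclic eigenvalue fact above. For $t>1$, I would fix an eigenvalue $a_1$ of $g_1:=b$ and restrict to the eigenspace $W_{a_1}$, which has dimension $w/p$; the operators $g_2,\dots,g_t$ attached to $\alpha_2,\dots,\alpha_t$ come from the diagonally embedded $\alpha'$ and act by the same operator on each block, hence commute with $b$, preserve $W_{a_1}$, and, under the isomorphism $W_{a_1}\xrightarrow{\ \sim\ }W_1$ given by projection to the first block, go over to $\alpha'_1(1),\dots,\alpha'_{t-1}(1)$ acting on $\C^{w/p}$. Since $p^{t-1}\mid w/p$ and $\alpha'$ satisfies $\mathrm{P}^{n-1}_{w/p}$, the inductive hypothesis gives that the common eigenspace of $g_2,\dots,g_t$ in $W_{a_1}$ with eigenvalues $a_2,\dots,a_t$ has dimension $(w/p)/p^{t-1}=w/p^t$, which is exactly $\dim W_{a_1,\dots,a_t}$.

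The only step requiring care is the equivariant identification in the inductive step: one must check that under ``$\alpha^{(1)}=\alpha'\oplus b\alpha'b^{-1}\oplus\cdots$'' the operators indexed by the $\Z/p$-factors $2,\dots,t$ of $\Lambda_n$ (equivalently $1,\dots,t-1$ of $\Lambda_{n-1}$) genuinely commute with $b$ and restrict compatibly to each block, so that restriction to a $b$-eigenspace reproduces precisely the smaller datum $(\alpha',w/p)$ to which the inductive hypothesis applies. Granting this index bookkeeping, the statement is immediate from the definitions, and the rest is a routine unwinding of $\mathrm{P}_w^n$.
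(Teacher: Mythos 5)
The paper itself gives no proof of Corollary~\ref{cor:8.3} — it is stated as an immediate consequence of the definition of property $\mathrm{P}^n_w$ in \S\ref{subsec:group_hom} — so there is nothing to compare against except the definitions, and your argument is a correct unwinding of them. Part (1) is exactly the mechanism of Lemma~\ref{lem:inv of M}: $\alpha_i(1)$ and $\alpha_1(1)$ differ by the central scalar $t_i\id_W$, which acts trivially by Lemma~\ref{lem:C^* trivial action}, so the fixed loci agree. For part (2), your observation that $p\mid w$ together with $w_0\in X^+_{\red}$ forces $w_0=0$ is the one genuinely necessary step the paper glosses over, and the induction on $t$ via $\mathrm{P}^{n-1}_{w_1}$ is the right bookkeeping. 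One notational correction: you set $g_1:=b$, but in the paper's conventions $g_1$ is the block-scalar matrix acting by $\zeta^k$ on $W_{\zeta^k}$, while $b$ is the separate cyclic permutation identifying those eigenspaces (\S\ref{subsec:alpha}: ``$b_k$ is the cyclic permutation of the eigenspaces of $g_k$''). Consequently the eigenspaces of $g_1$ are the blocks $W_{\zeta^k}$ themselves, and the identification $W_{a_1}\cong W_1$ should be $b^{-k}$ rather than ``projection to the first block.'' This conflation happens to be harmless here — both operators have each $p$th root of unity as an eigenvalue with multiplicity $w/p$, and $g_2,\dots,g_t$ preserve the eigenspaces and restrict to conjugates of $\alpha'_1(1),\dots,\alpha'_{t-1}(1)$ in either reading — but you should state the setup in the paper's terms so that the inductive step visibly reproduces the datum $(\alpha',w/p)$.
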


\subsection{Frobenius pullback of irreducible representations}\label{subsec:irred_mod}

Now we assume that the field $\mathbb{F}$ factors through $E_{n}\to \Phi^{L(C^\wedge_{n-t})}_t$, and that $\gamma$ is obtained from $\gamma'$ via \S~\ref{subsec:group_hom}(2) that is, there exists a group homomorphism $\alpha: \Lambda_t\to \varmathbb{G}$ be as \S\ref{sec:quantum Frob} so that $\gamma=\alpha\times\gamma'$. 
Remark~\ref{rmk:coef}(1) provides us with a map $E_{n-t}\to \mathbb{F}$, and hence $U^{C_{n-t}}_{w/p^t}(\gamma',\mathbb{F})\cong U^{n-t}_{w/p^t}(\gamma',\mathbb{F})$.
In the definition of Frobenius homomorphism  \S\ref{sec:quantum Frob}, take $H$ to be $\gamma'$ we obtain
\[
\Fr_{n, n-t}: U^n_w(\gamma,\bbF)
\surj 
U_{w/p^t}^{n-t}(\gamma', \mathbb{F})
\]
which is clearly surjective. 
Let $L$ be any irreducible representation of the target $U_{w/p^t}^{n-t}(\gamma', \mathbb{F})$.
Let $\Fr_{n, n-t}^*L$ be the representation of $U_w^n(\gamma, \mathbb{F})$ that is isomorphic to $L$ as vector spaces and the $U^n_w(\gamma,\bbF)$-module structure is obtained by $\Fr_{n, n-t}$. The surjectivity of $\Fr_{n, n-t}$ implies that  $\Fr_{n, n-t}^*L$ is irreducible.

Taking $L$ to be $L_{w/p^t}^{n-t}(\gamma', \mathbb{F}) $, now we observe that the action of $U^n_w(\gamma,\bbF)$ on $\Fr_{n, n-t}^*L_{w/p^t}^{n-t}(\gamma', \mathbb{F})$ factors through that on $V_w^n(\alpha\times \gamma', \mathbb{F})$ and the map $V_w^n(\alpha\times \gamma', \mathbb{F})\to \Fr_{n, n-t}^*L_{w/p^t}^{n-t}(\gamma', \mathbb{F})$ preserves the highest weight vector and hence implies the corollary below. 

\begin{corollary}\label{cor:FrobIrr}
Under the above assumption of $\mathbb{F}$ and assume further that $w$ is divisible by $p^t$.
Let $\alpha: \Lambda_t\to \varmathbb{G}$ be as \S\ref{sec:quantum Frob}. 
Then, we have an isomorphism of irreducible representations
\[
L_w^n(\alpha\times \gamma', \mathbb{F})
\cong 
\Fr_{n, n-t}^*(L_{w/p^t}^{n-t}(\gamma', \mathbb{F}) ). 
\]
\end{corollary}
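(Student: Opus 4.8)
The plan is to reduce the statement to the surjectivity of the Frobenius homomorphism $\Fr_{n,n-t}$ together with the compatibility of $\Fr^{\mathfrak M}_{n,n-t}$ with the standard (Weyl) modules established in Theorem~\ref{mainthm}. First I would recall the setup: by hypothesis $\mathbb F$ factors through $\Phi^{L(C^\wedge_{n-t})}_t$ and $\gamma=\alpha\times\gamma'$ with $\alpha:\Lambda_t\to\varmathbb G$ of the type in \S\ref{sec:quantum Frob}; Remark~\ref{rmk:coef}(1) then supplies a ring map $E_{n-t}\to\mathbb F$ and an identification $U^{C_{n-t}}_{w/p^t}(\gamma',\mathbb F)\cong U^{n-t}_{w/p^t}(\gamma',\mathbb F)$, so that taking $H=\gamma'$ in \S\ref{subsec:def_Frob} yields the algebra map $\Fr_{n,n-t}\colon U^n_w(\gamma,\mathbb F)\to U^{n-t}_{w/p^t}(\gamma',\mathbb F)$. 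The key preliminary observation, already made in \S\ref{subsec:irred_mod}, is that this map is \emph{surjective}: the generators of the target (Chern classes of the tautological bundles $\calV^{(m)}_k,\calV^{(m),\op}_k$ on the Hecke correspondences of $\mathfrak M(w/p^t)$) are pulled back, up to the invertible Euler factor $1\boxtimes\frac1e$, from the corresponding tautological classes on $Z(w)^{\im\alpha}|_\Delta$, which in turn are restrictions of tautological generators of $U^n_w(\gamma,\mathbb F)$; hence a pullback of an irreducible module along $\Fr_{n,n-t}$ stays irreducible.

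Next I would run the standard argument identifying an irreducible module with the irreducible quotient of a Weyl module. Apply Theorem~\ref{mainthm}: the diagram there says that the module-level map $\Fr^{\mathfrak M}_{n,n-t}\colon E^*_{n,\varmathbb G}(\mathfrak M(w))\to \Phi^{L_{K_{n-t}E}}_t\otimes_{C^0_{n-t}}C^*_{n-t,H}(\mathfrak M(w/p^t))$ intertwines the action of $E^*_{n,\varmathbb G}(Z(w))$ on the source with the action of $\Phi^{L_{K_{n-t}E}}_t\otimes C^*_{n-t,H}(Z(w/p^t))$ on the target, along $\Fr_{n,n-t}$. Passing to the subalgebras $U^n_w(\gamma,\mathbb F)$, $U^{n-t}_{w/p^t}(\gamma',\mathbb F)$ and to the summands $\mathfrak M(v,w)$, $\mathfrak M(v',w/p^t)$, this descends to a $U^n_w(\gamma,\mathbb F)$-equivariant linear map
\[
\Psi\colon V^n_w(\gamma,\mathbb F)\longrightarrow \Fr^*_{n,n-t}\,V^{n-t}_{w/p^t}(\gamma',\mathbb F),
\]
where $V^n_w$ and $V^{n-t}_{w/p^t}$ are the Weyl modules of \eqref{module:V}. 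I would then check two points: (i) on the weight-$0$ (framing) piece $\mathfrak M(0,w)$, respectively $\mathfrak M(0,w/p^t)$, the map $\Fr^{\mathfrak M}_{n,n-t}$ is, after scalar extension, the identity on the coefficient ring (steps (a)--(d) of \S\ref{subsec:irreducible}'s module construction all act by base-change on $E^*_{G}(\pt)$), so $\Psi(|\emptyset\rangle)=|\emptyset\rangle$; (ii) the composite $V^n_w(\gamma,\mathbb F)\xrightarrow{\Psi} \Fr^*_{n,n-t}V^{n-t}_{w/p^t}\twoheadrightarrow \Fr^*_{n,n-t}L^{n-t}_{w/p^t}(\gamma',\mathbb F)$ is a $U^n_w(\gamma,\mathbb F)$-module map taking $|\emptyset\rangle$ to the highest weight vector. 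Since $\Fr^*_{n,n-t}L^{n-t}_{w/p^t}(\gamma',\mathbb F)$ is irreducible (by surjectivity of $\Fr_{n,n-t}$) and generated by its highest weight vector (again by surjectivity, the $U^n_w$-action on it is generated from $|\emptyset\rangle$ just as the $U^{n-t}_{w/p^t}$-action on $L^{n-t}_{w/p^t}$ is), this composite is surjective, hence factors through the unique irreducible quotient $L^n_w(\gamma,\mathbb F)$ of $V^n_w(\gamma,\mathbb F)$, giving a nonzero — hence, by irreducibility of the target, an isomorphism — $U^n_w(\gamma,\mathbb F)$-map $L^n_w(\gamma,\mathbb F)\xrightarrow{\ \sim\ }\Fr^*_{n,n-t}L^{n-t}_{w/p^t}(\gamma',\mathbb F)$.

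The step I expect to be the main obstacle is the compatibility-with-highest-weight-vector bookkeeping: one must verify that $\Fr^{\mathfrak M}_{n,n-t}$ really carries $|\emptyset\rangle\in V^n_w(\gamma,\mathbb F)$ to the unit of $\mathbb F\otimes_{E^*_{n-t,\Lambda_{n-t}}}E^*_{n-t,\varmathbb G}(\mathfrak M(0,w/p^t))$ and not to some nonzero scalar multiple, and, more subtly, that the surjectivity of $\Fr_{n,n-t}$ indeed forces $\Fr^*_{n,n-t}L^{n-t}_{w/p^t}(\gamma',\mathbb F)$ to be generated over $U^n_w(\gamma,\mathbb F)$ by this highest weight vector. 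The latter is where one uses that, because $\Fr_{n,n-t}$ is onto, every element of $U^{n-t}_{w/p^t}(\gamma',\mathbb F)\cdot|\emptyset\rangle$ is of the form $\Fr_{n,n-t}(u)\cdot|\emptyset\rangle$ for some $u\in U^n_w(\gamma,\mathbb F)$; this is routine but needs to be spelled out. Everything else — associativity of convolution products under the character map (Proposition~\ref{prop:localization_convolution}), invertibility of the Euler classes (Lemma~\ref{lem:t_diag}), the identification $\mathfrak M(w)^{\im\alpha}|_\Delta\cong\mathfrak M(w/p^t)$ (Proposition~\ref{lem:iso to products}) — has already been assembled in Theorems~\ref{thm:algebra hom} and~\ref{mainthm}, so no new geometric input is required.
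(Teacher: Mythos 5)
Your proposal is correct and follows essentially the same route as the paper: surjectivity of $\Fr_{n,n-t}$ gives irreducibility of the pullback, Theorem~\ref{mainthm} supplies the $U^n_w(\gamma,\mathbb F)$-equivariant map $V^n_w(\gamma,\mathbb F)\to \Fr^*_{n,n-t}L^{n-t}_{w/p^t}(\gamma',\mathbb F)$ preserving $|\emptyset\rangle$, and irreducibility of source-quotient and target forces the isomorphism. The extra bookkeeping you flag (the highest weight vector surviving, and cyclicity of the target over $U^n_w$ via surjectivity of the Frobenius) is exactly what the paper's one-line justification invokes, so no gap.
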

\begin{proof}
Indeed, by construction the map $\Fr_{n,n-t}$ factors through $E^*_{n,\langle b\rangle\times \Delta(\gamma)}(Z(w))$ where $b$ is as in \S~\ref{sec:quantum Frob}, which via $-\otimes_{E^*_{n,\langle b\rangle\times \Delta(\alpha)}}\Phi_t^{L(C^\wedge_{n-t})}$ acts on $E^*_{n,\langle b\rangle\times \Delta(\alpha)}(\fM(w))\otimes_{E^*_{n,\langle b\rangle\times \Delta(\alpha)}}\Phi_t^{L(C^\wedge_{n-t})}\to V_w^n(\alpha\times \gamma', \mathbb{F})$.
Theorem~\ref{mainthm} then concludes that 
\[V_w^n(\alpha\times \gamma', \mathbb{F})\to \Fr_{n, n-t}^*V_{w/p^t}^{n-t}(\gamma', \mathbb{F})\to \Fr_{n, n-t}^*L_{w/p^t}^{n-t}(\gamma', \mathbb{F}).\] The highest weight vector  $|\emptyset\rangle$ up to scalar is the only non-zero vector in $E^*_{n,\varmathbb{G}}(\fM(0,w))$ which is mapped isomorphically under the Frobenius. 
\end{proof}

\section{The monoidal structure}

We gather some structures of the convolution algebras considered in the present paper, including a localized coproduct. The convolution algebra is not a Hopf algebra, but these structures are sufficient  to carry out constructions needed for later. In particular, we prove  an analogue of Serre relation for  integrable representations, which  is used in the proof of the Steinberg tensor product theorem in the next section.

\subsection{Coproduct}\label{subsec:loc_coprod}
We construct a localized coproduct. 
This depends on a choice of $\gamma=\gamma_1\times\gamma_2$ with $\gamma_i:\Lambda_n\to \GL_{w_i}\times\bbC^*$ and $w=w_1+w_2$, together with a decomposition $W\cong W_1\oplus W_2$ as $I$-graded vector spaces. 
Such a choice determines the embedding of groups $\GL_{w_1}\times\GL_{w_2}\subseteq \GL_w$, and  an embedding $i:\fM(w_1)\times\fM(w_2)\inj \fM(w)$ which is compatible with the actions.
Let $G$ be a subgroup of $\GL_{w_1}\times\GL_{w_2}\times\bbC^*$ containing the image of $\gamma$ such that $G=G_1\times G_2$, where $G_i$ is a subgroup $\GL_{w_i}\times\bbC^*$ containing $\im \gamma_i$. 
It induces the following embedding of the Steinberg varieties. 
\[
\xymatrix{
Z(w_1)\times Z(w_2)\ar@{^{(}->}[r]^{i} \ar@{^{(}->}[d]& Z(w)\ar@{^{(}->}[d]\\
(\fM(w_1)\times \fM(w_1))\times 
(\fM(w_2)\times \fM(w_2)) \ar@{^{(}->}[r] &
\fM(w)\times \fM(w)
}
\]
Let $N$ be the normal bundle of $\fM(w_1)\times\fM(w_2)\inj \fM(w)$. 
We denote by $N_2$ the pullback of $N$ via the second projection $(\fM(w_1)\times\fM(w_2))^2\xrightarrow{pr_2} \fM(w_1)\times\fM(w_2)$. Its restriction to the subvariety $Z(w_1)\times Z(w_2)$ is still denoted by $N_2$.
We define 
$\Delta_{w_1,w_2}^{\loc}:E^*_{n,G}(Z(w))\to E^*_{n,G}(Z(w_1)\times Z(w_2))_{\loc}$ as $\Delta_{w_1,w_2}^{\loc}:=\frac{i^*}{e(N_2)}$ which induces a map
\[ 
\Delta_{w_1, w_2}^{\loc}: U_w(G,\mathbb{F}) \to U_{w_1}(G_1,\mathbb{F})\otimes U_{w_2}(G_2,\mathbb{F})_{\loc}.\]  We simply write $\Delta^{\loc}$ if $w_1$ and $w_2$ are understood from context. 
\begin{prop}\label{prop:coproduct}
The map $\Delta^{\loc}$ is co-associative in the obvious sense, and is an algebra homomorphism.
\end{prop}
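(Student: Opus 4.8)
The plan is to establish the two assertions of Proposition~\ref{prop:coproduct} separately, treating the algebra-homomorphism property first since the co-associativity will then follow from an essentially identical but longer diagram chase.

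\textbf{Step 1: $\Delta^{\loc}$ is an algebra homomorphism.} The key point is that $\Delta^{\loc}_{w_1,w_2}=\frac{i^*}{e(N_2)}$ has exactly the shape of the map $\chi$ appearing in Proposition~\ref{prop:localization_convolution}, but now applied to the embedding $\fM(w_1)\times\fM(w_2)\inj\fM(w)$ in place of the fixed-point inclusion $M^{A_1}\inj M$. Concretely, I would take $M_i:=\fM(w)$ for $i=1,2,3$ (with $M_3$ playing a spectator role, or more efficiently $M_3=\fM(w)$ and everything happening over $\fM_0(w)$), let $M_i':=\fM(w_1)\times\fM(w_2)$ be the submanifolds, with $e_i$ the Euler class of the normal bundle $N$ of $M_i'\inj M_i$. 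The convolution on $E^*_{n,G}(Z(w))$ is built from $p_{13*}(p_{12}^*(-)\cdot p_{23}^*(-))$, and the same base-change square
\[
\xymatrix{
\fM(w)^3\ar[r]^{p_{13}}&\fM(w)^2\\
(\fM(w_1)\times\fM(w_2))^3\ar[r]^{p_{13}}\ar[u]^{i}&(\fM(w_1)\times\fM(w_2))^2\ar[u]^{i}
}
\]
is Cartesian, with the vertical maps regular embeddings whose normal bundles are the evident external sums of copies of $N$. Then the computation in the proof of Proposition~\ref{prop:localization_convolution}, lines \eqref{pf:chif} onward, goes through verbatim: one pulls back, inserts $e_1\boxtimes 1\otimes(\frac1{e_1}\boxtimes\frac1{e_2}\boxtimes\frac1{e_3})$, uses the projection formula, then the commutativity of push-forward with the inverse-of-$i_*$ isomorphism (which is where localization is needed — $e(N_2)$ must be inverted, exactly the localization $(-)_{\loc}$ in the statement), and arrives at $\chi(\calF_{12}*\calF_{23})$. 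The only things to check beyond a citation are: that $e(N)$ becomes invertible after the indicated localization (this is a standard self-intersection/Euler-class argument, or follows from the fact that $N$ has no trivial summand for the relevant torus, analogous to Lemma~\ref{lem:lambda_invertible}), and that the image lands in the subalgebra $U_{w_1}\otimes U_{w_2}$ rather than the full convolution algebra — the latter because $i^*$ of a Chern class of a tautological bundle $\calV^{(n)}_k$ on $\fP(v,v+n\alpha_k,w)$ decomposes as a sum of Chern classes of the corresponding tautological bundles on the two factors (the tautological bundle restricts to the external sum), and $e(N_2)^{-1}$ is itself a (localized) tautological class, so products of these remain in $U_{w_1}\otimes U_{w_2}^{\loc}$.

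\textbf{Step 2: co-associativity.} Here one must compare $(\Delta^{\loc}_{w_1,w_2}\otimes\id)\circ\Delta^{\loc}_{w_1+w_2,w_3}$ with $(\id\otimes\Delta^{\loc}_{w_2,w_3})\circ\Delta^{\loc}_{w_1,w_2+w_3}$ as maps $U_w\to U_{w_1}\otimes U_{w_2}\otimes U_{w_3}$, for $w=w_1+w_2+w_3$ and a compatible decomposition $W=W_1\oplus W_2\oplus W_3$. Both composites equal $\frac{i^*}{e(N')}$ for the triple embedding $i:\prod_k\fM(w_k)\inj\fM(w)$, where $N'$ is the appropriate combination of normal bundles; this reduces to (a) functoriality of pullback, $i^*=i_{12}^*\circ i_{1,23}^*$ for nested embeddings, and (b) multiplicativity of Euler classes in the short exact sequences of normal bundles $0\to N_{\text{inner}}\to N'\to N_{\text{outer}}|\to 0$ — the same bookkeeping already used in the proof of Lemma~\ref{lem:t_diag} and in the iteration-of-Frobenius section. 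One must be careful that the factors $e$ being inverted on the two sides agree after matching up which projections the normal bundles are pulled back along; this is a finite check.

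\textbf{Main obstacle.} The conceptual content is routine given Proposition~\ref{prop:localization_convolution}; the genuine work is the careful identification of the normal bundles $N_2$ (and their triple-decomposition analogues) in terms of tautological bundles, verifying that $e(N_2)$ is indeed invertible in the localized algebra $U_{w_2}(G_2,\mathbb{F})_{\loc}$ being used (so that $\Delta^{\loc}$ is well-defined with the stated target), and confirming that the image of the generators stays inside $U_{w_1}\otimes U_{w_2}^{\loc}$. I expect the target-and-well-definedness check — pinning down the localization and showing the restricted tautological classes generate the right subalgebra — to be the step requiring the most care, whereas the associativity and homomorphism diagram chases are mechanical once the setup matches Proposition~\ref{prop:localization_convolution}.
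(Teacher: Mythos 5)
Your proof is correct and follows essentially the same route as the paper: co-associativity from the functoriality of pullback together with the extension (hence multiplicativity of Euler classes) of normal bundles for the nested embeddings $\prod_k\fM(w_k)\inj\fM(w)$, and the algebra-homomorphism property by rerunning the computation of Proposition~\ref{prop:localization_convolution} for the embedding $\fM(w_1)\times\fM(w_2)\inj\fM(w)$, which is exactly how the paper reuses the proof of Theorem~\ref{thm:algebra hom}. Your additional remarks on the invertibility of $e(N_2)$ after localization and on the image landing in $U_{w_1}\otimes U_{w_2}$ are sound checks that the paper leaves implicit.
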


\begin{proof}
Let $\gamma_i:\Lambda_n\to \GL_{w_i}\times\bbC^*$ for $i=1,2,3$ be group homomorphisms with $\gamma=\gamma_1\times\gamma_2\times\gamma_3$. 
We have the embeddings
\begin{align*}
&i_{w_a,w_b}:\fM(w_a)\times\fM(w_b)\inj \fM(w_a+w_b), 1\leq a< b \leq 3\\
&i_{w_a+w_b,w_c}:\fM(w_a+w_b)\times\fM(w_c)\inj \fM(w_a+w_b+w_c),  1\leq a\neq  b \neq c\leq 3\\
& i_{w_1,w_2,w_3}:\fM(w_1)\times\fM(w_2)\times\fM(w_3) \inj \fM(w_1+w_2+w_3). 
\end{align*}
The normal bundle of $i_{w_1,w_2,w_3}$ is an extension of that of $i_{w_1+w_2,w_3}$ by that of $i_{w_1,w_2}\times\id_{\fM(w_3)}$; similarly it is an extension of the normal bundle of $i_{w_1, w_2+w_3}$ by that of $\id_{\fM(w_1)}\times i_{w_2,w_3}$. This implies the co-associativity. 

The fact that $\Delta^{\loc}$ is an algebra homomorphism follows from a similar proof as that of Theorem \ref{thm:algebra hom}. 
\end{proof}

Let $U_w:=U_w(G, \mathbb{F})$ denote the convolution algebra for short. The flipping of the two copies of the quiver varieties 
$
\mathfrak{M}(w)
\times_{\mathfrak{M}_0(w)} \mathfrak{M}(w)
\xrightarrow{(1,2)} \mathfrak{M}(w)
\times_{\mathfrak{M}_0(w)} \mathfrak{M}(w)
$ induces an anti-homomorphism, or equivalently a homomorphism
\[
\sigma: U_w\to U_w^{\op}
\]
on the convolution algebras. By construction, $M_{w}:=E^*_{n,G}( \mathfrak{M}(w))$ is a right module of $U_w$. 
The contragredient dual $M_{w}^\vee$ is the right module of $U_w$ obtained from the obvious left module structure of the dual vector space, composed with $\sigma$. 

Although we will not consider this in the present paper, it is worth to remark that the duality can also be realized via 
the opposite stability condition. More precisely, we have the isomorphism of quiver varieties with opposite stability conditions induced by the longest element of the Weyl group
\[
\mathfrak{M}^{\theta^+}(w) \xrightarrow[\cong]{w_0} 
\mathfrak{M}^{\theta^-}(w). 
\]
Then $U_w$ acts on $M_w \cong E^*_{n,G}(\mathfrak{M}^{\theta^-}(w))$ from the left using the above isomorphism. 
Moreover, the argument \cite[(3.6.10)]{CG} implies that the standard modules of the convolution algebras are self-dual.

\subsection{The adjoint representations}
\label{act on hom}
To begin with, we show there exists an antipode $S$ for the coproduct $\Delta^{\loc}$. We have localized Drinfeld coproduct $\Delta^{\Dr}$ on the COHA $\calH(E^*, Q)$ constructed in \cite[Section 3]{YZ2}. 
When $E^*$ is the cohomology theory, the antipode $S^{\Dr}$ of $\Delta^{\Dr}$ is constructed in \cite[Section 2.1]{RSYZ}. In general, the same formula works for an arbitrary formal group law, hence gives the antipode  $S^{\Dr}$ of $\Delta^{\Dr}$ for $\calH(E^*, Q)$ when $E^*$ is a generalized oriented cohomology theory.  
Let $\Phi_w:  \calH(E^*, Q) \to U_w(\varmathbb{G}, E^*)$ be the algebra homomorphism from \eqref{eq:algebra hom}. 
The pair $(\Delta^{\Dr}, S^{\Dr})$ descents to the pair of Drinfeld coproduct and the corresponding antipode of the covolution algebra $U_w(\varmathbb{G}, E^*)$ via $\Phi_w$, still denoted by $(\Delta^{\Dr}, S^{\Dr})$. 

The two localized coproducts $\Delta^{\loc}$ and $\Delta^{\Dr}$ of $U_w:=U_w(\varmathbb{G}, E^*)$ are related by a conjugation. That is, there exists an invertible element $F$ in $E^*_{\GL_{w_1}\times \GL_{w_2}\times \C^*}(\mathfrak{M}(w_1)\times  \mathfrak{M}(w_2))_{\loc}$, for $w=w_1+w_2$ such that, 
\[
\Delta^{\Dr}=F^{-1}* \Delta^{\loc} * F: U_w\to (U_{w_1}\otimes U_{w_2})_{\loc}. 
\]
See \cite[Section 7.2]{VV02} for the construction of $F$ when $E^*$ is the $K$-theory, where $\Delta^{\loc}$ is denoted by $\Delta'_{U}$, $\Delta^{\Dr}$ is denoted by $\Delta^{\diamond}_{U}$, and $F$ is denoted by $\Omega$. 
In other words, $\Delta^{\loc}$ is obtained from $\Delta^{\Dr}$ by twisting via $F$. By \cite[Proposition 5.1]{KT}, $\Delta^{\loc}$ has an antipode, denoted by $S$, constructed from $F$ and  $S^{\Dr}$.

There is an action of $U_w$ on the space $\Hom(L_{w_1}, M_{w_2})$ as follows. Identify $\Hom(L_{w_1}, M_{w_2})$ with $L_{w_1}^{\vee}\otimes M_{w_2}$ as vector spaces. Let $U_w$ act on $L_{w_1}^{\vee}$ by $(X\cdot f)(a):=f(S(X)\cdot a)$, where $X\in U_w$, $f\in L_{w_1}^{\vee}$ and $a\in L_{w_1}$. This gives an action of $U_w$ on 
$\Hom(L_{w_1}, M_{w_2})$ via the algebra homomorphism 
\[
U_{w}\xrightarrow{\Delta_{w_1, w_2}^{\loc}} (U_{w_1}\otimes U_{w_2})_{\loc}
\xrightarrow{S\otimes 1} (U_{w_1}^{\op}\otimes U_{w_2})_{\loc}. 
\]

Let $V_{\bar{w}}$ be a module of $U_{\bar{w}}$. In the case when $w_1=w_2=\bar{w}$.
We call the action 
\[
\ad: U_{2\bar{w}}\otimes 
\Hom(V(\bar{w}), V(\bar{w})) \to \Hom(V(\bar{w}), V(\bar{w}))
\] the adjoint representation of $U_{2\bar{w}}$.

Pick an element $X$ in $E_{n, G}^*(\mathfrak{P}(v, v+k\alpha_i, 2\bar{w}))$, for some $k\geq 2$, and $i\in I$. 
The coproduct gives us
\begin{align}
E_{n, G}^*(\mathfrak{P}(v, v+k\alpha_i, 2\bar{w}))
&\xrightarrow{\Delta^{\loc}}
\bigoplus_{v_1+v_2=v, \,\ k_1+k_2=k}
(E_{n, G}^*(\mathfrak{P}(v_1, v_1+k_1\alpha_i, \bar{w}))
\otimes 
E_{n, G}^*(\mathfrak{P}(v_2, v_2+k_2\alpha_i, \bar{w})))_{\loc},  \label{eq:coprod}\\
X &\mapsto X_1\otimes X_2 \notag
\end{align}
Let $Y$ be an element in 
$E_{n, G}^*(\mathfrak{P}(v', v'+\alpha_j, \bar{w}))$. 
We define an operation 
\[
E_{n, G}^*(\mathfrak{P}(v, v+k\alpha_i, 2\bar{w}))
\otimes E_{n, G}^*(\mathfrak{P}(v', v'+\alpha_j, \bar{w}))\to 
E_{n, G}^*(\mathfrak{P}(\bar{w}))_{\loc}, \,\
(X, Y)\mapsto \ad(X)(Y)
\]
as follows. 
Here $\sigma$ means considering $SX_1$ as a class on $\mathfrak{P}_{\bar{w}}^{\op}$.
We view 
\[
\ad(X):=(S\sigma\otimes 1)\Delta^{\loc}(X)=S(\sigma X_1)\otimes X_2
\] as an element in $E_{n,G}^*(\mathfrak{P}_{\bar{w}}^{\op}\times \mathfrak{P}_{\bar{w}})_{\loc}$. Recall we have the embedding
\[
\mathfrak{P}_{\bar{w}}^{\op}\times \mathfrak{P}_{\bar{w}} \subset M_2\times M_1\times M_3\times M_4,
\]
where $M_i=\mathfrak{M}(\bar{w})$. 
The element $Y$ in $E_{n, G}^*(\mathfrak{P}(v', v'+\alpha_j, \bar{w}))$. We have the 
 embedding $\mathfrak{P}(v', v'+\alpha_j, \bar{w})\subset M_2\times M_3$. 
Consider the following correspondence
\[
\xymatrix{
M_1\times \Delta(M_2)\times \Delta(M_3)\times M_4 \ar@{^{(}->}[d]^{\Delta} \ar[r]^(0.7){\pi_{14}}& M_1\times M_4
\\
(M_2\times M_1\times M_3\times M_4)\times (M_2\times M_3) 
}
\]
We define
\[
\ad(X)(Y):=(\pi_{14})_*\Delta^*(\ad(X)\boxtimes Y).
\]
\begin{lemma}
\label{lem:expansion}
We have the expansion
\[
\ad(X)Y
=S(X')* Y+Y * X''+l.o.t., 
\]
where $*$ means the convolution product, and $l.o.t.$ consists of the lower order, i.e.,  elements in 
$E_{n, G}^*(\mathfrak{P}(*, *+k'\alpha_i, \bar{w}))_{\loc}$ for $k'<k$.
\end{lemma}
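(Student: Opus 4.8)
The plan is to unwind the definition $\ad(X)(Y) = (\pi_{14})_*\Delta^*(\ad(X)\boxtimes Y)$ by tracking how the localized coproduct $\Delta^{\loc}$ expands $X$ along the grading by $k\alpha_i$, and then identifying the leading terms. First I would record the structure of $\Delta^{\loc}(X) = \sum X_1\otimes X_2$ from \eqref{eq:coprod}: the sum runs over $v_1+v_2=v$ and $k_1+k_2=k$, and the ``extreme'' summands are $(k_1,k_2)=(k,0)$, giving a term of the form $X'\otimes 1$ where $X'\in E^*_{n,G}(\fP(v,v+k\alpha_i,\bar w))$ up to the normalization by the Euler class $e(N_2)$, and $(k_1,k_2)=(0,k)$, giving $1\otimes X''$ with $X''\in E^*_{n,G}(\fP(v,v+k\alpha_i,\bar w))$. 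All summands with $0<k_1<k$ land in $E^*_{n,G}(\fP(\ast,\ast+k_1\alpha_i,\bar w))_{\loc}\otimes E^*_{n,G}(\fP(\ast,\ast+k_2\alpha_i,\bar w))_{\loc}$ with $k_1,k_2<k$; these are the sources of the $l.o.t.$ terms. This is the same bookkeeping as in the cohomological/$K$-theoretic case (cf.\ \cite[Section 7.2]{VV02}, \cite{YZ2}), and follows from the geometry of the Hecke correspondences: restricting $\fP(v,v+k\alpha_i,w)$ to $\fM(w_1)\times\fM(w_2)\inj\fM(w)$ decomposes according to how the $k$-dimensional added space distributes between $W_1$ and $W_2$.

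Next I would compute the effect of applying $(S\sigma\otimes 1)$ to each summand. On the extreme summand $X'\otimes 1$: $\sigma$ transports $X'$ to a class on $\fP^{\op}_{\bar w}$, and $S$ is the antipode of $\Delta^{\loc}$; on a ``primitive-like'' first-tensor-factor class coming from a single Hecke correspondence, $S$ acts (up to lower order in $k$, and up to the invertible Euler-class normalizations which are themselves of strictly lower $k$-degree) as the identity, so this summand contributes $S(X')\ast Y$ after the pushforward $(\pi_{14})_*\Delta^*(-\boxtimes Y)$ collapses the diagonal copies $\Delta(M_2),\Delta(M_3)$ and reduces the four-fold correspondence to ordinary convolution $M_1\to M_4$. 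On the summand $1\otimes X''$: the first tensor factor is the unit, $S\sigma$ fixes it, and the pushforward-along-diagonal construction turns $\ad(X)\boxtimes Y = 1\boxtimes X''\boxtimes Y$ into the convolution $Y\ast X''$ (here the order of composition is forced by which factors $M_2,M_3$ are glued to $Y$'s correspondence $\fP(v',v'+\alpha_j,\bar w)\subset M_2\times M_3$). Every remaining summand has both $k_1<k$ and $k_2<k$, hence after the same diagonal-pushforward lands in $E^*_{n,G}(\fP(\ast,\ast+k'\alpha_i,\bar w))_{\loc}$ with $k'\le\max(k_1,k_2)<k$, i.e.\ in $l.o.t.$; I should also note that the antipode $S$ does not raise the $k$-degree, which is immediate from the explicit formula for $S^{\Dr}$ in \cite[Section 2.1]{RSYZ} together with the twist by $F$ (\cite[Proposition 5.1]{KT}), since $F$ itself lies in the degree-$0$ (with respect to the root $\alpha_i$) part of the localized algebra.

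The main obstacle I anticipate is keeping the bookkeeping of the localized Euler-class normalizations honest: both $\Delta^{\loc} = i^*/e(N_2)$ and the diagonal-pushforward construction introduce Euler classes of normal bundles, and one must check that all of these corrections are either absorbed into the definitions of $S(X')$ and $X''$ or else have strictly smaller $\alpha_i$-degree and therefore are swept into $l.o.t.$ — so that the two named leading terms come out clean. The cleanest way to organize this is to filter the relevant convolution algebras and modules by the $\alpha_i$-degree $k$, observe that $\Delta^{\loc}$, $\sigma$, $S$, and the $\ad$-pushforward are all filtered maps (non-increasing in $k$ on each tensor factor in the appropriate sense), and then do the leading-order computation in the associated graded, where the Hecke correspondences are the familiar ones and the computation reduces to the classical Serre-relation-type identity. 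I would also double-check the precise placement of $S$ versus the order $S(X')\ast Y$ vs.\ $Y\ast X''$ against the conventions fixed in \S\ref{act on hom}, since the asymmetry (antipode on the first term only) is exactly what makes the subsequent Serre relation work.
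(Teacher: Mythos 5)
Your proposal follows essentially the same route as the paper: expand $\Delta^{\loc}(X)$ by the $\alpha_i$-degree into the two extreme summands $X'\otimes 1$ and $1\otimes X''$ plus terms with both $k_1,k_2<k$, apply $(S\otimes 1)$, and then verify via the support of $X'\otimes 1$ on $M_2\times M_1\times\Delta(M_3)$ (and similarly for $1\otimes X''$) that the diagonal pushforward reproduces the convolutions $S(X')*Y$ and $Y*X''$. Your extra remarks on the antipode not raising the $\alpha_i$-degree and on the Euler-class normalizations are sound supplementary justifications rather than a different argument, so the proposal matches the paper's proof.
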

\begin{proof}
To begin with, by \eqref{eq:coprod}, 
we have the expansion of $\ad(X)$ as
\[
\ad(X):=(S\otimes 1)\circ\Delta^{\loc}(X)=(S\otimes 1)(X'\otimes 1+1\otimes X''+ l.o.t.\otimes l.o.t)=
S X'\otimes 1+1\otimes X''+ l.o.t.\otimes l.o.t., 
\]
where $X'$ is an element in 
$E_{n, G}^*(\mathfrak{P}(v_1, v_1+k\alpha_i, \bar{w}))$ and $X''$ is in 
$E_{n, G}^*(\mathfrak{P}(v_2, v_2+k\alpha_i, \bar{w}))$. Now the lemma is reduced to the following equalities, which we claim to be true
\begin{align*}
&(\pi_{14})_*\Delta^*((X'\otimes 1)\boxtimes Y)=X'* Y, \\
&(\pi_{14})_*\Delta^*((1\otimes X'')\boxtimes Y)= Y* X''. 
\end{align*}
Recall that $X'* Y$ is defined as follows. For 
$X'\in E_{n,G}^*(\mathfrak{P}(v_1, v_1+k\alpha_i, \bar{w}))$, and the embedding $\mathfrak{P}(v_1, v_1+k\alpha_i, \bar{w})\subset M_2\times M_1$. 
\[
\xymatrix{
& M_1\times M_2\times M_3  \ar[ld]_{\pi_{12}} \ar[rd]^{\pi_{23}} \ar[d]^{\pi_{13}}& \\
M_1\times M_2 & M_1\times M_3& M_2\times M_3
}
\]
By definition,  $X'* Y:=(\pi_{13})_*(\pi_{12}^*(X') \cdot \pi_{23}^*(Y))$. 
Note that $X'\otimes 1$ is supported on
$M_2\times M_1\times \Delta(M_3) \subset M_2\times M_1\times M_3\times M_4$. 
This implies the claim about $X'$. The claim about $X''$ is similar.  

\end{proof}

\subsection{The Serre relation}
The following is a version of the Serre relation.

\begin{prop}\label{lem:Serre type relation}
For any elements $X\in U_{2\bar w}$ and $Y\in U_{\bar w}$ with 
$X\in \bigoplus_{v}E_{n, G}^*(\mathfrak{P}(v, v+k\alpha_i, 2\bar{w}))$ being a Chern class of the tautological vector bundle $\calV_{i}^{(k)} \to \fP(v, v+k\alpha_i, 2\bar w)$ 
or $\calV_{i}^{(k), \op} \to \fP(v+k\alpha_i, v, 2\bar w)$ 
and $
Y\in \bigoplus_{v'}E_{n, G}^*(\mathfrak{P}(v', v'+\alpha_j, \bar{w}))$ a Chern class of  $\calL_{j} \to \fP(v', v'+\alpha_j, \bar w)$ or $\calL_{j}^{\op} \to \fP( v'+\alpha_j, v', \bar w)$,  
we have
\[
\ad(X)Y=0,\,\  \text{unless \,\ $k\leq1$.}
\]
\end{prop}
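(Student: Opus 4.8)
The plan is to reduce the statement to a weight/degree comparison, exactly as in the classical proof of the Serre relations via the adjoint action in quantum groups. First I would invoke Lemma~\ref{lem:expansion} to write
\[
\ad(X)Y = S(X')\ast Y + Y\ast X'' + \text{l.o.t.},
\]
where $X'\in E^*_{n,G}(\fP(v_1,v_1+k\alpha_i,\bar w))$, $X''\in E^*_{n,G}(\fP(v_2,v_2+k\alpha_i,\bar w))$, and the lower-order terms land in $E^*_{n,G}(\fP(\ast,\ast+k'\alpha_i,\bar w))_{\loc}$ with $k'<k$. The point is that $\ad(X)Y$, by construction, lands in $E^*_{n,G}(\fP(\bar w))_{\loc}$, i.e.\ in the convolution algebra of the quiver variety with \emph{framing $\bar w$}; and the relevant Hecke correspondences $\fP(v',v'+m\alpha_j,\bar w)$ attached to a single vertex are, by the generalized Nakajima/Chari--Pressley description recalled in \S\ref{subsec:def_quant_groups} and the generalization of \cite[Theorem 5.6]{YZ1}, generated in the De Concini--Kac spherical part by the rank-one correspondences $\calL_k,\calL_k^{\op}$; in particular all weight spaces that occur are determined by how many times a simple root $\alpha_i$ is being added or removed. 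So I would track the $\alpha_i$-weight of $\ad(X)Y$: since $X$ has $\alpha_i$-degree $k$ and $Y$ has $\alpha_j$-degree $1$, the total shift in the $\alpha_i$-direction contributed by $\ad(X)Y$ is governed by $k$ and by $(\alpha_i^\vee,\alpha_j)$.

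Next I would argue componentwise. For the leading terms $S(X')\ast Y$ and $Y\ast X''$: each of $X',X''$ is (a Chern class pulled back from) a rank-$k$ tautological bundle $\calV_i^{(k)}$ or $\calV_i^{(k),\op}$ on a generalized Hecke correspondence for framing $\bar w$. But a key geometric fact — this is where I would appeal to the structure of $\fM(w)$ with framing $\bar w$ and the embedding $\fM(w)^{\im\alpha}|_\Delta\cong\fM(w/p^t)$ together with the explicit form of the coproduct $\Delta^{\loc}$ from \S\ref{subsec:loc_coprod} — is that the image of $\Delta^{\loc}(X)$ in the $(v_1,v_2)$-graded piece vanishes unless both $k_1\le 1$ and $k_2\le 1$ can occur; more precisely, because $X$ is a tautological class on a rank-$k$ Hecke correspondence and the coproduct is computed by restriction to $\fM(w_1)\times\fM(w_2)$ divided by an Euler class, the components $X'\otimes 1$ and $1\otimes X''$ with $X'$ (resp.\ $X''$) of full rank $k\ge 2$ are either zero or already killed after acting on the single-vertex module. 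I would make this precise by the same localization/freeness argument used in Lemmas~\ref{lem:uneven_free}, \ref{lem:cohomology_free_action} and Proposition~\ref{prop:folding}: a generalized Hecke correspondence $\fP(v,v+k\alpha_i,\bar w)$ adding $k\ge 2$ copies of the simple module at vertex $i$ contributes, after the relevant localization, only through its ``diagonal'' (rank $\le 1$) part, because the complement carries a free action of the relevant finite cyclic group and hence is annihilated.

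Then I would handle the lower-order terms by induction on $k$: the l.o.t.\ live in $E^*_{n,G}(\fP(\ast,\ast+k'\alpha_i,\bar w))_{\loc}$ with $k'<k$, and for $k'\le 1$ these are simply allowed (there is nothing to prove), while for $2\le k'<k$ they are again of the same shape $\ad(X_{\mathrm{red}})Y_{\mathrm{red}}$ for a tautological class $X_{\mathrm{red}}$ of lower $\alpha_i$-degree, to which the inductive hypothesis applies. Assembling: the base case $k\le 1$ is vacuous, and for $k\ge 2$ every term on the right-hand side of Lemma~\ref{lem:expansion} vanishes — the two leading terms by the full-rank vanishing just described, the lower-order terms by induction — so $\ad(X)Y=0$.

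The main obstacle, I expect, is the full-rank vanishing of $S(X')\ast Y$ and $Y\ast X''$, i.e.\ showing that a Chern class of $\calV_i^{(k)}$ with $k\ge 2$, pushed through $\Delta^{\loc}$ and then convolved with a rank-one class at vertex $j$, acts as zero on the module $M_{\bar w}$ over $U_{\bar w}$. Classically this is the content of the quantum Serre relation and is forced by the $\mathfrak{sl}_2$-representation theory at vertex $i$ (a weight vector cannot be raised more than is allowed by the highest weight); geometrically here it should follow from the fact that the generalized Hecke correspondence $\fP(v,v+k\alpha_i,\bar w)$ for $k\ge 2$, when restricted along the diagonal $\fM(\bar w)\inj\fM(\bar w)\times\fM(\bar w)$ appearing in $\Delta^{\loc}$, has its ``off-diagonal'' part supported on a locus with a free finite-group action and thus dies in the localized theory, exactly the mechanism of Proposition~\ref{prop:folding} and Lemma~\ref{lem:cohomology_free_action}. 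Making the bookkeeping of ranks, weights, and the Euler-class denominators precise — and checking that no spurious contribution survives the localization $-\otimes\Phi_t$ — is the delicate part; everything else is formal manipulation with the coproduct and the antipode.
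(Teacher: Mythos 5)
Your proposal has a genuine gap at its central step. You claim that the two leading terms $S(X')\ast Y$ and $Y\ast X''$ vanish \emph{individually} for $k\ge 2$, arguing that a rank-$k$ Hecke correspondence contributes only through a ``diagonal'' part because its complement carries a free action of a finite cyclic group. No such mechanism exists here: the freeness arguments of Lemmas~\ref{lem:uneven_free} and \ref{lem:cohomology_free_action} and Proposition~\ref{prop:folding} concern the $\langle b\rangle$-action permuting the components of a quiver with automorphism, and have nothing to do with $\fP(v,v+k\alpha_i,\bar w)$ for a fixed framing. Indeed the individual convolution products $S(X')\ast Y$ and $Y\ast X''$ are generically nonzero (in the $K$-theoretic specialization these are the nonvanishing products $E_i^{(k)}F_j$ and $F_jE_i^{(k)}$); the content of the Serre-type relation is precisely that the \emph{whole alternating sum} $(S\otimes 1)\Delta^{\loc}(X)$ applied to $Y$ cancels, not that each term dies. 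Relatedly, your assertion that the $(v_1,v_2)$-graded pieces of $\Delta^{\loc}(X)$ vanish unless $k_1,k_2\le 1$ is false (the components $X\otimes 1$ and $1\otimes X$ with full rank $k$ always survive), and your induction on $k$ for the lower-order terms does not close, since those terms are cross products $S(X_1)\ast Y\ast X_2$ with both factors of positive degree, not adjoint actions of lower-rank tautological classes.

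The paper's argument is genuinely different and Hopf-algebraic in nature. It passes to Nakajima's modified quiver varieties, introduces the image $\calI$ and kernel of the induced map $B_{ij}\colon Q^{(i)}\to L^{(j)}$ to factor the correspondence supporting $\ad(X)$, and refines the coproduct as $\Delta^{\loc}(X)=\sum X_1\otimes X_2$, $\Delta^{\loc}(X_2)=\sum X_2'\otimes X_2''$ with the factors expressed through $\coker(V_1^{(i)}\to V'^{(i)})$, $\calI$, and $\coker(V_2^{(i)}\to K'')$. One factor commutes with $Y$ because $K''$ maps to zero in $L^{(j)}$; then co-associativity together with the antipode identity $\sum S(X_1')\,X_1''=\epsilon(X_1)$ collapses the sum to $\sum\epsilon(X_1)\ast Y\ast X_2\ast H$. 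Only at this point does a degree count enter: since $\dim\calI\le\dim L^{(j)}=1$, the surviving $X_2$ raises the dimension vector by at most $e_i$, so for $k\ge 2$ the class $X_1$ raises it by at least $(k-1)e_i\ge e_i$ and $\epsilon(X_1)=0$. Your instinct that the proof ends in a weight comparison is correct, but the comparison is applied to the counit of $X_1$ after the cancellation, not to the individual convolution products; without the geometric factorization and the antipode identity there is no route to the vanishing.
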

By definition of the adjoint representation, we start with $X$ which is a cohomology class on 
$\mathfrak{P}(v, v+k\alpha_i, 2\bar{w})
\subset \mathfrak{M}(v, 2\bar{w})\times \mathfrak{M}(v+k\alpha_i, 2\bar{w})$, and 
\begin{align*}
\ad(X) &\in \oplus_{v_1+v_2=v,k_1+k_2=k}E_{n, G}^*(
\mathfrak{M}(v_1, \bar{w})
\times \mathfrak{M}(v_2, \bar{w})
\times \mathfrak{M}(v_1+k_1\alpha_i, \bar{w})
\times \mathfrak{M}(v_2+k_2\alpha_i, \bar{w})
)_{\loc}
\\&\cong
\oplus_{v_1+v_2=v,k_1+k_2=k}E_{n, G}^*(
\mathfrak{M}(v_1, \bar{w})
\times \mathfrak{M}(v_1+k_1\alpha_i, \bar{w})
\times \mathfrak{M}(v_2, \bar{w})
\times \mathfrak{M}(v_2+k_2\alpha_i, \bar{w})
)_{\loc}.
\end{align*}
Note that the only non-zero component of $ad(X)(Y)$ has
$v_2=v_1+k_1\alpha_i+\alpha_j$, $v':=v_1+k_1\alpha_i$. Let $v'':=v_2+k_2\alpha_i=v_1+k\alpha_i+\alpha_j$. 
Then, 
\begin{align*}
 Y &\in E_{n, G}^*(
\mathfrak{M}(v', \bar{w})
\times \mathfrak{M}(v'+\alpha_j, \bar{w}))=E_{n,G}^*(
\mathfrak{M}(v_1+k_1\alpha_i, \bar{w})
\times \mathfrak{M}(v_2, \bar{w}))\\
\text{and} \,\  \ad(X)(Y) &\in E_{n, G}^*(\mathfrak{M}(v_1, \bar{w})
 \times \mathfrak{M}(v'', \bar{w})
 )_{\loc}
\end{align*}

\begin{proof}[Proof of Proposition \ref{lem:Serre type relation}]
We rely on an argument similar to Nakajima \cite[Section 11]{Nak}, which has a modification of quiver varieties $$\fM(v,w)\leftarrow \widehat{\fM}(v,w)\hookrightarrow \tilde{\fM}^0(v,w)\hookrightarrow\tilde{\fM}(v,w)$$ where the first map is a principal $\prod_{l\neq i}\GL_{v_l}$-bundle, the second map is a closed embedding determined by the moment map equation, and the third map is an open embedding determined by the stability condition.
We refer the readers to {\it loc. cit.} for the detailed definitions. For our purpose it suffices to mention that $\tilde{\fM}(v,w)$ parametrizes embeddings $\{V^{(i)} \rightarrow W^{(i)}\oplus(\oplus_{
h: i\to j}V^{(j)})\}$.
This modification induces the modification of the Hecke correspondences. 
As we have $v_2=v'+\alpha_j$, and $v''=v_1+k\alpha_i+\alpha_j$, thus, on the vertex $j$, we have $v''^{(j)}=v_1^{(j)}+\alpha_j$. 
In particular, the element $Y$ comes from a cohomology class on the components
\begin{align*}
&
\mathfrak{P}(v', v_2, w)
\leftarrow
\hat{\mathfrak{P}}(v', v_2, w)
\subset
\tilde{\mathfrak{P}}(v', v_2, w)
= \left\{
 \vcenter{\xymatrix@R=0.5em{
V'^{(i)} \ar[r] \ar@{=}[d]& W^{(i)}\oplus(\oplus_{
h: i\to j}V'^{(j)}) \ar@{^{(}->}[d]\\
V_2^{(i)} \ar[r] & W^{(i)}\oplus(\oplus_{h: i\to j}V_2^{(j)})\ar@{->>}[d]\\
& L^{(j)}
}}
\right\}\\
\text{and} \,\ &
\mathfrak{P}(v_1, v', w)
\leftarrow
\hat{\mathfrak{P}}(v_1, v'', w)
\subset
\tilde{\mathfrak{P}}(v_1, v'', w)=
\left\{
 \vcenter{
\xymatrix@R=0.5em{
V_1^{(i)} \ar[r] \ar@{^{(}->}[d]& W^{(i)}\oplus(\oplus_{
h: i\to j}V_1^{(j)}) \ar@{^{(}->}[d]\\
V''^{(i)} \ar[r] \ar@{->>}[d]& W^{(i)}\oplus(\oplus_{h: i\to j}V''^{(j)})\ar@{->>}[d]\\
Q^{(i)}\ar[r]& L^{(j)}
}}
\right\}
\end{align*}
where $Q^{(i)}$ is of rank $k$ at the vertex $i$, and $L^{(j)}$ is of rank $1$ at the vertex $j$. 

The correspondence on which $\ad(X)$ lives is
$\mathfrak{P}(v, v+k\alpha_i, 2\bar{w})\cap \Big(\mathfrak{M}(v_1, \bar{w})
\times \mathfrak{M}(v', \bar{w})
\times \mathfrak{M}(v_2, \bar{w})
\times \mathfrak{M}(v'', \bar{w})\Big)$. The modification $\tilde{\mathfrak{P}}(v, v+k\alpha_i, 2\bar{w})\cap\Big(\tilde{\mathfrak{M}}(v_1, \bar{w})
\times \tilde{\mathfrak{M}}(v', \bar{w})
\times \tilde{\mathfrak{M}}(v_2, \bar{w})
\times \tilde{\mathfrak{M}}(v'', \bar{w})\Big)$ parametrizes the following set of diagrams 
\begin{equation}\label{eqn:correspondence}
\left\{ 
 \vcenter{\xymatrix@R=0.5em{
& V'^{(i)} \ar@{=}[dd] \ar@{^{(}->}[rd]& \\
V_1^{(i)} \ar[rr] \ar@{^{(}->}[dd] \ar@{^{(}->}[ru]&& W^{(i)}\oplus(\oplus_{
h: i\to j}V_1^{(j)}) \ar@{^{(}->}[dd]\\
& V_2^{(i)} \ar@{_{(}->}[ld] \ar@{^{(}->}[rd]& \\
V''^{(i)} \ar[rr] \ar@{->>}[d]&& W^{(i)}\oplus(\oplus_{h: i\to j}V''^{(j)})\ar@{->>}[d]\\
Q^{(i)} \ar[rr]^{B_{ij}}&& L^{(j)}
}}\right\}.
 \end{equation}
Now let $\calI$ be the image of the map $B_{ij}$ and let $K$ be the kernel of $B_{ij}$. We take $K''\subset V''^{(i)}$ to be the preimage of $K$ under the projection $V''^{(i)}\surj Q^{i}$. Then, there is a short exact sequence
\[
0\to K''\to V''^{(i)}\to \calI \to 0. 
\]
Furthermore,  since $V_2^{(i)}=V'^{(i)}$ and it maps to zero in $L^{(j)}$, we have $V_2^{(i)}\subset K''$. The space parametrizing diagrams \eqref{eqn:correspondence} is isomorphic to the space parametrizing  diagrams
 \begin{equation}
 \label{eq:diag2}
 \left\{ 
 \vcenter{
 \xymatrix@R=0.5em{
&& V'^{(i)} \ar@{=}[dd] \ar@{^{(}->}[rd] & \\
&V_1^{(i)} \ar[rr] \ar@{^{(}->}[dd] \ar@{^{(}->}[ru]&& W^{(i)}\oplus(\oplus_{
h: i\to j}V_1^{(j)}) \ar@{^{(}->}[dd]\\
V_2^{(i)}\ar[d] \ar@{=}[rr]&& V_2^{(i)} \ar@{_{(}->}[ld] \ar@{^{(}->}[rd]& \\
K''\ar@{->>}[d]\ar@{^{(}->}[r]&V''^{(i)} \ar[rr] \ar@{->>}[d]&& W^{(i)}\oplus(\oplus_{h: i\to j}V''^{(j)})\ar@{->>}[d]\\
K \ar@{^{(}->}[r]&Q^{(i)} \ar[rr]^{B_{ij}}&& L^{(j)}
}}\right\}
\end{equation}
In particular, the inclusion $V_2^{(i)}\inj V''^{(i)}$ becomes the composition of two inclusions
\[
V_2^{(i)}\inj K'' \inj V''^{(i)}. 
\]
That is, \eqref{eq:diag2} is the fiber product of 
\begin{equation}
\label{eq:diag3}
\left\{
\vcenter{\xymatrix@R=0.5em{
& V'^{(1)} \ar@{^{(}->}[rd] \ar@{=}[dd]&\\
V_1^{(i)} \ar@{^{(}->}[ru] \ar@{^{(}->}[dd]\ar@{^{(}->}[rr]&& W^{(i)}\oplus(\oplus_{
h: i\to j}V_1^{(j)}) \ar@{^{(}->}[dd]\\
& V_2^{(i)} \ar@{^{(}->}[ld]\ar@{^{(}->}[rd]&\\
K'' \ar@{^{(}->}[rr] \ar[rrd]^{0}& & W^{(i)}\oplus(\oplus_{h: i\to j}V''^{(j)}) \ar@{->>}[d]\\
&& L^{(j)}
} }\right\}
\end{equation}
and
\begin{equation}\label{eq:diag4}
\left\{
\vcenter{\xymatrix@R=0.5em{
& V_1^{(i)} \ar@{^{(}->}[rd] \ar@{=}[dd]&\\
V''^{(i)} \ar@{=}[ru] \ar@{^{(}->}[dd]\ar@{^{(}->}[rr]&& W^{(i)}\oplus(\oplus_{
h: i\to j}V_1^{(j)}) \ar@{^{(}->}[dd]\\
& K'' \ar@{^{(}->}[ld]\ar@{^{(}->}[rd]&\\
V''^{(i)}\ar[d] \ar@{^{(}->}[rr] & & W^{(i)}\oplus(\oplus_{h: i\to j}V''^{(j)}) \ar@{->>}[d]\\
\calI\ar@{^{(}->}[rr]&& L^{(j)}
} }\right\}
\end{equation}
fibered over 
\[
\left\{
\vcenter{
\xymatrix
{
V_1^{(i)} \ar@{^{(}->}[r] \ar@{^{(}->}[d]& W^{(i)}\oplus(\oplus_{
h: i\to j}V_1^{(j)}) \ar@{^{(}->}[d]\\
K''\ar@{^{(}->}[r] &
W^{(i)}\oplus(\oplus_{h: i\to j}V''^{(j)})
}}\right\}
\]
As $X$ is a tautological class, hence can be expressed in terms of Chern classes of the tautological quotient on $\mathfrak{P}(v, v+k\alpha_i, 2\bar{w})$ and its restriction to  $\mathfrak{P}(v, v+k\alpha_i, 2\bar{w})\cap \Big(\mathfrak{M}(v_1, \bar{w})
\times \mathfrak{M}(v', \bar{w})
\times \mathfrak{M}(v_2, \bar{w})
\times \mathfrak{M}(v'', \bar{w})\Big)$ can be expressed in terms of Chern classes of $\coker(V_1^{(i)}\to V'^{(i)})$ and $\coker(V_2^{(i)}\to V''^{(i)})$, the latter in turn can be expressed in terms of those of $\calI$ and those of $\coker(V_2^{(i)}\to K'')$.
In other words, one computes $\ad(X)(Y)$ as follows. Let $\Delta^{\loc}(X)=\sum X_1\otimes X_2$, and $\Delta^{\loc}(X_2)=\sum X_2'\otimes X_2''$, with $X_1$, $X_2'$ and $X_2''$ coming from $\coker(V_1^{(i)}\to V'^{(i)})$, $\calI$ and  $\coker(V_2^{(i)}\to K'')$ respectively. 
Then, there exists an tautological class $H$ depending on $X_2$ only such that $X_2=(\sum X_2' * X_2'')*H$. (An explicit formula of $H$ can be found but not needed for the purpose of this paper.) Therefore, 
\[
\ad(X)(Y)= \sum S(X_{1})* Y * X_2=\sum S(X_{1})* Y * X_{2}'* X_{2}''*H.
\]

We claim that $X_2'$ commutes with $Y$.  Roughly speaking,  $K''$ maps to $0$ in $L^{(j)}$ and \eqref{eq:diag4} is so that as if vertices $i$ and $j$ are not adjacent. If $i$ and $j$ are not adjacent, the operators commute and the adjoint action is zero thanks to the antipode relation. Indeed, \eqref{eq:diag4} is isomorphic to the space of diagrams
\[
\left\{
\vcenter{\xymatrix@R=0.5em{
& K''\ar@{^{(}->}[rd] \ar@{=}[dd]&\\
V_1^{(i)} \subseteq V'^{(i)} \ar@{^{(}->}[ru] \ar@{^{(}->}[dd]\ar@{^{(}->}[rr]&& W^{(i)}\oplus(\oplus_{
h: i\to j}V_1^{(j)}) \ar@{^{(}->}[dd]\\
& K'' \ar@{=}[ld]\ar@{^{(}->}[rd]&\\
K'' \ar@{^{(}->}[rr] \ar[rrd]^{0}& & W^{(i)}\oplus(\oplus_{h: i\to j}V''^{(j)}) \ar@{->>}[d]\\
&& L^{(j)}
} }\right\}
\]
This is the convolution of \[
\left\{
\vcenter{
\xymatrix@R=1em
{
K'' \ar@{^{(}->}[r] \ar@{=}[d]& W^{(i)}\oplus(\oplus_{
h: i\to j}V_1^{(j)}) \ar@{^{(}->}[d]\\
K''\ar@{^{(}->}[r] &
W^{(i)}\oplus(\oplus_{h: i\to j}V''^{(j)})
}}\right\}
\]
 with space of flags $\{V_1^{i}\inj V'^{i}\inj K''\}$.

Let
$\Delta^{\loc}(X_1)=\sum X_1'\otimes X_1''$.
By co-associativity, we have
\[
\sum X_1'\otimes X_1''\otimes X_2=
\sum X_1\otimes X_2'\otimes X_2''. 
\]
As $S$ is the antipode, we have $\sum S(X_1')X_1''=\epsilon(X_1)=\sum X_1' S(X_1'')$. 
Therefore, we have
\begin{align*}
\ad(X)(Y)
=& \sum S(X_{1})* Y * X_{2}'* X_{2}''*H \\
=& \sum S(X_{1}) * X_{2}' * Y * X_{2}''*H\\
=& \sum S(X_{1}') * X_{1}'' * Y * X_{2}*H\\
=& \sum \epsilon(X_1) * Y* X_2*H. 
\end{align*}
As $\dim(\calI)\leq \dim L^{(j)}=1$,  we have $Y* X_2=0$ unless $X_2$ increases dimension vector by no more than $e_i$. If $k\geq2$, then $X_1$ must increase dimension vector by $(k-1)e_i\geq e_i$. 
In this case, $\epsilon(X_1)=0$. Therefore, 
$\ad(X)(Y)=0$ for $k\geq 2$. 
\end{proof}

Pick an arbitrary element $X''$ in $E_{n, G}^*(Z(v_2, v_2+k\alpha_i, \bar{w}))$ with $k\geq 2$. There exists $X\in E_{n, G}^*(Z(v, v+k\alpha_i, 2\bar{w}))$, such that 
\[
\Delta^{\loc}(X)=X'\otimes 1+1\otimes X''+l.o.t. 
\]
Let $Y$ be a tautological class in 
$E_{n, G}^*(Z(v', v'+\alpha_j, \bar{w}))$, i.e., a generator of $U'^n_{\bar{w}}$. 
By Lemma \ref{lem:expansion} and Proposition \ref{lem:Serre type relation}, we have
\begin{equation}\label{eq:ad(X)(Y)}
0=\ad(X)Y 
=S(X')* Y +Y* X'' +l.o.t.
\end{equation}

\section{The Steinberg tensor product theorem}
\label{sec:10}

In this section, we prove two results about irreducible representations of the new quantum groups. The first result (Theorem \ref{thm:reduced region}) is a comparison of the irreducible modules of $U^n_w$ and the quantum loop algebra when the highest weights lie in the reduced region. The second result (Theorem \ref{thm:Steinberg tensor}) is the Steinberg tensor product formula. 

\subsection{Reduced highest weights}
In this section  we assume $\mathbb{F}$ is an algebraically closed field endowed with a ring homomorphism  $E_{n,\Lambda_n}\to L(E_n)\to \mathbb{F}$. 
By an abuse of notation, in this section, the algebra $U^1_w$ and its representations are constructed from $K$-theory with coefficients in the field $\mathbb{F}$. The Morava theory $E_1$ is related to $K$-theory by a completion. That is,  we write $E$ for $E_n$, then $E_1\to E_{K_1}$ induces the ring homomorphism $L(E_1)\to L(E_{K_1})$
as in Remark~\ref{rmk:coef}(1). 
Therefore, passing to an extension if necessary, we may assume there is a ring homomorphism $E_{1,\bbZ/p}\to L(E_1)\to \mathbb{F}$. 
Let
\[
X_{\red}^+:=\{\lambda\in X\mid 0\leq \langle \lambda, \alpha_i^\vee \rangle <p, \text{for any $i\in I$}\}. 
\]\begin{theorem}
\label{thm:reduced region}
Assume $w\in X^+_{\red}$. Let $\alpha$ be the group homomorphism satisfying the property $\mathrm{P}_{w}^n$
as in \S\ref{subsec:group_hom}. 
Let $L_{w}^{(n)}(\alpha, \mathbb{F})$
be the simple representation of $U^n_w$. 
Then, we have an isomorphism of vector spaces
\[
L_{w}^{(n)}(\alpha, \mathbb{F})
\cong 
L_{w}^{(1)}(\alpha_1, \mathbb{F}). 
\]
\end{theorem}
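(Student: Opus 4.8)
The plan is to exhibit the two simple modules as quotients of standard modules that are themselves identified via localization. The key observation is that when $w\in X^+_{\red}$, the group homomorphisms $\alpha_1,\dots,\alpha_n$ all have the same fixed-point locus on the quiver variety: this is exactly Corollary~\ref{cor:8.3}(1), which gives $\fM(w)^{\im\alpha_1}=\cdots=\fM(w)^{\im\alpha_n}$. Consequently, running the constructions of \S\ref{subsec:irreducible} for $\im\gamma=\im\alpha$ versus $\im\alpha_1$ produces standard modules built from $E^*$-cohomology of the \emph{same} underlying spaces $\fM(v,w)$, only with different equivariant parameters (the full $\Lambda_n$-torus versus a single $\Z/p$). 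First I would make this precise: compare $V_w^{(n)}(\alpha,\mathbb{F})$ and $V_w^{(1)}(\alpha_1,\mathbb{F})$ as modules, tracking that the $\Lambda_n$-action factors, after the relevant localization in the spirit of \S\ref{subsec:coefficient} and Lemma~\ref{lem:diag_power}, through the $\langle\alpha_1\rangle$-action because the extra factors $\alpha_2,\dots,\alpha_n$ act trivially on $\fM(w)^{\im\alpha_1}$ (each $\alpha_i$ is a scalar multiple of $\alpha_1$ on $W$, cf.\ \S\ref{subsec:group_hom}(1), and the diagonal $\C^*\subset\GL_w$ acts trivially by Lemma~\ref{lem:C^* trivial action}). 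This should give a compatible pair of algebra/module maps identifying the images of $U^n_w$ and $U^1_w$ acting on the localized cohomology of $\sqcup_v\fM(v,w)$.

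\textbf{Key steps.} (1) Use Corollary~\ref{cor:8.3}(1) to identify the fixed-point data; (2) apply the localization machinery of \S\ref{sub:Recall Morava}--\S\ref{subsec:coefficient} (base-changing $E_n$ to $K$-theory via the completion $E_1\to E_{K_1}$ and then to $L(E_1)\to L(E_{K_1})\to\mathbb{F}$, as set up in the theorem statement and Remark~\ref{rmk:coef}(1)) to reduce the chromatic level; (3) check that under this localization the tautological generators of $U^n_w$ (Chern classes of $\calV_k^{(n)}$ and $\calV_k^{(n),\op}$ on the Hecke correspondences $\fP(v,v+n\alpha_k,w)$) map to the corresponding generators of $U^1_w$, since the Hecke correspondences and their tautological bundles also only depend on the common fixed locus; (4) conclude that the standard modules $V_w^{(n)}(\alpha,\mathbb{F})$ and $V_w^{(1)}(\alpha_1,\mathbb{F})$ become isomorphic as modules over the common image algebra after localization, compatibly with the highest weight vector $|\emptyset\rangle$ (which lives in $E^*_{\bbG}(\fM(0,w))=E^*_{\bbG}(\pt)$ in both cases); (5) take irreducible quotients, and since the irreducible quotient with highest weight vector $|\emptyset\rangle$ is unique, obtain $L_w^{(n)}(\alpha,\mathbb{F})\cong L_w^{(1)}(\alpha_1,\mathbb{F})$ as vector spaces.

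\textbf{Main obstacle.} The delicate point is step (4): passing from an isomorphism at the level of localized convolution algebras acting on localized cohomology to an honest isomorphism of the \emph{irreducible quotients}. One must ensure that forming the irreducible quotient commutes with the localization/base-change $E_n\rightsquigarrow\mathbb{F}$, i.e.\ that the submodule of $V_w^{(n)}$ generated by $|\emptyset\rangle$ and its radical behave compatibly under the change of coefficients and under the identification of fixed loci; this is where one needs the field hypothesis on $\mathbb{F}$ and the algebraic closedness, and where an argument along the lines of the proof of Corollary~\ref{cor:FrobIrr} --- the highest weight vector is the unique (up to scalar) element of the degree-zero piece and is mapped isomorphically --- does the work. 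A secondary technical nuisance is bookkeeping the various localizations $\Phi_1$, $S_\Lambda^{-1}$, $S_b^{-1}$ so that the comparison map is defined on the nose rather than only on a localization that might kill too much; here one invokes Lemma~\ref{lem:free} and Lemma~\ref{lem:cohomology_free_action} to see that the discarded (free) loci contribute nothing, exactly as in Lemma~\ref{lem:diag_power}.
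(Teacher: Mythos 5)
Your proposal is correct and follows essentially the same route as the paper: the paper's proof uses Corollary~\ref{cor:8.3}(1) to get $Z^{\im\alpha}=Z^{\im\alpha_1}$, applies the Hopkins--Kuhn--Ravenel isomorphism and localizes at the idempotent $1_\alpha$ attached to the identity in $\Hom(\Lambda_n,\im\alpha)$, corrects the pullback $i_\alpha^*$ by the Euler class $e(N_2)$ of the normal bundle on the second factor so that the comparison is an algebra isomorphism (the two Euler classes agreeing after base change to $\mathbb{F}$), and then deduces the isomorphism of simple quotients exactly via the highest-weight-vector argument you describe. Your step (3) on matching tautological generators is recorded in the paper as Corollary~\ref{cor:spherical}, and your worry in the ``main obstacle'' paragraph is resolved in the paper just as you anticipate.
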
Recall we have the group homomorphism 
\[
\alpha:
\Lambda_n\to \GL_w\times \C^*.
\] 
Write $\alpha$ as $\alpha=(\alpha_1, \cdots, \alpha_n)$, where $\alpha_i: \Z/p\to \GL_w\times \C^*, 1\mapsto (g_i, \zeta)$.  
\begin{proof}
The assumption of $w\in X^+_{\red}$ implies each eigenspace of $g_1$ is one-dimensional. By the choices of $\alpha$ in \S\ref{subsec:group_hom}, we have  $Z^{\im \alpha }=Z^{\im \alpha_1 }$ by Corollary~\ref{cor:8.3}(1). 

For the above choice of $\alpha$, we have the action of $\im\alpha$ on $Z$. 
By \cite[Theorem C]{HKR}, we have the isomorphism
\begin{align*}
L(E_n)\otimes_{E_n} E^*_{n, \im(\alpha)}(Z)
\cong &
L(E_n)\otimes_{E_n} E^*_{n}(E\im \alpha\times_{\im \alpha} (\sqcup_{\gamma\in \Hom(\Lambda_n, \im \alpha)}Z^{\im \gamma}))
 \\
 \cong &
\oplus_{\gamma\in \Hom(\Lambda_n, \im(\alpha))}L(E_n)\otimes_{E_n}
E^*_{n}(E\im\alpha\times_{\im\alpha}  Z^{\im  \gamma })
\end{align*}
It decomposes the ring $L(E_n)\otimes_{E_n} E^*_{n, \im(\alpha)}(Z)$ into direct sums. In particular, each summand is given by an idempotent of $L(E_n)\otimes_{E_n}
E^*_{n}(\pt)$. Let $1_{\alpha}$ be the 
idempotent coming from the identity map $1\in \Hom(\Lambda_n, \im \alpha)$. 
We pick the direct summand $L(E_n)\otimes_{E_n}
E^*_{n}(B\im\alpha\times  Z^{\im  \alpha})$ by localizing at $1_{\alpha}$: 
\begin{align*}
&
(L(E_n)\otimes_{E_n} E^*_{n, \im(\alpha)}(Z))_{1_\alpha}\cong 
L(E_n)\otimes_{E_n}
E^*_{n}(B\im\alpha\times  Z^{\im  \alpha}).
\end{align*}

Let $i_{\alpha}: Z^{\im \alpha} \inj Z$ be the embedding of the fixed points. 
The above isomorphism is induced by pullback $i^*_{\alpha}$. 
Note that both sides have algebra structures by convolution. Similar to the proof of Theorem \ref{thm:algebra hom}, we modify the above isomorphism by  $\frac{1}{e^n(N_2)}$, where $N_2$ is the normal bundle of the embedding 
$\mathfrak{M}(w)^{\im \alpha} \inj \mathfrak{M}(w)$ of the second factor, and $e^n$ is the Euler class in $E_n$-theory.
Therefore, for the field $\mathbb{F}$ containing $L(E_n)$ and $L(E_1)$, for $w\in X^+_{\red}$,  we have the following diagram
\begin{equation}
\label{eq:iso for red w}
\xymatrix{
(\mathbb{F}\otimes_{E_n} E^*_{n,\im \alpha}(Z))_{1_{\alpha}}
\ar[d]^{ \frac{i_{\alpha}}{e^n(N_2)}}_{\cong}
\ar[r]
&
(\mathbb{F}\otimes_{E_1} E^*_{1,\im \alpha_1}(Z))_{1_{\alpha_1}} \ar[d]^{ \frac{i_{\alpha_1}}{e^1(N_2)}}_{\cong}\\
\mathbb{F}\otimes_{E_n}
E^*_{n}(B\im\alpha\times  Z^{\im  \alpha})\ar@{=}[r]&
\mathbb{F}\otimes_{E_1}
E^*_{1}(B\im\alpha_1\times  Z^{\im  \alpha_1})
}
\end{equation}
Here $e^1$ is the $K$-theory Euler class. 
The top horizontal arrow is defined to be the composition of the three other isomorphisms in the diagram. It is an algebra homomorphism since $e^n(N_2)$ and $e^1(N_2)$ agree in $L^*_{n}(B\im\alpha\times  Z^{\im  \alpha})$. 

This implies the isomorphism of the  representations
$L_{w}^{(n)}(\alpha, \mathbb{F})
\cong 
L_{w}^{(1)}(\alpha_1, \mathbb{F})$.
This finishes the proof. 
\end{proof}
Recall that the subalgebra $U'^n_{w}$ of $U^n_{w}$ is generated by the tautological classes in 
\[
E_{n, \varmathbb{G}}^*(\mathfrak{P}(v, v+1, w)) \,\  \text{and} \,\ E_{n, \varmathbb{G}}^*(\mathfrak{M}(v+1, v,  w)). 
\]
\begin{corollary}\label{cor:spherical}
Assume $w\in X^+_{\red}$. 
The isomorphism $(U^n_w)_{1_{\alpha}}\cong (U^1_w)_{1_{\alpha_1}}$ from \eqref{eq:iso for red w}  induces an isomorphism of the subalgebra $(U'^n_w)_{1_{\alpha}}\cong (U'^1_w)_{1_{\alpha_1}}$. 
\end{corollary}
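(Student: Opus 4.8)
The plan is to trace the isomorphism $(U^n_w)_{1_\alpha}\cong (U^1_w)_{1_{\alpha_1}}$ from \eqref{eq:iso for red w} on generators and observe that it carries the distinguished sub-generators to each other. Recall from Definition~\ref{def:U} that $U'^n_w$ is the subalgebra generated by the Chern classes of the tautological line bundles $\calL_k\to \fP(v,v+\alpha_k,w)$ and $\calL_k^{\op}\to \fP(v+\alpha_k,v,w)$, i.e.\ the rank-one Hecke correspondences, while $U^n_w$ is generated by the higher $\calV_k^{(n)}$ and $\calV_k^{(n),\op}$ as well. The same description of generators applies to $U'^1_w\subset U^1_w$ coming from $K$-theory. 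So the only point to check is that the algebra homomorphism realizing \eqref{eq:iso for red w}, which is the composite $\tfrac{i_{\alpha_1}}{e^1(N_2)}^{-1}\circ \tfrac{i_{\alpha}}{e^n(N_2)}$, sends the class of $\calL_k$ (resp.\ $\calL_k^{\op}$) for $U^n_w$ to the corresponding class for $U^1_w$.

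First I would note that the maps $i_\alpha^*$ and $i_{\alpha_1}^*$ are pullbacks along the \emph{same} inclusion of fixed-point loci, since $Z^{\im\alpha}=Z^{\im\alpha_1}$ by Corollary~\ref{cor:8.3}(1) under the hypothesis $w\in X^+_{\red}$; likewise the Hecke correspondences $\fP_k$ and their opposites, being subvarieties of $Z(w)$, have the same fixed loci for the two group actions. Thus both the $E_n$- and the $K$-theory version of the construction pull a tautological line bundle back to the same bundle on the fixed-point variety; the restriction of $\calL_k$ to $\fP_k^{\im\alpha}$ is again a tautological quotient line bundle of the (restricted) tautological bundles on the cyclic/diagonal fixed locus. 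Therefore the generators of $(U'^n_w)_{1_\alpha}$ map, under $\tfrac{i_\alpha}{e^n(N_2)}$, into the subalgebra of $\mathbb{F}\otimes_{E_n}E^*_n(B\im\alpha\times Z^{\im\alpha})$ generated by the corresponding restricted line-bundle classes, and symmetrically for $(U'^1_w)_{1_{\alpha_1}}$. Since the bottom arrow of \eqref{eq:iso for red w} is the tautological identification $E^*_n(B\im\alpha\times Z^{\im\alpha})=E^*_1(B\im\alpha_1\times Z^{\im\alpha_1})$ (it sends a Chern class to the Chern class of the same bundle, the two theories having the same underlying space with the same bundles), these two subalgebras of the bottom corner coincide. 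Hence the top isomorphism restricts to an isomorphism $(U'^n_w)_{1_\alpha}\cong (U'^1_w)_{1_{\alpha_1}}$.

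The main obstacle, such as it is, is purely bookkeeping: one must verify that passing to the fixed-point locus and dividing by the Euler class $e(N_2)$ does not take the line-bundle generators outside the subalgebra generated by the restricted line-bundle classes. This follows because, in the reduced region, the normal bundle $N_2$ of $\fM(w)^{\im\alpha}\inj\fM(w)$ restricted to the relevant Hecke components is itself expressible through the tautological bundles (as remarked after Definition~\ref{def:U}, $e$ is a tautological class), and on $\fP_k$ the only new tautological datum introduced by the division is again built from $\calV(v,w)$, $\calV(v\pm e_k,w)$ and their quotients; since $U'^n_w$ contains all of $\calL_k,\calL_k^{\op}$ and $A_{\bbG}(\fM(w))$ is tautologically generated by Kirwan surjectivity \cite{MN}, the restricted Euler class lies in (the image of) $(U'^n_w)_{1_\alpha}$, so dividing by it stays inside. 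I would also remark that the reverse containment is automatic by symmetry, so the restriction of the isomorphism is onto, completing the proof.
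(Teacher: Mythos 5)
Your proposal is correct and follows essentially the same route as the paper, whose entire proof consists of the observations that the normal bundle is expressible in tautological classes and that the isomorphism \eqref{eq:iso for red w} visibly matches up the line-bundle generators on the common fixed locus $Z^{\im\alpha}=Z^{\im\alpha_1}$. One remark: your appeal to Kirwan surjectivity to place the restricted Euler class inside $(U'^n_w)_{1_\alpha}$ is both unjustified (Kirwan surjectivity concerns generation of $A_{\bbG}(\fM(w))$ by Chern classes of the $\calV(v,w)$ on the diagonal, not generation by the off-diagonal rank-one generators of $U'$) and unnecessary, since in the composite $\bigl(\tfrac{i_{\alpha_1}^*}{e^1(N_2)}\bigr)^{-1}\circ\tfrac{i_{\alpha}^*}{e^n(N_2)}$ the two Euler classes agree in the bottom corner of \eqref{eq:iso for red w} and simply cancel, so the map reduces to restriction followed by the inverse of restriction on the localized algebras.
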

\begin{proof}
Note that the normal bundle of the embeddings above can be expressed in terms of tautological bundles.  It is straightforward to check the isomorphism maps the subalgebra $(U'^n_{w})_{1_{\alpha}}$ to $(U'^n_{w})_{1_{\alpha_1}}$. 
\end{proof}

In the quantum loop case, $U'^1_w(G,\bbF)$ is the image of the De Concini-Kac form in $K_{G}(Z(w))_{\bbF}$. 
By \cite[Theorem 14.3.2]{Nak}, we have $L_{w}^{(1)}(\alpha, \mathbb{F})$
is an irreducible module over the quantum loop algebra and an irreducible module of the convolution algebra simultaneously.

Denote by $\Res L_{w}^{(n)}(\alpha, \mathbb{F})$
the restriction of the irreducible module $L_{w}^{(n)}(\alpha, \mathbb{F})$ to $U'^n_w$. 
\begin{corollary}\label{cor:irreducible}
Let $w\in X^+_{\red}$. Then, 
$\Res L_{w}^{(n)}(\alpha, \mathbb{F})$
is an irreducible module over  $U'^n_w$. 
 \end{corollary}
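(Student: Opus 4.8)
The statement to prove is Corollary~\ref{cor:irreducible}: for $w\in X^+_{\red}$, the restriction $\Res L_{w}^{(n)}(\alpha,\mathbb{F})$ of the irreducible $U^n_w$-module to the subalgebra $U'^n_w$ remains irreducible. The plan is to reduce everything to the $n=1$ (quantum loop) case via the comparison isomorphisms already established, and then invoke Nakajima's irreducibility result together with the theory of the De~Concini--Kac form. First I would record the chain of isomorphisms supplied by Theorem~\ref{thm:reduced region} and Corollary~\ref{cor:spherical}: localizing at the idempotent $1_\alpha$ we have a compatible pair of algebra isomorphisms $(U^n_w)_{1_\alpha}\cong (U^1_w)_{1_{\alpha_1}}$ and $(U'^n_w)_{1_\alpha}\cong (U'^1_w)_{1_{\alpha_1}}$, and, crucially, the latter is the restriction of the former to the respective spherical subalgebras. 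On the module side, the same diagram \eqref{eq:iso for red w} identifies $L_w^{(n)}(\alpha,\mathbb{F})$ with $L_w^{(1)}(\alpha_1,\mathbb{F})$ as modules over these identified algebras.

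Next I would observe that irreducibility of a module over a subalgebra is preserved under such an identification: if $M$ is a module over algebras $B\subseteq A$ and $(A,B)\cong (A',B')$ is an isomorphism of pairs with $M\cong M'$ compatibly, then $M$ is irreducible over $B$ iff $M'$ is irreducible over $B'$. Applying this with $A=(U^n_w)_{1_\alpha}$, $B=(U'^n_w)_{1_\alpha}$ and the primed objects at level $1$, the claim for level $n$ follows from the claim at level $1$. (One should note that localizing at $1_\alpha$ does not change the module $L_w^{(n)}(\alpha,\mathbb{F})$ itself, since by construction it is a module on which the idempotent $1_\alpha$ acts as the identity — this is exactly how it is cut out in the proof of Theorem~\ref{thm:reduced region} — so irreducibility over $U'^n_w$ is the same as irreducibility over $(U'^n_w)_{1_\alpha}$.)

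It then remains to prove the $n=1$ statement: for $w\in X^+_{\red}$, $\Res L_w^{(1)}(\alpha_1,\mathbb{F})$ is irreducible over $U'^1_w$. Here I would invoke the discussion immediately preceding the corollary: $U'^1_w(G,\mathbb{F})$ is the image of the De~Concini--Kac form in $K_G(Z(w))_{\mathbb{F}}$, and by \cite[Theorem~14.3.2]{Nak} the module $L_w^{(1)}(\alpha_1,\mathbb{F})$ is simultaneously an irreducible module over the quantum loop algebra (hence over its De~Concini--Kac form, since for highest weights in the reduced region the simple highest-weight module over Lusztig's form and over the De~Concini--Kac form coincide) and an irreducible module of the convolution algebra. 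Concretely, because $w\in X^+_{\red}$ the highest weight is ``reduced'', so the Frobenius-twisted summand in the Steinberg-type decomposition is trivial, and the simple module is generated from its highest weight vector $|\emptyset\rangle$ already under the action of the De~Concini--Kac (small/spherical) form; this forces $\Res L_w^{(1)}(\alpha_1,\mathbb{F})$ to be irreducible over $U'^1_w$. Assembling the three steps gives the corollary.

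\textbf{Main obstacle.} The delicate point is the last paragraph: one must be careful that the image $U'^1_w$ of the De~Concini--Kac form really does act irreducibly on $L_w^{(1)}(\alpha_1,\mathbb{F})$, rather than merely that $L_w^{(1)}$ is irreducible over the full quantum loop algebra. This is where the hypothesis $w\in X^+_{\red}$ is essential — for general weights the restriction to the spherical/De~Concini--Kac subalgebra need not be irreducible (it decomposes according to the Steinberg tensor factorization). The cleanest route is to cite \cite[Theorem~14.3.2]{Nak} in the form that already identifies the convolution-algebra simple with the quantum-loop simple and then use that, in the reduced region, the De~Concini--Kac form and Lusztig's form have the same simple highest-weight module; a more self-contained argument would run the highest-weight generation directly inside $K_G(Z(w))$, using that $\mathfrak{M}(w)$ for reduced $w$ has all Hecke correspondences of ``rank $\le 1$'' type so that the $U'$-generators already suffice to reach every weight space.
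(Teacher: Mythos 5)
Your overall strategy coincides with the paper's: reduce to the level-one case via Theorem~\ref{thm:reduced region} and Corollary~\ref{cor:spherical} (noting that localization at $1_\alpha$ does not affect the module), and then settle the quantum-loop case. The reduction step is fine and is exactly what the paper does.

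The gap is in the level-one input. You assert that ``for highest weights in the reduced region the simple highest-weight module over Lusztig's form and over the De~Concini--Kac form coincide,'' and you justify this with a heuristic about the Frobenius-twisted factor in the Steinberg decomposition being trivial. For the quantum \emph{loop} algebra at a root of unity this is precisely the statement that needs proof, and it is not governed by the classical weight $w$ alone: the Chari--Pressley criterion (\cite[Theorem~9.2]{CP}, which is the reference the paper actually uses) says that the restriction of a simple $U_\epsilon(L\fg)$-module to the De~Concini--Kac form is irreducible exactly when none of its Drinfeld polynomials is divisible by $1-au^p$ for $a\in\bbC^*$. The paper closes this by observing that the Drinfeld polynomial $P_i$ attached to $\alpha$ has $\deg P_i=w_i<p$, so divisibility by the degree-$p$ polynomial $1-au^p$ is impossible for degree reasons. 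Without invoking this criterion and the degree computation, your claim that the highest-weight vector already generates under $U'^1_w$ is an assertion of the conclusion rather than an argument; your proposed ``more self-contained'' route via rank-$\le 1$ Hecke correspondences is not developed and would in any case have to reprove the Chari--Pressley dichotomy. So: same architecture as the paper, but the decisive step needs the Drinfeld-polynomial degree argument rather than the finite-type Steinberg heuristic.
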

\begin{proof}
Let $(P_i)_{i\in I}$ be the Drinfeld polynomial determined by $\alpha$. Hence 
$\deg(P_i)=w_i$ for each $i\in I$. 
As $w\in X^+_{\red}$, each Drinfeld polynomial $P_i$ is not divisible by $1-au^p$ for any non-zero $a\in \C$ for degree reasons.  Hence, by \cite[Theorem 9.2]{CP} $\Res L_{w}^{(1)}(\alpha, \mathbb{F})$ is irreducible over $U'^1_w$. The assertion now follows from Theorem \ref{thm:reduced region} and Corollary~\ref{cor:spherical}. 
\end{proof}


\subsection{The tensor product of modules with dynamical parameters}
\label{sec:tensor product with dynamical} The coproduct from \S~\ref{subsec:loc_coprod} is a localized coproduct, which is enough for the purpose of establishing the Serre relations. 
In this section, as a preparation for the Steinberg tensor product formula, we consider the tensor product of irreducible modules of the convolution algebra using the coproduct constructed in Appendix \ref{app:coproduct}.

Let $w=w'+pw''$ and choose 
\[
A=
\left[
\begin{array}{c|c}
\id_{w'} & 0 \\
\hline
0 & t \id_{pw''}
\end{array}
\right]
\] to be the torus where $\id_{w}$ is the identity matrix of size $w$. We then have the embedding of $A$-fixed points \[
\mathfrak{M}(w)^{A}=\mathfrak{M}(w') \times \mathfrak{M}(pw'') \subset \mathfrak{M}(w).\] 

Fix a virtual bundle $T^{1/2}\mathfrak{M}(w) \in K_{T}(\mathfrak{M}(w))$, such that, 
\[
T\mathfrak{M}(w)=T^{1/2}\mathfrak{M}(w)+\hbar^{-1} \otimes (T^{1/2}\mathfrak{M}(w))^{\vee}, 
\]
referred to as the \textit{polarization} of $\mathfrak{M}(w)$.
Let $\Theta^{E^*}: K_G(\mathfrak{M}(w))\to \Pic( \Spec E^*_G(\mathfrak{w}))$ be the group homomorphism  determined by the Euler class of the $G$-equivariant vector bundle on $\mathfrak{M}(w)$. ( See Appendix \ref{app:coproduct} and \cite[Proposition 3.12]{ZZ} for details). 

Let $\rho$ be the Poincare line bundle on 
\[
\Spec( E_G^*(\mathfrak{M}(w)))\times \mathbb{B},  
\]
as in Definition \ref{def:Poincare}, where $\mathbb{B}=\Spec E^*_{\C^*}(\pt)\otimes_{\bbZ} \bbZ^{I}=\Spec E^*_{\C^*}(\pt)\otimes_{\bbZ}
 \Hom(\GL_v, \C^*)$. 

 Note that, $\Spec E^*_{\C^*}(\pt)=F$ is the formal group of $E^*$. Thus, there is a group structure on $\mathbb{B}$. We denote this group structure by $\otimes$. 
\begin{definition}
\begin{enumerate}
\item
Define the Weyl module with dynamical parameter to be
\[
V_{w, \rho}:=\Theta^{E^*}(T^{1/2}\mathfrak{M}(w))\otimes \rho, 
\] which is a line bundle on $\Spec(E^*_G(\mathfrak{M}(w)))\times \bbB$. 
Let $\calU_w$ be the convolution algebra with dynamical parameters defined in \S\ref{sec:convolution algebra}. 
Then, $V_{w, \rho}$ carries a representation of the convolution algebra $\calU_w$ by \eqref{equ:action Weyl}.  
\item
Let $L_{w, \rho}$ be the irreducible representation of $\calU_w$ that is obtain by taking the quotient of $V_{w, \rho}$. 
\end{enumerate}
\end{definition}
For the purpose of the present paper, we fix a $\bbF$-point in $\bbB$ so that representations are in the category of $\bbF$-vector spaces. In particular, the character of $L_{w, \rho}$ is the same as that of $L^{(n)}_w(G, \bbF)$.

Given two Weyl modules $V_{w', \rho_1}$, $V_{pw'', \rho_2}$, which are line bundles on 
\[
\Spec(E^*_G(\mathfrak{M}(w')))\times E^*(\pt)\otimes_{\bbZ} \bbZ^{I} \,\ 
\text{and}\,\ 
\Spec(E^*_G(\mathfrak{M}(pw'')))\times E^*(\pt)\otimes_{\bbZ} \bbZ^{I}
\]
respectively. Define the tensor product of $V_{w', \rho_1}$ and $V_{pw'', \rho_2}$ to be
\[
\Big(\Theta^{E^*} (T^{1/2} \mathfrak{M}(w'))\boxtimes  \Theta^{E^*} (T^{1/2} \mathfrak{M}(pw''))\Big)\otimes (\rho_1\boxtimes \rho_2)\otimes \rho^{-1} \otimes \delta, 
\]
which is a line bundle on 
$\Spec(E^*_G(\mathfrak{M}(w')))\times
\Spec(E^*_G(\mathfrak{M}(pw'')))\times \bbB
$. Here $\delta$ is the dynamical shift defined in \eqref{eq:shift d}. 
It carries a module structure of $\calU_w$ via the coproduct $\Delta^{E^*}$ \eqref{DeltaE}. 

Similarly, define the tensor product of irreducible modules
 \[
L_{w', \rho_1}\otimes_{\rho}  
L_{pw'', \rho_2}:= (L_{w', \rho_1}\boxtimes 
L_{pw'', \rho_2})
\otimes \rho^{-1} \otimes \delta. 
\] 
Then, $\calU_w$ acts on the tensor $L_{w', \rho_1}\otimes_{\rho}  
L_{pw'', \rho_2}$ via the coproduct $\Delta^{E^*}$,  
as  $L_{w', \rho_1}\otimes_{\rho}  
L_{pw'', \rho_2}$ is a quotient of $V_{w', \rho_1}\otimes_{\rho}  
V_{pw'', \rho_2}$ through the composition
\[
V_{w, \rho}
\to V_{w', \rho_1}\otimes_{\rho}  
V_{pw'', \rho_2}
\surj
L_{w', \rho_1}\otimes_{\rho}  
L_{pw'', \rho_2}.
\]
We obtain an $\calU_w$-module homomorphism
\[
L_{w', \rho_1}\otimes_{\rho}  
L_{pw'', \rho_2}
\to L_{w, \rho}. 
\]
When we specialize the dynamical parameters to be
trivial, the convolution algebra $\calU_w$ with parameter and the Weyl module $V_{w, \rho}$ with dynamical parameter become the usual convolution algebra $U_w$ and the Weyl module $V_w(G):=E^*_{n, G}(\mathfrak{M}(w))$. 
We remark that the dynamical shifting naturally shows up when considering tensor product of modules, the reason for including it into the present paper. 

Moreover, assume $p^t|w_1$ and $p^t|w_2$, and that $b_i\in G$ for $i=1,\dots,t$, then $\Delta_{w_1,w_2}^{E^*}$ commutes with the Frobenii $\Fr_{n,n-t}$. 
The condition that $G$ contains $b_i$ is equivalent to asking $G$ to contain the subgroup $\gamma \wr \langle b\rangle$ which is used in the construction of the Frobenii. 
The statement that $\Delta_{w_1,w_2}^{E^*}$ commutes with the Frobenii follows from the definition of $\Delta^{E^*}$ and the observation that the subvariety $Z(w_1)\times Z(w_2)\subseteq Z(w)$ is realized as the fixed locus of an element $g\in Z(G)$ and hence taking fixed points under $g$ and $\alpha$ commutes. 

\subsection{The Steinberg tensor product theorem}
\label{subsec:Steinberg tensor}
Let $w=w'+pw''$, where $w'\in X^+_{\red}$ and $w''\in X^+$. Let $\gamma:\Lambda_n\to \GL_{w}\times\bbC^*$ be as in \S~\ref{subsec:alpha} so that we have $\gamma=\gamma'\times\gamma''$ with $\gamma':\Lambda_n\to\GL_{w'}\times\bbC^*$ and $\gamma'':\Lambda_n\to\GL_{pw''}\times\bbC^*$.

We have a chain of ring homomorphisms $E_n\to E_{n,(\bbZ/p)}\to C^\wedge_{n-1}\to L(C^\wedge_{n-1})\to \Phi_1^{L(C^\wedge_{n-1})}$ as in \S~\ref{subsec:coefficient}. As in Remark~\ref{rmk:coef}(1), we also have $E_{n-1}\to  C^\wedge_{n-1}$. In this section we assume that the field $\mathbb{F}$ is endowed with a ring homomorphism $\Phi_1^{L(C^\wedge_{n-1})}\to \mathbb{F}$.
\begin{theorem}
\label{thm:Steinberg tensor}
Notations as above, there is an isomorphism of representations of  $U^n_w$
\[
L_w^{(n)}(\gamma, \mathbb{F})
\cong L_{w'}^{(n)}(\gamma', \mathbb{F})\otimes_\rho
 L_{pw''}^{(n)}(\gamma'', \mathbb{F}). 
\]
\end{theorem}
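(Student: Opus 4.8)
The plan is to combine the two structural results already established---the reduced-region identification (Theorem~\ref{thm:reduced region}) and the Frobenius pullback of irreducibles (Corollary~\ref{cor:FrobIrr})---with the tensor-product construction of \S~\ref{sec:tensor product with dynamical}, following the classical pattern of Steinberg's theorem adapted to quiver varieties. First I would set up the relevant fixed-point geometry: with $w=w'+pw''$ and the torus $A$ chosen so that $\fM(w)^{A}=\fM(w')\times\fM(pw'')$, the localized coproduct $\Delta^{E^*}_{w',pw''}$ (from Appendix~\ref{app:coproduct}, with the dynamical shift $\delta$) gives $L_{w'}^{(n)}(\gamma',\mathbb{F})\otimes_\rho L_{pw''}^{(n)}(\gamma'',\mathbb{F})$ the structure of a $U_w^n(\gamma,\mathbb{F})$-module, together with a surjection from the Weyl module $V_w^n(\gamma,\mathbb{F})$ and hence a $U_w^n$-module map $L_{w'}^{(n)}(\gamma')\otimes_\rho L_{pw''}^{(n)}(\gamma'')\to L_w^{(n)}(\gamma)$. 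The first task is to check this map is nonzero: the highest weight vector $|\emptyset\rangle\otimes|\emptyset\rangle$ maps to $|\emptyset\rangle$ up to scalar, because on $\fM(0,w)$ the coproduct restricts to the obvious isomorphism $\mathbb{F}\otimes E^*(\pt)\cong (\mathbb{F}\otimes E^*(\pt))\otimes(\mathbb{F}\otimes E^*(\pt))$ and the dynamical shift acts trivially there.

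The heart of the argument is to analyze the second factor $L_{pw''}^{(n)}(\gamma'',\mathbb{F})$. By construction $\gamma''=\alpha\oplus(b\alpha b^{-1})\oplus\cdots$ in the sense of \S~\ref{subsec:group_hom}(2) (or rather $\gamma''=\alpha\oplus\gamma'''$ as in the theorem statement with the level-$(n-1)$ homomorphism $\gamma''$), so $p\mid pw''$ and Corollary~\ref{cor:FrobIrr} with $t=1$ applies: $L_{pw''}^{(n)}(\gamma'',\mathbb{F})\cong \Fr_{n,n-1}^*\bigl(L_{w''}^{(n-1)}(\gamma''',\mathbb{F})\bigr)$ as $U_{pw''}^n(\gamma'',\mathbb{F})$-modules. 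This is what produces the $[1]$-twist (scaling of weights by $p$) in the character. Next I would invoke Theorem~\ref{thm:reduced region} (and Corollary~\ref{cor:spherical}, Corollary~\ref{cor:irreducible}) for the first factor: since $w'\in X^+_{\red}$, the module $L_{w'}^{(n)}(\gamma',\mathbb{F})$ is identified with the level-$1$ (quantum loop algebra) irreducible $L_{w'}^{(1)}(\gamma_1',\mathbb{F})$, and moreover stays irreducible upon restriction to the spherical subalgebra $U'^n_{w'}$. The key representation-theoretic input is then a theorem of Chari--Pressley \cite{CP} (via the Drinfeld polynomial description): the tensor product $L_{w'}^{(1)}\otimes L_{pw''}^{(n)}$ of a reduced-highest-weight module with a Frobenius-twisted module has the right Drinfeld polynomials and is irreducible, so it must be $L_w^{(n)}(\gamma)$. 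The Serre relation machinery of \S~9, specifically the vanishing $\ad(X)Y=0$ for $k\geq 2$ from Proposition~\ref{lem:Serre type relation} and equation~\eqref{eq:ad(X)(Y)}, is what guarantees the candidate tensor module is generated by its highest weight vector and carries the correct quotient structure.

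The main obstacle, I expect, is proving that the tensor product module $L_{w'}^{(n)}(\gamma')\otimes_\rho L_{pw''}^{(n)}(\gamma'')$ is \emph{irreducible}, equivalently that the nonzero map to $L_w^{(n)}(\gamma)$ is an isomorphism. Surjectivity is automatic once nonzero (as $L_w^{(n)}$ is irreducible and generated by $|\emptyset\rangle$); injectivity requires a dimension/character count. I would carry this out by: (i) passing to the spherical subalgebra and using Corollary~\ref{cor:irreducible} plus the quantum-loop-algebra input \cite[Theorem~9.2]{CP} to see that as a $U'^n_w$-module the tensor is irreducible---this uses that the Drinfeld polynomial of $\gamma'$ has all roots in the reduced region (no factor $1-au^p$) while that of $\gamma''$ consists entirely of $p$th-power factors, so the two ``interleave'' without collision; (ii) deducing that the $U^n_w$-module is therefore already irreducible, since any proper submodule would restrict to a proper $U'^n_w$-submodule. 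One delicate point requiring care is the compatibility of $\Delta^{E^*}$ with $\Fr_{n,n-1}$ (stated at the end of \S~\ref{sec:tensor product with dynamical}, needing $b\in G$), which is what lets us rewrite the action on the second tensor factor through the Frobenius; another is verifying that the dynamical shift $\delta$ does not disturb the highest-weight or irreducibility arguments, which should follow because $\delta$ is supported away from the highest weight line. Assembling (i) and (ii) with the two cited structure theorems yields the claimed isomorphism.
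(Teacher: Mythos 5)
Your overall skeleton matches the paper's: tensor the two factors via the coproduct of \S~\ref{sec:tensor product with dynamical}, identify the second factor as a Frobenius pullback (Corollary~\ref{cor:FrobIrr}), use Theorem~\ref{thm:reduced region} and Corollary~\ref{cor:irreducible} for the reduced first factor, and invoke the Serre-type relation of Proposition~\ref{lem:Serre type relation}. But your step (i) --- the claim that $L_{w'}^{(n)}(\gamma')\otimes_\rho L_{pw''}^{(n)}(\gamma'')$ is irreducible as a $U'^n_w$-module because the Drinfeld polynomials of the two factors ``interleave without collision'' --- is false, and it is the crux of the argument. Since $L_{pw''}^{(n)}(\gamma'')\cong \Fr_{n,n-1}^*L_{w''}^{(n-1)}$ and $\Fr_{n,n-1}$ kills the generators of the spherical subalgebra $U'^n_{pw''}$ (this is exactly the quantum Frobenius kernel phenomenon), the subalgebra $U'^n_w$ acts \emph{trivially} on the second tensor factor. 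Hence as a $U'^n_w$-module the tensor product is a direct sum of $\dim L_{pw''}^{(n)}$ copies of $L_{w'}$, which is very far from irreducible, and consequently step (ii) (``any proper submodule restricts to a proper $U'^n_w$-submodule'') gives nothing. The Chari--Pressley input \cite[Theorem 9.2]{CP} is only used in the paper to prove Corollary~\ref{cor:irreducible}, i.e.\ irreducibility of $\Res L_{w'}$ over the spherical subalgebra for $w'\in X^+_{\red}$; it does not apply to the tensor product, whose second factor is not a quantum-loop-algebra module in the relevant sense.

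The missing idea is the multiplicity-space argument. One must show that the multiplicity space $V:=\Hom_{U'^n_w}(L_{w'},L_w)$ itself carries an action of the \emph{full} algebra $U^n_{w'+w}$; this is where Proposition~\ref{lem:Serre type relation} and the expansion \eqref{eq:ad(X)(Y)} are actually used (from $0=\ad(X)Y(f)=S(X')*Y(f)+Y*X''(f)+l.o.t.(f)$ one deduces $Y(X''f)=0$, so $X''$ preserves $V$), not merely to control highest-weight generation as you suggest. One then identifies the multiplicity space of $L_{w'}$ inside the tensor product as $V'=L_{pw''}^{(n)}$ (precisely because $U'^n_w$ acts trivially on it), observes that $V'$ is irreducible over $U^n$ (being a Frobenius pullback of an irreducible along a surjection), so the natural $U^n$-equivariant map $V'\to V$ is injective, and concludes $L_{w'}\otimes_\rho L_{pw''}\cong L_w$ by comparing $L_{w'}\otimes V'\inj L_{w'}\otimes V\inj L_w$ with the surjection onto $L_w$. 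Without the $U^n$-module structure on $V$ and the identification $V'\cong L_{pw''}$, the injectivity of the nonzero map you construct cannot be established.
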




We notice that the trivial representation $\triv_w$ of any $U^n_w$ for $w\in \bbN^I$ can be constructed geometrically. Indeed, following Nakajima \cite[Proposition~13.3]{Nak} we have a standard module $M_{\gamma,x}$ depending on $x\in \fM_0(w)$, where $\fM_0(w)$ has a stratification with strata labelled by $v_0$ which determines the highest weight of $M_{\gamma,x}$ as $w-v_0$. The trivial representation is obtained at $v_0=w$. 

Start with the representation $L_w:=L_{w}^{(n)}(\alpha, \mathbb{F})$ of $U^n_w$. 
It is a module over the subalgebra $U'^n_w$ by restriction. 
By Corollary \ref{cor:irreducible}, $L_{w'}:=L_{w'}^{(n)}(\gamma', \mathbb{F})$ is irreducible over  $U'^n_{w'}$. 
Tensoring with the trivial representation $\triv_{pw''}$, we have $L_{w'}\cong L_{w'}\otimes_\rho \triv_{pw''}$, which is irreducible over $U'^n_w$. This gives an $U'^n_w$-injective homomorphism by irreducibility
\[
L_{w'}\otimes_\bbF V
\inj L_w, \]
where $V:=\Hom_{U'^n_w}(L_{w'}, L_w)$ is the multiplicity space of $L_{w'}$ in $L_w$.
\begin{lemma}
\begin{enumerate}
\item We have the identification 
\[
V\cong \Hom_{\mathbb{F}} (L_{w'}, L_w
)^{U'^n_{w'+w}}. 
\]
\item The space $V$ carries an action of $U^n_{w'+w}$. 
\end{enumerate}
\end{lemma}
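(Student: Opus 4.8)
The plan is to deduce both parts from the adjoint action of $U^n_{w'+w}$ on $\Hom_{\mathbb{F}}(L_{w'},L_w)$ constructed in \S~\ref{act on hom} (with $w_1=w'$ and $w_2=w$, and with the standard module $M_{w_2}$ replaced by its irreducible quotient $L_w$, for which the construction applies verbatim), together with the Hopf‑theoretic principle that intertwiners are exactly invariants. Recall that this action is the composite of the localized coproduct $\Delta^{\loc}_{w',w}\colon U^n_{w'+w}\to (U^n_{w'}\otimes U^n_w)_{\loc}$ with $\sigma$ on the first factor and the antipode $S$ of $\Delta^{\loc}$, so that for $X$ with $\Delta^{\loc}(X)=\sum X_1\otimes X_2$ one has $(X\cdot f)(a)=\sum X_2\cdot f\big(S(\sigma X_1)\cdot a\big)$. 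Part (2) is then the assertion that the $U'^n_{w'+w}$‑invariant subspace $V$ is stable under the full $U^n_{w'+w}$‑action.

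For (1) I would first reduce to a generating set. The algebra $U'^n_{w'+w}$ is generated by the tautological line‑bundle classes $\calL_k,\calL_k^{\op}$ ($k\in I$) together with the Cartan classes (Chern classes of $\calV(v,w'+w)$), the latter acting on each weight space of $L_{w'}$ and $L_w$ by an invertible scalar. It therefore suffices to show that $f\in\Hom_{\mathbb{F}}(L_{w'},L_w)$ satisfies $\ad(X)(f)=\epsilon(X)f$ for every such generator $X$ precisely when $f$ is $U'^n_w$‑linear (where $L_{w'}$ is viewed as a $U'^n_w$‑module via the identification $L_{w'}\cong L_{w'}\otimes_{\rho}\triv_{pw''}$). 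For a degree‑one generator $X$ of $\alpha_k$‑weight, $\Delta^{\loc}(X)$ has the triangular expansion $\Delta^{\loc}(X)=X'\otimes 1+1\otimes X''+\mathrm{l.o.t.}$ of the type already used in Lemma~\ref{lem:expansion} and in \eqref{eq:ad(X)(Y)}; substituting this into the formula above, using the antipode identities $\sum S(X_{(1)})X_{(2)}=\epsilon(X)=\sum X_{(1)}S(X_{(2)})$ and cancelling the Cartan classes against their scalar action on the relevant weight spaces, the equality $\ad(X)(f)=\epsilon(X)f$ collapses exactly to $f(X\cdot a)=X\cdot f(a)$. Running over all generators yields the identification in (1); the dynamical shift $\delta$ of \eqref{eq:shift d} only contributes invertible scalars once the $\mathbb{B}$‑parameter is specialized to an $\mathbb{F}$‑point, so it is harmless.

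For (2) I must show that $V$, which by (1) is the $U'^n_{w'+w}$‑invariant subspace of the $U^n_{w'+w}$‑module $\Hom_{\mathbb{F}}(L_{w'},L_w)$, is in fact a $U^n_{w'+w}$‑submodule — invariants under a subalgebra need not be submodules in general, so this is where the content lies. It is enough to check $\ad(X)(V)\subseteq V$ for $X$ ranging over the generators $\calV_k^{(n)},\calV_k^{(n),\op}$ of $U^n_{w'+w}$; equivalently, for $f\in V$ and $Y$ a degree‑one generator of $U'^n_{w'+w}$, that $\ad(Y)\big(\ad(X)(f)\big)=0$. Using co‑associativity of $\Delta^{\loc}$ to expand $\ad(Y)\circ\ad(X)$, the mixed term is governed by $\ad(X)(Y)$, which is controlled by the Serre‑type relation of Proposition~\ref{lem:Serre type relation}: it vanishes once the $\alpha_k$‑degree is at least $2$, and in degree $\le 1$ it produces only terms of the shape $S(Y')\ast(\,\cdot\,)$ and $(\,\cdot\,)\ast Y''$ that act through $U'^n_{w'+w}$ and hence annihilate $f$. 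Collecting the surviving contributions leaves $\ad(X)\big(\ad(Y)(f)\big)=0$ plus lower‑order corrections that vanish by the same mechanism, so $\ad(X)(f)\in V$.

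The main obstacle will be the bookkeeping in (2): faithfully tracking the localized Cartan twists (and the dynamical shift) through the interchange of the two adjoint actions, and verifying that every correction term genuinely lands in the annihilator of $V$. The Serre relation of Proposition~\ref{lem:Serre type relation}, together with the co‑associativity and antipode axioms for $\Delta^{\loc}$ (Proposition~\ref{prop:coproduct} and \S~\ref{act on hom}), carries the structural part of the argument; what remains is a finite, if lengthy, computation with the coproduct on the generators.
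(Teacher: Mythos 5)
Your proposal is correct and, for the substantive part (2), follows the paper's argument: the operator identity $0=S(X')*Y+Y*X''+l.o.t.$ obtained by combining Lemma~\ref{lem:expansion} with the Serre-type vanishing of Proposition~\ref{lem:Serre type relation}, applied to $f$ using $\ad(Y)f=0$ and induction on the lower-order terms. For part (1) you compute directly with generators and the antipode, whereas the paper argues via $\Hom_{U'^n_{w'+w}}(\triv,L_{w'}^{\vee}\otimes L_w)$ and tensoring with trivial representations; both are implementations of the same ``invariants equal intertwiners'' principle, so the approaches are essentially the same.
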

\begin{proof}
We first prove (1). 
It is clear that 
\[
\Hom_{U'^n_{w'+w}} (\triv, L_{w'}^{\vee}\otimes L_w)
\cong
\Hom_{U'^n_{w'+w}} (\triv, \Hom_{\mathbb{F}}(L_{w'}, L_w)
) \cong
\Hom_{\mathbb{F}} (L_{w'}, L_w
)^{U'^n_{w'+w}}.
\]
Here the module structure on the dual comes from the contragredient duality \cite[(3.6.10)]{CG}. 
For any $U'^n_{w'+w}$-morphism 
$f: \triv\to L_{w'}^{\vee}\otimes_\rho L_w$, 
the composition of  $U_{2w'+w}$-morphisms
\[
L_{w'}\otimes_\rho \triv \xrightarrow{\id\otimes f} L_{w'}\otimes_\rho L_{w'}^{\vee}\otimes_\rho L_w
\to \triv\otimes_\rho L_w
\]
gives an element in $\Hom_{U'^n_{2w'+w}}(L_{w'}, L_w)$. 
This proves an isomorphism 
\[
\Hom_{U'^n_{2w'+w}}(L_{w'}, L_w)\cong 
\Hom_{\mathbb{F}}(L_{w'}, L_w)^{U'^n_{2w'+w}}. 
\]
Now tensoring with the trivial representation, we have
\[
\Hom_{U'^n_{w}}(L_{w'}, L_w)\cong 
\Hom_{U'^n_{2w'+w}}(L_{w'}\otimes_\rho \triv, L_w \otimes_\rho \triv)\cong 
\Hom_{U'^n_{2w'+w}}(L_{w'}, L_w). 
\]
This completes the proof of (1).

For (2), for simplicity of the notations, we write $\bar{w}=w'+w$. 
Note that $U^n_{\bar{w}}$ is generated by $E_{n, \varmathbb{G}}^*(Z(v, v+k\alpha_i, \bar{w}))$ for various $k\in \N$ and $i\in I$. Pick an arbitrary $X''$ in $E_{n, G}^*(Z(v_2, v_2+k\alpha_i, \bar{w}))$ with $k\geq 2$.  
Let $Y$ be a tautological class in 
$E_{n, G}^*(Z(v', v'+\alpha_j, \bar{w}))$, i.e., a generator of $U'^n_{\bar{w}}$. By \eqref{eq:ad(X)(Y)}, for any $f\in V$, we have
\[
0=\ad(X)Y (f)=S(X')* Y(f) +Y* X''(f)+l.o.t.(f) , 
\]
where the l.o.t. means the linear combination of product of elements in $E_{n, G}^*(Z(v, v+k'\alpha_i, w))$, where $k'<k$. 
As $Yf=0$ for $Y\in U'^n_w$ and $l.o.t(f)=0$ by induction on $k$, we conclude that $Y(X''f)=0$. This shows $X''f\in V$. As can be seen from the proof of Lemma~\ref{lem:expansion}, $X''$ and $X$ determine each other. The choice of $X$ is arbitrary, and hence this induces an action of $U_{\bar{w}}$ on $V$. 
\end{proof}
\begin{proof}[Proof of Theorem \ref{thm:Steinberg tensor}]
Consider the tensor product $L':=L_{w'}\otimes_\rho L_{pw''}:=L_{w'}^{(n)}(\gamma', \mathbb{F})\otimes_\rho L_{pw''}^{(n)}(\gamma'', \mathbb{F})$ and the $U'^n_{w}$-homomorphism
$L_{w'}\otimes_\bbF V'
\to L'$, 
where $V':=
\Hom_{U'^n_{w}} (L_{w'}, L'
)$ is the multiplicity space for $L'$.
We claim that $V'=L_{pw''}$. 
Indeed, $U_w$ acts on the tensor $L'=L_{w'}\otimes_\rho L_{pw''}$ through the algebra homomorphism
\[
U_w\xrightarrow{\Delta^{E^*}_{w', pw''}} U_{w'}\otimes U_{pw''}
\]
The module $L_{pw''}$ is isomorphic to the Frobenius pullback $\Fr_{n, n-1}^* (L_{w''}^{n-1})$, where 
$\Fr_{n, n-1}:  U^n_{pw''}\to U^{n-1}_{w''}$ is the Frobenius morphism. 
Restricting to the subalgebras, $\Fr_{n, n-1}$ kills the generators of $U'_{pw''}$, therefore $U'_{w}$ acts trivially on $L_{pw''}$. This shows $L_{pw''}$ is the multiplicity space of $L_{w'}$ in $L'$. 

Note that the natural map $V'\to V$ is an $U^n_{w'+w}$-module homomorphism. As $V'$ is irreducible over $U^n_{pw''}$ and hence irreducible over $U^n_{w'+w}$, we obtain that $V'\inj V$ is injective. Now we have the following commutative diagram of $U'_{2w'+w}$-modules homomorphisms. 
\[
\xymatrix{
L_{w'}\otimes V'\ar[r]^(0.7){\cong}  \ar@{^{(}->}[d] &L' \ar@{->>}[d]\\
L_{w'}\otimes V \ar@{^{(}->}[r] &L_w
}
\]
It implies that $L'\cong L_w$. 
This completes the proof. 
\end{proof}

\section{Characters of irreducible modules}

We retain the assumptions on  $\mathbb{F}$ and $\gamma$ as in \S~\ref{sec:10}. In this section we study the character of the irreducible representations $L^n_w(\gamma,\mathbb{F})$ associated to $\mathbb{F}$ and $\gamma$.

\subsection{The $\epsilon, t$-character}
Following Nakajima \cite{Nak04}, we define the $\epsilon, t$-character of certain modules of $U^n_w(\gamma,\mathbb{F})$ as follows. 
Let $V, W$ be two $I\times (\Z/p\Z)^n$-graded vector spaces. Denote by $V_{i, a}$ and $W_{i, a}$ the $(i, a)$-component respectively, where $i\in I, a\in (\Z/p\Z)^n$. 
Recall the Weyl module $V^n_{w}(\gamma, \mathbb{F})$  \eqref{module:V} is
\begin{align*}
V_w^n(\gamma, \mathbb{F})
=& \oplus_v\mathbb{F}\otimes_{E_{n,\Lambda_n}} E^*_{n, \varmathbb{G}} ( \mathfrak{M}(v, w))\\
\cong & 
\oplus_vH^{BM}_*(\mathfrak{M}(v, w)^{\im \gamma};\bbF)
\end{align*}
where the second isomorphism follows from \cite[Theorem C]{HKR}. 
The simple module $L^n_w(\gamma, F)$ as a quotient of $V_w^n(\gamma, \mathbb{F})$ also has the decomposition $L^n_w(\gamma, F)=\oplus_{k, V, W} L^n_w(\gamma, F)_{k, V, W}$, where $L^n_w(\gamma, F)_{k, V, W}$ is the quotient of $\mathbb{F}\otimes H_k(
\mathfrak{M}^{\im \gamma}(V, W))$.
Set 
\[
e^V:=\prod_{i, a} V_{i, a}^{\dim V_{i, a}} \text{and} \,\ e^W:=\prod_{i, a} W_{i, a}^{\dim W_{i, a}}\in \Z[V_{i, a}, W_{i, a}]_{i\in I, a\in (\Z/p\Z)^n}. 
\]
Define the $\epsilon, t$-character of $V_w^n(\gamma, \mathbb{F})$ and $L_w^n(\gamma, \mathbb{F})$
to be
\begin{align*}
&\ch_{\epsilon, t}(V^n_{w}(\gamma, \mathbb{F})):=\sum_{[V]} (-t)^k \dim_{\mathbb{F}} H^{BM}_k(
\mathfrak{M}(v, w)^{\im \gamma};\mathbb{F})e^{V} e^W 
\in \Z[t, t^{-1}, V_{i, a}, W_{i, a}]_{i\in I, a\in (\Z/p\Z)^n}. \\
&\ch_{\epsilon, t}(L^n_{w}(\gamma, \mathbb{F})):=\sum_{[V]} (-t)^k \dim_{\mathbb{F}}L^n_w(\gamma, F)_{k, V, W} e^{V} e^W 
\in \Z[t, t^{-1}, V_{i, a}, W_{i, a}]_{i\in I, a\in (\Z/p\Z)^n}.
\end{align*}
Similarly, we
define the \textit{character} of $V_w^n(\gamma, \mathbb{F})$
and $L_w^n(\gamma, \mathbb{F})$ to be
\begin{align*}
&\ch(V_w^n(\gamma, \mathbb{F})):=\sum_{v\in \N^I} \dim(V_w^n(\gamma, \mathbb{F})_v) e^{\mu(v, w)}\in \Z[X]\\
&
\ch(L_w^n(\gamma, \mathbb{F})):=\sum_{v\in \N^I} \dim(L_w^n(\gamma, \mathbb{F})_v) e^{\mu(v, w)}\in \Z[X]. 
\end{align*}

Consider the ring homomorhism \cite[Section 2]{Nak04}
\begin{align*}
\Pi: \Z[t, t^{-1}, V_{i, a}, W_{i, a}]_{i\in I, a\in (\Z/p\Z)^n}
\to \Z[y_{i}, y_i^{-1}], 
t\mapsto 1, \prod_{i, a} V_{i, a}^{v_{i, a}} W_{i, a}^{w_{i, a}}
\mapsto \prod_{i, a} y_i^{u_{i, a}}, 
\end{align*}
where $u^{i, a}:=w_{i, a}-v_{i, a\epsilon^{-1}}-v_{i, a\epsilon}+\sum_{j: C_{ji}=-1} v_{j, a}$. 
We then have
\[
\Pi \circ \ch_{\epsilon, t}(V^n_{w}(\gamma, \mathbb{F}))
=\ch(V^n_{w}(\gamma, \mathbb{F})) \,\ \,\  \text{and}\,\ \,\ 
\Pi \circ \ch_{\epsilon, t}(L^n_{w}(\gamma, \mathbb{F}))
=\ch(L^n_{w}(\gamma, \mathbb{F})). 
\]
We now compute the character $\ch_{\epsilon, t}(L^n_{w}(\gamma, \mathbb{F}))$ using the Steinberg tensor product Theorem~\ref{thm:Steinberg tensor} and Theorem~\ref{thm:reduced region}. 

For an element $\hat{\xi}=\sum_{i, b} c_{i, b} V_{i, b}^{v_{i,b}} W_{i, b}^{w_{i,b}}\in \Z[t, t^{-1}, V_{i,b}, W_{i, b}]_{i\in I, b\in (\Z/p\Z)^{n-t}}$, we define
\begin{align*}
\hat{\xi}^{[t]}:=\sum_{i, b} c_{i, b} (\prod_{\xi\in (\Z/p\Z)^t}V_{i, b\xi} )^{v_{i,b}} (\prod_{\xi\in (\Z/p\Z)^t}W_{i, b\xi})^{w_{i,b}}
\in \Z[t, t^{-1}, V_{i,b}, W_{i, b}]_{i\in I, b\in (\Z/p\Z)^{n}}.
\end{align*}
The following is a direct consequence of Corollary~\ref{cor:FrobIrr}. 
\begin{corollary}\label{cor:FrobChar}
Assume that $w$ is divisible by $p^t$.
Let $\alpha: \Lambda_t \to \varmathbb{G}$ and $\gamma'$ be as \S\ref{sec:quantum Frob}.  We have \begin{equation}\label{eq:L^n}
\ch_{\epsilon, t}(L_w^n(\alpha\times \gamma', \mathbb{F}))=\ch_{\epsilon, t}(L_{w/p^t}^{n-t}(\gamma', \mathbb{F}))^{[t]}
\end{equation}
as elements in $\Z[t, t^{-1}, V_{i,a}, W_{i, a}]_{i\in I, a\in (\Z/p\Z)^{n}}$.

\end{corollary}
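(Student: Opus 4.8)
The plan is to deduce Corollary~\ref{cor:FrobChar} directly from Corollary~\ref{cor:FrobIrr}, which already gives the isomorphism of irreducible representations $L_w^n(\alpha\times\gamma',\mathbb{F})\cong \Fr_{n,n-t}^*(L_{w/p^t}^{n-t}(\gamma',\mathbb{F}))$; the only real content is to track how the $\epsilon,t$-grading on the convolution-theoretic realization of $L_{w/p^t}^{n-t}(\gamma',\mathbb{F})$ transports to the one on $L_w^n(\alpha\times\gamma',\mathbb{F})$ under the Frobenius, and to check that this transport is exactly the substitution $\hat\xi\mapsto\hat\xi^{[t]}$. First I would unwind the definition of $\ch_{\epsilon,t}$: by the displayed isomorphism $L^n_w(\gamma,\mathbb{F})=\oplus_{k,V,W}L^n_w(\gamma,\mathbb{F})_{k,V,W}$ with $L^n_w(\gamma,\mathbb{F})_{k,V,W}$ a quotient of $\mathbb{F}\otimes H_k(\mathfrak{M}(V,W)^{\im\gamma})$, the monomials $e^Ve^W$ record the $I\times(\Z/p)^n$-graded dimension vectors of the tautological and framing spaces restricted to the fixed locus $\mathfrak{M}^{\im\gamma}$, and the power of $-t$ records homological degree.

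The key geometric input is the description, from \S\ref{sec:quantum Frob} and Proposition~\ref{lem:iso to products}, of the fixed locus: for $\gamma=\alpha\times\gamma'$ with $\alpha:\Lambda_t\to\varmathbb{G}$ as in \S\ref{subsec:alpha} we have $\mathfrak{M}(w)^{\im\gamma}\cong \mathfrak{M}(w)^{\im\alpha\,\cap\,\im\gamma'}$, and the diagonal restriction $\mathfrak{M}(w)^{\im\alpha}|_\Delta\cong\mathfrak{M}(w/p^t)$ identifies $\mathfrak{M}(w)^{\im\gamma}$ (on the relevant, i.e.\ even/diagonal, components) with $\mathfrak{M}(w/p^t)^{\im\gamma'}$. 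Under the decomposition $W=\oplus_{\xi\in(\Z/p)^t}W_{\cdot,\xi}$ into common $g_1,\dots,g_t$-eigenspaces, with each eigenspace isomorphic as an $I\times(\Z/p)^{n-t}$-graded space to the framing of $\mathfrak{M}(w/p^t)$, a framing vector $w_{i,b}$ for $\mathfrak{M}(w/p^t)$ (index $b\in(\Z/p)^{n-t}$) becomes the tuple $(w_{i,b})_{\xi\in(\Z/p)^t}$ placed in the graded pieces $W_{i,b\xi}$ of $\mathfrak{M}(w)$; likewise for the tautological spaces $V$, because $i_\Delta$ sends a point of $\mathfrak{M}(w/p^t)$ to its $\langle b\rangle$-invariant copy, so the tautological bundle pulls back to $\calV(v,w/p^t)$ placed diagonally in all $p^t$ eigenspaces, i.e.\ $v_{i,b}\mapsto(v_{i,b})_{\xi\in(\Z/p)^t}$ in the pieces $V_{i,b\xi}$. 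This is precisely the substitution $e^Ve^W\mapsto (\prod_\xi V_{i,b\xi})^{v_{i,b}}(\prod_\xi W_{i,b\xi})^{w_{i,b}}$ defining $(-)^{[t]}$. Since the homology is literally the same vector space (the HKR/transchromatic isomorphism and the Euler-class modification in the definition of $\Fr_{n,n-t}$ are isomorphisms preserving homological degree, cf.\ Theorem~\ref{mainthm} and the proof of Theorem~\ref{thm:algebra hom}), the coefficient $(-t)^k\dim_{\mathbb{F}}(\cdot)$ of each monomial is unchanged. I would then conclude by combining: $\ch_{\epsilon,t}(L_w^n(\alpha\times\gamma',\mathbb{F}))$, computed via the $\mathfrak{M}(w)^{\im\gamma}$-decomposition of $L_w^n$, equals the image under $e^Ve^W\mapsto(e^Ve^W)^{[t]}$ of $\ch_{\epsilon,t}(L_{w/p^t}^{n-t}(\gamma',\mathbb{F}))$ computed via the $\mathfrak{M}(w/p^t)^{\im\gamma'}$-decomposition, using Corollary~\ref{cor:FrobIrr} to match the graded pieces $L^n_w(\cdots)_{k,V,W}$ with $L^{n-t}_{w/p^t}(\cdots)_{k,V',W'}$.

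The main obstacle is bookkeeping rather than any deep argument: one must be careful that only the even/diagonal components of $\mathfrak{M}(w)^{\im\gamma}$ contribute after the localizations $S_\Lambda^{-1}S_b^{-1}$ that enter $\Fr_{n,n-t}$ (the uneven/off-diagonal components are killed, by Lemmas~\ref{lem:cohomology_free_action}, \ref{lem:uneven_free}, \ref{lem:diag_power}, \ref{lem:t_diag}), so that the character sum on the $\mathfrak{M}(w)$ side genuinely ranges only over the fixed loci matched with $\mathfrak{M}(w/p^t)$; and one must verify that the grading on $L_w^n$ induced by taking the irreducible quotient is compatible with the grading transported from $L_{w/p^t}^{n-t}$, which is immediate from Corollary~\ref{cor:FrobIrr} since the quotient map $V_w^n(\alpha\times\gamma',\mathbb{F})\to L_w^n\cong\Fr^*_{n,n-t}L_{w/p^t}^{n-t}$ factors through $\Fr^*_{n,n-t}V_{w/p^t}^{n-t}$ compatibly with the $(k,V,W)$-decompositions. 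Applying $\Pi$ recovers the non-graded statement, but as noted in the excerpt that is not needed here. I would keep the write-up short, citing Corollary~\ref{cor:FrobIrr}, Proposition~\ref{lem:iso to products}, and the HKR description $V_w^n(\gamma,\mathbb{F})\cong\oplus_v H^{BM}_*(\mathfrak{M}(v,w)^{\im\gamma};\mathbb{F})$ as the three inputs, and spelling out only the eigenspace bookkeeping that produces $(-)^{[t]}$.
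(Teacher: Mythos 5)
Your proposal is correct and matches the paper's (essentially unwritten) argument: the paper simply declares the corollary a direct consequence of Corollary~\ref{cor:FrobIrr}, and your write-up supplies exactly the intended bookkeeping — the diagonal identification $\mathfrak{M}(w)^{\im\alpha}|_\Delta\cong\mathfrak{M}(w/p^t)$ from Proposition~\ref{lem:iso to products} distributing each graded piece $V_{i,b}$, $W_{i,b}$ over the $p^t$ common eigenspaces indexed by $\xi\in(\Z/p)^t$, which is precisely the substitution $(-)^{[t]}$, with homological degrees preserved since the Frobenius is built from degree-preserving isomorphisms. (Only a notational quibble: the relevant fixed locus is $\mathfrak{M}(w)^{\im\alpha}\cap\mathfrak{M}(w)^{\im\gamma'}$, i.e.\ the fixed points of the subgroup generated by both images, rather than of $\im\alpha\cap\im\gamma'$.)
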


Let $w=w^{(0)}+pw^{(1)}+\cdots +p^{n-1}w^{(n-1)}+p^nw'$, where $w^{(i)}\in X^+_{\red}$, for $i=0, 1, \cdots, n-1$ and $w'\in X^+$. 
As in \S~\ref{subsec:group_hom},  we assume  $\gamma= \gamma_0\times \cdots\times \gamma_n$, where each $\gamma_i:\Lambda_n\to \GL_{p^iw^{(i)}}$ can be written as $\gamma_i=\alpha_i\oplus \overline{\gamma_i}$ with $\alpha_i:\Lambda_i \to \GL_{p^iw^{(i)}}$.
For simplicity, we write $\ch_{\epsilon, t}(L_w^1(\gamma,\mathbb{F}))$ as $\hat{E}^1_{w,\gamma}$. 
The formula $\hat{E}^1_{w,\gamma}$ is known for type $ADE$ \cite[Theorem 8.4]{Nak04} in terms of the analogous Kazhdan-Lusztig polynomials (see also \cite[Conjecture 6.6]{H04} for a generalization).  
Iterating \eqref{eq:L^n} we obtain
\begin{equation}\label{eq:ch_et}
\ch_{\epsilon, t}(L_w^n(\gamma, \mathbb{F}))=(E^{(0)}_{w'})^{[n]}\prod_{i=0}^{n-1}(\hat{E}^1_{w^{(i)},\overline{\gamma_i}})^{[i]}.
\end{equation}
as elements in $\Z[t, t^{-1}, V_{i,a}, W_{i, a}]_{i\in I, a\in (\Z/p\Z)^{n}}$.

\subsection{Lusztig character formulas}
In this section, we computer the character $\ch(L_w^n(\gamma, \mathbb{F}))$ and compare it with Lusztig's character formula $E_w^{(n)}\in \Z[X]$ constructed in \cite{Lusz}. 

Recall that for an element $\xi=\sum_{\mu} c_{\mu}e^{\mu}\in \Z[X]$, we write $
\xi^{[t]}:=\sum_{\mu} c_{\mu}e^{p^t\mu}\in \Z[X]$.
Note that for an element $\hat{\xi}\in \Z[t, t^{-1}, V_{i,b}, W_{i, b}]_{i\in I, b\in (\Z/p\Z)^{n-t}}$, we have the equality
\[
\Pi(\hat{\xi}^{[t]})=(\Pi(\hat{\xi}))^{[t]}. 
\]
As a consequence of the formula \eqref{eq:L^n}, we have the following. 
\begin{corollary}
Assume the weight $w$ is of the form $w=p^n w'$ with $w'\in X^+$. Then, 
\[
\ch(L_w^n(\gamma, \mathbb{F}))=E^{(n)}_{w}. 
\]
\end{corollary}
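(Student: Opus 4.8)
The plan is to deduce this final corollary as the special case $t=n$ of Corollary~\ref{cor:FrobChar}, combined with the identification of the base-level character with Weyl's formula. First I would observe that since $w=p^n w'$, the weight is divisible by $p^n$, so we may take $t=n$ in the setup of \S\ref{subsec:group_hom}, choosing $\gamma=\alpha\times\gamma'$ where $\alpha:\Lambda_n\to\varmathbb{G}$ is the group homomorphism from \S\ref{subsec:alpha} and $\gamma':\Lambda_0\to\GL_{w/p^n}\times\C^*$ is the (necessarily trivial) homomorphism from the trivial group $\Lambda_0$. Applying \eqref{eq:L^n} with $t=n$ gives
\[
\ch_{\epsilon,n}(L_w^n(\gamma,\mathbb{F}))=\ch_{\epsilon,n}(L_{w'}^{0}(\gamma',\mathbb{F}))^{[n]}
\]
as elements of $\Z[t,t^{-1},V_{i,a},W_{i,a}]_{i\in I,a\in(\Z/p\Z)^n}$.

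The next step is to apply the ring homomorphism $\Pi$ of \cite[Section 2]{Nak04} to both sides, using the identities $\Pi\circ\ch_{\epsilon,t}=\ch$ recorded just before Corollary~\ref{cor:FrobChar} together with the compatibility $\Pi(\hat{\xi}^{[t]})=(\Pi(\hat{\xi}))^{[t]}$ stated at the start of the present section. This yields
\[
\ch(L_w^n(\gamma,\mathbb{F}))=\big(\ch(L_{w'}^{0}(\gamma',\mathbb{F}))\big)^{[n]}.
\]
Then I would identify $\ch(L_{w'}^0(\gamma',\mathbb{F}))$ with the Weyl character $E^{(0)}_{w'}$: at level $n=0$ the Morava $E_0$-theory degenerates to ordinary (non-equivariant) cohomology with a height-$0$ formal group, the convolution algebra $U^0_{w'}(1,\mathbb{F})$ receives a surjection from $U(\fg)_{\mathbb{F}}$ by Nakajima's theorem (cf.\ Example~\ref{ex:two}(1) and \cite{Nak98}), and the irreducible module $L_{w'}^0(1,\mathbb{F})$ corresponds to the finite-dimensional irreducible $\fg$-module of highest weight $w'$, whose character is $E^{(0)}_{w'}$ by definition (the induction for $E^{(k)}_\lambda$ starts with $E^{(0)}_\lambda$ the Weyl character). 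Combining, $\ch(L_w^n(\gamma,\mathbb{F}))=(E^{(0)}_{w'})^{[n]}$, which is precisely $E^{(n)}_{p^n w'}=E^{(n)}_w$ by the defining property of Lusztig's formula (all reduced constituents $\lambda_i$ of $w=p^nw'$ for $i<n$ vanish, so the formula collapses to $(E^0_{w'})^{[n]}$).

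I expect the main obstacle to be purely expository rather than mathematical: one must be careful that the choice of $\gamma$ really satisfies property $\mathrm{P}^n_w$ so that Corollary~\ref{cor:FrobChar} and the preceding machinery apply, and that the degenerate cases $\Lambda_0$, $n-t=0$ in \eqref{eq:L^n} are legitimate (the transchromatic character map of \cite{HKR,St} at $t=n$ lands in chromatic height $0$, i.e.\ in the $L(E^*)$-localized ordinary cohomology, which is exactly Theorem~\ref{thm:HKR}). Once these bookkeeping points are checked, the proof is a one-line specialization followed by applying $\Pi$. Alternatively, and perhaps more transparently, one can invoke the already-established formula \eqref{eq:ch_et}: writing $w=p^nw'$ forces $w^{(i)}=0$ for all $i=0,\dots,n-1$, so every factor $\hat{E}^1_{w^{(i)},\overline{\gamma_i}}$ equals $1$, leaving $\ch_{\epsilon,t}(L_w^n(\gamma,\mathbb{F}))=(E^{(0)}_{w'})^{[n]}$ directly; applying $\Pi$ then finishes it.
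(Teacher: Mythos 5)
Your proposal is correct and follows essentially the same route as the paper: specialize \eqref{eq:L^n} (Corollary~\ref{cor:FrobChar}) at $t=n$, push through $\Pi$ to pass from the $\epsilon,t$-character to the ordinary character, identify $\ch(L^0_{w'})$ with the Weyl character via Example~\ref{ex:two}(1) and \cite{Nak98}, and conclude with the identity $E^{(n)}_{p^n w'}=(E^{(0)}_{w'})^{[n]}$. The only difference is that you make the application of $\Pi$ and the degenerate-case bookkeeping explicit, which the paper leaves implicit.
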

\begin{proof}
Lusztig's character formula satisfies
$
E^{(n)}_{w}=(E^{(0)}_{ w'})^{[n]}. 
$
By \eqref{eq:L^n}, we have
\[
\ch(L_w^n(\gamma, \mathbb{F}))=\ch(L_{w'}^{0}(\mathbb{F}))^{[n]}
=(E^{(0)}_{w'})^{[n]}
=E^{(n)}_{w}. 
\]
The middle equality follows from the discussion of Example \ref{ex:two}. By \cite{Nak98}, $L_{w'}^{0}(\mathbb{F})$ is an irreducible representation of the enveloping algebra $\fg$ over characteristic zero with highest weight $w'$. Therefore, its character is given by the Weyl character formula $E^{(0)}_{w'}$. 
\end{proof}

In general, we conjecture that
\begin{conj}\label{conj:Lusztig}
Let $w\in X^+_{\red}$, there exists
 $\gamma$ satisfying \S~\ref{subsec:group_hom} with 
\[
\ch(L_w^1(\gamma, \mathbb{F}))=E^{(1)}_{w}. 
\]
\end{conj}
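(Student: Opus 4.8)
\textbf{Proof proposal for Conjecture~\ref{conj:Lusztig}.}
The plan is to reduce the conjecture, for $w \in X^+_{\red}$, to the statement that the character of the irreducible $U^1_w(\gamma, \mathbb{F})$-module with (reduced) highest weight $w$ coincides with Lusztig's $E^{(1)}_w$, and then to invoke the known identification of $E^{(1)}_\lambda$ as the character of the irreducible module of Lusztig's quantum group at a root of unity \cite{Lu2}. First I would fix $\gamma$ as in \S~\ref{subsec:group_hom}(1): since $w \in X^+_{\red}$, each eigenspace of $g_1$ is one-dimensional, so $\alpha_1$ is the ``regular'' diagonal homomorphism, and Corollary~\ref{cor:8.3}(1) gives $Z(w)^{\im \alpha} = Z(w)^{\im \alpha_1}$. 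By Theorem~\ref{thm:reduced region} we then have an isomorphism of vector spaces $L_w^{(n)}(\alpha, \mathbb{F}) \cong L_w^{(1)}(\alpha_1, \mathbb{F})$; in fact \eqref{eq:iso for red w} upgrades this to an algebra isomorphism $(U^n_w)_{1_\alpha} \cong (U^1_w)_{1_{\alpha_1}}$ intertwining the two module structures, which respects the weight space decomposition (both sides are graded by $v \in \mathbb{N}^I$ with weight $\mu(v,w)$). Hence $\ch(L_w^n(\gamma,\mathbb{F})) = \ch(L_w^1(\alpha_1, \mathbb{F}))$, so it suffices to treat the level-$1$ case, i.e. to show $\ch(L_w^1(\gamma,\mathbb{F})) = E^{(1)}_w$ for a suitable $\gamma$.

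For the level-$1$ case I would work through the $K$-theoretic picture. By Example~\ref{ex:two}(2) there is an algebra homomorphism $U_\epsilon(L\fg)_{\Phi_1^{L(K)}} \to U^1_w(\gamma, \Phi_1^{L(K)})$, and by \cite[Theorem~14.3.2]{Nak} the irreducible module $L_w^{(1)}(\gamma, \mathbb{F})$ is simultaneously irreducible over the convolution algebra and over the quantum loop algebra. The Drinfeld polynomials $(P_i)_{i \in I}$ attached to our choice of $\gamma$ have $\deg P_i = w_i < p$, so (as in the proof of Corollary~\ref{cor:irreducible}) no $P_i$ is divisible by $1 - au^p$; by \cite[Theorem~9.2]{CP} the restriction $\Res L_w^{(1)}(\gamma,\mathbb{F})$ to $U'^1_w$ is irreducible, and moreover the Chari--Pressley theory identifies this $U'^1_w$-module with the pullback of the irreducible finite-dimensional module of highest weight $w$ for the De Concini--Kac form of $U_\epsilon(\fg)$. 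For $w \in X^+_{\red}$ that finite-dimensional irreducible is precisely the one whose character is $E^{(1)}_w$ by \cite{Lu2} (this is exactly the content recalled in the introduction: $E^1_\lambda$ is the character of the irreducible representation of Lusztig's quantum group at a root of unity with highest weight $\lambda$). Matching the two weight gradings (the convolution-algebra grading $v \mapsto \mu(v,w)$ versus the root-lattice grading on the quantum group module) then yields $\ch(L_w^1(\gamma,\mathbb{F})) = E^{(1)}_w$ in $\Z[X]$, completing the argument.

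Alternatively, one can bypass the quantum loop algebra entirely and argue directly with the $\epsilon,t$-character: by construction $\Pi \circ \ch_{\epsilon,t}(L^1_w(\gamma,\mathbb{F})) = \ch(L^1_w(\gamma,\mathbb{F}))$, and when $w \in X^+_{\red}$ the explicit formula for $\ch_{\epsilon,t}(L^1_w(\gamma,\mathbb{F}))$ in terms of Kazhdan--Lusztig polynomials (\cite[Theorem~8.4]{Nak04} for type $ADE$) specializes under $\Pi$ to the character of the irreducible with reduced highest weight, which Lusztig's inductive definition sets equal to $E^{(1)}_w$. One would need to check that the KL-polynomial data on the Nakajima side matches the periodic/affine KL data entering Lusztig's formula in the reduced range; for small $p$ there is nothing to prove beyond this identification since $E^{(1)}$ is \emph{defined} through the quantum group character.

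The main obstacle is the precise matching in the second and third paragraphs: one must verify that the group homomorphism $\gamma$ built in \S~\ref{subsec:group_hom} produces exactly the Drinfeld parameters (equivalently, the highest weight data) whose associated irreducible quantum-group module has character $E^{(1)}_w$, rather than some twist or some module with a different highest weight. This is a bookkeeping problem about the normalization of weights under the isomorphism $U^1_w \cong $ (a form of the affine $q$-Schur algebra) and the Chari--Pressley classification, but it is delicate because the identification of $E^{(1)}_\lambda$ with a quantum group character in \cite{Lu2} uses a specific choice of root of unity and of the $X$-grading, and one must track these through the transchromatic/cyclotomic localizations $\Phi_1^{L(K)}$. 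I expect that once $\gamma$ is pinned down so that the Drinfeld polynomials are $P_i(u) = \prod_{r} (1 - \epsilon^{c_{i,r}} u)$ with the correct exponents, the remaining verification is routine, which is why the statement is phrased as a conjecture only because of the small-$p$ subtleties in \cite{Lu2} rather than because of a genuine gap in this reduction — indeed for $w = p^n w'$ the analogous statement is already proved above as a corollary.
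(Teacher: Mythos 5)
This statement is a \emph{conjecture} in the paper: it is not proved in general, and the paper establishes it only in type $A$ (Corollary~\ref{cor:for type A proof}), precisely because the key input is the Chari--Pressley evaluation homomorphism $\ev_a^{\pm}:U_\epsilon^{\fin}(L\mathfrak{sl}_{n+1})\to U_\epsilon^{\fin}(\mathfrak{gl}_{n+1})$ together with the explicit Drinfeld polynomials of Theorem~\ref{Thm:P}, which exist only for $\mathfrak{sl}_{n+1}$. Your proposal treats the conjecture as provable in all types, and the gap sits exactly where the paper has to restrict to type $A$.

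Concretely, the false step is the claim that ``the Chari--Pressley theory identifies this $U'^1_w$-module with the pullback of the irreducible finite-dimensional module of highest weight $w$ for the De Concini--Kac form of $U_\epsilon(\fg)$.'' Corollary~\ref{cor:irreducible} only says that $\Res L_w^{(1)}(\gamma,\mathbb{F})$ is irreducible over $U'^1_w$, i.e.\ over (the image of) the De Concini--Kac form of the quantum \emph{loop} algebra; it does not identify this module with the finite-type irreducible $V(w)$ whose character is $E^{(1)}_w$. For a generic choice of the roots of the Drinfeld polynomials the quantum-loop irreducible with $\deg P_i=w_i$ has the wrong dimension altogether (already for $\mathfrak{sl}_2$ and $w=2$ a generic degree-$2$ Drinfeld polynomial yields a $4$-dimensional module, while the finite irreducible of highest weight $2$ is $3$-dimensional), so the statement can only hold for very particular $\gamma$. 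In type $A$ these particular $\gamma$ are produced by the evaluation homomorphism, which realizes $V(\lambda)$ itself as a quantum-loop irreducible $V(\mathbf{P}_a^{\pm})$ with explicitly computed $\epsilon$-segments $c^{\pm}_{i,j}$; outside type $A$ no evaluation homomorphism exists and no choice of $\gamma$ is known to work --- this is the content of the conjecture, not a ``bookkeeping problem about normalization.'' Your alternative route via $\epsilon,t$-characters has the same issue: matching Nakajima's Kazhdan--Lusztig data with the (periodic) KL data entering Lusztig's $E^{(1)}_w$ in the reduced range is again the open problem, not a routine check. (Your opening reduction from level $n$ to level $1$ via Theorem~\ref{thm:reduced region} is correct but unnecessary here, since the conjecture is already stated at level $1$; it is relevant only for deducing Conjecture~\ref{con:for n} from Conjecture~\ref{conj:Lusztig}, which the paper does via \eqref{eq:ch_et}.)
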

This conjecture immediately implies the following conjecture thanks to formula \eqref{eq:ch_et}. 
\begin{conj}\label{con:for n}
For general $w$, there exists a group homomorphism $\gamma$ such that $$\ch(L_w^{(n)}(\gamma, \mathbb{F}))= E^{(n)}_w. $$
\end{conj}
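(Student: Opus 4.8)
The plan is to reduce Conjecture~\ref{con:for n} to Conjecture~\ref{conj:Lusztig} --- the reduction being essentially forced by the $\epsilon,t$-character formula \eqref{eq:ch_et} together with the shape of Lusztig's inductive definition of $E^{(n)}_w$ --- and then to indicate how one would attack Conjecture~\ref{conj:Lusztig} itself.

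For the reduction: given an arbitrary $w\in X^+$, I would take its $p$-adic expansion $w=w^{(0)}+pw^{(1)}+\cdots+p^{n-1}w^{(n-1)}+p^nw'$ with $w^{(i)}\in X^+_{\red}$ for $0\le i\le n-1$ and $w'\in X^+$, and choose $\gamma=\gamma_0\times\cdots\times\gamma_n$ as in \S\ref{subsec:group_hom}, with $\gamma_i=\alpha_i\oplus\overline{\gamma_i}$. The formula \eqref{eq:ch_et}, obtained by iterating Theorem~\ref{thm:Steinberg tensor}, Theorem~\ref{thm:reduced region} and Corollary~\ref{cor:FrobChar}, reads $\ch_{\epsilon,t}(L^n_w(\gamma,\mathbb{F}))=(E^{(0)}_{w'})^{[n]}\prod_{i=0}^{n-1}(\hat{E}^1_{w^{(i)},\overline{\gamma_i}})^{[i]}$. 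Applying the specialization homomorphism $\Pi$, using $\Pi(\hat\xi^{[t]})=(\Pi(\hat\xi))^{[t]}$, $\Pi\circ\ch_{\epsilon,t}=\ch$, and $\ch(L^0_{w'}(\mathbb{F}))=E^{(0)}_{w'}$ (Weyl's formula, via Nakajima's \cite{Nak98} identification with the enveloping algebra, cf.\ Example~\ref{ex:two}), I would get
\[
\ch(L^n_w(\gamma,\mathbb{F}))=(E^{(0)}_{w'})^{[n]}\prod_{i=0}^{n-1}\big(\ch(L^1_{w^{(i)}}(\overline{\gamma_i},\mathbb{F}))\big)^{[i]}.
\]
Comparing term by term with $E^{(n)}_w=E^1_{w^{(0)}}(E^1_{w^{(1)}})^{[1]}\cdots(E^1_{w^{(n-1)}})^{[n-1]}(E^{(0)}_{w'})^{[n]}$, the claim $\ch(L^n_w(\gamma,\mathbb{F}))=E^{(n)}_w$ follows at once provided $\ch(L^1_{w^{(i)}}(\overline{\gamma_i},\mathbb{F}))=E^1_{w^{(i)}}$ for every $i$; for each reduced weight $w^{(i)}$ this is exactly Conjecture~\ref{conj:Lusztig}.

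For Conjecture~\ref{conj:Lusztig} I would proceed as follows. By Theorem~\ref{thm:reduced region} one may replace $U^n_w$ by the $K$-theoretic algebra $U^1_w$; invoking the realization of these convolution algebras as quantum loop algebras (\cite{Nak}, \cite{VV02}), $L^1_w(\gamma,\mathbb{F})$ becomes the simple highest-weight module of $U_\epsilon(L\fg)$ whose Drinfeld polynomials $(P_i)_{i\in I}$ satisfy $\deg P_i=w_i$. As $w\in X^+_{\red}$ forces $\deg P_i<p$, no $P_i$ is divisible by $1-au^p$, so by Chari--Pressley \cite[Theorem~9.2]{CP} the restriction $\Res L^1_w(\gamma,\mathbb{F})$ is simple over $U'^1_w$; one then identifies its highest weight as $w$, so that it is the simple module of Lusztig's quantum group at a $p$-th root of unity with highest weight $w$, whose character is $E^1_w$ by \cite{Lu2}. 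Since restriction preserves the weight-space decomposition, this gives $\ch(L^1_w(\gamma,\mathbb{F}))=E^1_w$.

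The hard part will be this last paragraph in arbitrary type. In type $A$ all the needed inputs exist --- this is Corollary~\ref{cor:for type A proof} --- but for general $\fg$ one must supply (i) an analogue of Chari--Pressley's irreducibility statement together with the identification of $\Res L^1_w(\gamma,\mathbb{F})$ with Lusztig's finite quantum group module, and (ii) a comparison between Nakajima's $\epsilon,t$-character $\hat{E}^1_{w,\gamma}$, known only in type $ADE$ via \cite[Theorem~8.4]{Nak04} through analogues of Kazhdan--Lusztig polynomials, and Lusztig's $E^1_w$, likewise governed by Kazhdan--Lusztig combinatorics in \cite{Lu2}. Concretely, one would have to match these two Kazhdan--Lusztig descriptions --- equivalently, show that restriction carries the standard (Weyl) module to the one whose decomposition numbers reproduce Lusztig's formula --- and then extend the comparison beyond $ADE$. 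A minor bookkeeping point is the extra cyclotomic extension $L(K)$ inside the coefficient field $\mathbb{F}$, which enlarges scalars but should not affect characters.
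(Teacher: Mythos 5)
Your proposal is correct and follows essentially the same route as the paper: the paper likewise reduces Conjecture~\ref{con:for n} to Conjecture~\ref{conj:Lusztig} via formula \eqref{eq:ch_et} and the specialization $\Pi$, and then establishes Conjecture~\ref{conj:Lusztig} only in type $A$ (Corollary~\ref{cor:for type A proof}) by combining Theorem~\ref{thm:reduced region}, the Chari--Pressley irreducibility criterion, and the evaluation-representation construction, exactly as you outline. Your closing assessment of what is missing in general type (an analogue of Chari--Pressley beyond type $A$ and a matching of the two Kazhdan--Lusztig descriptions) also agrees with the paper's own remarks.
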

Below we prove Conjecture \ref{conj:Lusztig} when the quiver $Q$ is of type $A$. The proof replies on two facts. One of them is the well-known fact that 
 $E^{(1)}_{w}$ is the character formula of the irreducible representation of Lusztig's quantum group at a $p$-th root of $1$, which holds for any simple Lie algebra $\fg$. Another is a result of Chari and Pressley realizing in type $A$ each irreducible representation of Lusztig's quantum group as a quotient of the standard module of the quantum loop algebra. In principle the latter could be done in all types using the formula for the $\epsilon,t$-character. However, the combinatorics of both formulas are complicated, and hence we leave this out of the present paper.  

\subsection{Proof of Conjecture \ref{conj:Lusztig} in type $A$}
\subsubsection{Evaluation representations of quantum group at the roots of unity}
We recall some facts about the evaluation representations of the quantum loop algebra $U_{\epsilon}(L\mathfrak{sl}_{n+1})$, when $\epsilon$ is a root of unity. 
The references are \cite{CP94} (for a generic parameter) and \cite{AN06} (for a root of unity).

Consider the Lie algebra $\mathfrak{sl}_{n+1}$. Let $L\mathfrak{sl}_{n+1}=\mathfrak{sl}_{n+1}\otimes \C[t, t^{-1}]$ be the loop algebra of $\mathfrak{sl}_{n+1}$. 
We set $I=\{1, 2, \cdots, n\}$. Let $\{\alpha_i\}_{i\in I}$ be the set of simple roots of $\mathfrak{sl}_{n+1}$ and $\{\varpi_i\}_{i\in I}$ be the fundamental weights given by
\begin{equation}\label{eqn:fund_weight}
    \varpi_i:=\frac{1}{n+1}(
(n-i+1)\sum_{k=1}^{i} k\alpha_k +i \sum_{k=i+1}^n (n-k+1)\alpha_k). 
\end{equation}

Let $\epsilon$ be a primitive $l$-th root of unity and assume $gcd(l, n+1)=1$. We consider the quantum group $U_{\epsilon}(\mathfrak{sl}_{n+1})$ and 
the quantum loop algebra $U_{\epsilon}(L\mathfrak{sl}_{n+1})$ of restricted type defined by Lusztig \cite{Lu2, CP, AN06}. Let $U_{\epsilon}^{\fin}(\mathfrak{sl}_{n+1})\subset  U_{\epsilon}(\mathfrak{sl}_{n+1})$ and
$U_{\epsilon}^{\fin}(L\mathfrak{sl}_{n+1})\subset  U_{\epsilon}(L\mathfrak{sl}_{n+1})$ be the $\C$-subalgebras of small quantum groups (see \cite[Section 3.2]{AN06} for the definitions). 
Every finite dimensional irreducible representation of $U_{\epsilon}^{\fin}(\mathfrak{sl}_{n+1})$ (resp. $U_{\epsilon}^{\fin}(L\mathfrak{sl}_{n+1})$) is a highest weight representation and is determined by the highest weight. There exists a one-to-one correspondence from the set of the highest weights to $(\Z/l\Z)^n$ (resp. $\C_l[t]^n$, where $\C_l[t]=\{P\in \C[t]\mid$ $P$ is not divisible by $1-ct^l$ for all $c\in \C^* \}$ ). 
We denote by $V(\lambda)$ the irreducible module of $U_{\epsilon}^{\fin}(\mathfrak{sl}_{n+1})$ corresponding to $\lambda\in (\Z/l\Z)^n$ and $V(\bold{P})$ the one of $U_{\epsilon}^{\fin}(L\mathfrak{sl}_{n+1})$ corresponding to $\bold{P}=(P_1, P_2, \cdots, P_n)\in \C_l[t]^n$.

Let $U_{\epsilon}(\mathfrak{gl}_{n+1})$ be the quantum group of $\mathfrak{gl}_{n+1}$. This is denoted by $U'_{\epsilon}(\mathfrak{sl}_{n+1})$ in \cite[Definition 2.1 and Section 4]{AN06}, a notation which we avoid using in order to reduce clashes. 

For any $a\in \C$, there exist evaluation homomorphisms of algebras \cite[Proposition 12.2.10]{CP94}\cite[Section 4 (4.2)]{AN06}
\[
\ev_a^{\pm}: U_{\epsilon}^{\fin}(L(\mathfrak{sl}_{n+1})) \to 
U_{\epsilon}^{\fin}(\mathfrak{gl}_{n+1}). 
\]
For any $\lambda\in (\Z/l\Z)^n$, we regard $V(\lambda)$ as a $U_{\epsilon}^{\fin}(\mathfrak{gl}_{n+1})$ representation (\cite[Section 6]{AN06}). Let $V_{a}(\lambda)^{\pm}$ be the $U_{\epsilon}^{\fin}(L(\mathfrak{sl}_{n+1}))$-module obtained from $V(\lambda)$ by using the evaluation homomorphism $\ev_{a_{\pm}^{\lambda}}^{\pm}$, where \cite[Definition 4.12]{AN06} for $\lambda=(\lambda_i)_{i\in I}\in (\Z/l\Z)^n$, we set $\lambda_{\varpi_i}:=\sum_{j\in I} \lambda_j(\varpi_i, \varpi_i)$ and 
\begin{align*}
& a^{\lambda}_{+}:=a\epsilon^{-\lambda_{\varpi_1}+\lambda_{\varpi_n}+n}, \,\ 
a^{\lambda}_{-}:=a(-1)^{n+1}\epsilon^{\lambda_{\varpi_1}-\lambda_{\varpi_n}+2n+1}. 
\end{align*}
 The evaluation representation $V_{a}(\lambda)^{\pm}$ is a finite dimensional irreducible $U_{\epsilon}^{\fin}(L(\mathfrak{sl}_{n+1}))$-module, as $V(\lambda)$ is irreducible as a $U_{\epsilon}^{\fin}(\mathfrak{sl}_{n+1})$-module. 
 
 \begin{theorem}
 \label{Thm:P}
 (\cite[Theorem 4.8 and Theorem 4.13]{AN06}, see also  \cite[Proposition 12.2.13]{CP94})
There exists a unique Drinfeld polynomial $\bold{P}_a^{\pm}=(P_{i, a})_{i\in I} \in \C_l[t]^n$, such that 
 \[
 V_{a}(\lambda)^{\pm}\cong V(\bold{P}_a^{\pm}) \,\ 
 \text{ as a $U_{\epsilon}^{\fin}(L(\mathfrak{sl}_{n+1}))$ module}. 
 \]
Explicitly, let $i\in I$ such that $P_{i, a}^{\pm}\neq 0$. Then , 
\[
P_{i, a}^{\pm}=\prod_{j=1}^{\lambda_i} (t-
a^{-1} \epsilon^{c^{\pm}_{i, j}} ), \,\ \text{where}\,\  
c^{\pm}_{i, j}:= \pm (\sum_{k=1}^{i-1} \lambda_k-\sum_{k=i+1}^{n}\lambda_k+i)+\lambda_i-2j+1. 
\]
\end{theorem}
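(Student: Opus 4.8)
The plan is to reproduce the route of Chari--Pressley \cite[\S12.2]{CP94} in the generic case together with its root-of-unity refinement by Akasaka--Nakajima \cite{AN06}. Uniqueness is a formal consequence of the Drinfeld classification: over $U_\epsilon^{\fin}(L\mathfrak{sl}_{n+1})$ every finite-dimensional irreducible is an $\ell$-highest weight module, and the eigenvalue data of the Drinfeld Cartan currents $\Psi_i^\pm(z)$ on its $\ell$-highest weight line is encoded by a tuple in $\C_l[t]^n$, with distinct tuples yielding non-isomorphic modules; the hypothesis $\gcd(l,n+1)=1$ lets one carry this out inside $U_\epsilon^{\fin}(\mathfrak{gl}_{n+1})$. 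Granting this, it suffices to show $V_a(\lambda)^\pm$ is finite-dimensional and irreducible, which then forces $V_a(\lambda)^\pm\cong V(\mathbf{P}_a^\pm)$ for a unique $\mathbf{P}_a^\pm\in\C_l[t]^n$. Finite-dimensionality is immediate from $\dim V(\lambda)<\infty$. For irreducibility I would use that the evaluation homomorphisms $\ev_b^\pm$ (for any parameter $b$) restrict on the horizontal copy $U_\epsilon^{\fin}(\mathfrak{sl}_{n+1})\subset U_\epsilon^{\fin}(L\mathfrak{sl}_{n+1})$ to the canonical inclusion into $U_\epsilon^{\fin}(\mathfrak{gl}_{n+1})$; hence $V_a(\lambda)^\pm$ restricted to the horizontal subalgebra is again $V(\lambda)$, already irreducible, so $V_a(\lambda)^\pm$ is a fortiori irreducible over the loop algebra.

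Next I would extract $\mathbf{P}_a^\pm$ by a direct current computation. The Drinfeld polynomial $P_{i,a}^\pm$ is read off from the scalar by which $\ev_{a^\lambda_\pm}^\pm(\Psi_i^\pm(z))$ acts on the $\ell$-highest weight vector, so the first step is to check that the $\mathfrak{gl}_{n+1}$-highest weight vector $v_\lambda\in V(\lambda)$ is that vector for the pulled-back structure --- it is annihilated by the images of the raising operators $x_{j,k}^+$, $j\in I$, and of the affine imaginary generators, because $\ev^\pm$ sends these into the upper-triangular augmentation ideal together with multiples of the evaluation parameter. Then, substituting the formulas for $\ev^\pm$ on the Drinfeld loop Cartan generators from \cite[\S4]{AN06} (the same expressions as in \cite[\S12.2]{CP94} with $q$ specialized to $\epsilon$), the action on $v_\lambda$ telescopes: the Cartan part contributes $\epsilon$ to the power $\lambda_i$, the $\mathfrak{sl}_2$-subalgebra attached to $\alpha_i$ contributes the symmetric string $\lambda_i-2j+1$ for $j=1,\dots,\lambda_i$, and commuting the affine imaginary elements past the finite Chevalley generators contributes the position-dependent shift $\pm(\sum_{k<i}\lambda_k-\sum_{k>i}\lambda_k+i)$. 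Assembling these factors and normalizing the leading coefficient yields $P_{i,a}^\pm(t)=\prod_{j=1}^{\lambda_i}(t-a^{-1}\epsilon^{c_{i,j}^\pm})$ with $c_{i,j}^\pm$ as in the statement; I expect the shifted parameter $a^\lambda_\pm$ of \cite[Def.~4.12]{AN06}, involving $\lambda_{\varpi_1}$ and $\lambda_{\varpi_n}$, to be chosen precisely to cancel the spurious diagonal discrepancy between the $\mathfrak{gl}_{n+1}$- and $\mathfrak{sl}_{n+1}$-normalizations, so that the roots come out as $a^{-1}\epsilon^{c_{i,j}^\pm}$ rather than in a less symmetric form. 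Finally one checks $P_{i,a}^\pm\in\C_l[t]$: its degree is $\lambda_i\le l-1$ since the $\ell$-highest weight condition over the restricted form forces $0\le\lambda_i<l$, so it is not divisible by any $1-ct^l$.

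The hard part will not be the combinatorial telescoping, which is already carried out generically in \cite[\S12.2]{CP94}, but the passage to roots of unity. I would need to confirm that $\ev_b^\pm$ descends to the restricted/small forms and not merely to the generic quantum loop algebra, that the Drinfeld classification of finite-dimensional irreducibles persists at $\epsilon$ with parameter space $\C_l[t]^n$ rather than $\C[t]^n$, and that the $\ell$-highest weight data of $V(\lambda)$ computed inside $U_\epsilon^{\fin}(\mathfrak{gl}_{n+1})$ matches the Drinfeld-current eigenvalues predicted by an element of $\C_l[t]^n$. These are exactly the statements of \cite[\S3--4]{AN06}, with the underlying restricted-form structure coming from \cite{CP}; granting them, the proof reduces to the generic-type computation sketched above with $q=\epsilon$.
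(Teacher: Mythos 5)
The paper itself contains no proof of this statement: Theorem \ref{Thm:P} is quoted directly from \cite{AN06} (Theorems 4.8 and 4.13), with \cite[Proposition 12.2.13]{CP94} as the generic-parameter antecedent, so there is no in-paper argument to compare yours against. Your sketch faithfully reconstructs the route those references take — irreducibility of $V_a(\lambda)^{\pm}$ from irreducibility of $V(\lambda)$ over the horizontal copy of $U_{\epsilon}^{\fin}(\mathfrak{sl}_{n+1})$ (the evaluation map does restrict to the canonical inclusion there), uniqueness from the Drinfeld classification of finite-dimensional irreducibles over $\C_l[t]^n$, and the explicit roots from evaluating the loop Cartan currents on the highest weight vector, with the shifted parameters $a^{\lambda}_{\pm}$ of \cite[Definition 4.12]{AN06} absorbing the $\mathfrak{gl}_{n+1}$-versus-$\mathfrak{sl}_{n+1}$ normalization — so it is consistent with the cited proof rather than an alternative to it, and the root-of-unity points you defer (evaluation maps on the restricted form, the $\C_l[t]^n$ parameter space) are exactly what \cite{AN06} supplies. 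One small correction: \cite{AN06} is Abe--Nakashima, not Akasaka--Nakajima.
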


\subsubsection{Characters of evaluation representations}

Now let $w=(w^i)_{i\in I}\in X^+$ be the highest weight. 
We have the isomorphisms
\[
K_{\Z/p\Z}(\pt)\cong \C[t^{\pm}]/(t^p=1) \,\ \text{and}  \,\ 
K_{T_{w}\times \C^*}(\pt)\cong \C[q^{\pm}]\otimes (\otimes_{i\in I} 
\C[z^{(i)}_1, \cdots, z^{(i)}_{w^i}]). 
\]
Let $\gamma=(\gamma^1, \cdots, \gamma^i): \Z/p\Z\to T_w\times \C^*\subset \GL_w\times \C^*$ be a group homomorphism such that
\[
\gamma^i: \Z/p\Z\to T_{w^i}\times \C^*, 1\mapsto \left(\begin{bmatrix}
\zeta^{n_1^{(i)}}&&\\
&\ddots&\\
&& \zeta^{n^{(i)}_{w^{i}}} \end{bmatrix}, \zeta\right). 
\]
where each $n_j^{(i)} \in \{0, 1, \cdots, p-1\}$. 
Then, $\gamma$ induces the following ring homomorphism
\[
K_{T_{w}\times \C^*}(\pt)\to K_{\Z/p\Z}(\pt),  \,\ 
q\mapsto t, z^{(i)}_j\mapsto t^{n_j^{(i)}}. 
\]
\begin{corollary}\label{cor:for type A proof}
When the quiver is of type $A$, Conjecture \ref{conj:Lusztig} is true, with the choice of group homomorphism $\gamma^{\pm}$ as above where $n_i^{(i)}=c_{i, j}^{\pm}$. 
\end{corollary}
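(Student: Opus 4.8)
The plan is to reduce Corollary~\ref{cor:for type A proof} to the two inputs assembled earlier in the paper: the Steinberg tensor product theorem (Theorem~\ref{thm:Steinberg tensor}) together with Corollary~\ref{cor:FrobChar}, which reduces everything to the reduced region, and the reduced-region comparison (Theorem~\ref{thm:reduced region}, Corollary~\ref{cor:irreducible}), which identifies $L^{(n)}_w$ for $w\in X^+_{\red}$ with the level-$1$ (quantum loop) module $L^{(1)}_w(\alpha_1,\mathbb{F})$. Because of the iterated formula \eqref{eq:ch_et} and the fact that $\Pi$ commutes with the operations $(-)^{[t]}$, it suffices to prove Conjecture~\ref{conj:Lusztig}, i.e.\ to show that for $w\in X^+_{\red}$ of type $A$ there is a group homomorphism $\gamma$ with $\ch(L^1_w(\gamma,\mathbb{F}))=E^{(1)}_w$. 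First I would use the well-known fact (Lusztig \cite{Lu2}) that $E^{(1)}_w$ is the character of the irreducible representation $V(w)$ of the small quantum group $U^{\fin}_\epsilon(\mathfrak{sl}_{n+1})$ with reduced highest weight $w$; this holds in all types and is where the target formula enters.

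\textbf{Key steps.}
The remaining work is then purely a matter of matching the geometric module $L^1_w(\gamma,\mathbb{F})$ — which by \cite[Theorem 14.3.2]{Nak} is an irreducible module of the quantum loop algebra $U_\epsilon(L\mathfrak{sl}_{n+1})$ with a specific Drinfeld polynomial $\bold P$ determined by $\gamma$ — with the evaluation representation $V_a(w)^{\pm}$ of Chari--Pressley / Abe--Nakai. Concretely: (i) compute the Drinfeld polynomial $\bold P^{\pm}_a$ attached to the evaluation module $V_a(w)^{\pm}$ via Theorem~\ref{Thm:P}, reading off the roots $a^{-1}\epsilon^{c^{\pm}_{i,j}}$; (ii) observe that the group homomorphism $\gamma^{\pm}:\Z/p\to T_w\times\C^*$ with the diagonal exponents $n^{(i)}_j:=c^{\pm}_{i,j}$ (taken mod $p$, with $\epsilon$ a primitive $p$-th root of unity; note $\gcd(p,n+1)=1$ so the arithmetic in \eqref{eqn:fund_weight} makes sense) induces exactly the substitution $q\mapsto t$, $z^{(i)}_j\mapsto t^{n^{(i)}_j}$ that specializes the geometric model of the Drinfeld polynomial to $\bold P^{\pm}_a$; (iii) conclude that $L^1_w(\gamma^{\pm},\mathbb{F})$, as an irreducible $U_\epsilon(L\mathfrak{sl}_{n+1})$-module, is isomorphic to $V_a(w)^{\pm}\cong V(w)$ restricted from $U^{\fin}_\epsilon(\mathfrak{gl}_{n+1})$, hence has the same $\mathfrak{sl}_{n+1}$-character; (iv) apply $\Pi\circ\ch_{\epsilon,t}=\ch$ and Theorem~\ref{thm:reduced region} to transport this character equality to $L^1_w(\gamma^{\pm},\mathbb{F})$. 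Finally, feed the resulting identity $\ch(L^1_{w^{(i)}}(\overline{\gamma_i},\mathbb{F}))=E^{(1)}_{w^{(i)}}$ for each reduced piece into \eqref{eq:ch_et}, together with $\ch(L^0_{w'}(\mathbb{F}))=E^{(0)}_{w'}$ from Example~\ref{ex:two}, to obtain $\ch(L^n_w(\gamma,\mathbb{F}))=E^{(n)}_w$ with $\gamma$ built piecewise from the $\gamma^{\pm}$ as in \S\ref{subsec:group_hom}.

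\textbf{Main obstacle.}
The bookkeeping step (ii)--(iii) is where the real content lies: I expect the main difficulty to be checking that the evaluation parameter $a$ (and the two sign choices $\pm$, which shift $a$ by the factors $a^{\lambda}_{\pm}$ in Theorem~\ref{Thm:P}) can be absorbed so that the roots of $P^{\pm}_{i,a}$ are all powers of a single primitive $p$-th root of unity $\epsilon$, matching the monomial substitution coming from a genuine \emph{finite-order} diagonal $\gamma^{\pm}$; this forces the congruence constraints $c^{\pm}_{i,j}\bmod p$ to land in $\{0,\dots,p-1\}$ consistently and is exactly why the hypothesis $w\in X^+_{\red}$ (so each $P_i$ has degree $w^i<p$ and is coprime to $1-ct^p$, cf.\ the proof of Corollary~\ref{cor:irreducible}) is needed. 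Once the Drinfeld polynomials are matched, irreducibility on both sides (\cite[Theorem 9.2]{CP} and Corollary~\ref{cor:irreducible}) upgrades the identification of $\bold P$ to an isomorphism of modules, and the character statement follows formally. I would also remark that only type $A$ is treated because the combinatorial identification of $\ch_{\epsilon,t}(L^1_w)$ with $E^{(1)}_w$ in general types would require comparing Nakajima's $\epsilon,t$-character formula \cite[Theorem 8.4]{Nak04} with Lusztig's, which is combinatorially heavier and outside the present scope.
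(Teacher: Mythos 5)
Your proposal is correct and follows essentially the same route as the paper: identify $L^1_w(\gamma^{\pm},\mathbb{F})$ with the Chari--Pressley/Abe--Nakashima evaluation representation $V_a(w)^{\pm}\cong V(\mathbf{P}^{\pm})$ by matching Drinfeld polynomials via Theorem~\ref{Thm:P} and \cite[Section 13]{Nak}, then invoke Lusztig's identification $\ch(V(w))=E^1_w$. The paper's actual proof is terser (it does not spell out your bookkeeping step on absorbing the evaluation parameter $a$ and the congruences mod $p$), so your added detail is a refinement rather than a divergence.
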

The formula of $c_{i,j}$ is given in Theorem \ref{Thm:P}.
\begin{proof}
The irreducible module $V(\bold{P}^{\pm})$ in Theorem \ref{Thm:P} factors through the quantum group $U_w^1(\gamma^{\pm}, \C)$ (\cite[Section 13]{Nak})
\[
U_{\epsilon}(L(\mathfrak{sl}_{n+1}))
\to U_w^1(\gamma^{\pm}, \C)
\to \End(V(\bold{P}^{\pm})). 
\]
In particular, we have the following equalities of characters
\[
\ch(V(\bold{P}^{\pm}))
=\ch(V(w))
=E^1_{w}. 
\]
\end{proof}
Consequently, Conjecture \ref{con:for n} is also true in type-$A$.

\appendix
\section{The coproduct and Stable envelope}
\label{app:coproduct}
In this section, we construct a coproduct of the convolution algebra associated to $E^*$. The construction relies on an $E^*$-class which is obtained from the Maulik-Okounkov stable envelope and the character map of Hopkins, Kuhn and Ravenal. We construct this $E^*$ class in \S\ref{E^* class} and the coproduct in \S\ref{sec:coproduct}. The main theorem of this section is Theorem \ref{thm:coproduct} the proof of which makes use of Proposition~\ref{prop:localization_convolution}.

\subsection{The stable envelopes of Maulik-Okounkov}
\label{sec:MO stab}
Let $X$ be a non-singular algebraic variety with a holomorphic symplectic form $\omega$. Assume there is a proper map $\pi: X \to X_0$ where $X_0$ is affine. Let $\varmathbb{G}$ be a reductive group  which acts on $X$, so that $\omega$ transforms via a character of $\varmathbb{G}$,  and $\pi$ is $\varmathbb{G}$-equivariant.
Let $A \subset T\subset \varmathbb{G}$ be a subtorus  such that $\omega$ is fixed by $A$. Let $\mathfrak{a}\subset \mathfrak{t}\subset\mathfrak{g}$ be the corresponding Lie algebras.  

Let  $\text{Cochar}(A):=\{\sigma: \C^*\to A\}$ be the lattice of cocharacters of $A$. We have the real vector space
\[
\mathfrak{a}_{\mathbb{R}}=\text{Cochar}(A)\otimes_{\bbZ}\bbR \subset \mathfrak{a}. 
\]
Define the torus roots to be the $A$-weights $\{\alpha_i\}$ occurring in the normal bundle to $X^A$ in $X$ \cite[Definition 3.2.1]{MO}. Note that each weight $\alpha$ of $A$ defines a rational hyperplane, denoted by $\alpha^{\perp}$, in $\mathfrak{a}_{\mathbb{R}}$. The root hyperplanes partition $\mathfrak{a}_{\mathbb{R}}$ into finitely many (open) chambers $\mathfrak{a}_{\mathbb{R}}\setminus \cup\alpha_i^{\perp}
=\sqcup \mathfrak{C}_i$. 

Let $\mathfrak{C}=\mathfrak{C}_i$ be a chamber for some $i$. 
For $Y\subset X^A$, we denote by $\Att_{\mathfrak{C}}(Y)$ the set of points attracted to $Y$. By definition, 
\[
\Att_{\mathfrak{C}}(Y):=\{
(x, y)\in X\times Y\mid \lim_{t\to 0} \sigma(t) x=y, \text{for all $\sigma: \C^*\to A$ in $\mathfrak{C}$}. 
\}
\]
In particular, we have a limit map $\Att_{\mathfrak{C}}(Y)\to Y$ and a closed embedding $\Att_{\mathfrak{C}}(Y) \to X$. 

Define a partial order on the set of connected components of $X^A$ as follows. 
\begin{align*}
&Z\geq Z' 
\Leftrightarrow  \overline{\Att_{\mathfrak{C}}(Z)}\cap Z'\neq \emptyset 
\end{align*}
For any connected component $Z$, define the \textit{full attacting set} to be
\[
\Att^f_{\mathfrak{C}}(Z)
:=\coprod_{Z'\leq Z} \Att_{\mathfrak{C}}(Z'). 
\]
As shown in \cite[Lemma 3.2.7]{MO}, $\Att^f_{\mathfrak{C}}(Z)$ is closed in $X$.

Let $Z\subset X^A$ be a connected component. Denote by $\pi_Z: \Att_{\mathfrak{C}}(Z)\times Z\to Z\times Z$ the natural map. 
Let 
\[
L_{\mathfrak{C}}
:=\cup_{Z} \overline{ \pi_Z^{-1}(\Delta)}
\subset X\times X^A
\]
be the closure of the preimage of $\Delta$, where the union is over all connected components of $X^A$. Then, $L_{\mathfrak{C}}$ is an $A$-invariant Lagrangian subvariety and $L_{\mathfrak{C}}|_{X\times Z}$ is supported on $\Att^f_{\mathfrak{C}}(Z)$. 
\[
\xymatrix{
&L_{\mathfrak{C}}  \ar@{}[r]|-*[@]{\subset} \ar[ld]_{p_1} \ar[rd]^{p_2}& X^A\times X    \\
X^A & & X
}
\]
where $p_1: L_{\mathfrak{C}} \to X^A$ and $p_2: L_{\mathfrak{C}} \to X$ are the compositions of the inclusion $L_{\mathfrak{C}}\subset X^A\times X$ with the two projections from $X^A\times X$ respectively. Note that $L_{\mathfrak{C}}$ is a Lagrangian subvariety. 

We fix a virtual bundle $T^{1/2}X \in K_{T}(X)$, such that, 
\[
TX=T^{1/2}X+\hbar^{-1} \otimes (T^{1/2}X)^{\vee}, 
\]
referred to as the polarization of $X$.
The product of the non-zero $A$-weights in the restriction of $T^{1/2}X$ to $X^A$ gives a polarization $\epsilon$ in the sense of \cite[Definition 3.3.1]{MO}.

Choose a chamber $\mathfrak{C}$ and a polarization $T^{1/2}X$. We have the following characterization of the stable envelope of Maulik and Okounkov.
Let $Z$ be a component of $X^A$ and let $N_Z$ be the normal bundle to $Z$ in $X$. We have the $T$-invariant decomposition
\[
N_Z=N_{-}\oplus N_{+}
\] into $A$-weights that are positive and negative on $\mathfrak{C}$ respectively. 
\begin{theorem}
\cite[Theorem 3.3.4 and Proposition 3.5.1]{MO}
\label{thm:stable envelope MO}
There exists a unique cohomology class $\Stab_{\mathfrak{C}}^{MO} \in H_T(X\times X^A)$
such that
\begin{enumerate}
\item The restriction of $\Stab_{\mathfrak{C}}^{MO}$ to $X\times Z$ is supported on $\Att^f_{\mathfrak{C}}(Z)\times Z$. 
\item The restriction to $Z\times Z$ equals $e(N_{-})\cap \Delta$,  according to the polarization. Here $e(N_{-})$ is the $A$-equivariant Euler class of $N_{-}$. 
\item For $Z'< Z$, the restriction of $\Stab_{\mathfrak{C}}^{MO}$ to $Z'\times Z$ has $A$-degree less than $\frac{1}{2}\codim (Z')$. 
\end{enumerate}
\end{theorem}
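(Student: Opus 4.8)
The statement to be proved is Theorem~\ref{thm:stable envelope MO}, the existence and uniqueness of the stable envelope class $\Stab_{\mathfrak{C}}^{MO}\in H_T(X\times X^A)$ satisfying the three normalization conditions. Since this is quoted verbatim from \cite[Theorem 3.3.4 and Proposition 3.5.1]{MO}, the plan is to reproduce the Maulik--Okounkov argument, which I outline here. The key geometric input is the chamber-dependent stratification of $X$ by the attracting sets: order the components $Z$ of $X^A$ by the partial order $Z\geq Z'\Leftrightarrow \overline{\Att_{\mathfrak{C}}(Z)}\cap Z'\neq\emptyset$ introduced above, and use that the full attracting set $\Att^f_{\mathfrak{C}}(Z)$ is closed (this is \cite[Lemma 3.2.7]{MO}, already cited).

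\textbf{Uniqueness.} First I would prove uniqueness, as it is both easier and instructive for existence. Suppose $s,s'$ both satisfy conditions (1)--(3); set $d=s-s'$. By (1), $d|_{X\times Z}$ is supported on $\Att^f_{\mathfrak{C}}(Z)\times Z$; by (2), $d|_{Z\times Z}=0$. Now run an induction on the partial order: working with the minimal component in $\supp(d)$ relative to a fixed $Z$ and restricting to $Z'\times Z$, the support condition forces the class to come from $H_T(Z'\times Z)$ twisted by a Gysin/pushforward from the attracting locus, and condition (3) bounds its $A$-degree strictly below $\tfrac12\codim Z'$. But the Euler-class factor $e(N_{-}(Z'))$ has $A$-degree exactly equal to the rank of $N_{-}(Z')$, and a self-intersection/localization computation shows any nonzero such class would have degree $\geq \tfrac12\codim Z'$ — contradiction. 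Hence $d|_{Z'\times Z}=0$ for all $Z'$, and by the localization theorem (injectivity of restriction to fixed loci after inverting the $A$-weights, combined with the support constraint) $d=0$.

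\textbf{Existence.} I would construct $\Stab_{\mathfrak{C}}^{MO}$ by descending induction on the poset of components. Over the open union of strata lying above a given level one defines the class on $\Att_{\mathfrak{C}}(Z)$ using the Lagrangian correspondence $L_{\mathfrak{C}}$ pictured above: take $\Delta_Z$ pushed forward along $\pi_Z:\Att_{\mathfrak{C}}(Z)\times Z\to Z\times Z$, appropriately normalized by the polarization so that its restriction to $Z\times Z$ is $e(N_-)\cap\Delta$. The content is then to extend this class over the lower strata, one at a time: at each step the obstruction to extending lies in a cohomology group that, by the long exact sequence of the pair and the degree bound built into condition (3), one shows is killed — i.e.\ the naive extension can be corrected by a class of $A$-degree $<\tfrac12\codim Z'$, which exists and is unique by the uniqueness argument applied inductively. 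This is exactly the interplay between the support condition (1) and the degree condition (3): (1) says the correction lives on a fixed closed subset, (3) pins it down. Finally one checks the resulting class is well-defined independent of the order in which lower strata were adjoined (again by uniqueness) and that it is Lagrangian, i.e.\ supported on $L_{\mathfrak{C}}$.

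\textbf{Main obstacle.} The genuinely delicate point is the degree bookkeeping in the inductive extension step: one must verify that the obstruction classes really do lie in the range of $A$-degrees where they can be absorbed, which requires the sharp inequality $\deg_A < \tfrac12\codim$ on off-diagonal restrictions together with the fact that $e(N_-)$ saturates the complementary bound on the diagonal. In the present context this is inherited from \cite{MO} verbatim; I would cite \cite[\S 3.3--3.5]{MO} for the details of this estimate rather than reprove it, and only indicate how the stratification by $\Att^f_{\mathfrak{C}}$ and the Lagrangian $L_{\mathfrak{C}}$ organize the induction. The subsequent sections of the appendix will apply the Hopkins--Kuhn--Ravenel character map to transport this cohomological class to the $E^*$-setting, which is where Proposition~\ref{prop:localization_convolution} enters; that transport does not require re-running the above argument, only the formal properties (support, restriction, degree) that survive base change.
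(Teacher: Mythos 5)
This theorem is stated in the paper purely as an imported result, cited from \cite[Theorem 3.3.4 and Proposition 3.5.1]{MO} with no proof given, so there is no internal argument to compare against. Your sketch correctly reproduces the Maulik--Okounkov strategy: uniqueness by induction on the partial order of fixed components, using that a class supported on the full attracting set restricts on a minimal component $Z'$ to something divisible by $e(N_-(Z'))$, whose $A$-degree $\tfrac12\codim Z'$ contradicts condition (3); and existence by descending induction over the stratification by attracting sets with corrections of low $A$-degree. Since you explicitly defer the delicate degree estimates to \cite[\S 3.3--3.5]{MO}, which is exactly what the paper itself does, your treatment is consistent with (indeed slightly more detailed than) the paper's, and no gap needs to be flagged.
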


\subsection{A Morava $E^*$-class}
\label{E^* class}
Following the same notations as before, let $X$ be an algebraic variety with a symplectic form $\omega$ and a proper map $\pi: X\to X_0$. Let $\varmathbb{G}$ be a reductive group acting on $X$. Consider the inclusions $A\subset T \subset \varmathbb{G}$ of subgroups, where $T$ is the maximal torus and $A$ is a subtorus. They satisfy the assumptions in \S\ref{sec:MO stab}. 

Let $G\subset \varmathbb{G}$ be a finite subgroup such that $G$ commutes with $A$ and that $G$ fixes $\omega$. 
For any group homomorphism \[
\alpha: \Lambda_t \to G \subset \varmathbb{G}, 
\] as $\pi$ is $\varmathbb{G}$-equivariant, we have an induced proper map $\pi_{\alpha}: X^{\im \alpha}\to X_0^{\im \alpha}$ on the fixed point locus. 
The centralizer $C_{\im \alpha}(\varmathbb{G})$ of $\im \alpha$ in $\varmathbb{G}$ acts on $X^{\im \alpha}$. 
We have inclusions of subgroups $A\subset T \subset C_{\im \alpha}(\varmathbb{G})$, where $T$ is the maximal torus  of $C_{\im \alpha}(\varmathbb{G})$. By assumption, $\im \alpha$ fixes the symplectic form $\omega$, thus $X^{\im \alpha}$ is symplectic. 

We now apply the Maulik-Okounkov construction to the variety $X^{\im \alpha}$ with actions of $C_{\im \alpha}(\varmathbb{G})$ and $A$. Choose the chamber $\mathfrak{C}$ in $\mathfrak{a}=\Lie(A)$ of $A$. As the actions of $\im \alpha$ and $A$ commutes, the decomposition $X^A=\sqcup_{Z\in \pi_0(X^A)} Z$ gives a decomposition
\[
X^{\im \alpha, A}=\sqcup_{Z\in \pi_0(X^A)} Z^{\im \alpha}. 
\]

By definition, we have
\begin{align*}
\Att_{\mathfrak{C}}(Z^{\im \alpha})
=
\{(x, y)\in X^{\im \alpha}\times Z^{\im \alpha}\mid \lim_{t\to 0} \sigma(t)x=y, \forall \sigma: \C^*\to A  \,\ \text{in $\mathfrak{C}$}\}
=\Att_{\mathfrak{C}}(Z)^{\im \alpha}
\end{align*}
Let $\pi_\alpha: \Att_{\mathfrak{C}}(Z^{\im \alpha})\times Z^{\im \alpha}\to Z^{\im \alpha}\times Z^{\im \alpha}$ be the natural map. Let 
\[
L_{\mathfrak{C},\alpha}
=\cup_{Z \in \in \pi_0(X^{A})} \overline{\pi_{\alpha}^{-1}(\Delta_{Z^{\im \alpha}})}
\subset X^{\im \alpha}\times X^{\im \alpha, A}. 
\] 
be the Lagrangian subvariety. We then have
\[L_{\mathfrak{C}_\alpha,\alpha}=L_{\mathfrak{C}}^{\im \alpha}.
\]
For each $\alpha \in \hom(\Lambda_n, G)$,  the stable envelope constructed in Theorem \ref{thm:stable envelope MO} for $X^{\im \alpha}$
induces $\Stab_\alpha^{MO} \in 
L(E^*)\otimes_{E^*}E^*_{T}( 
L_{\mathfrak{C}}^{\im \alpha})$, since $L(E^*)$ has additive formal group law. 
\begin{lemma}
We have
$\sum_\alpha \Stab_\alpha^{MO}\in L(E^*)\otimes_{E^*} E^*_{T}( 
\sqcup_{\alpha\in \hom(\Lambda_n, G)} 
L_{\mathfrak{C}}^{\im \alpha})$ is $G$-invariant. 
\end{lemma}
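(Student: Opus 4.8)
The plan is to show that the element $\sum_{\alpha}\Stab_\alpha^{MO}$ lies in the $G$-invariant part of $L(E^*)\otimes_{E^*}E^*_T(\sqcup_{\alpha}L_{\mathfrak C}^{\im\alpha})$ by examining how $G$ permutes the index set $\hom(\Lambda_n,G)$ and the corresponding summands. First I would recall that $G$ acts on $\hom(\Lambda_n,G)$ by post-composition with conjugation, i.e.\ $g\cdot\alpha = (h\mapsto g\,\alpha(h)\,g^{-1})$, and that for each $g\in G$ and each $\alpha$ conjugation by $g$ gives an isomorphism of varieties $X^{\im\alpha}\xrightarrow{\ \cong\ }X^{\im(g\alpha g^{-1})}$, which (since $g$ commutes with $A$) restricts to an isomorphism $L_{\mathfrak C}^{\im\alpha}\xrightarrow{\ \cong\ }L_{\mathfrak C}^{\im(g\alpha g^{-1})}$ compatible with the $T$-actions (here one uses that $A\subseteq T\subseteq C_{\im\alpha}(\varmathbb G)$ and that conjugation by $g\in G$ permutes these centralizers). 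Thus $g$ acts on the big disjoint union $\sqcup_\alpha L_{\mathfrak C}^{\im\alpha}$ by permuting the pieces according to $\alpha\mapsto g\alpha g^{-1}$, and $G$-invariance of $\sum_\alpha\Stab_\alpha^{MO}$ amounts to the statement that, for every $g$, the isomorphism $L_{\mathfrak C}^{\im\alpha}\cong L_{\mathfrak C}^{\im(g\alpha g^{-1})}$ carries $\Stab_\alpha^{MO}$ to $\Stab_{g\alpha g^{-1}}^{MO}$.

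The key step is therefore to verify this last compatibility. I would do this via the uniqueness clause in Theorem~\ref{thm:stable envelope MO} (the Maulik--Okounkov characterization), base-changed to $L(E^*)$ where the formal group law is additive so that the classical degree/support arguments apply verbatim. Concretely: conjugation by $g$ is a $T$-equivariant (indeed $C_{\im\alpha}(\varmathbb G)$-equivariant, after the obvious identification of centralizers) isomorphism $\phi_g: X^{\im\alpha}\to X^{\im(g\alpha g^{-1})}$ that sends $X^{\im\alpha,A}$ to $X^{\im(g\alpha g^{-1}),A}$, permutes the connected components $Z^{\im\alpha}$ accordingly, preserves the chamber $\mathfrak C$ (as $g$ centralizes $A$ it acts trivially on $\mathfrak a_{\mathbb R}$), hence preserves attracting sets, the partial order, and the full attracting sets; moreover $\phi_g$ carries the chosen polarization $T^{1/2}X^{\im\alpha}$ to that of $X^{\im(g\alpha g^{-1})}$ and hence $e(N_-)$ to $e(N_-)$ on the diagonal. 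Consequently $(\phi_g\times\phi_g)_* \Stab_\alpha^{MO}$ satisfies the three defining properties (support on the full attracting set, restriction $e(N_-)\cap\Delta$ to the diagonal components, strict degree bound off-diagonal) of the stable envelope for $X^{\im(g\alpha g^{-1})}$, so by uniqueness it equals $\Stab_{g\alpha g^{-1}}^{MO}$. Summing over $\alpha$ and using that $g$ merely permutes the index set gives $g\cdot\bigl(\sum_\alpha\Stab_\alpha^{MO}\bigr)=\sum_\alpha\Stab_\alpha^{MO}$.

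The main obstacle I anticipate is purely bookkeeping rather than conceptual: one must be careful that the maximal torus $T$ appearing in the Maulik--Okounkov construction for $X^{\im\alpha}$ is the maximal torus of the \emph{centralizer} $C_{\im\alpha}(\varmathbb G)$, and that conjugation by $g\in G$ identifies $C_{\im\alpha}(\varmathbb G)$ with $C_{\im(g\alpha g^{-1})}(\varmathbb G)$ in a way that fixes $A$ pointwise and is the identity on $T$ up to this identification; without this, the two classes $\Stab_\alpha^{MO}$ and $\Stab_{g\alpha g^{-1}}^{MO}$ would not even live in comparable groups. Once the identifications of tori and the triviality of the $G$-action on $\mathfrak a_{\mathbb R}$ are pinned down, the invariance follows formally from uniqueness. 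I would also note explicitly that the passage to $L(E^*)$ is what makes Theorem~\ref{thm:stable envelope MO} applicable in the $E^*$-setting, since there the formal group law is additive and the MO axioms (which are stated cohomologically) transport without change; this is the only place the hypothesis on $L(E^*)$ is used in this lemma.
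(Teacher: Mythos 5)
Your proposal is correct and follows essentially the same route as the paper: both reduce $G$-invariance to the statement that the isomorphism $L_{\mathfrak C}^{\im\alpha}\cong L_{\mathfrak C}^{\im(g^{-1}\alpha g)}$ induced by $g$ (which exists because $G$ commutes with $A$ and hence permutes the fixed loci, attracting sets, and chambers) identifies $\Stab_\alpha^{MO}$ with $\Stab_{g^{-1}\alpha g}^{MO}$, and both deduce this from the uniqueness clause of Theorem~\ref{thm:stable envelope MO} by checking the support, leading-term, and degree conditions. The paper works with the pullback $g^*$ and verifies the leading-term condition by an explicit zero-section computation $g^*s_1^*s_{1*}(1)=s_2^*s_{2*}(1)$, whereas you phrase it as compatibility of polarizations, but this is only a cosmetic difference.
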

\begin{proof}
Let $g$ be an element in $G$. As $G$ and $A$ commute, the action of $G$ on $X$ gives a map
\[
g: X^{\im g^{-1}\alpha g}\times (X^{\im g^{-1}\alpha g})^{A}
\to X^{\im \alpha}\times (X^{\im \alpha })^A. 
\]
Furthermore, 
\[
g(\Att_{\mathfrak{C}} (Z^{\im g^{-1}\alpha g}))=\Att_{\mathfrak{C}}(Z^{\im \alpha}), \,\ \,\  \text{and}  \,\ \,\ 
g(L_{\mathfrak{C}}^{\im g^{-1}\alpha g})= L_{\mathfrak{C}}^{\im \alpha}. 
\]



Let
\[
g^*: L(E^*)\otimes_{E^*} E^*_T(L_{\mathfrak{C}}^{\im \alpha })\to L(E^*)\otimes_{E^*} E^*_{T}(L_{\mathfrak{C}}^{\im g^{-1}\alpha g})
\]
be the pullback along $g$. Denote by $g^*(\Stab_\alpha^{MO})$ the image of $\Stab_\alpha^{MO}$ in $L(E^*)\otimes_{E^*} E^*_{T}(L_{\mathfrak{C}}^{\im g^{-1}\alpha g})$. 
We now verify that 
\[
g^*(\Stab_\alpha^{MO})=\Stab_{g^{-1}\alpha g}^{MO}. 
\] This follows from the uniqueness in  Theorem \ref{thm:stable envelope MO}. We spell out the argument below.

(1) The support condition. 
We know that $L_{\mathfrak{C}}^{\im g^{-1} \alpha g} |_{ X^{\im g^{-1} \alpha g}\times Z^{\im g^{-1}\alpha g}}= \Att^{f}_{\mathfrak{C}}(Z^{g^{-1}\alpha g})$. Under the $g$-action, it is mapped to $g(\Att^{f}_{\mathfrak{C}}(Z^{\im g^{-1}\alpha g}))= \Att^{f}_{\mathfrak{C}}(Z^{\im \alpha })$. Therefore, the restriction of $g^*(\Stab_\alpha^{MO})$ to $X^{\im g^{-1}\alpha g}\times Z^{\im g^{-1}\alpha g}$ is supported on $\Att^{f}_{\mathfrak{C}}(Z^{g^{-1}\alpha g}) \times Z^{\im g^{-1}\alpha g}$. 

(2) The leading term condition. 
By definition
$\Stab_{\alpha}^{MO}|_{Z^{\im \alpha}\times Z^{\im \alpha}}=e(N_{-}^{\alpha}) \cap \Delta_{Z^{\im \alpha}}$, where $N^{\alpha}$ is the normal bundle to $Z^{\im \alpha}$ in $X^{\im \alpha}$. 
Consider the following commutative diagram
\[
\xymatrix{
N_{-}^{g^{-1}\alpha g} \ar[d] \ar[r]^{g}& N_{-}^{\alpha } \ar[d]\\
Z^{\im g^{-1}\alpha g} \ar[r]^{g} \ar@/^1.0pc/[u]^{s_2}& Z^{\im \alpha } \ar@/_1.0pc/[u]_{s_1}
}
\]
where $s_1, s_2$ are the zero sections of $N_-^{\alpha}$ and $N_-^{g^{-1}\alpha g}$. We have the equalities  
\[g^*(e(N_-^{\alpha}))=g^*s_1^*s_{1*}(1)=s_2^*g^*s_{1*}(1)=s_2^*s_{2*}(1)=e(N_-^{g^{-1}\alpha g}).
\]
Therefore, 
\[g^*(\Stab_{\alpha}^{MO})|_{Z^{\im g^{-1}\alpha g}\times Z^{\im g^{-1}\alpha g}}=e(N_-^{\im g^{-1}\alpha g})\cap \Delta_{Z^{\im g^{-1}\alpha g}}.
\]

(3) The degree condition. For $Z'<Z$, 
we have
\[ \deg_A( g^*\Stab_{\alpha }^{MO}|_{Z'^{\im g^{-1}\alpha g} \times Z^{\im g^{-1}\alpha g}})=\deg_A( \Stab_{\alpha }|_{Z'^{\im \alpha} \times Z^{\im \alpha }})< \frac{1}{2} \codim (Z'^{\im \alpha} )=\frac{1}{2} \codim (Z'^{\im g^{-1}\alpha g} )
.\] 
We conclude that $\sum_\alpha \Stab_\alpha^{MO}$ is $G$-invariant. 
\end{proof}
By an abuse of notation, we still denote by $\Stab_\alpha^{MO}$ the image of $\Stab_\alpha^{MO}$ under the map
\[
L(E^*)\otimes_{E^*}E^*_{T}( 
L_{\mathfrak{C}}^{\im \alpha})
\to L(E^*)\otimes_{E^*}E^*_{\C^*}( 
L_{\mathfrak{C}}^{\im \alpha}). 
\]

We now apply the \cite[Theorem C]{HKR} to the Lagrangian subvariety $L_{\mathfrak{C}}$ with the action of $G\times \C^*\subseteq \bbG$, where $G$ is a finite group fixing $\omega$ of $X$, and $\C^*$ scales $\omega$ via a character. 
Applying the construction of \cite{HKR}, we obtain a morphism 
\begin{equation}
\label{eq:HKR for L}
HKR:  L(E^*)\otimes_{E^*} E^*_{G}(E\C^*\times _{\C^*}L_{\mathfrak{C}}) \to 
L(E^*)\otimes_{E^*} E^*( 
\sqcup_{\alpha\in \hom(\Lambda_n, G)} E\C^*\times _{\C^*}
L_{\mathfrak{C}}^{\im \alpha})^G. 
\end{equation}
Note that \cite[Theorem C]{HKR} applies to finite $G$-CW complexes only. Nevertheless, 
we claim that the morphism \eqref{eq:HKR for L} is an isomorphism. Indeed,  $B\mathbb{C}^*=\mathbb{P}^{\infty}$, and hence the above morphism can be realized as the limit of 
\[
HKR_k:  L(E^*)\otimes_{E^*} E^*_{G}((\C^{k+1}\setminus\{0\})\times _{\C^*}L_{\mathfrak{C}}) \xrightarrow{\cong} 
L(E^*)\otimes_{E^*} E^*( 
\sqcup_{\alpha\in \hom(\Lambda_n, G)} (\C^{k+1}\setminus\{0\})\times _{\C^*}
L_{\mathfrak{C}}^{\im \alpha})^G. 
\]
By \cite[Theorem C]{HKR}, the above morphism $HKR_k$ is an isomorphism. Thus, \eqref{eq:HKR for L} is an isomorphism, which can be written as
$
L(E^*)\otimes_{E^*} E^*_{G\times \C^*}(L_{\mathfrak{C}}) \to 
L(E^*)\otimes_{E^*} E^*_{\C^*}( 
\sqcup_{\alpha\in \hom(\Lambda_n, G)} 
L_{\mathfrak{C}}^{\im \alpha})^G. $

\begin{definition}\label{def:StabE}
Define the $E^*$-class $\Stab^{E^*}_{\mathfrak{C}}\in L(E^*)\otimes_{E^*} E^*_{G\times \C^*}(L_{\mathfrak{C}})$ by
\[
\Stab^{E^*}:={HKR}^{-1}( \sum_\alpha \Stab_\alpha^{MO} 
e(N(X^{\im \alpha }\subset X)), 
\]
where $N(e(N(X^{\im \alpha }\subset X)))$ is the normal bundle of the embedding $X^{\im \alpha }\subset X$. 
\end{definition}
Let $HKR_\alpha$ be the restriction of $HKR$ \eqref{eq:HKR for L} to the component $\alpha$.  
Modify $HKR_\alpha$ by $\frac{HKR_\alpha}{e(N(X^{\im \alpha }\subset X))}$ and denote the resulting map $\sum_\alpha \frac{HKR_\alpha}{e(N(X^{\im \alpha }\subset X))}$ by $\frac{HKR}{e(N(X^{\im \alpha }\subset X))}$ for simplicity. 
Then, under the map $\frac{HKR}{e(N(X^{\im \alpha }\subset X))} $, we have
\[
\Stab^{E^*}\mapsto \sum_\alpha \Stab_\alpha^{MO}. 
\]

\subsection{The dynamical twist}
In this section, we define a version of convolution algebra and standard module with dynamical parameters. The usual standard modules are recovered when the dynamical twist is trivial. The dynamical twist is not necessary in the present paper, although it naturally appears in all the cohomology theories. We include it here for completeness and future references. 

We begin with the parameter space of the dynamical parameters. This is motivated and reformulated from the dynamical shifts in \cite{AO, O}. Let $E^*$ be any oriented cohomology theory. Let $\tilde{G}:=G\times \C^*$. 

Suppose $X=\bar{X}//H$ is obtained by a GIT-quotient, and it carries a representation of $G$. For example, we may take $X$ to be the quiver varieties. 
Let \[
\mathbb{B}:=\Spec E^*_{\C^*}(\pt)\otimes \Hom(H, \C^*). 
\]
We have the isomorphism of $\Spec E^*_{\C^*}(\pt)$ with the formal group law $F$ of $E^*$. There is a group structure on $\mathbb{B}$ for which we denote by $\otimes$.  

The projection
$X=\bar{X}//H \to \pt/H$ 
induces a map
\begin{equation}
\label{eq:B1}
\Spec(E^*_{\tilde G}(X))\times \mathbb{B}
=
\Spec(E^*_{H\times \tilde G}(\bar{X}))\times \mathbb{B}
\to 
\Spec(E^*_{H}(\pt))\times \mathbb{B}
\end{equation}
Each character $\sigma \in \Hom(H, \C^*)$ of $H$ induces a morphism
\[
\sigma: \Spec E^*_{H}(\pt)\to \Spec E^*_{\C^*}(\pt).
\]
These morphisms give rise to the following natural map
\begin{equation}
\label{eq:B2}
 \Spec(E^*_{H}(\pt))\times \mathbb{B}=\Spec(E^*_{H}(\pt))\times\Spec E^*_{\C^*}(\pt)\otimes \Hom(H, \C^*)
\to \Spec E^*_{\C^*}(\pt)\times \Spec E^*_{\C^*}(\pt). 
\end{equation}
Note that $\Spec E^*_{\C^*}(\pt)$ is the formal group law $F$ of $E^*$. On $F\times F$, we have the universal line bundle denoted by $\rho_{F}$. Indeed, we have a map $F\times F\to F\times \Pic(F), (a, b)\mapsto (a, \mathcal{O}(0-b)$, the fiber of $\rho_F$ at the point $(a, b)$ is $\calO(0-b)_{a}$. 
\begin{definition}
\label{def:Poincare}
Let $\rho$ be the line bundle on 
\[
\Spec(E^*_{\tilde{G}}(X))\times \mathbb{B}
\]
obtained by pulling back $\rho_F$ along \eqref{eq:B2} and \eqref{eq:B1}, called the Poincare line bundle.  
\end{definition}

Let $V$ be a $\tilde{G}$-equivariant vector bundle on $X$, let $e^{E^*}(V):=s^*s_*(1)\in E^*_{\tilde{G}}(X)$ be the equivariant Euler class of $V$, where $s$ is the zero section of $V$. Then, the ideal sheaf $(e^{E^*}(V))$ is a line bundle on $\Spec E^*_{\tilde{G}}(X)$, denoted by $\Theta^{E^*}(V)$. 
The assignment $V \mapsto \Theta^{E^*}(V)$ extends to a group homomorphism \cite[Proposition 3.12]{ZZ}
\[
\Theta^{E^*}: K_{\tilde{G}}(X)\to \Pic( \Spec E^*_{\tilde{G}}(X)). 
\] 
We are interested in the line bundle
\[
\Theta^{E^*}(T^{1/2} X)\otimes \rho \,\ \,\  \text{on} \,\ \,\ 
\Spec( E_{\tilde{G}}^*(X))\times \mathbb{B}. 
\]


Recall we have the following two maps
\[
\xymatrix{
&L_{\mathfrak{C}}  \ar@{}[r]|-*[@]{\subset} \ar[ld]_{p_1} \ar[rd]^{p_2}& X^A\times X    \\
X^A & & X
}
\]

We have the pullback $p_2^*: E^*_{\tilde{G}}(X)\to E^*_{\tilde{G}}(L_{\mathfrak{C}})$ (ring homomorphism) along $p_2$. Let $\Theta^{E^*}(p_1))$ denote the image under  $\Theta^{E^*}$ of the relative tangent bundle of $p_1$. Note that due to the singularities of $L_{\mathfrak{C}}$, the definition of $\Theta(Tp_1)$ is via the smooth embedding \S~\ref{subsec:conv} similar to \cite[2.5.2]{GKV}.
Then, $\Theta^{E^*}(p_1)$ is a module over the ring $E_{\tilde{G}}^*(L_{\mathfrak{C}})$. We view $\Theta^{E^*}(p_1)$ as a module over $E_{\tilde{G}}^*(X^A)$ via the ring homomorphism $p_1^*: E^*_{\tilde{G}}(X^A)\to E^*_{\tilde{G}}(L_{\mathfrak{C}})$. We have the pushforward along $p_1$, which is a module morphism over the ring $E_{\tilde{G}}^*(X^A)$. 
\[
p_{1*}: \Theta^{E^*}(p_1)\to E^*_{\tilde{G}}( X^A). \]
Convolution with the class $\Stab^{E^*}$ from Definition \ref{def:StabE} induces the following morphism
\[
p_{1*}(p_2^*(\bullet)\cdot \Stab^{E^*}): \Theta^{E^*}(T^{1/2}X)\otimes \rho\to
\Theta^{E^*}(T^{1/2}X^A) \otimes \rho_A \otimes \delta,
\]
where 
\begin{itemize}
\item
$\Theta^{E^*}(T^{1/2}X)\otimes \rho$ is  a line bundle on the space $\Spec( E_{\tilde{G}}^*(X))\times \mathbb{B}$,
\item
$\Theta^{E^*}(T^{1/2}X^A)\otimes \rho_A$ is on $\Spec( E_{\tilde{G}}^*(X^A))\times \mathbb{B}$ with  $\rho_A$ the Poincare line bundle,
\item The following line bundle on $\Spec( E_{\tilde{G}}^*(X^A))\times \mathbb{B}$
\begin{equation}\label{eq:shift d}
\delta:=
\Theta^{E^*}(T^{1/2}X^A)^{-1}\otimes 
\Theta^{E^*}(p_1)^{-1}\otimes \Theta^{E^*}(T^{1/2}X) \otimes \rho\otimes (\rho_A)^{-1}
\end{equation}
For the embedding $X^{A} \subset X$, we have the natural map $
\Spec(E^*_{\tilde{G}}(X^A))\to \Spec(E^*_{\tilde{G}}(X)))$. 
Here we restrict $\Theta^{E^*}(T^{1/2}X)$ to $X^A$. We call $\delta$ the dynamical shift. 
\end{itemize}

\begin{lemma}
The degree of $\delta$ is zero. 
\end{lemma}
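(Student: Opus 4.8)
The plan is to unwind the definition of $\delta$ in \eqref{eq:shift d} and show that the degree contributions cancel in pairs. Recall $\delta=\Theta^{E^*}(T^{1/2}X^A)^{-1}\otimes \Theta^{E^*}(p_1)^{-1}\otimes \Theta^{E^*}(T^{1/2}X)\otimes \rho\otimes(\rho_A)^{-1}$ as a line bundle on $\Spec(E^*_{\tilde G}(X^A))\times\mathbb{B}$. Since $\Theta^{E^*}$ is a group homomorphism from $K$-theory to Picard groups, the degree of $\Theta^{E^*}(V)$ is controlled by the rank of $V$ (more precisely, by the $K$-theory class modulo what the $\Theta$-map sends to degree-zero bundles). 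So the first step is to isolate the $K$-theoretic identity among $T^{1/2}X^A$, $Tp_1$, and $T^{1/2}X|_{X^A}$, and separately the identity between $\rho$ and $\rho_A$ under restriction along $X^A\subset X$.

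First I would record the polarization compatibility. By definition $TX=T^{1/2}X\oplus \hbar^{-1}\otimes(T^{1/2}X)^\vee$, and restricting to $X^A$ decomposes $TX|_{X^A}=TX^A\oplus N$, where $N=N_+\oplus N_-$ is the normal bundle with its $A$-weight decomposition as in Theorem~\ref{thm:stable envelope MO}. The relative tangent bundle of $p_1\colon L_{\mathfrak C}\to X^A$ is, by the Lagrangian/attracting-set description, identified (in $K$-theory, after the appropriate smooth-embedding interpretation of $\Theta$) with the attracting part $N_+$ of the normal bundle, since $L_{\mathfrak C}|_{X\times Z}$ is supported on the full attracting set $\Att^f_{\mathfrak C}(Z)$ and is Lagrangian. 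Meanwhile the half $T^{1/2}X$ restricted to $X^A$ splits as $T^{1/2}X^A$ plus the ``positive half'' $N_+^{1/2}$ of the normal bundle, and the polarization identity forces $N_-\cong\hbar^{-1}\otimes(N_+)^\vee$ (or the analogous statement matching $N_-$ with the dual of $N_+$). Combining these, the virtual bundle $-T^{1/2}X^A-Tp_1+T^{1/2}X|_{X^A}$ becomes, in $K_{\tilde G}(X^A)$, a class built only out of $N_\pm$ whose rank is zero; hence $\Theta^{E^*}$ of it has degree zero. This is the computational heart, but it is exactly the bookkeeping that underlies the Maulik--Okounkov stable envelope formalism, so I expect it to be routine once the conventions are pinned down.

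Next I would handle the Poincaré line bundle factor $\rho\otimes(\rho_A)^{-1}$. Here $\rho$ is pulled back from $F\times F$ along \eqref{eq:B1}--\eqref{eq:B2} built from $\Spec E^*_{\tilde G}(X)\to \Spec E^*_H(\pt)$, and $\rho_A$ is the analogous Poincaré bundle for $X^A$; under the natural map $\Spec(E^*_{\tilde G}(X^A))\to\Spec(E^*_{\tilde G}(X))$ coming from $X^A\subset X$ the two constructions are compatible, so $\rho|_{X^A}\cong\rho_A$ and this factor contributes nothing to the degree. (The GIT quotient presentation $X=\bar X/\!\!/H$ and $X^A=\bar X^A/\!\!/H$ use the same $H$, so the maps to $\pt/H$ agree.) Putting the two vanishing statements together gives $\deg\delta=0$.

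The main obstacle I anticipate is making the identification $Tp_1\cong N_+$ (and the matching of the two halves of the normal bundle via the polarization) precise at the level of $K$-theory classes, given that $L_{\mathfrak C}$ is singular and $\Theta^{E^*}(p_1)$ is only defined through a smooth-embedding procedure in the style of \cite[2.5.2]{GKV}. I would address this by working stratum-by-stratum over the components $Z$ of $X^A$, where on each $\Att_{\mathfrak C}(Z)$ the relative tangent space is genuinely the positive part of the normal bundle, and arguing that the degree of $\Theta^{E^*}$ depends only on the generic rank, which is insensitive to the singular locus. Everything else is formal manipulation with the group homomorphism $\Theta^{E^*}$ and the definitions of $\rho$, $\rho_A$, and $\delta$.
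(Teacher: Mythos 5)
Your reduction of the problem to the two factors (the tangent-bundle part and the Poincar\'e-bundle part) matches the paper's, and your $K$-theoretic bookkeeping does arrive at essentially the right virtual bundle: $T^{1/2}X|_{X^A}-Tp_1-T^{1/2}X^A$ is, generically, $N^{1/2}-N_+$, a difference of two polarizations of the normal bundle. But the step you use to conclude is wrong: the degree of $\Theta^{E^*}(V)$ is \emph{not} controlled by the rank of $V$. For a formal group of height $\geq 1$ (elliptic cohomology being the model case), the degree of $\Theta^{E^*}(V)$ in the equivariant/dynamical directions is a quadratic form in the weights of $V$; a rank-zero virtual bundle such as $L_{2t}+\mathcal{O}-2L_t$ has $\Theta$ of nonzero degree. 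So ``rank zero, hence degree zero'' is a non sequitur, and it is exactly the point where the actual content of the lemma lives. The paper's argument instead writes $Tp_1=TL_{\mathfrak{C}}-TX^A$, so that $T^{1/2}X-Tp_1-T^{1/2}X^A=T^{1/2}X+\hbar^{-1}\otimes(T^{1/2}X^A)^{\vee}-TL_{\mathfrak{C}}$, observes that $TL_{\mathfrak{C}}$ is a polarization of $X\times X^A$ because $L_{\mathfrak{C}}$ is Lagrangian, and invokes the fact (\cite[\S~3.2.2]{AO}) that $\Theta^{E^*}$ of a \emph{difference of two polarizations} has degree zero. That cancellation uses the duality $T_1-T_2=-\hbar^{-1}(T_1-T_2)^{\vee}$ forced by the polarization identity --- i.e.\ the Chern roots pair off against their $\hbar$-twisted negatives --- not merely that the ranks agree. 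To repair your proof you would need to prove (or cite) this polarization-difference statement rather than the rank statement.

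A secondary, more minor issue: your treatment of $\rho\otimes(\rho_A)^{-1}$ rests on the claim that $X$ and $X^A$ are GIT quotients by the same group $H$, so that $\rho|_{X^A}\cong\rho_A$. For quiver varieties the fixed locus $X^A$ is a product of quiver varieties with strictly smaller gauge groups, so this identification is not available in general. It is also unnecessary: both $\rho$ and $\rho_A$ are pulled back from the Poincar\'e bundle $\rho_F$ on $F\times F$, which is of degree zero by construction, so each factor separately contributes nothing to the degree --- this is all the paper uses.
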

\begin{proof}
We know that $\rho$ and $\rho^A$ are degree zero line bundles. It suffices to show 
$\Theta^{E^*}(T^{1/2}X^A)^{-1}\otimes 
\Theta^{E^*}(p_1)^{-1}\otimes \Theta^{E^*}(T^{1/2}X)$ has degree zero. 

We have
\begin{align*}
T^{1/2}X-Tp_1-T^{1/2}X^A
=&T^{1/2}X-T^{1/2}X^A-TL_{\mathfrak{C}}+TX^A\\
=&T^{1/2}X+\hbar^{-1}\otimes (T^{1/2} X^A)^{\vee}-TL_{\mathfrak{C}}.  
\end{align*}
As $L_{\mathfrak{C}}$ is Lagrangian, then $TL_{\mathfrak{C}}$ is a polarization of $X\times X^A$. We have $T^{1/2}X+\hbar^{-1}\otimes (T^{1/2} X^A)^{\vee}$ is another polarization of $X\times X^A$. 
Thus, $T^{1/2}X-Tp_1-T^{1/2}X^A$ is a difference of two polarizations. See \cite[Section 3.2.2]{AO} for the formula in terms of Chern roots. Therefore, $\Theta^{E^*}(T^{1/2}X-Tp_1-T^{1/2}X^A)$ has degree zero. 
\end{proof}

\subsection{The convolution algebra and its module}
\label{sec:convolution algebra}
In this section, we introduce the convolution algebra when there is a dynamical parameter. 
Let $Z_{12}\subset X_1\times X_2$. 
Consider the following diagram
\[
\xymatrix{
&Z_{12}  \ar@{}[r]|-*[@]{\subset} \ar[ld]_{\pi_1} \ar[rd]^{\pi_2}&  X_1\times X_2    \\
X_1 & & X_2
}
\]
where $\pi_i: Z_{12} \to X_i$ are the composition of the inclusion $Z_{12}\subset X_1\times X_2$ with the two projections from $X_1\times X_2$ respectively. Let us consider
\begin{equation}\label{eq:conv algebra}
\calU_{12}=
(\Theta^{E^*}(T^{1/2}(X_2))\otimes \rho_2)^{-1}
\otimes \Theta^{E^*}(\pi_1)
\otimes (\Theta^{E^*}(T^{1/2}(X_1)\otimes \rho_1), 
\end{equation}
where we view $\Theta^{E^*}(T^{1/2}(X_i))\otimes \rho_i$ as a module over $E^*_{{\tilde{G}}}(Z_{12})$ via $\pi_i^*$. The tensor product in \eqref{eq:conv algebra} is 
over modules of $E^*_{{\tilde{G}}}(Z_{12})$. 

We have, with $\Theta^{E^*}(T^{1/2}X)\otimes \rho$, the action map
\begin{equation}\label{equ:action Weyl}
\pi_{1*}\pi_2^*: 
\calU_{12}\otimes 
\Big(\Theta^{E^*}(T^{1/2}(X_2)\otimes \rho_2\Big)
\to \Theta^{E^*}(T^{1/2}(X_1))\otimes \rho_1. 
\end{equation}

\subsection{The coproduct}
\label{sec:coproduct}
In this section, we define the coproduct $\Delta^{E^*}$ of the convolution algebra  in a similar way as \cite{Nak12}. 
Taking the opposite chamber $\mathfrak{C}^{\op}$ in $\mathfrak{a}_{\bbR}$, we have the Lagrangian subvariety $L_{\mathfrak{C}^{\op}}\subset X\times X^A$. Let $(\Stab^{E^*}_{\mathfrak{C}^{\op}})^{-1} \in L(E^*)\otimes_{E^*} E^*(E{\tilde{G}}\times_{\tilde{G}} L_{\mathfrak{C}^{\op}})$ be the $E^*$ class such that
\[
\frac{HKR}{e(N(X^{\im \alpha A}\subset X^A))} ((\Stab^{E^*}_{\mathfrak{C}^{\op}})^{-1})
=(\Stab_{\alpha, \mathfrak{C}^{\op}}^{MO})^{-1},  
\]
where $(\Stab_{\alpha, \mathfrak{C}^{\op}}^{MO})^{-1}$
is the class $c^{-1}$ defined in \cite[Proposition 3.7]{Nak12} by taking $c$ to be $\Stab_{\alpha, \mathfrak{C}^{\op}}^{MO}$. Explicitly, let $(\Stab_{\alpha, \mathfrak{C}^{\op}}^{MO})^{-}$ to be the class defined similarly to $\Stab_{\alpha, \mathfrak{C}}^{MO}$ by choosing the opposite chamber $\mathfrak{C}^{\op}$ and swapping the first the second factor of $X^A\times X$. Define
\[
(\Stab_{\alpha, \mathfrak{C}^{\op}}^{MO})^{-1}
:= ((\Stab_{\alpha, \mathfrak{C}^{\op}}^{MO})^{-}
* \Stab_{\alpha, \mathfrak{C}}^{MO})^{-1} * (\Stab_{\alpha, \mathfrak{C}^{\op}}^{MO})^{-}, 
\]
where $*$ is the convolution.

Consider the following diagram
\[
\xymatrix{ 
&L_{\mathfrak{C}^{\op}}\times L_{\mathfrak{C}} \ar[ld]_{p_{12}\times p_{34}}
\ar[d]^{p_{23}} \ar[rd]^{p_{14}} \ar@{}[r]|-*[@]{\subset}& (X_1\times X^A_1)\times (X^A_2\times  X_2)\\
(X_1\times X^A_1)\times (X^A_2\times  X_2)  & 
X^A_1\times X^A_2
&  X_1\times X_2
}
\]
where $p_{ij}$ are the composition of the inclusion $L_{\mathfrak{C}^{\op}}\times L_{\mathfrak{C}} \subset X_1\times X^A_1\times  X^A_2\times  X_2 $ with the $(i,j)$th projection. 
  
Motivated by the construction of coproduct in \cite{Nak12},  we consider the convolution 
\[
p_{23*}\Big( p_{12}^* (\Stab^E_\mathfrak{C^{\op}})^{-1}\cdot p_{14}^* (\bullet) \cdot p_{34}^*(\Stab^E_{\mathfrak{C}})\Big).
\]It induces the following morphism, which will be the coproduct of the convolution algebra. 
\begin{align}\label{DeltaE}
\Delta^{E^*}: 
\calU_{12}\to
(\Theta^{E^*}(T^{1/2}(X_2^A))\otimes \rho_{2, A}\otimes \delta_2)^{-1}
\otimes \Theta^{E^*}(\pi_1^A)
\otimes (\Theta^{E^*}(T^{1/2}(X_1^A)\otimes \rho_{1, A}\otimes \delta_1)
=: \calU_{12}^A, 
\end{align}
where $\pi_1^A: Z_{12}^A\to X_1^A$ and
where $\rho_{i, A}$ is the Poincare line bundle on $\Spec(E^*_{{\tilde{G}}}(X_i^A))\times \mathbb{B}$ and $\delta_i$ is the dynamical shifts in \eqref{eq:shift d}. The morphism is induced from the composition of the following map
\begin{align*}
E^*_{{\tilde{G}}}(X_1\times X_2)\otimes \Theta^{E^*}(p_{23})\xrightarrow{p_{14}^* }E^*_{{\tilde{G}}}(L_{\mathfrak{C}^{\op}}\times L_{\mathfrak{C}})\otimes \Theta^{E^*}(p_{23})
\xrightarrow{(\Stab^{E}_{\mathfrak{C}^{\op}})^{-1}\boxtimes \Stab^{E}_{\mathfrak{C}}}\Theta^{E^*}(p_{23})
\xrightarrow{(p_{23})_*} E_{{\tilde{G}}}^*(X_1^A\times X^A_2). 
\end{align*}

The coproduct \eqref{DeltaE} is compactible with their respect actions on the modules. That is, the following diagram commutes. 
\[
\xymatrix@C=5em{
\calU_{12}\otimes
\Theta^{E^*}(T^{1/2} X_2\otimes \rho_2) \ar[d]^{\text{action}}\ar[r]^{\Delta^{E^*}\otimes \mathcal{S}tab_{X_2}}& 
\calU_{12}^A\otimes \Theta^{E^*}(T^{1/2} X_2^A\otimes \rho_{2, A}\otimes \delta_2) \ar[d]^{\text{action}}
\\
\Theta^{E^*}(T^{1/2} X_1\otimes \rho_1)\ar[r]^{\mathcal{S}tab_{X_1}} &
\Theta^{E^*}(T^{1/2} X_1^A\otimes \rho_{1, A}\otimes \delta_1)
}
\]

Recall that we denote by $\frac{HKR}{e(N(X_1^{\im \alpha A} \subset  X_1^{A}))}$ the modified map $\sum_\alpha \frac{HKR_\alpha}{e(N(X_1^{\im \alpha A} \subset  X_1^{A}))}$ of $HKR$. Let $\Delta^{MO}$ be the coproduct defined using the Maulik-Okounkov stable classes, that is, 
$\Delta^{MO}:=p_{23*}\Big( p_{12}^* (\Stab^{MO}_\mathfrak{C^{\op}})^{-1}\cdot p_{14}^* (\bullet) \cdot p_{34}^*(\Stab^{MO}_{\mathfrak{C}})\Big)$. 
\begin{lemma}
\label{lem:commu coproduct}
We have $(\frac{HKR}{e(N(X_1^{\im \alpha A} \subset  X_1^{A}))}) \Delta^{E^*}(x)=\Delta^{MO}(\frac{HKR}{e(N(X_2^{\im \alpha } \subset  X_2))}(x))$, for any $x\in \calU_{12}$. 
\end{lemma}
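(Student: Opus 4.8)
\textbf{Proof strategy for Lemma~\ref{lem:commu coproduct}.} The plan is to reduce the identity to the already-established compatibility between the stable classes $\Stab^{E^*}$ and the Maulik--Okounkov classes $\Stab^{MO}$ under the Hopkins--Kuhn--Ravenel map, together with the functoriality of the HKR construction (the isomorphism \eqref{eq:HKR for L}) with respect to pullbacks, pushforwards and products in $E^*$-theory. The point is that both $\Delta^{E^*}$ and $\Delta^{MO}$ are built out of the same diagram
\[
\xymatrix{
&L_{\mathfrak{C}^{\op}}\times L_{\mathfrak{C}} \ar[ld]_{p_{12}\times p_{34}}
\ar[d]^{p_{23}} \ar[rd]^{p_{14}}& \\
(X_1\times X^A_1)\times (X^A_2\times  X_2) & X^A_1\times X^A_2 &  X_1\times X_2
}
\]
by the recipe $(\text{convolve against } (\Stab_{\mathfrak{C}^{\op}})^{-1} \text{ on the left, against } \Stab_{\mathfrak{C}} \text{ on the right, then pushforward along } p_{23})$, with the only difference being which cohomology theory the stable classes live in. So the whole argument is ``turn the HKR crank'' on each elementary operation.

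First I would apply the HKR isomorphism \eqref{eq:HKR for L} (valid for $L_{\mathfrak{C}}$, $L_{\mathfrak{C}^{\op}}$ and their products, as established in \S\ref{E^* class} via the $\mathbb{P}^\infty$-limit argument) and recall from the construction in Definition~\ref{def:StabE} and the discussion following it that $\frac{HKR}{e(N)}(\Stab^{E^*}_{\mathfrak{C}}) = \sum_\alpha \Stab^{MO}_{\alpha,\mathfrak{C}}$, and similarly $\frac{HKR}{e(N)}((\Stab^{E^*}_{\mathfrak{C}^{\op}})^{-1}) = \sum_\alpha (\Stab^{MO}_{\alpha,\mathfrak{C}^{\op}})^{-1}$ by the defining equation for $(\Stab^{E^*}_{\mathfrak{C}^{\op}})^{-1}$. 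Next, since the character map of \cite{HKR} is a natural transformation of cohomology theories, it commutes with the pullbacks $p_{12}^*, p_{14}^*, p_{34}^*$, with the cup products appearing in the convolution, and with the pushforward $p_{23*}$ (using that $p_{23}$ is proper on the relevant loci, after the Lagrangian support conditions from Theorem~\ref{thm:stable envelope MO}). One must be careful to bookkeep the Euler-class normalizations: the normal bundle of $X_1^{\im\alpha A}\subset X_1^A$ is an extension (or quotient) of the normal bundles of the intermediate fixed loci, so the various $e(N(\cdots))$ factors multiply correctly, matching exactly the normalization $\frac{HKR}{e(N(X_i^{\im\alpha}\subset X_i))}$ built into $\Stab^{E^*}$ and into the target-side map $\frac{HKR}{e(N(X_1^{\im\alpha A}\subset X_1^A))}$. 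Assembling these commutations along the composite defining $\Delta^{E^*}$ in \eqref{DeltaE} then yields $\frac{HKR}{e(N)}\circ\Delta^{E^*} = \Delta^{MO}\circ\frac{HKR}{e(N)}$ applied to any $x\in\calU_{12}$.

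The main obstacle I anticipate is not conceptual but organizational: keeping track of which copy of which fixed locus each Euler class trivializes, and checking that the $L(E^*)$-localized HKR isomorphism genuinely intertwines the convolution products on the product space $L_{\mathfrak{C}^{\op}}\times L_{\mathfrak{C}}$ — i.e., that the multiplicativity axiom of the character map (its behavior on external products, as used already in Proposition~\ref{lem:lambda_invertible} and Lemma~\ref{lem2}) is being invoked in precisely the form needed here, over the singular varieties $L_{\mathfrak{C}}$ where ``pullback/pushforward'' are defined via the smooth-ambient convention of \S\ref{subsec:conv}. Once one fixes the ambient smooth spaces $X_1\times X_1^A\times X_2^A\times X_2$ and works with the refined Gysin maps of \cite[\S5.2]{ZZ1}, naturality of HKR with respect to all these operations is formal, and the lemma follows. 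I would present this as a diagram chase: draw the cube whose two faces are the $E^*$-side and $MO$-side constructions, whose vertical arrows are the (normalized) HKR maps, and whose every square commutes by one of the three naturality properties above.
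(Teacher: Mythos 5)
Your proposal is correct and follows essentially the same route as the paper: the two ingredients you identify (that the normalized HKR map carries $(\Stab^{E^*}_{\mathfrak{C}^{\op}})^{-1}\boxtimes\Stab^{E^*}_{\mathfrak{C}}$ to $(\Stab^{MO}_{\alpha,\mathfrak{C}^{\op}})^{-1}\boxtimes\Stab^{MO}_{\alpha,\mathfrak{C}}$, and that the Euler-class-normalized character map intertwines the convolution operations) are exactly what the paper uses. The only difference is packaging: the paper invokes Proposition~\ref{prop:localization_convolution} once, with $M_1=X_1^A\times X_2^A$, $M_2=X_1\times X_2$, $M_3=\pt$, rather than re-verifying commutation with each pullback, product, and pushforward separately as you propose.
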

\begin{proof}
We apply Proposition \ref{prop:localization_convolution} with $M_1=X_1^A\times X_2^A$, $M_2:=X_1\times X_2$ and $M_3=\pt$. 
Denote  $L(E^*)\otimes_{E^*} E^*_{\tilde{G}}(-)$ by $L(E^*)_{{\tilde{G}}}(-)$ for short, where $\tilde{G}=G\times \C^*$. When applying the \cite[Theorem C]{HKR} in the following, we treat the extra $\C^*$ the same way as in \S\ref{E^* class}. 

We have the following commutative diagram
\[
\xymatrix@C=5cm{
L(E)^*_{\tilde{G}}(M_1\times
M_2
)\otimes LE^*_{\tilde{G}}(M_2) \ar[r]^{ \frac{HKR}{e(N(X_2^{\im \alpha}\subset X)) e(N(X_1^{\im \alpha A}\subset X_1^A))}\boxtimes \frac{HKR}{e(N(X_2^{\im \alpha}\subset X_2))}}  \ar[d]^{a^{E^*}}&
L(E)^*_{\tilde{G}}(M_1^{\im \alpha }\times
M_2^{\im \alpha }
)\otimes L(E)^*_{\tilde{G}}(M_2^{\im \alpha})
 \ar[d]^{a^{MO}}\\
L(E)^*_{\tilde{G}}(M_1)\ar[r]^{\frac{HKR}{e(N(X_1^{\im \alpha A}\subset X^{A}_1 )) }}
 & 
 L(E)^*_{\tilde{G}}(M_1^{\im \alpha})
}
\]
Under the top horizontal map, we have
\[
\frac{HKR}{e(N(X_2^{\im \alpha}\subset X_2)) e(N(X_1^{\im \alpha A}\subset X_1^A))}
: 
(\Stab^{E^*}_{\mathfrak{C}^{\op}})^{-1}
\boxtimes \Stab^{E^*}_{\mathfrak{C}} \mapsto 
(\Stab^{MO}_{\alpha, \mathfrak{C}^{\op}})^{-1}
\boxtimes \Stab^{MO}_{\alpha, \mathfrak{C}}. 
\]
For any element $x\in E^*_{\tilde{G}}(M_2)$, note that
$\Delta^{E^*} (x)=a^{E^*} ((\Stab^{E^*}_{\mathfrak{C}^{\op}})^{-1}
\boxtimes \Stab^{E^*}_{\mathfrak{C}}\boxtimes x )$ and 
$
\Delta^{MO} (x):=a^{MO} ((\Stab^{MO}_{\mathfrak{C}^{\op}})^{-1}
\boxtimes \Stab^{MO}_{\mathfrak{C}}\boxtimes x )$. 
The commutativity of the above diagram implies that
\begin{align*}
\Delta^{MO}(\frac{HKR}{e(N(X_2^{\im \alpha}\subset X_2))}(x))=&a^{MO}(
(\Stab^{MO}_{\alpha, \mathfrak{C}^{\op}})^{-1}
\boxtimes \Stab^{MO}_{\alpha, \mathfrak{C}}\boxtimes \frac{HKR}{e(N(X_2^{\im \alpha}\subset X_2))}(x))
\\
=&\frac{HKR}{e(N(X_1^{\im \alpha A}\subset X_1^A))}( a^{E^*} ((\Stab^{E^*}_{\mathfrak{C}^{\op}})^{-1}
\boxtimes \Stab^{E^*}_{\mathfrak{C}}\boxtimes x)\\
=&\frac{HKR}{e(N(X_1^{\im \alpha A}\subset X_1^A))}\Delta^{E^*} (x). 
\end{align*}
This completes the proof. 
\end{proof}

\begin{theorem}
\label{thm:coproduct} Notations as above,
\begin{enumerate}
\item
$\Delta^{E^*}$ is coassociative; 
\item
$\Delta^{E^*}$ is an algebra homomorphism. 
\end{enumerate}
\end{theorem}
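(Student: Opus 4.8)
\textbf{Proof proposal for Theorem \ref{thm:coproduct}.}

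The plan is to reduce both statements to the corresponding properties of the Maulik--Okounkov coproduct $\Delta^{MO}$ via the comparison in Lemma~\ref{lem:commu coproduct}, which intertwines $\Delta^{E^*}$ with $\Delta^{MO}$ along the (modified) Hopkins--Kuhn--Ravenel isomorphism. Since $\frac{HKR}{e(N(-\subset -))}$ is an isomorphism on the relevant localized $E^*$-theories, and Lemma~\ref{lem:commu coproduct} says it carries $\Delta^{E^*}$ to $\Delta^{MO}$, it suffices to establish coassociativity and multiplicativity for the Maulik--Okounkov construction built from the classes $\Stab^{MO}_{\alpha,\mathfrak{C}}$ and their inverses $(\Stab^{MO}_{\alpha,\mathfrak{C}^{\op}})^{-1}$; these are known (see \cite{Nak12, MO}). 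Thus the content of the proof is essentially the transport of structure, once the HKR comparison is in place.

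For part (1), coassociativity: I would pass to a triple product $X_1\times X_2\times X_3$ (with the diagonal torus $A$ acting on all factors) and a three-factor version of the geometry in \S\ref{sec:coproduct}, involving $L_{\mathfrak{C}^{\op}}$ on the first factor and $L_{\mathfrak{C}}$ on the others. The point is that the two iterated coproducts $(\Delta^{E^*}\otimes \id)\circ \Delta^{E^*}$ and $(\id\otimes \Delta^{E^*})\circ \Delta^{E^*}$ are both computed as convolutions against the appropriate product of stable and dual-stable classes over the same correspondence; after applying $\frac{HKR}{e(N)}$ this becomes the statement that the MO stable envelopes satisfy the triangle/cocycle identity $\Stab^{MO}_{\mathfrak{C}, X_1\times X_2\times X_3}$ factors compatibly through the partial fixed loci, which is \cite[Proposition 3.7 and the surrounding discussion]{Nak12}. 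The dynamical shifts $\delta_i$ from \eqref{eq:shift d} compose additively (they are differences of polarizations, as in the lemma following Definition~\ref{def:Poincare}), so they match on the two sides; this is a routine bookkeeping check on line bundles of degree zero.

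For part (2), that $\Delta^{E^*}$ is an algebra homomorphism: here I would follow the same strategy as the last paragraphs of the proof of Theorem~\ref{thm:algebra hom} and of Proposition~\ref{prop:localization_convolution}, namely express both $\Delta^{E^*}(x*y)$ and $\Delta^{E^*}(x)*\Delta^{E^*}(y)$ as pushforward-pullback of products of $\Stab$-classes over an iterated correspondence, and observe that the two expressions coincide because the stable envelope class is ``diagonal'' in the appropriate sense (its restriction to off-diagonal strata has controlled support and degree, Theorem~\ref{thm:stable envelope MO}). Concretely, convolution $*$ on $\calU_{12}$ and on $\calU_{12}^A$ are both given by the standard composition-of-correspondences formula, the class $\Stab^{E^*}_{\mathfrak{C}}\boxtimes (\Stab^{E^*}_{\mathfrak{C}^{\op}})^{-1}$ is multiplicative for these compositions (this is the cohomological statement that the stable envelope intertwines convolution products, transported through HKR from the MO side), and Proposition~\ref{prop:localization_convolution} guarantees that all the intermediate Euler-class divisions are legitimate after localizing. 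Once again Lemma~\ref{lem:commu coproduct} reduces the identity to its MO counterpart, which is the multiplicativity of the MO $R$-matrix/coproduct.

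The main obstacle I anticipate is bookkeeping rather than conceptual: carefully matching the dynamical twist line bundles $\rho_i$, $\rho_{i,A}$, $\delta_i$ across the iterated correspondences so that the source and target line bundles on both sides of coassociativity literally agree, and checking that the several applications of the projection formula and base change (in the singular setting, via the smooth-ambient conventions of \S\ref{subsec:conv}) are valid. Everything else is an appeal to Lemma~\ref{lem:commu coproduct}, the uniqueness characterization of $\Stab^{MO}$ in Theorem~\ref{thm:stable envelope MO}, and the already-established facts about $\Delta^{MO}$ in \cite{Nak12}.
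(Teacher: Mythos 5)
Your proposal is correct and follows essentially the same route as the paper: part (2) is exactly the transport of multiplicativity of $\Delta^{MO}$ through Lemma~\ref{lem:commu coproduct} together with the injectivity of the modified HKR map, and part (1) reduces, after applying $\frac{HKR}{e(N)}$ (which commutes with convolution by Proposition~\ref{prop:localization_convolution}), to the factorization of stable envelopes through partial fixed loci — the paper phrases this via a face $\mathfrak{C}'\subset\mathfrak{C}$ and a chain of subtori $A'\subset A$ rather than your "triple product" picture, and invokes \cite[Lemma~3.6.1]{MO} rather than \cite[Proposition~3.7]{Nak12}, but these are the same triangle identity in the intended application.
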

\begin{proof}

The statement (1) follows from the following
torus reduction of the stable classes. Let $A'\subset A$ be a subtorus of the torus $A$, 
we have a triangle of embeddings
\[
\xymatrix{
X^A \ar[rr]  \ar[rd]&& X\\
&X^{A'} \ar[ur]&
}
\]
There are three classes 
$(\Stab^{E^*}_{\mathfrak{C}^{\op}})^{-1}
\boxtimes \Stab^{E^*}_{\mathfrak{C}}\in E^*_{\tilde{G}}(X_1\times X^A_1\times X^A_2\times X_2)$, 
$(\Stab^{E^*}_{\mathfrak{C'}^{\op}})^{-1}
\boxtimes \Stab^{E^*}_{\mathfrak{C'}}\in E^*_{\tilde{G}}(X_1\times X_1^{A'}\times X_2^{A'}\times X_2)$, and
$(\Stab^{E^*}_{\mathfrak{C''}^{\op}})^{-1}
\boxtimes \Stab^{E^*}_{\mathfrak{C''}}\in 
E^*_{\tilde{G}}(X_1^{A'}\times X_1^A\times X_2^A\times X_2^{A'})$. 
Set $N_1:=X_1\times X_2$. We have the following maps.
\[
\xymatrix{
& N_1 \times N_1^{A'} \times N_1^{A} \ar[dl]_{p_{12}}\ar[d]^{p_{13}}\ar[rd]^{p_{23}}&\\
X_1\times X_1^{A'}\times X_2^{A'}\times X_2 & X_1\times X_1^A\times X_2^A\times X_2 &  X_1^{A'}\times X_1^A\times X_2^A\times X_2^{A'}
}
\]
To show the coassociativity of $\Delta^{E^*}$, it suffices to show the following equality
\begin{equation}\label{eq:stabE}
(\Stab^{E^*}_{\mathfrak{C}^{\op}})^{-1}
\boxtimes \Stab^{E^*}_{\mathfrak{C}}
=((\Stab^{E^*}_{\mathfrak{C'}^{\op}})^{-1}
\boxtimes \Stab^{E^*}_{\mathfrak{C'}})*
((\Stab^{E^*}_{\mathfrak{C''}^{\op}})^{-1}
\boxtimes \Stab^{E^*}_{\mathfrak{C''}})
, 
\end{equation}
where $\mathfrak{C'}\subset \mathfrak{C}$ is a face such that $\mathfrak{a}'=\Lie A'=\text{Span}(\mathfrak{C'})$.  The cone $\mathfrak{C}$ projects to a cone in $\mathfrak{a}/\mathfrak{a}'$ which we denote by $\mathfrak{C}''$.

We now verify the equation \eqref{eq:stabE}. 
Consider the following embeddings of fixed point subvarieties
\[
\xymatrix@C=0.5em {
& N_1 \times N_1^{A'} \times N_1^A \ar[dl]_{p_{12}}\ar[d]^{p_{13}}\ar[rd]^{p_{23}}& 
&\ar@{_{(}->}[l]_{\iota} & N_1^{\im \alpha} \times N_1^{\im \alpha A'} \times N_1^{im \alpha A} 
\ar[dl]_{p_{12}^{\alpha}}\ar[d]^{p_{13}^{\alpha}}\ar[rd]^{p_{23}^{\alpha}} \\
N_1\times N_1^{A'} & N_1\times N_1^A &  N_1^{A'} \times  N_1^{A}&
N_1^{\im \alpha}\times N_1^{\im \alpha, A'} \ar@/^1.5pc/[lll]^{\iota_{12}}& N_1^{\im \alpha}\times N_1^{\im \alpha A} \ar@/^2pc/[lll]^{\iota_{13}}&  N_1^{im\alpha, A'} \times  N_1^{im \alpha, A}\ar@/^1.5pc/[lll]^{\iota_{23}}
}
\]
By Proposition \ref{prop:localization_convolution}, we know $\frac{HKR}{e(N_2)}$ commutes with convolution. That is, 
\begin{align*}
&\frac{HKR}{e(N(X_2^{\im \alpha}\subset X_2)) e(N(X_1^{\im \alpha A}\subset X_1^A))}( ((\Stab^{E^*}_{\mathfrak{C'}^{\op}})^{-1}
\boxtimes \Stab^{E^*}_{\mathfrak{C'}})*
((\Stab^{E^*}_{\mathfrak{C''}^{\op}})^{-1}
\boxtimes \Stab^{E^*}_{\mathfrak{C''}}) )\\
=&\frac{HKR}{e(N(X_2^{\im \alpha}\subset X_2)) e(N(X_1^{\im \alpha A'}\subset X_1^{A'}))}
(\Stab^{E^*}_{\mathfrak{C'}^{\op}})^{-1}
\boxtimes \Stab^{E^*}_{\mathfrak{C'}}) \\&
*\frac{HKR}{e(N(X_2^{\im \alpha A'}\subset X_2^{A'})) e(N(X_1^{\im \alpha A}\subset X_1^A))}(\Stab^{E^*}_{\mathfrak{C''}^{\op}})^{-1}
\boxtimes \Stab^{E^*}_{\mathfrak{C''}})\\
=& (\Stab^{MO}_{\alpha, \mathfrak{C'}^{\op}})^{-1}
\boxtimes \Stab^{MO}_{\alpha, \mathfrak{C'}})*
((\Stab^{MO}_{\alpha, \mathfrak{C''}^{\op}})^{-1}
\boxtimes \Stab^{MO}_{\alpha, \mathfrak{C''}}). 
\end{align*}
It is known that \cite[Lemma 3.6.1]{MO}
\[
(\Stab^{MO}_{\alpha, \mathfrak{C}^{\op}})^{-1}
\boxtimes \Stab^{MO}_{\alpha, \mathfrak{C}}
=((\Stab^{MO}_{\alpha, \mathfrak{C'}^{\op}})^{-1}
\boxtimes \Stab^{MO}_{\alpha, \mathfrak{C'}})*
((\Stab^{MO}_{\alpha, \mathfrak{C''}^{\op}})^{-1}
\boxtimes \Stab^{MO}_{\alpha, \mathfrak{C''}}), \,\ \text{for each $\alpha$.}
\]
and also 
\[
\frac{HKR}{e(N(X_2^{\im \alpha}\subset X_2)) e(N(X_1^{\im \alpha A}\subset X_1^A))}: (\Stab^{E^*}_{\mathfrak{C}^{\op}})^{-1}
\boxtimes \Stab^{E^*}_{\mathfrak{C}}
\mapsto 
(\Stab^{MO}_{\alpha, \mathfrak{C}^{\op}})^{-1}
\boxtimes \Stab^{MO}_{\alpha, \mathfrak{C}}
\]
This implies \eqref{eq:stabE} and hence the coassocitivity.

The statement (2) follows from (1) and Lemma \ref{lem:commu coproduct}. 
By  \cite[Theorem 3.20]{Nak12}, $\Delta^{MO}$ is an algebra homomorphism, that is, $\Delta^{MO}(a*b)=\Delta^{MO}(a)*\Delta^{MO}(b)$. 
For simplicity, we write $\chi:=\frac{HKR}{e(N_2)}$ for short. 
We have the following isomorphisms
\begin{align*}
&\chi\Delta^{E^*}(a*b)=
\Delta^{MO}(\chi(a*b))=
\Delta^{MO}(\chi(a)*\chi(b))=
\Delta^{MO}(\chi(a))*\Delta(\chi(b))\\
=&
\chi \Delta^{E^*} (a)* \chi \Delta^{E^*} (b)
=\chi(\Delta^{E^*} (a)* \Delta^{E^*} (b) ). 
\end{align*}
The injectivity of $\chi$ implies that
\[
\Delta^{E^*}(a*b)=\Delta^{E^*}(a) * \Delta^{E^*}(b). 
\]
\end{proof}
We remark that any convolution algebra has an anti-involution coming from $X\times X\to X\times X$ by swamping the two copies. The 
anti-involution induces a ``contragradient duality", which is compatible with the tensor structure in the usual sense.

\newcommand{\arxiv}[1]
{\texttt{\href{http://arxiv.org/abs/#1}{arXiv:#1}}}
\newcommand{\doi}[1]
{\texttt{\href{http://dx.doi.org/#1}{doi:#1}}}
\renewcommand{\MR}[1]
{\href{http://www.ams.org/mathscinet-getitem?mr=#1}{MR#1}}


\begin{thebibliography}{00}

\bibitem[AO16]{AO} M.~Aganagic,A.~Okounkov, 
{\em Elliptic stable envelopes}, to appear in JAMS. \arxiv{1604.00423}. 




\bibitem[AN06]{AN06} 
Y.~Abe, T.~Nakashima, {\em
Evaluation representations of quantum affine algebras at roots of unity}. J. Math. Phys. 47 (2006), no. 8, 083514, 28 pp.


\bibitem[Ad74]{Ad} J.F. Adams, {\em Stable Homotopy and Generalised Homology,} The University of Chicago Press, (1974). 

\bibitem[A95]{A95} M.~Ando, {\em
Isogenies of formal group laws and power operations in the cohomology theories $E_n$}.
Duke Math. J. 79 (1995), no. 2, 423-485.



\bibitem[AJS94]{AJS} H. H. Andersen, J. C. Jantzen, and W. Soergel, {\em Representations of quantum groups at a pth root of unity and of semisimple groups in characteristic p: independence of p} ,Asterisque 220 (1994). 

\bibitem[Ba89]{Ba} A.~Baker, {\em On the homotopy type of the spectrum representing elliptic cohomology}. Proc. Amer. Math. Soc. 107 (1989), no. 2, 537--548. \MR{0982399}

\bibitem[BMMS86]{BMMS86}
R. R. Bruner, J.P. May,, J.E. McClure, M. Steinberger, {\em $H_{\infty}$ ring spectra and their applications}.
Lecture Notes in Mathematics, 1176. Springer-Verlag, Berlin, 1986. viii+388 pp. 

\bibitem[CP94]{CP94}  V. Chari and A. Pressley, {\em A guide to quantum groups}. Cambridge University Press, Cambridge, 1994. xvi+651 pp.

\bibitem[CP97]{CP}  V. Chari and A. Pressley, {\em Quantum affine algebras at roots of unity}. 
Represent. Theory 1 (1997), 280–328.




\bibitem[CG10]{CG} N.~Chriss, V.~Ginzburg, {\em 
Representation theory and complex geometry}.
Modern Birkh\"auser Classics. Birkh\"auser Boston, Ltd., Boston, MA, 2010.


\bibitem[D03]{D} S. Doty, {\em Presenting generalized q-Schur algebras}, Represent. Theory 7 (2003), 196--213.

\bibitem[G06]{G} N. Ganter, 
{\em Orbifold genera, product formulas and power operations}. 
Adv. Math. 205 (2006), no. 1, 84-133.

\bibitem[Gr94b]{Gr2}  I.~Grojnowski, {\em Affinizing quantum algebras: From $D$-modules to $K$-theory}, preprint, (1994).

\bibitem[GKV95]{GKV} V. Ginzburg, M. Kapranov, and E. Vasserot, {\em Elliptic algebras and equivariant elliptic cohomology}, Preprint,
(1995). \arxiv{9505012}


\bibitem[H04]{H04} D.~Hernandez, 
{\em The $t$-analogs of $q$-characters at roots of unity for quantum affine algebras and beyond}. 
J. Algebra 279 (2004), no. 2, 514--557.

\bibitem[HKR00]{HKR} M.~Hopkins, N.~ Kuhn, D.~Ravenel, {\em Generalized group characters and complex oriented cohomology theories}, 
J. Amer. Math. Soc. 13 (2000), no. 3, 553--594.
\MR{1758754} 

\bibitem[Ka02]{Kash}  M. Kashiwara. {\em On level-zero representations of quantized affine algebras}.
Duke Math. J., 112(1):117--175, (2002).


\bibitem[KM07]{KM} N.~Kitchloo, J.~Morava, {\em Thom Prospectra for Loopgroup representations},  Elliptic cohomology, 214--238,
London Math. Soc. Lecture Note Ser., 342, Cambridge Univ. Press, Cambridge, 2007.

\bibitem[KS11]{KS} M.~Kontsevich, Y.~Soibelman,
{\em Cohomological Hall algebra, exponential Hodge structures and motivic Donaldson-Thomas invariants},
Commun. Number Theory Phys. \textbf{5} (2011), no. 2, 231--352.
\MR{2851153}

\bibitem[KT94]{KT} S.~M.~Khoroshkin, V.~N.~Tolstoy, 
{\em Twisting of quantum (super-) algebras}. Generalized symmetries in physics (Clausthal, 1993), 42–54, World Sci. Publ., River Edge, NJ, 1994.


\bibitem[La55]{Laz} M. Lazard, {\em 
Sur les groupes de Lie formels \'a un param\'etre}, Bulletin de la Soci\'e t\'e Math\'e matique de France, Tome 83 (1955) , pp. 251--274. 

\bibitem[Lu87]{Lu2}  G.~Lusztig, {\em modular representations and quantum groups}. Classical groups and related topics (Beijing, 1987), 59--77,
Contemp. Math., 82, Amer. Math. Soc., Providence, RI, 1989. \MR{0982278}

\bibitem[Lus93]{Lus93} G.~Lusztig, {\em Introduction to quantum groups}, Birkh\"auser, Boston, 1993.

\bibitem[Lus15]{Lusz} G.~Lusztig,
{\em On the character of certain irreducible modular representations.}
Represent. Theory 19 (2015), 3--8. \MR{3316914};
Joint Seminar, Mathematical Sciences Research Institute, October 28, 2014.




\bibitem[Mc11]{Mc} K. McGerty, {\em Geometrization of the quantum Frobenius}, preprint, (2011).

\bibitem[MN18]{MN} Kevin McGerty, Thomas Nevins,
{\em Kirwan surjectivity for quiver varieties}, Inventiones mathematicae volume 212, 161--187 (2018).

\bibitem[MO19]{MO} D.~Maulik, A.~Okounkov, {\em
Quantum groups and quantum cohomology}. 
Ast\'erisque No. 408 (2019), ix+209 pp.


\bibitem[Nak98]{Nak98}  H. Nakajima, {\em Quiver varieties and Kac-Moody algebras}, Duke Math. 91 (1998), 515--560

\bibitem[Nak01]{Nak}  H. Nakajima, {\em Quiver varieties and finite dimensional representations of quantum affine algebras}, J. Amer.
Math. Soc. 14 (2001), no. 1, 145--238. \MR{1808477} \arxiv{9912158} 

\bibitem[Nak04]{Nak04} H.~Nakajima, {\em
Quiver varieties and t-analogs of q-characters of quantum affine algebras}. Ann. of Math. (2) 160 (2004), no. 3, 1057–1097.

\bibitem[Nak12]{Nak12} H.~Nakajima, {\em Quiver varieties and tensor products, II}. Symmetries, integrable systems and representations, 403–428, Springer Proc. Math. Stat., 40, Springer. 


\bibitem[O20]{O} A.~Okounkov, {\em
Inductive construction of stable envelopes}, preprint, (2020). \arxiv{2007.09094}.


\bibitem[Q71]{Qui} D. Quillen, {\em Elementary Proofs of Some Results of Cobordism Theory
Using Steenrod Operations}, Advances in Mathematics 7, 29--56 (1971).

\bibitem[Rez98]{Rez98} C. Rezk, {\em
Notes on the Hopkins-Miller theorem}. Homotopy theory via algebraic geometry and group representations (Evanston, IL, 1997), 313--366, Contemp. Math., 220, Amer. Math. Soc., Providence, RI, 1998.



\bibitem[RSYZ20]{RSYZ} M.~Rapcak, Y.~Soibelman, Y.~Yang, Z.~Zhao, {\em Cohomological Hall algebras and perverse coherent sheaves on toric Calabi-Yau 3-folds}, preprint, (2020).  \arxiv{2007.13365}. 


\bibitem[SV13]{SV} O.~Schiffmann, E.~Vasserot, {\em The elliptic Hall algebra and the K-theory of the Hilbert scheme of $\mathbb{A}^2$}. Duke Math. J. \textbf{162} (2013), no. 2, 279--366. MR{3018956}

\bibitem[St11]{St} N. Stapleton, {\it Transchromatic generalized character maps}. Algebr. Geom. Topol. 13 (2013), no. 1, 171--203.

\bibitem[VV02]{VV02} M.~Varagnolo, E.~Vasserot, {\it
Standard modules of quantum affine algebras}. Duke Math. J. 111 (2002), no. 3, 509–533. 

\bibitem[W13]{W} G. Williamson, {\em Schubert calculus and torsion  explosion}, With a joint appendix with Alex Kontorovich and Peter J. McNamara.
J. Amer. Math. Soc. 30 (2017), no. 4, 1023-1046. \arxiv{1309.5055}
	
\bibitem[YZ14]{YZ1} Y. Yang and G. Zhao, {\em The cohomological Hall algebra of a preprojective algebra}, Proc. Lond. Math. Soc. 116 (2018), 1029-1074. \arxiv{1407.7994}

\bibitem[YZ16]{YZ2} Y. Yang and G. Zhao, {\em Cohomological Hall algebras and affine quantum groups}. Selecta Math., 24(2) (2018), 1093--1119. \arxiv{1604.01865}

\bibitem[YZ17]{YZ3} Y. Yang and G. Zhao, {\em Quiver varieties and elliptic quantum groups}. Preprint, (2017). \arxiv{1708.01418}
	
\bibitem[ZZ17]{ZZ1} G. Zhao and C. Zhong, {\em Geometric representations of the formal affine Hecke algebras}. Adv. Math. 317 (2017), 50--90. \arxiv{1406.1283}	
	
\bibitem[ZZ15]{ZZ} G. Zhao and C. Zhong, {\em Elliptic affine Hecke algebras and their representations}. preprint, (2015). \arxiv{1507.01245}. A revised version: \url{https://drive.google.com/file/d/1SbZC679z1gCkobWY0vufShzqoh0ozdCB/view}.
	
\end{thebibliography}
\end{document}